\title[The regularity of KP solitons]
{The Deodhar decomposition of the Grassmannian and the regularity of KP solitons}
\author{Yuji Kodama and Lauren Williams} 
\date{\today}
\thanks{The first author was partially
supported by NSF grant DMS-1108813.  The second author was 
partially supported by an NSF CAREER award and an 
Alfred Sloan Fellowship.}
\address{Department of Mathematics, Ohio State University,
Columbus, OH 43210}
\email{kodama@math.ohio-state.edu}
\address{Department of Mathematics, University of California,
Berkeley, CA 94720-3840}
\email{williams@math.berkeley.edu}
\subjclass[2000]{}
\newcommand{\rotxc}[1]{\begin{sideways}#1\end{sideways}}
\newcommand{\invert}[1]{\rotxc{\rotxc{#1}}}
\def\Le{\hbox{\invert{$\Gamma$}}}
\def\tbox(#1,#2)#3{
\x=#1 \y=#2 
\multiply\x by 12 
\multiply\y by 12 
\z=\x \t=\y
\advance\z by 12 
\advance\t by 12 
\psline(\x,\y)(\x,\t)(\z,\t)(\z,\y)(\x,\y)
\advance\x by 6
\advance\y by 6 
\rput(\x,\y){{\bf #3}}}
\def\proof{\par{\it Proof}. \ignorespaces}
\def\endproof{{\ \vbox{\hrule\hbox{%
     \vrule height1.3ex\hskip0.8ex\vrule}\hrule }}\par}
\theoremstyle{definition}
\theoremstyle{remark}
\numberwithin{equation}{section}
\let\trueint=\int
\let\truesum=\sum
\def\int{\mathop{\textstyle\trueint}\limits}
\def\sum{\mathop{\textstyle\truesum}\limits}
\def\K{{\mathbb{K}}}
\def\F{{\mathbb{F}}}
\def\S{{\mathcal S}}
\def\J{{\mathcal J}}
\def\SL{\mathrm{SL}}
\def\Sym{{\mathfrak S}}
\def\rank{\mathop{\rm rank}\nolimits}
\def\v{\mathbf{v}}
\def\w{\mathbf{w}}
\def\wstn{\raisebox{0.12cm}{\hskip0.14cm\circle{4}\hskip-0.15cm}}
\def\bstn{\raisebox{0.12cm}{\hskip0.14cm\circle*{4}\hskip-0.15cm}}
\newtheorem{theorem}{Theorem}[section]
\newtheorem{definition}[theorem]{Definition}
\newtheorem{proposition}[theorem]{Proposition}
\newtheorem{lemma}[theorem]{Lemma}
\newtheorem{problem}[theorem]{Problem}
\newtheorem{example}[theorem]{Example}
\newtheorem{corollary}[theorem]{Corollary}
\newtheorem{remark}[theorem]{Remark}
\newtheorem{algorithm}[theorem]{Algorithm}
\newcommand{\inv}{^{-1}}
\newcommand{\To}{\longrightarrow}
\newcommand{\R}{\mathbb R}
\newcommand{\B}{\mathcal{B}}
\newcommand{\Q}{\mathcal{Q}}
\DeclareMathOperator{\Dec}{Dec}
\DeclareMathOperator{\CC}{\mathcal C}
\DeclareMathOperator{\In}{in}
\DeclareMathOperator{\Out}{out}
\DeclareMathOperator{\GL}{GL}
\DeclareMathOperator{\M}{\mathcal M}
\DeclareMathOperator{\I}{\mathcal I}
\newcommand{\thmrefer}[1]{\renewcommand\thetheorem
  {\protect\ref{#1}}\addtocounter{theorem}{-1}}
\begin{document}

\begin{abstract}
Given a point $A$ in the real Grassmannian, it is well-known that
one can construct a soliton solution $u_A(x,y,t)$ to the KP equation. 
The \emph{contour plot} of such a solution provides a tropical 
approximation to the solution when the variables $x$, $y$, and $t$ 
are considered 
on a large scale and the time $t$ is fixed.
In this paper we use several decompositions of the Grassmannian
in order to gain an understanding of the contour plots of 
the corresponding soliton solutions.  First we use the 
\emph{positroid stratification} of the real Grassmannian in order to characterize the 
unbounded line-solitons in the contour plots at $y\gg 0$ and $y\ll 0$.  
Next we use 
the \emph{Deodhar decomposition} of the Grassmannian -- 
a refinement of the positroid stratification -- to study 
contour plots at $t\ll 0$.
More specifically, we index the components of the Deodhar decomposition of the 
Grassmannian by certain tableaux which we call 
\emph{Go-diagrams}, and then use these Go-diagrams 
to characterize the contour plots of solitons solutions when $t\ll 0$.
Finally we use these results to show that 
a soliton solution $u_A(x,y,t)$ is regular for all times $t$ 
if and only if $A$ comes from the \emph{totally non-negative part}
of the Grassmannian.
\end{abstract}

\maketitle

\setcounter{tocdepth}{1}
\tableofcontents

\section{Introduction}

The KP equation is a two-dimensional
nonlinear dispersive wave equation which was proposed by Kadomtsev and Peviashvili
in 1970 to study the stability problem of the soliton solution of the Korteweg-de Vries (KdV)
equation \cite{KP70}.  The KP equation can also be used to describe shallow water waves, and
in particular, the equation provides an excellent model for the resonant interaction of those waves.
The equation has a rich
mathematical structure, and is now considered
to be the prototype of an integrable nonlinear
dispersive wave equation with two spatial dimensions (see for example  \cite{NMPZ84,AC91,D91,MJD00,H04}).

One of the main breakthroughs in the KP theory was given by Sato \cite{Sato}, who
realized that solutions of the KP equation could be written in terms of points on an
infinite-dimensional Grassmannian.  The present paper deals with a real, finite-dimensional
version of the Sato theory; in particular, we are interested in solutions that
are localized along
certain rays in the $xy$ plane called \emph{line-solitons}.  Such a soliton 
solution
can be constructed from a point $A$ of the real Grassmannian.  More specifically,
one can apply
the {\it Wronskian form} \cite{Sato,Sa79,FN,H04}
to $A$ to produce
a $\tau$-function $\tau_A(x,y,t)$ which is a sum of exponentials, and from the
$\tau$-function 
one can construct a solution $u_A(x,y,t)$ to the KP equation.

Recently several authors
have studied the soliton solutions $u_A(x,y,t)$ which come from points $A$ of 
the {\it totally non-negative part of the Grassmannian} $(Gr_{k,n})_{\geq 0}$, that is,
those points of the real Grassmannian $Gr_{k,n}$
whose Pl\"ucker coordinates are all non-negative
\cite{BK,K04, BC06, CK1,CK3, KW, KW2}.  These solutions are 
\emph{regular}, and include
a large variety of soliton solutions which were previously overlooked by those using the
Hirota method of a perturbation expansion \cite{H04}.

One of the main goals of this paper is to understand the soliton solutions
$u_A(x,y,t)$ coming from arbitrary points $A$ of the real Grassmannian, not just 
the totally non-negative part. In general such solutions are no longer regular -- 
they may have singularities along rays in the $xy$ plane -- 
but it is possible, nevertheless, to understand a great deal about the asymptotics of such solutions.

Towards this end, we use two related decompositions
of the real Grassmannian.  The first decomposition is 
Postnikov's \emph{positroid stratification} of the Grassmannian 
\cite{Postnikov}, whose strata are indexed by various combinatorial
objects including decorated permutations and $\Le$-diagrams.  
Note that the intersection of each positroid stratum with $(Gr_{k,n})_{\geq 0}$
is a cell (homeomorphic to an open ball); 
when one intersects the positroid stratification of the Grassmannian with 
the totally non-negative part, one obtains a cell decomposition of 
$(Gr_{k,n})_{\geq 0}$ \cite{Postnikov}.

The second decomposition is the 
\emph{Deodhar decomposition} of the Grassmannian, which is 
a refinement of the positroid stratification.
%This is obtained by projecting the Deodhar decomposition of the complete
%flag variety to the Grassmannian.  
Its components have explicit 
parameterizations due to Marsh and Rietsch \cite{MR}, and are indexed by 
\emph{distinguished subexpressions} of reduced words in the Weyl 
group.  The components may also be indexed by certain tableaux filled with black
and white stones which we call \emph{Go-diagrams}, and which provide
a generalization of $\Le$-diagrams.  Note that almost all Deodhar 
components have an empty intersection with the totally non-negative part 
of the Grassmannian.  More specifically, each positroid stratum is a union
of Deodhar components, precisely one of which has a non-empty %(and indeed dense) 
intersection with $(Gr_{k,n})_{\geq 0}$.

By using the positroid stratification of the Grassmannian, we characterize
the unbounded line-solitons of KP soliton solutions coming from arbitrary points 
of the real Grassmannian.  More specifically, given $A \in Gr_{k,n}$, 
we show that the unbounded line-solitons of the solution $u_A(x,y,t)$
at $y\ll 0$ and $y \gg 0$ depend only on which positroid stratum $A$ belongs to, and that one 
can use the corresponding decorated permutation to read off the unbounded 
line-solitons.  This extends work of \cite{BC06, CK1, CK3, KW, KW2}
from the setting of the non-negative part of the Grassmannian to the entire real Grassmannian.

By using the Deodhar decomposition of the Grassmannian,
we give an explicit description of the {\it contour plots} of soliton solutions
in the $xy$-plane  when $t\ll 0$.  
The contour plot of the solution $u_A(x,y,t)$ at a fixed $t$ approximates
the locus where $|u_A(x,y,t)|$ takes on its maximum values or is singular.
More specifically, we provide an algorithm for constructing 
the contour plot of $u_A(x,y,t)$ at $t\ll 0$, which uses the Go-diagram indexing the 
Deodhar component of $A$.  We also show that when the Go-diagram 
$D$ is a $\Le$-diagram, then the corresponding contour plot at 
$t\ll 0$ gives rise to a \emph{positivity test} for the Deodhar component
$S_D$.

Finally we use our previous results to address the regularity problem for KP solitons.
We prove that a soliton solution $u_A(x,y,t)$ coming from a point $A$ of the real Grassmannian
is regular for all times $t$ if and only if $A$ is a point of the totally non-negative part of
the Grassmannian.

The structure of this paper is as follows.
In Section \ref{sec:Gr} we provide background on the Grassmannian 
and some of its decompositions, including the positroid stratification.
In Section \ref{sec:project} we describe
the Deodhar decomposition of the complete flag variety and its projection
to the Grassmannian, while in Section \ref{Deodhar-combinatorics} we
explain how to index Deodhar components in the Grassmannian by 
\emph{Go-diagrams} (Subsection \ref{subsec:Go}).
In Section \ref{sec:positivitytest} we provide explicit formulas for
certain Pl\"ucker coordinates of points in Deodhar components (Theorems \ref{p:maxmin} and \ref{p:Plucker}), and use
these formulas to provide \emph{positivity tests} for points
in the real Grassmannian (Theorem \ref{th:TPTest}).  Subsequent sections provide applications
of the previous results to soliton solutions of the KP equation.
In Section \ref{soliton-background} we give background on  how to produce
a soliton solution to the KP equation from a point of the real
Grassmannian.  In Section \ref{sec:solgraph} we 
define the \emph{contour plot} associated to a soliton solution
at a fixed time $t$ (Definition \ref{contour}), then in Section \ref{sec:unbounded} we use the positroid stratification to 
describe the unbounded line-solitons in contour plots of soliton solutions
 at $y\gg 0$ and $y \ll 0$ (Theorem \ref{thm:soliton-perm}).
 In Section \ref{sec:solitonplabic}
we define the more combinatorial notions of 
\emph{soliton graph} and \emph{generalized plabic graph}.
In Section \ref{sec:t<<0} we use the Deodhar
decomposition to describe contour plots of soliton solutions
for $t\ll 0$ (Theorem \ref{t<<0}), and in Section \ref{sec:slides} we provide 
some technical results on $X$-crossings in contour plots and
corresponding relations among Pl\"ucker
coordinates. 
Finally we use the results of the previous sections to address the 
\emph{regularity problem} for soliton solutions in Section \ref{sec:regularity}
(Theorem \ref{th:regularity}).

%%%%%%%%%%%%%   Background   %%%%%%%%%%%%%%%%%%%%%%%%

\section{Background on the Grassmannian and its decompositions}\label{sec:Gr}

The \emph{real Grassmannian} $Gr_{k,n}$ is the space of all
$k$-dimensional subspaces of $\R^n$.  An element of
$Gr_{k,n}$ can be viewed as a full-rank $k\times n$ matrix modulo left
multiplication by nonsingular $k\times k$ matrices.  In other words, two
$k\times n$ matrices represent the same point in $Gr_{k,n}$ if and only if they
can be obtained from each other by row operations.
Let $\binom{[n]}{k}$ be the set of all $k$-element subsets of $[n]:=\{1,\dots,n\}$.
For $I\in \binom{[n]}{k}$, let $\Delta_I(A)$
be the {\it Pl\"ucker coordinate}, that is, the maximal minor of the $k\times n$ matrix $A$ located in the column set $I$.
The map $A\mapsto (\Delta_I(A))$, where $I$ ranges over $\binom{[n]}{k}$,
induces the {\it Pl\"ucker embedding\/} $Gr_{k,n}\hookrightarrow \mathbb{RP}^{\binom{n}{k}-1}$.

We now describe several useful decompositions of the Grassmannian:
the matroid stratification, the Schubert decomposition, and the positroid
stratification.
Their relationship is as follows:
the  matroid stratification 
refines the positroid stratification which refines the Schubert
decomposition.  In Section \ref{sec:projections} we will describe the 
Deodhar decomposition, which is a refinement of the positroid 
stratification, and (as
verified in \cite{TW}) is refined by the matroid stratification.

\subsection{The matroid stratification of $Gr_{k,n}$}

\begin{definition}\label{def:matroid}
A \emph{matroid} of \emph{rank} $k$ on the set $[n]$ is a nonempty collection
$\M \subset \binom{[n]}{k}$ of $k$-element subsets in $[n]$, called \emph{bases}
of $\M$, that satisfies the \emph{exchange axiom}:\\
For any $I,J \in \M$ and $i \in I$ there exists $j\in J$ such that
$(I \setminus \{i\}) \cup \{j\} \in \M$.
\end{definition}

\begin{definition}
A \emph{loop} of a matroid on the set $[n]$ is an element $i\in [n]$ 
which is in every basis.  A \emph{coloop} is an element $i\in [n]$ which
is not in any basis.
\end{definition}

Given an element $A \in Gr_{k,n}$, there is an associated matroid
$\M_A$ whose bases are the $k$-subsets $I \subset [n]$ such that 
$\Delta_I(A) \neq 0$.

\begin{definition}
Let $\M \subset \binom{[n]}{k}$ be a matroid.
The \emph{matroid stratum}
$S_{\M}$ is defined to be 
$$S_{\M} = \{A \in Gr_{k,n} \ \vert \ \Delta_I(A) \neq 0 \text{ if and only if }
I\in \M \}.$$
This gives a stratification of $Gr_{k,n}$ called the 
\emph{matroid stratification}, or \emph{Gelfand-Serganova stratification}.
The matroids $\M$ with nonempty strata $S_{\M}$ are called \emph{realizable} over
$\R$.
\end{definition}

\subsection{The Schubert decomposition of $Gr_{k,n}$}

We now turn to the Schubert decomposition of the Grassmannian.
First recall that the partitions $\lambda \subset (n-k)^k$
are in bijection with $k$-element subset $I \subset [n]$.  
The boundary of the Young diagram of such a partition 
$\lambda$ forms a lattice path from the upper-right corner to the lower-left
corner of the rectangle $(n-k)^k$.  Let us label the $n$ steps 
in this path by the numbers $1,\dots,n$, and define
$I = I(\lambda)$ as the set of labels on the $k$ vertical steps in 
the path. Conversely, we let $\lambda(I)$ denote the 
partition corresponding to the subset $I$.

\begin{definition}
For each partition $\lambda \subset (n-k)^k$, one can define the 
\emph{Schubert cell} $\Omega_{\lambda}$ to be the set of 
all elements $A \in Gr_{k,n}$ such that when $A$ is represented
by a matrix in row-echelon form, it has pivots precisely
in the columns $I(\lambda)$.
As $\lambda$ ranges over the partitions contained in $(n-k)^k$,
this gives the \emph{Schubert decomposition} 
of the Grassmannian $Gr_{k,n}$, i.e.
\[
Gr_{k,n}=\bigsqcup_{\lambda\subset (n-k)^k}\,\Omega_{\lambda}.
\]
\end{definition}

\begin{definition}\label{def:prec}
Let $\{i_1, i_2,\dots,i_k\}$ and $\{j_1, j_2,\dots,j_k\}$ be 
two $k$-element subsets of $\{1,2,\dots,n\}$, such that 
$i_1 < i_2 < \dots < i_k$ and 
$j_1 < j_2 < \dots < j_k$.
We define the component-wise order $\preceq$ on $k$-element subsets
of $\{1,2,\dots,n\}$ as follows:
$$\{i_1, i_2, \dots, i_k \} \preceq \{j_1, j_2, \dots, j_k\}
\text{ if and only if }
i_1 \leq j_1, i_2 \leq j_2, \dots, \text{ and }i_k \leq j_k.$$
\end{definition}

\begin{lemma}\label{lem:order}
Let $A$ be an element of the Schubert cell 
$\Omega_{\lambda}$, and let $I = I(\lambda)$.  
If $\Delta_J(A) \neq 0$, then $I \preceq J$.
In particular, 
$$\Omega_{\lambda} = \{A \in Gr_{k,n} \ \vert \ I(\lambda) \text{ is
the lexicographically minimal base of }\M_A \}.$$
\end{lemma}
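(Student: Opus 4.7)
The plan is to exploit directly the row-echelon normal form guaranteed by $A\in\Omega_\lambda$: after row reduction one may assume that the pivots of $A$ lie in the columns $I=\{i_1<\cdots<i_k\}$, so that row $s$ has all zeros in columns strictly less than $i_s$ and a nonzero entry in column $i_s$. The proof breaks into three short steps.

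First, I would check that $I$ itself is a base of $\M_A$. The $k\times k$ submatrix $A_I$ of $A$ on the columns $I$ is upper triangular with the pivots on the diagonal, so $\Delta_I(A)\neq 0$. Next, for the main inequality I would argue by contradiction: suppose $J=\{j_1<\cdots<j_k\}$ satisfies $\Delta_J(A)\neq 0$ but $i_s>j_s$ for some $s$. Consider the submatrix $A_J$ and focus on the block formed by rows $s,s+1,\ldots,k$ and columns $j_1,\ldots,j_s$. Each of those rows has its leading nonzero entry in column $i_r\geq i_s>j_s\geq j_t$ for $t\leq s$, so every entry of this $(k-s+1)\times s$ block is zero. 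Equivalently, the first $s$ columns of $A_J$ are contained in the row-span of only the first $s-1$ rows, forcing $\rank A_J\leq k-1$ and hence $\Delta_J(A)=0$, a contradiction. Therefore $I\preceq J$ for every base $J$ of $\M_A$.

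Finally, the ``in particular'' statement is a short formal consequence. Componentwise order implies lexicographic order, so $I=I(\lambda)$ being $\preceq$-minimal (and itself a base) makes it the unique lexicographically minimal base of $\M_A$. This gives one inclusion $\Omega_\lambda \subseteq \{A : I(\lambda) \text{ is the lex-min base of }\M_A\}$. The reverse inclusion uses the fact that the Schubert cells partition $Gr_{k,n}$: if $A\in\Omega_\mu$ then by what we just proved $I(\mu)$ is the lex-min base of $\M_A$, so uniqueness of the lex-min base forces $I(\mu)=I(\lambda)$ and hence $\mu=\lambda$.

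There is no real obstacle here; the content of the lemma is a standard unpacking of row-echelon form. The only point requiring a bit of care is the bookkeeping of indices in the forced-zero block of $A_J$, i.e.\ verifying that the inequality $i_s>j_s$ together with $j_1<\cdots<j_s$ and $i_s\leq i_{s+1}\leq\cdots\leq i_k$ really does kill every entry of the claimed $(k-s+1)\times s$ rectangle; once that is written out explicitly, the rank bound and the equality of sets both fall out immediately.
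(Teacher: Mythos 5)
Your proof is correct and follows exactly the route the paper intends: its entire proof is the one-line remark that the lemma ``follows immediately by considering the representation of $A$ as a matrix in row-echelon form,'' and your argument simply fills in the details of that observation (the forced zero block in $A_J$ when $i_s>j_s$, plus the partition of $Gr_{k,n}$ by Schubert cells for the set equality).
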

\begin{proof}
This follows immediately by considering the representation
of $A$ as a matrix
in row-echelon form.
\end{proof}

We now define the \emph{shifted linear order} $<_i$ (for $i\in [n]$) to be the total order on $[n]$
defined by $$i <_i i+1 <_i i+2 <_i \dots <_i n <_i 1 <_i \dots <_i i-1.$$
One can then define \emph{cyclically shifted Schubert cells} as follows.

\begin{definition} \label{def:Schubert}
For each partition $\lambda \subset (n-k)^k$ and $i \in [n]$, we define the 
\emph{cyclically shifted Schubert cell} $\Omega_{\lambda}^i$ by 
$$\Omega_{\lambda}^i = \{A \in Gr_{k,n} \ \vert \ I(\lambda) \text{ is
the lexicographically minimal base of }\M_A \text{ with respect to }<_i \}.$$
\end{definition}

Note that $\Omega_{\lambda} = \Omega_{\lambda}^1$.

\subsection{The positroid stratification of $Gr_{k,n}$}\label{sec:positroid}

The \emph{positroid stratification} of the real Grassmannian
$Gr_{k,n}$ is obtained by taking the
simultaneous refinement of the $n$ Schubert decompositions with respect to 
the $n$ shifted linear orders $<_i$.  This stratification 
was first considered
by Postnikov \cite{Postnikov}, who showed that
the strata are conveniently described 
in terms of 
\emph{Grassmann necklaces}, as well as \emph{decorated permutations}
and \emph{$\Le$-diagrams}.  Postnikov coined the terminology
\emph{positroid} because the intersection of the positroid stratification
with the \emph{totally non-negative part of the Grassmannian}
$(Gr_{k,n})_{\geq 0}$ gives a cell decomposition of 
$(Gr_{k,n})_{\geq 0}$  (whose cells are called 
\emph{positroid cells}).

\begin{definition}\cite[Definition 16.1]{Postnikov}
A \emph{Grassmann necklace} is a sequence
$\I = (I_1,\dots,I_n)$ of subsets $I_r \subset [n]$ such that,
for $i\in [n]$, if $i\in I_i$ then $I_{i+1} = (I_i \setminus \{i\}) \cup \{j\}$,
for some $j\in [n]$; and if $i \notin I_i$ then $I_{i+1} = I_i$.
(Here indices $i$ are taken modulo $n$.)  In particular, we have
$|I_1| = \dots = |I_n|$, which is equal to some $k \in [n]$.  We then say that 
$\I$ is a Grassmann necklace of \emph{type} $(k,n)$.
\end{definition}

\begin{example}\label{ex1}
$\mathcal I = (1257, 2357, 3457, 4567, 5678, 6789, 1789, 1289, 1259)$ is
an example of a Grassmann necklace of type $(4,9)$. 
\end{example}

\begin{lemma}\cite[Lemma 16.3]{Postnikov} \label{lem:necklace}
Given $A\in Gr_{k,n}$, 
let $\mathcal I(A) = (I_1,\dots,I_n)$ be the sequence of subsets in 
$[n]$ such that, for $i \in [n]$, $I_i$ is the lexicographically
minimal subset of $\binom{[n]}{k}$  with respect to the shifted linear order
$<_i$ such that $\Delta_{I_i}(A) \neq 0$.
Then $\I(A)$ is a Grassmann necklace of type $(k,n)$.
\end{lemma}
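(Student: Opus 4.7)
The plan is to represent $A$ by any $k\times n$ matrix $[\,v_1\mid\cdots\mid v_n\,]$ and to translate the Pl\"ucker non-vanishing condition $\Delta_J(A)\neq 0$ into the statement that the columns $\{v_j:j\in J\}$ form a basis of $\R^k$. The key tool is the greedy characterization of the lexicographically minimal base of the matroid $\M_A$: with respect to any linear order $\prec$ on $[n]$, an index $j$ lies in the lex-min base if and only if $v_j\notin\mathrm{span}\{v_k:k\prec j\}$. I would quote (or briefly verify) this standard matroid fact and then compare the two shifted orders $<_i$ and $<_{i+1}$.

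The comparison is clean because $<_i$ and $<_{i+1}$ induce identical orders on $[n]\setminus\{i\}$; the only difference is that $i$ is the minimum of $<_i$ but the maximum of $<_{i+1}$. Consequently, for every $j\neq i$,
\[
\{k : k<_i j\} \;=\; \{k : k<_{i+1} j\}\,\sqcup\,\{i\},
\]
so the greedy span test for $j$ under $<_i$ differs from that under $<_{i+1}$ only by whether $v_i$ is adjoined to the span.

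If $i\notin I_i$, applying the greedy characterization at the very first step of $<_i$ forces $v_i\in\mathrm{span}(\emptyset)=\{0\}$, i.e., $v_i=0$. Adjoining the zero vector does not change any span, so the greedy membership conditions for every $j\neq i$ coincide under $<_i$ and $<_{i+1}$; likewise $i\notin I_{i+1}$ since $v_i=0$. Hence $I_i=I_{i+1}$, which is exactly what the Grassmann necklace axiom demands in this case.

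If $i\in I_i$, then for each $j\neq i$ the condition ``$j\in I_i$'' requires $v_j$ to avoid the \emph{larger} subspace $\mathrm{span}\{v_k:k<_{i+1}j\}+\mathrm{span}(v_i)$, whereas ``$j\in I_{i+1}$'' only requires avoidance of $\mathrm{span}\{v_k:k<_{i+1}j\}$. The former being strictly stronger yields $I_i\setminus\{i\}\subseteq I_{i+1}$, and because $|I_i|=|I_{i+1}|=k$, the set $I_{i+1}$ must equal $(I_i\setminus\{i\})\cup\{j\}$ for a unique $j\in[n]$ (with $j=i$ permitted, giving $I_{i+1}=I_i$). The main obstacle I foresee is purely bookkeeping -- carefully tracking the two shifted orderings and their associated spans; once that is in place, the lemma reduces to the single one-element span comparison above together with a cardinality count.
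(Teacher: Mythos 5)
Your argument is correct. Note that the paper itself gives no proof of this lemma --- it is quoted from Postnikov --- so there is nothing internal to compare against; your greedy/span argument (lex-min base under $\prec$ consists of those $j$ with $v_j\notin\mathrm{span}\{v_k: k\prec j\}$, and the prefixes of $<_i$ and $<_{i+1}$ differ only by adjoining $v_i$) is the standard route and handles both cases correctly, including the observation that $i\notin I_i$ forces $v_i=0$ and the cardinality count that produces the exchange form $I_{i+1}=(I_i\setminus\{i\})\cup\{j\}$.
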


If $A$ is in the matroid stratum $S_{\M}$, 
we also use $\I_{\M}$ to denote the sequence
$(I_1,\dots,I_n)$ defined above.
This leads to the following description of the 
\emph{positroid stratification}
of $Gr_{k,n}$.

\begin{definition}\label{def:pos}
Let $\I = (I_1,\dots,I_n)$ be a Grassmann necklace of type $(k,n)$.
The \emph{positroid stratum}
$S_{\I}$ is defined to be 
$$S_{\I} = \{A \in Gr_{k,n} \ \vert \ \I(A) = \I \}.$$
\end{definition}

\begin{remark}
By comparing Definition \ref{def:pos} to Definition \ref{def:Schubert},
we see that given a Grassmann necklace $\I = (I_1,\dots,I_n)$,
$$S_{\I} = \bigcap_{i=1}^n ~\Omega_{\lambda(I_i)}^i\,.$$
In other words, each positroid stratum is an intersection of $n$ 
cyclically shifted Schubert cells.
\end{remark}

%Grassman necklaces  are in bijection with 
%other interesting combinatorial objects.

\begin{definition}\cite[Definition 13.3]{Postnikov}
A \emph{decorated permutation} $\pi^{:} = (\pi, col)$
is a permutation $\pi \in S_n$ together with a coloring
function $col$ from the set of fixed points
$\{i \ \vert \ \pi(i) = i\}$ to $\{1,-1\}$.  So
a decorated permutation is a permutation with fixed points
colored in one of two colors.  A \emph{weak excedance} of
$\pi^{:}$ is a pair $(i,\pi(i))$  such that either
$\pi(i)>i$ or $\pi(i)=i$ and $col(i)=1$. We call
$i$ the \emph{weak excedance position}.
If $\pi(i)>i$ (respectively $\pi(i)<i$) then 
$(i,\pi(i))$ is called an excedance (respectively, nonexcedance).
\end{definition}

\begin{example}\label{ex2}
The decorated permutation (written in one-line notation)
$(6,7,1,2,8,3,9,4,5)$ has no fixed points,
and 
four weak excedances, in positions $1, 2, 5$ and $7$.
\end{example}

\begin{definition}\cite[Definition 6.1]{Postnikov}\label{def:Le}
Fix $k$, $n$. If $\lambda$ is a partition, let
$Y_{\lambda}$ denote its Young diagram.  A {\it $\Le$-diagram}
$(\lambda, D)_{k,n}$ of type $(k,n)$
is a partition $\lambda$ contained in a $k \times (n-k)$ rectangle
together with a filling $D: Y_{\lambda} \to \{0,+\}$ which has the
{\it $\Le$-property}: 
there is no $0$ which has a $+$ above it and a $+$ to its
left.\footnote{This forbidden pattern is in the shape of a backwards $L$,
and hence is denoted $\Le$ and pronounced ``Le."}  (Here, ``above" means above and in the same column, and
``to its left" means to the left and in the same row.)
\end{definition}
In Figure \ref{LeDiagram} we give
an example of a $\Le$-diagram.
%%%%%%%%%%%%%%%%%%%%%%%%%%%%%
\begin{figure}[h]
\centering
\includegraphics[height=1.2in]{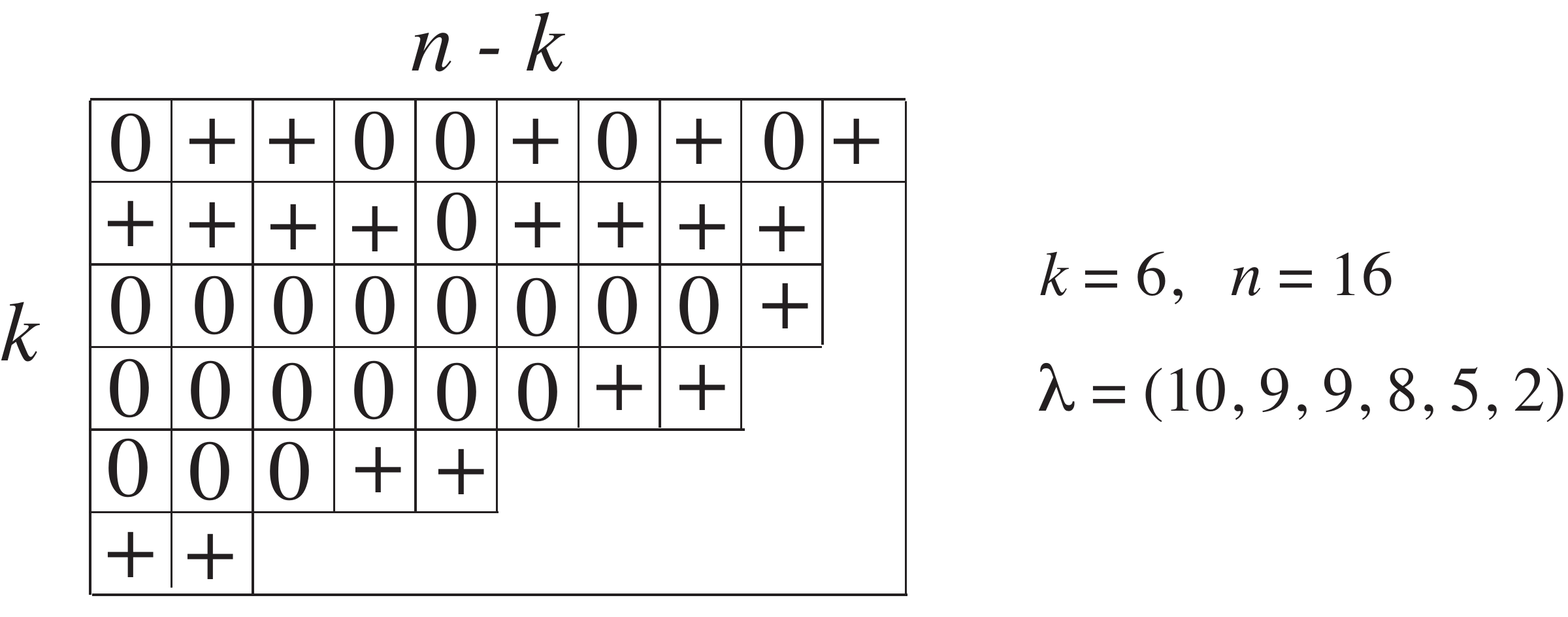}
\caption{A Le-diagram $L=(\lambda,D)_{k,n}$.
\label{LeDiagram}}
\end{figure}
%%%%%%%%%%%%%%%%%%%%%%%%%%%%%%%

We now review some of the bijections among these objects.

\begin{definition}\cite[Section 16]{Postnikov}\label{necklace-to-perm}
Given a Grassmann necklace $\mathcal I$, define
a decorated permutation $\pi^{:}=\pi^{:}(\mathcal I)$ by requiring that
\begin{enumerate}
\item if $I_{i+1} = (I_i \setminus \{i\}) \cup \{j\}$,
for $j \neq i$, then $\pi(j)=i$.
\footnote{Actually Postnikov's convention was to set $\pi(i)=j$ above,
so the decorated permutation we are associating is the inverse one to his.}
\item if $I_{i+1}=I_i$ and $i \in I_i$ then $\pi(i)=i$ is colored
 with $col(i)=1$.
\item if $I_{i+1}=I_i$ and $i \notin I_i$ then $\pi(i)=i$ is colored
 with $col(i)=-1$.
\end{enumerate}
As before, indices are taken modulo $n$.
\end{definition}
If $\pi^{:}=\pi^{:}(\mathcal I)$, then we also use
the notation
$S_{\pi^{:}}$ to refer to the positroid stratum
$S_{\I}$.  

%\begin{lemma}
%Let $\I = (I_1,\dots,I_n)$ and $\pi^: = \pi^:(\I)$.  Then
%$\pi^:$ has weak excedances precisely in positions $I_1$.
%Furthermore, if 
%$A \in S_{\I}$ is written in row-echelon form,
%then the pivots are located in positions $I_1$.
%\end{lemma}

\begin{example}\label{ex3}
Definition \ref{necklace-to-perm} carries the Grassmann necklace
of Example \ref{ex1} to the decorated permutation of Example \ref{ex2}.
\end{example}

\begin{lemma}\cite[Lemma 16.2]{Postnikov}\label{Postnikov-permutation}
The map $\mathcal I \to \pi^{:}(\mathcal I)$
is a bijection from
Grassmann necklaces $\mathcal I=(I_1,\dots,I_n)$
of size $n$ to
decorated permutations $\pi^{:}(\mathcal I)$ of size $n$.
Under this bijection, the weak excedances of
$\pi^{:}(\mathcal I)$ are in positions $I_1$.
\end{lemma}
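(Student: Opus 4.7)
The plan is to construct the inverse map $\pi^{:} \mapsto \mathcal{I}(\pi^{:})$ explicitly and verify that the two maps are inverse to each other; the weak excedance statement will then drop out of the construction.

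First I would check that the forward map $\mathcal{I} \mapsto \pi^{:}$ genuinely produces a decorated permutation. To see this I would trace each element $k \in [n]$ through the cyclic sequence $I_1 \to I_2 \to \cdots \to I_n \to I_1$. The Grassmann necklace axiom only permits the specific element $i$ to be removed at transition $i \to i+1$, so $k$ can be removed from the sets at most once (and only at transition $k$). Cyclic consistency forces the number of insertions of $k$ to equal the number of removals, so either $k$ is never moved---in which case $\pi(k) = k$ with color determined by whether $k$ is always present (color $+1$) or always absent (color $-1$)---or $k$ is inserted at a unique transition $i \neq k$ and removed at transition $k$, giving $\pi(k) = i$. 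In either case $\pi(k)$ is defined exactly once, so $\pi$ is a well-defined function, and by counting, a bijection.

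For the inverse direction, given a decorated permutation $\pi^{:}$ I would define
\[
I_i(\pi^{:}) := \{k \in [n] : \pi(k) \geq_i k, \text{ with } col(k) = +1 \text{ required if } \pi(k) = k\},
\]
namely the set of weak excedance positions of $\pi^{:}$ with respect to the shifted linear order $<_i$. The key step is verifying that $(I_1,\ldots,I_n)$ is a Grassmann necklace. To do this I would analyze how weak excedance status changes when the order passes from $<_i$ to $<_{i+1}$; the only positions whose status can change are $i$ and $\pi^{-1}(i)$, because for every other $k$ the relative order of $k$ and $\pi(k)$ is the same under both orderings. A short case analysis on whether $\pi(i) = i$ would give either $I_{i+1} = I_i$ (in the fixed-point case) or $I_{i+1} = (I_i \setminus \{i\}) \cup \{\pi^{-1}(i)\}$ (otherwise), matching the Grassmann necklace axiom exactly and also showing $|I_i|$ is constant in $i$.

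The main obstacle is the bookkeeping needed to show the two maps compose to the identity in both directions. Starting from a decorated permutation $\pi^{:}$, the composition is immediate because the necklace transitions of $\mathcal{I}(\pi^{:})$ are built to match Definition \ref{necklace-to-perm}. Starting from a Grassmann necklace $\mathcal{I}$, I would use the insertion-removal tracking from the first step to show that $k \in I_i$ if and only if $k$ is a weak excedance position of $\pi^{:}(\mathcal{I})$ under $<_i$, which is precisely the definition of $I_i(\pi^{:}(\mathcal{I}))$. The statement that the weak excedance positions of $\pi^{:}(\mathcal{I})$ form exactly $I_1$ is then immediate, since $<_1$ is the usual linear order on $[n]$ and $I_1(\pi^{:})$ was defined as the set of weak excedance positions of $\pi^{:}$ under $<_1$.
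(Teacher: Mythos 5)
The paper states this lemma as a cited result from Postnikov and gives no proof of its own, so there is nothing internal to compare against. Your argument is correct and is essentially the standard proof of Postnikov's Lemma 16.2: the insertion/removal bookkeeping shows the forward map is well defined, and your inverse $I_i(\pi^{:}) = \{k : \pi(k) \geq_i k\}$ (with the color convention at fixed points) works because passing from $<_i$ to $<_{i+1}$ only moves $i$ from the bottom to the top of the order, so only the positions $i$ and $\pi^{-1}(i)$ can change weak-excedance status, which reproduces the necklace axiom exactly; the claim about $I_1$ then follows since $<_1$ is the usual order.
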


\begin{remark}\label{perm-pivot}
Use the notation of Lemma \ref{Postnikov-permutation}.  It follows from the 
definition of the positroid stratification that if 
$A \in S_{\I}$ is written in row-echelon form, then the pivots are located
in position $I_1$.  It follows from Lemma \ref{Postnikov-permutation}
that the pivot positions coincide with the 
weak excedance positions of $\pi^{:}(\I)$.
%Note that the loops of the corresponding matroid
\end{remark}

\subsection{Irreducible elements of $Gr_{k,n}$}

%It is sometimes useful 
%to restrict our
%attention to the 
%\emph{irreducible} elements of $Gr_{k,n}$.

\begin{definition}
We say that a full rank $k \times n$ matrix 
is \emph{irreducible} if, after passing to its reduced
row-echelon form $A$, the matrix $A$ 
has the following properties:
\begin{enumerate}
\item Each column of $A$ contains at least one nonzero element.
\item Each row of $A$ contains at least one nonzero element in 
addition to the pivot.
\end{enumerate}
\end{definition}

An \emph{irreducible Grassmann necklace of type 
$(k,n)$} is a sequence $\mathcal I = (I_1,\dots,I_n)$ of 
subsets $I_r$ of $[n]$ of size $k$  such that, for $i\in [n]$, 
$I_{i+1}=(I_i \setminus \{i\}) \cup \{j\}$ for some $j\neq i$.
(Here indices $i$
are taken modulo $n$.)  
A \emph{derangement} $\pi=(\pi_1,\dots,\pi_n)$
is a permutation $\pi \in S_n$ which has no fixed points.

%Therefore every matroid stratum either contains only irreducible
%elements, or only non-irreducible elements.  
%(and positroid strata??)

In the language of matroids,
an element $A \in S_{\M}$ is irreducible if and only if
the matroid $\M$ has no loops or coloops.
It is easy to see that if $A$ is irreducible, then 
$\I(A)$ is an irreducible Grassmann necklace and 
$\pi^:(\I)$ is a derangement.

%\begin{lemma}\cite[Lemma 16.2]{Postnikov}\label{Postnikov-permutation}
%Given an irreducible Grassmann necklace $\mathcal I$, define
%a derangement  $\pi^{:}=\pi^{:}(\mathcal I)$ by requiring that:
%if $I_{i+1} = (I_i \setminus \{i\}) \cup \{j\}$
%for $j \neq i$, then
%$\pi^{:}(j)=i$.\footnote{In fact Postnikov's convention was to set $\pi(i)=j$ above,
%so the permutation we are associating is the inverse of his.}
%Indices are taken modulo $n$.
%Then $\mathcal I \to \pi^{:}(\mathcal I)$
%is a bijection from irreducible
%Grassmann necklaces $\mathcal I=(I_1,\dots,I_n)$ of type $(k,n)$
%to derangements $\pi^{:}(\mathcal I) \in S_n$ with $k$ excedances.
%The excedances of
%$\pi^{:}(\mathcal I)$ are in positions $I_1$.
%\end{lemma}

%%%%%%%%%%%%%%%%%%%%   Deodhar decomposition   %%%%%%%%%%

\section{Projecting the Deodhar decomposition of $G/B$ to the Grassmannian}
\label{sec:project}

In this section we review Deodhar's decomposition of the 
flag variety  $G/B$ \cite{Deodhar}.  By projecting it, one may 
obtain a decomposition of any partial flag variety $G/P$ 
(and in particular the Grassmannian), 
obtaining the decomposition which Deodhar described in
\cite{Deodhar2}.
We also review the parameterizations of the components due to Marsh 
and Rietsch \cite{MR}.  
%We then show that when one projects the usual Deodhar decomposition of the flag variety to the Grassmannian, one gets the Deodhar-type decomposition of the Grassmannian which we defined  in Section \ref{sec:Deodhar-type}.

\subsection{The flag variety}

The following definitions can be made for any 
split, connected, simply connected, semisimple algebraic group over a field $\K$.
However this paper will be concerned with $G = \SL_n = \SL_n(\R)$.
%(\K)$ where 
%$\K$ is $\C$ or $\R$.
%$\R$, with split torus $T$.  Identify $G$ (and related spaces)
%with their real points and consider them with their real topology.
%Let $\Phi \subset \Hom(T,\R^*)$ the set of roots and choose
%a system of positive roots $\Phi^+$.  Denote by $B^+$ the
%Borel subgroup corresponding to $\Phi^+$.
% and by $U^+$ its unipotent
%radical.
%Let $B^-$ be the opposite Borel subgroup $B^-$ such that
%$B^+ \cap B^- = T$.
%, and let $U^-$ be its unipotent radical.
%Let $U^+$ and $U^-$ be the unipotent radicals of $B^+$ and $B^-$.

We fix a maximal torus $T$, and opposite Borel subgroups $B^+$ and $B^-$, which
consist of the diagonal, upper-triangular, and lower-triangular
matrices, respectively.  
We let $U^+$ and $U^-$ be the unipotent radicals of $B^+$ and $B^-$; these
are the subgroups of upper-triangular and lower-triangular matrices with $1$'s on the diagonals.
For each $1 \leq i \leq n-1$ we have a homomorphism 
$\phi_i:{\rm SL}_2\to {\rm SL}_n$ such that
\[
\phi_i\begin{pmatrix} a& b\\c&d\end{pmatrix}=
\begin{pmatrix}
1 &             &       &       &           &     \\
   &\ddots  &        &      &            &        \\
   &             &   a   &   b  &          &       \\
   &             &   c    &   d  &         &       \\
   &            &          &       &  \ddots  &     \\
   &            &          &       &              & 1 
   \end{pmatrix} ~\in {\rm SL}_n,
\]
that is, $\phi_i$ replaces a $2\times 2$ block of the identity matrix with $\begin{pmatrix} a&b\\c&d\end{pmatrix}$.
Here $a$ is at the $(i+1)$st diagonal entry counting from the southeast corner.\footnote{Our
numbering differs from that in \cite{MR} in that
the rows of our matrices in $\SL_n$ are numbered from the bottom.}
We use this to construct $1$-parameter subgroups in $G$ 
(landing in $U^+$ and $U^-$,
%and $T$, 
respectively) defined by
\begin{equation*}
x_i(m) = \phi_i \left(
                   \begin{array}{cc}
                     1 & m \\ 0 & 1\\
                   \end{array} \right)  \text{ and }\ %\qquad
y_i(m) = \phi_i \left(
                   \begin{array}{cc}
                     1 & 0 \\ m & 1\\
                   \end{array} \right) ,\
%\alpha_i^{\vee}(t) = \phi_i \left(
%                   \begin{array}{cc}
%                     t & 0 \\ 0 & t^{-1}\\
%                   \end{array} \right) ,
%\end{equation*}
\text{ where }m \in \R.
\end{equation*}
%Denote the set of simple roots by $\Pi = \{\alpha_i \ \vline \ i \in
%I \} \subset \Phi^+$. For each $\alpha_i \in \Pi$ there is an
%associated homomorphism $\phi_i : \Sl_2 \to G$, generated by
%$1$-parameter subgroups $x_i(t) \in U^+$, $y_i(t) \in U^-$, and
%$\alpha_i^\vee(t) \in T$.  
The datum $(T, B^+, B^-, x_i, y_i; i \in
I)$ for $G$ is called a {\it pinning}.  
%Let $W = N_G(T)/T$ be the
%Weyl group and for $w \in W$ let $\dot{w} \in N_G(T)$ denote a
%representative for $w$.

Let $W$ denote the Weyl group $ N_G(T) / T$, 
where $N_G(T)$ is the normalizer of $T$.  
% acts on $X(T)$ permuting the roots
%$\Phi$.  
The simple reflections $s_i \in W$ are given explicitly by
$s_i:= \dot{s_i} T$ where $\dot{s_i} :=
                 \phi_i \left(
                   \begin{array}{cc}
                     0 & -1 \\ 1 & 0\\
                   \end{array} \right)$
and any $w \in W$ can be expressed as a product $w = s_{i_1} s_{i_2}
\dots s_{i_m}$ with $m=\ell(w)$ factors.  We set $\dot{w} =
\dot{s}_{i_1} \dot{s}_{i_2} \dots \dot{s}_{i_m}$.
For $G = \SL_n$, we have $W = \Sym_n$, the symmetric group on $n$ letters,
and $s_i$ is the transposition exchanging $i$ and $i+1$.

%In this paper we will be concerned with $G = \SL_n$.
%One standard pinning for
%$\Sl_n$ consists of the diagonal, upper-triangular, and lower-triangular
%matrices,  together with the 1-parameter subgroups defined using

%along with the simple root subgroups
%$x_i (m) = I_d + mE_{i,i+1}$ and
%$y_i (m) = I_d + mE_{i+1,i}$ where
%$I_n$ is the identity matrix and $E_{i,j}$ has a $1$ in
%position $(i,j)$ and zeroes elsewhere.

We can identify the flag variety $G/B$ with the variety
$\B$ of Borel subgroups, via
\begin{equation*}
gB \longleftrightarrow g \cdot B^+ := gB^+ g^{-1}.
\end{equation*}
We have two opposite Bruhat
decompositions of $\B$:
\begin{equation*}
\mathcal B=\bigsqcup_{w\in W} B^+\dot w\cdot B^+=\bigsqcup_{v\in W}
B^-\dot v\cdot B^+.
\end{equation*}
Note that $B^-\dot v\cdot B^+\cong\mathbb R^{\ell(w_0)-\ell(v)}$. The
closure relations for these opposite Bruhat cells are
given by $B^-\dot v'\cdot B^+\subset 
\overline{B^-\dot v\cdot B^+}$ if and only if $v\le v'$.
We define
\begin{equation*}
\mathcal R_{v,w}:=B^+\dot w\cdot B^+\cap B^-\dot v\cdot B^+,
\end{equation*}
the intersection of opposite Bruhat cells. This intersection is empty
unless $v\le w$, in which case it is smooth of dimension
$\ell(w)-\ell(v)$, see \cite{KaLus:Hecke2,Lusztig2}.
The strata $\mathcal R_{v,w}$ are often called \emph{Richardson varieties}.

\subsection{Distinguished expressions}

We now provide background on distinguished and positive
distinguished subexpressions, as in
\cite{Deodhar} and \cite{MR}. We will assume that the reader is familiar
with the (strong) Bruhat order $<$ on the Weyl group $W=\Sym_n$, and the 
basics of reduced expressions, as in  \cite{BB}.

Let $\w:= s_{i_1}\dots s_{i_m}$ be a reduced expression for $w\in W$.
We define a {\it subexpression} $\v$ of $\w$
to be a word obtained from the reduced expression $\w$ by replacing some of
the factors with $1$. For example, consider a reduced expression in $\Sym_4$, say $s_3
s_2 s_1 s_3 s_2 s_3$.  Then $s_3 s_2\, 1\, s_3 s_2\, 1$ is a
subexpression of $s_3 s_2 s_1 s_3 s_2 s_3$.
Given a subexpression $\v$, 
we set $v_{(k)}$ to be the product of the leftmost $k$ 
factors of $\v$, if $k \geq 1$, and $v_{(0)}=1$.
The following definition was given in \cite{MR}
and was implicit in \cite{Deodhar}.

\begin{definition}\label{d:Js}
Given a subexpression $\v$ of a reduced expression $\w=
s_{i_1} s_{i_2} \dots s_{i_m}$, we define
\begin{align*}
J^{\circ}_\v &:=\{k\in\{1,\dotsc,m\}\ |\  v_{(k-1)}<v_{(k)}\},\\
J^{\Box}_\v\, &:=\{k\in\{1,\dotsc,m\}\ |\  v_{(k-1)}=v_{(k)}\},\\
J^{\bullet}_\v &:=\{k\in\{1,\dotsc,m\}\ |\  v_{(k-1)}>v_{(k)}\}.
\end{align*}
The expression  $\v$
is called {\it
non-decreasing} if $v_{(j-1)}\le v_{(j)}$ for all $j=1,\dotsc, m$,
e.g.\ $J^{\bullet}_\v=\emptyset$.
% and it is called {\it reduced} if $v_{(j-1)}<
%v_{(j)}$ for all $j=1,\dotsc, n$.
\end{definition}

The following definition is from {\cite[Definition
2.3]{Deodhar}}:  
\begin{definition}[Distinguished subexpressions]
A subexpression $\v$ of $\w$ is called {\it distinguished}
if we have
\begin{equation}\label{e:dist}
v_{(j)}\le v_{(j-1)}\ s_{i_j}\qquad \text{for all
$~j\in\{1,\dotsc,m\}$}.
\end{equation}
In other words, if right multiplication by $s_{i_j}$ decreases the
length of $v_{(j-1)}$, then in a distinguished subexpression we
must have
$v_{(j)}=v_{(j-1)}s_{i_j}$.

We write $\v\prec\w$ if $\v$ is a distinguished subexpression of
$\w$.
\end{definition}

\begin{definition}[Positive distinguished subexpressions]
We call a
subexpression $\v$ of $\w$ a {\it positive distinguished subexpression} 
(or a PDS for short) if
 \begin{equation}\label{e:PositiveSubexpression}
v_{(j-1)}< v_{(j-1)}s_{i_j} \qquad \text{for all
$~j\in\{1,\dotsc,m\}$}.
 \end{equation}
In other words, it is distinguished and non-decreasing.
\end{definition}
%Note that \eqref{e:PositiveSubexpression} is equivalent to
%$v_{(j-1)}\le v_{(j)}\le v_{(j-1)}s_{i_j}$.  

\begin{lemma}\label{l:positive}\cite{MR}
Given $v\le w$ 
%in $W$ 
and a reduced expression $\w$ for $w$,
there is a unique PDS $\v_+$ for $v$ in $\w$.
\end{lemma}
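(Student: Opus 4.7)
The plan is to prove existence and uniqueness simultaneously by induction on $m = \ell(w)$, processing the reduced word $\w = s_{i_1}\cdots s_{i_m}$ from the right. The base case $m=0$ forces $v=w=e$, and the empty subexpression is trivially the unique PDS.

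For the inductive step, write $\w = \w' s_{i_m}$, where $\w' := s_{i_1}\cdots s_{i_{m-1}}$ is a reduced expression for $w' := w s_{i_m}$. Any subexpression $\v$ of $\w$ with $v_{(m)} = v$ must have either $v_{(m-1)} = v$ (the last factor is skipped) or $v_{(m-1)} = v s_{i_m}$ (the last factor is taken); in both situations the PDS inequality at step $m$ reduces to $v_{(m-1)} < v_{(m-1)} s_{i_m}$. This single inequality forces the choice at step $m$ uniquely: if $vs_{i_m} < v$, the skip option would require $v < vs_{i_m}$, which is false, so we must take the factor and set $v_{(m-1)} = vs_{i_m}$; symmetrically, if $vs_{i_m} > v$, the take option would require $vs_{i_m} < v$, so we must skip and set $v_{(m-1)} = v$.

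In either case the lifting property of the Bruhat order, applied using $ws_{i_m} < w$ (which holds because $\w$ is reduced), guarantees that the forced target lies below $w'$: in the first case $vs_{i_m} \le ws_{i_m} = w'$, and in the second case $v \le ws_{i_m} = w'$. The inductive hypothesis applied to $\w'$ and the forced target then supplies a unique PDS for the first $m-1$ steps, and appending the forced choice at step $m$ produces the unique PDS for $v$ in $\w$. The verification that the appended step satisfies $v_{(m-1)} < v_{(m-1)} s_{i_m}$ is built into the dichotomy above.

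The only real obstacle is checking that the lifting property yields the correct Bruhat inequality in each of the two cases; once this is in place, uniqueness of the choice at step $m$ is immediate from the PDS definition, and the remainder of the argument is dictated by induction.
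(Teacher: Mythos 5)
Your proof is correct: the right-to-left greedy argument (the choice at each step is forced by the condition $v_{(j-1)}<v_{(j-1)}s_{i_j}$, and the lifting property of Bruhat order guarantees the forced target stays below $ws_{i_m}$ so the induction goes through) is exactly the argument of Marsh and Rietsch, which the paper cites for this lemma without reproducing a proof. No gaps.
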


\subsection{Deodhar components in the flag variety}

We now describe the Deodhar decomposition of the  flag variety.  This is a further
refinement of the decomposition of $G/B$ into Richardson varieties $\mathcal R_{v,w}$.
Marsh and Rietsch \cite{MR} gave explicit parameterizations for each Deodhar
component, identifying each one with a subset in the group.

%??? We may want to give the original definition of each Deodhar component,
%as in equation 4.5 of \cite{MR}. ???

\begin{definition}\cite[Definition 5.1]{MR}\label{d:factorization}
Let $\w=s_{i_1} \dots s_{i_m}$ be a reduced expression for $w$,
and let $\v$ be a distinguished subexpression.
Define a subset $G_{\v,\w}$ in $G$ by
\begin{equation}\label{e:Gvw}
G_{\v,\w}:=\left\{g= g_1 g_2\cdots g_m \left
|\begin{array}{ll}
 g_\ell= x_{i_\ell}(m_\ell)\dot s_{i_\ell}\inv& \text{ if $\ell\in J^{\bullet}_\v$,}\\
 g_\ell= y_{i_\ell}(p_\ell)& \text{ if $\ell\in J^{\Box}_\v$,}\\
 g_\ell=\dot s_{i_\ell}& \text{ if $\ell\in J^{\circ}_\v$,}
 \end{array}\quad \text{
for $p_\ell\in\R^*,\, m_\ell\in\R$. }\right. \right\}.
\end{equation}
There is an obvious map $(\R^*)^{|J^{\Box}_\v|}\times \R^{|J^{\bullet}_\v|}\to
G_{\v,\w}$ defined by the parameters $p_\ell$ and $m_\ell$ in
\eqref{e:Gvw}. 
%For $\v =\w=(1)$ 
For $v =w=1$ 
we define $G_{\v,\w}=\{1\}$.
\end{definition}

% If we use the example below, we need to change the conventions
\begin{example}\label{ex:g}
Let $W=\Sym_5$, $\w = s_2 s_3 s_4 s_1 s_2 s_3$ and 
$\v = s_2 1 1 1 s_2 1$. 
%Note that we get exactly the same thing that he does, using our conventions,
%and indexing rows and columns of a matrix in the standard way.}
Then the corresponding element $g\in G_{\v,\w}$ is given by $g=s_2 y_3(p_2)y_4(p_3)y_1(p_4)x_2(m_5)s_2^{-1}y_3(p_6)$,
which is 
\[
g=\begin{pmatrix}
1 & 0 & 0 & 0 & 0 \\
p_3 & 1 & 0 & 0 & 0 \\
0 & p_6 & 1 & 0 & 0 \\
p_2 p_3 & p_2-m_5 p_6 & -m_5 & 1 & 0 \\
0 & -p_4 p_6 & -p_4 & 0 & 1
\end{pmatrix}.
\]
%Then $g=s_2 y_3(p_1)y_4(p_2)y_1(p_3)x_2(m_1)s_2^{-1}y_3(p_4)$ equals
%\[
%g=\begin{pmatrix}
%1 & 0 & 0 & 0 & 0 \\
%p_2 & 1 & 0 & 0 & 0 \\
%0 & p_4 & 1 & 0 & 0 \\
%p_1 p_2 & p_1-m_1 p_4 & -m_1 & 1 & 0 \\
%0 & -p_3 p_4 & -p_3 & 0 & 1
%\end{pmatrix}.
%\]
\end{example}

The following result from \cite{MR} gives an explicit parametrization for
the Deodhar component $\mathcal R_{\v,\w}$.
We will take the description below as the \emph{definition}
of $\mathcal R_{\v,\w}$.

\begin{proposition}\label{p:parameterization}\cite[Proposition 5.2]{MR}
The map $(\R^*)^{|J^{\Box}_\v|}\times \R^{|J^{\bullet}_\v|}\to G_{\v,\w}$ from
Definition~\ref{d:factorization} is an isomorphism. The set
$G_{\v,\w}$ lies in $U^-\dot v\cap B^+\dot w B^+$, and the
assignment $g\mapsto g\cdot B^+$ defines an isomorphism
\begin{align}\label{e:parameterization}
G_{\v,\w}&~\overset\sim\To ~\mathcal R_{\v,\w}
\end{align}
between the subset $G_{\v,\w}$ of the group,
and the Deodhar component $\mathcal R_{\v,\w}$
 in $G/B$.
\end{proposition}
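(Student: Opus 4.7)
The plan is to establish all three assertions (bijectivity of the parametrization, the double containment $G_{\v,\w}\subset U^-\dot v\cap B^+\dot wB^+$, and the identification with $\mathcal R_{\v,\w}$) simultaneously by induction on $m=\ell(\w)$. The base case $m=0$ gives $v=w=1$ and $G_{\v,\w}=\{1\}$, and every claim is immediate. For the inductive step, write $\w=\w's_{i_m}$ and let $\v'$ be the distinguished subexpression of $\w'$ obtained by dropping the last factor, with $v':=v_{(m-1)}$. Each $g\in G_{\v,\w}$ factors as $g=g'g_m$ with $g'\in G_{\v',\w'}$ and $g_m$ one of $\dot s_{i_m}$, $x_{i_m}(m_m)\dot s_{i_m}^{-1}$, or $y_{i_m}(p_m)$, depending on whether $m\in J^\circ_\v$, $J^\bullet_\v$, or $J^\Box_\v$; by the inductive hypothesis I may write $g'=u\dot{v'}$ for some $u\in U^-$.

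The heart of the argument is verifying $g\in U^-\dot v$ case by case. If $m\in J^\circ_\v$, then $v=v's_{i_m}$ with $\ell(v)>\ell(v')$, so $\dot{v'}\dot s_{i_m}=\dot v$ and $g=u\dot v\in U^-\dot v$. If $m\in J^\bullet_\v$, then $v=v's_{i_m}$ with $\ell(v)<\ell(v')$, hence $\dot{v'}=\dot v\dot s_{i_m}$, and the $\mathrm{SL}_2$-conjugation identity $\dot s_{i_m}x_{i_m}(m_m)\dot s_{i_m}^{-1}=y_{i_m}(-m_m)$ rewrites $g$ as $u\dot v\cdot y_{i_m}(-m_m)$; since $vs_{i_m}=v'>v$, the conjugate $\dot v\,y_{i_m}(-m_m)\,\dot v^{-1}$ lies in $U^-$, giving $g\in U^-\dot v$. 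If $m\in J^\Box_\v$, then $v=v'$, and the distinguished inequality $v_{(m)}\le v_{(m-1)}s_{i_m}$ forces $v'<v's_{i_m}$, so $\dot{v'}\,y_{i_m}(p_m)\,\dot{v'}^{-1}\in U^-$, yielding $g\in U^-\dot v$. The containment $gB^+\in B^+\dot wB^+$ is immediate from $\ell(w)=\ell(w')+1$ together with the observation that in every case $g_m\cdot B^+\in B^+\dot s_{i_m}B^+$: this is trivial for $g_m=\dot s_{i_m}$, holds for $g_m=x_{i_m}(m_m)\dot s_{i_m}^{-1}$ because $\dot s_{i_m}^{-1}\in B^+\dot s_{i_m}B^+$, and for $g_m=y_{i_m}(p_m)$ with $p_m\ne 0$ it is the standard $\mathrm{SL}_2$ Bruhat identity.

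With both containments in hand, the map $g\mapsto gB^+$ from $G_{\v,\w}$ to $G/B$ is injective, since $U^-\dot v$ represents the Schubert cell $B^-\dot vB^+/B^+$ uniquely; its image equals $G_{\v,\w}\cdot B^+=\mathcal R_{\v,\w}$ by the definition adopted in this paper, establishing the isomorphism in (\ref{e:parameterization}). For the bijectivity of the parametrization, note that the composition $(\R^*)^{|J^\Box_\v|}\times\R^{|J^\bullet_\v|}\to G_{\v,\w}\to \mathcal R_{\v,\w}$ is surjective by construction, and injectivity can be obtained inductively by observing that the new factor $g_m$ contributes at most one free parameter, which can be recovered from $(g')^{-1}g$ once $g'$ is known; so injectivity of the parametrization of $G_{\v',\w'}$ lifts to that of $G_{\v,\w}$.

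The main obstacle is the case $m\in J^\Box_\v$: here $v$ does not change, yet the new factor $y_{i_m}(p_m)$ lies in $U^-$ rather than $B^+$, so the only way $g$ can remain in $U^-\dot v$ is if $\dot{v'}\,y_{i_m}(p_m)\,\dot{v'}^{-1}$ already lies in $U^-$. This is precisely the geometric content of the distinguished condition, via $v'<v's_{i_m}$, and it is the sole step where distinguishedness plays a non-trivial role; keeping careful track of the sign conventions in the Tits section $w\mapsto\dot w$ is the only other subtlety in the computation.
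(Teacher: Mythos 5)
Your case analysis establishing $G_{\v,\w}\subset U^-\dot v\cap B^+\dot w B^+$ is correct, and it is essentially the Marsh--Rietsch argument (the paper itself offers no proof here: it quotes \cite[Proposition 5.2]{MR} and takes the isomorphism as the \emph{definition} of $\mathcal R_{\v,\w}$). In particular you correctly isolate the $J^{\Box}_{\v}$ case as the one place where distinguishedness is needed, via $v'<v's_{i_m}$ and hence $\dot v'y_{i_m}(p_m)\dot v'^{-1}\in U^-$.

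The injectivity half, however, has a genuine gap. You assert that $g\mapsto gB^+$ is injective on $G_{\v,\w}$ ``since $U^-\dot v$ represents the Schubert cell $B^-\dot vB^+/B^+$ uniquely.'' That is false whenever $\ell(v)>0$: the set of unique representatives of $B^-\dot vB^+/B^+$ is $(U^-\cap \dot vU^-\dot v^{-1})\dot v$, which has dimension $\ell(w_0)-\ell(v)$, whereas $U^-\dot v$ has dimension $\ell(w_0)$; the map $U^-\dot v\to G/B$ has fibers of dimension $\ell(v)$. Already in $\SL_2$ with $v=s_1$, every element of $U^-\dot s_1$ maps to the single point $B^-\dot s_1 B^+/B^+$. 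So the containment $G_{\v,\w}\subset U^-\dot v$ by itself gives no injectivity. Your argument for injectivity of the parametrization is likewise circular: recovering $g_m$ from $(g')^{-1}g$ presupposes that $g$ determines $g'$, which is exactly what needs proving. The missing idea is to use the \emph{Schubert-cell} side of your containment rather than the opposite cell: since $\w$ is reduced, a point $gB^+\in B^+\dot wB^+/B^+$ determines the entire gallery $g_1\cdots g_jB^+\in B^+\dot s_{i_1}\cdots\dot s_{i_j}B^+/B^+$, because the fiber of $G/B\to G/P_{i_j}$ through $g_1\cdots g_jB^+$ is a $\mathbb{P}^1$ meeting the smaller Bruhat cell $B^+\dot s_{i_1}\cdots \dot s_{i_{j-1}}B^+/B^+$ in exactly one point, namely $g_1\cdots g_{j-1}B^+$; and then each parameter $p_j$ or $m_j$ is read off uniquely from the pair of consecutive cosets, since $y_{i}(p)B^+$, respectively $x_{i}(m)\dot s_i^{-1}B^+$, are distinct points of the fiber for distinct parameter values. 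This simultaneously yields injectivity of the parametrization and of $G_{\v,\w}\to G/B$. (Alternatively, one could try to strengthen your induction to show $G_{\v,\w}\subset(U^-\cap\dot vU^-\dot v^{-1})\dot v$, but that requires an additional argument you have not supplied.)
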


Suppose that for each $w\in W$ we choose a reduced expression 
$\w$ for $w$.  Then it follows from Deodhar's work (see \cite{Deodhar} and
\cite[Section 4.4]{MR}) that 
\begin{equation}\label{e:DeoDecomp}
\mathcal R_{v,w} = \bigsqcup_{\v \prec \w} \mathcal R_{\v,\w}\qquad  \text{ and }
\qquad  G/B=\bigsqcup_{w\in W}\left(\bigsqcup_{\v\prec \w} \mathcal
 R_{\v,\w}\right).
\end{equation}
These are called the \emph{Deodhar decompositions} of $\mathcal R_{v,w}$ and  $G/B$.

\begin{remark}\label{rem:KLpolys}
One may define the Richardson variety $\mathcal R_{v,w}$ over a finite field 
$\F_q$.  In this setting the number of points
determine the $R$-polynomials $R_{v,w}(q) = \#(\mathcal R_{v,w}(\F_q))$
introduced by Kazhdan and Lusztig \cite{KazLus} to give a formula
for the Kazhdan-Lusztig polynomials.  This was the original motivation
for Deodhar's work.
Therefore 
the isomorphisms
$\mathcal R_{\v,\w} \cong (\F_q^*)^{|J^{\Box}_\v|} 
                       \times \F_q^{|J^{\bullet}_\v|}$
together with the decomposition \eqref{e:DeoDecomp} 
give formulas for the $R$-polynomials.
\end{remark}

\begin{remark}\label{rem:dependence}
Note that the Deodhar decomposition of $\mathcal R_{v,w}$ depends on the choice of 
reduced expression for $w$.  
However, we will show in Proposition \ref{prop:independence} that 
its projection to the Grassmannian does not depend on the choice of reduced expression.
\end{remark}

\begin{remark} The Deodhar decomposition of the complete flag variety is not
a stratification -- e.g. the closure of a component is not a union of 
components \cite{Dudas}.
\end{remark}

This decomposition has a beautiful restriction to the 
totally non-negative part $(G/B)_{\geq 0}$ of $G/B$.
See \cite[Section 11]{MR} and also \cite{Rietsch} for more
definitions and details.
\begin{remark}\label{rem:TPcell}
Suppose we choose a reduced expression $\w$ for $w$, 
and for each $v \leq w$ we let $\v_+$ denote the unique 
positive distinguished subexpression for $v$ in $\w$.  
Note that $\v_+$ is non-decreasing so $J^{\bullet}_{\v_+} =\emptyset$.
Define $G^{>0}_{\v_+,\w}$ to be the subset of 
$G_{\v_+,\w}$ obtained by letting the parameters 
$p_{\ell}$ range over the positive reals.  
Let $\mathcal R^{>0}_{v,w}$ denote the image of 
$G^{>0}_{\v_+,\w}$  under the isomorphism 
$G_{\v_+,\w}\overset\sim\To \mathcal R_{\v_+,\w}$.
Then $\mathcal R^{>0}_{v,w}$ depends only on $v$ and $w$,
not on $\v_+$ and $\w$.  Moreover, 
the totally non-negative part $(G/B)_{\geq 0}$ of $G/B$
has a cell decomposition  
\begin{equation}
 (G/B)_{\geq 0}=\bigsqcup_{w\in W}\left(\bigsqcup_{v \leq w} \mathcal
 R^{>0}_{v,w}\right).
\end{equation}
\end{remark}

\subsection{Deodhar components in the Grassmannian}\label{sec:projections}

As we will explain in this section,
one obtains the Deodhar decomposition of the Grassmannian by 
projecting the Deodhar decomposition of the flag variety to the 
Grassmannian \cite{Deodhar2}.

The Richardson stratification of $G/B$ has an analogue for partial flag varieties $G/P_J$
introduced by Lusztig \cite{Lusztig2}.  Let $W_J$ be the parabolic subgroup of $W$ 
corresponding to $P_J$, and let $W^J$ be the set of minimal-length coset representatives
of $W/W_J$.  Then for each $w\in W^J$, the projection 
$\pi: G/B \to G/P_J$ is an isomorphism on each Richardson variety $\mathcal R_{v,w}$. 
%Defining $\mathcal P_{v,w}:= \pi(\mathcal R_{v,w})$, 
Lusztig showed that we have a decomposition of the 
partial flag variety 
\begin{equation}\label{Richardson}
G/P_J = \bigsqcup_{w \in W^J} \left(\bigsqcup_{v \leq w} \pi({\mathcal R}_{v,w}) \right).
\end{equation}

Now consider the case that our partial flag variety is the Grassmannian
$Gr_{k,n}$ for $k<n$.
The corresponding parabolic subgroup of $W = \Sym_n$ is 
$W_k = \langle s_1,s_2,\dots,\hat{s}_{n-k},\dots,s_{n-1} \rangle$.
Let $W^k$ denote the set of minimal-length
coset representatives of $W/W_k$. 
Recall that a \emph{descent} of a permutation $\pi$
is a position $j$ such that $\pi(j)>\pi(j+1)$.
Then $W^k$ is the subset of permutations of $\Sym_n$
which have at most one descent; and that descent must be in position $n-k$.
%It is well know that the Schubert cells of the Grassmannian $Gr_{k,n}$
%are indexed by $W^k$, and the partial order on Schubert cells by containment
%of closures is given by the Bruhat order on $W^k$.

Let $\pi_k: G/B \to Gr_{k,n}$ be the projection from the flag variety to the Grassmannian.
For each $w \in W^k$ and $v \leq w$, define 
$\mathcal P_{v,w} = \pi_k(\mathcal R_{v,w})$.  Then by \eqref{Richardson}
we have a decomposition 
\begin{equation}\label{projected-Richardson}
Gr_{k,n} = \bigsqcup_{w \in W^k} \left(\bigsqcup_{v \leq w} \mathcal P_{v,w} \right).
\end{equation}

\begin{remark}\label{rem:coincide}
The decomposition in \eqref{projected-Richardson} coincides with the
positroid stratification from Section \ref{sec:positroid}.  
This was verified in \cite[Theorem 5.9]{KLS}.  The appropriate bijection 
between the strata is defined in Lemma \ref{lem:bijection} below, and was first 
given in \cite[Lemma A.4]{Williams}.
\end{remark}

\begin{lemma}\cite[Lemma A.4]{Williams}\label{lem:bijection}
Let $\Q^k$ denote
the set of pairs $(v,w)$ where $v\in W$, $w\in W^k$,  and $v\leq w$;
let $\Dec_n^k$ denote the set of decorated permutations in $S_n$
with $k$ weak excedances.
We consider both sets as partially ordered sets, where the cover
relation corresponds to containment of closures of the corresponding 
strata.  Then there is an order-preserving bijection 
$\Phi$ from $\Q^k$ to $\Dec_n^k$ which is defined as follows.
Let $(v,w) \in \Q^J$.  Then $\Phi(v,w) = (\pi, col)$
where $\pi = v w^{-1}$.  We also let $\pi^:(v,w)$ denote $\Phi(v,w)$.
To define $col$, we color any fixed point that occurs
in one of the positions $w(1), w(2), \dots, w(n-k)$
with the color $-1$, and color any other fixed point with the color $1$.
\end{lemma}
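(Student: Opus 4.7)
The plan is to construct an explicit inverse $\Psi$ to $\Phi$, verify that both maps are well-defined and mutually inverse, and then check that $\Phi$ preserves the partial order.

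I would define $\Psi \colon \Dec_n^k \to \Q^k$ by sending $(\pi, col)$ to $(v, w)$, where $w \in S_n$ is the unique permutation with $\{w(1) < \dots < w(n-k)\} = [n] \setminus E$ and $\{w(n-k+1) < \dots < w(n)\} = E$ for $E$ the set of weak excedance positions of $\pi^{:}$, and $v = \pi w$. By construction $w$ has its only possible descent at position $n-k$, so $w \in W^k$. To see $\Phi \circ \Psi = \mathrm{id}$, I would verify that for any $(v,w) \in \Q^k$ the weak excedance positions of $\pi = vw^{-1}$ (with the coloring prescribed by the lemma) are exactly $\{w(n-k+1), \dots, w(n)\}$: writing $i = w(a)$ gives $\pi(i) = v(a)$, and splitting on $a \le n-k$ versus $a \ge n-k+1$ together with the fact that each half of $w$ is increasing controls the sign of $v(a) - i$; in the fixed-point case the lemma's coloring rule matches the convention for weak excedances. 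Then $\Psi \circ \Phi = \mathrm{id}$ follows since $w$ is recovered from the set of weak excedance positions and $v$ from $v = \pi w$.

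The main technical obstacle is verifying that $\Psi$ actually lands in $\Q^k$, i.e., that $v = \pi w \le w$ in Bruhat order. My plan is to use the tableau criterion, translated via $v(a) = \pi(w(a))$ to the claim that for every $i, j$ one has $|S \cap \pi^{-1}(\{j, \dots, n\})| \le |S \cap \{j, \dots, n\}|$, where $S = \{w(1), \dots, w(i)\}$. The coloring guarantees $\pi(x) \le x$ for $x \in [n] \setminus E$ and $\pi(x) \ge x$ for $x \in E$. Passing to complements, the inequality becomes $|S \cap L| \ge |S \cap R|$ with $L = \{x \in [n] \setminus E : \pi(x) < j \le x\}$ and $R = \{x \in E : x < j \le \pi(x)\}$. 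Since $\pi$ is a bijection and the excedance constraints force $A := \{x : \pi(x) < j \le x\} \subseteq [n] \setminus E$ and $B := \{x : x < j \le \pi(x)\} \subseteq E$, a simple counting argument gives $|L| = |A| = |B| = |R|$. For $i \le n-k$ we have $S \subseteq [n] \setminus E$, so $S \cap R = \emptyset$ and the inequality is trivial. For $i > n-k$, $S$ contains all of $[n] \setminus E$, hence all of $L$, giving $|S \cap L| = |L| = |R| \ge |S \cap R|$. Thus $v \le w$.

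Finally, for order-preservation, recall that $\Q^k$ is ordered by closure containment of projected Richardson cells in the decomposition \eqref{projected-Richardson}, and $\Dec_n^k$ is ordered by closure containment of positroid strata (Section \ref{sec:positroid}). By Remark \ref{rem:coincide} the two decompositions of $Gr_{k,n}$ coincide, so identifying the indexing sets also identifies the partial orders, provided $\Phi$ implements exactly this identification of strata. To confirm this, one constructs an explicit matrix representative of $\mathcal{P}_{v,w}$ via the Marsh--Rietsch parametrization (Proposition \ref{p:parameterization}) and computes its Grassmann necklace $\I(A)$ directly from the parameters, matching it with the necklace associated to $\pi^{:} = \Phi(v,w)$ via Definition \ref{necklace-to-perm} using the explicit construction of $w$ from $E$. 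This exhibits $\Phi$ as a stratum-preserving bijection, hence automatically a poset isomorphism.
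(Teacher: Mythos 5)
The paper gives no proof of this lemma --- it is quoted from \cite[Lemma A.4]{Williams} --- so there is no internal argument to compare against, and your proposal must stand on its own. The bijection half does: the two nontrivial points are exactly the ones you isolate, and both of your arguments go through. For (i), the tableau criterion gives $v(a)\le\max\{v(1),\dots,v(a)\}\le\max\{w(1),\dots,w(a)\}=w(a)$ for $a\le n-k$ (the last equality because $w$ increases on $[1,n-k]$), and dually $v(a)\ge w(a)$ for $a>n-k$, so the weak excedance positions of $vw^{-1}$ with the prescribed coloring are exactly $w\{n-k+1,\dots,n\}$, consistent with Remark \ref{perm-pivot}. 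For (ii), your reduction of the tableau criterion for $\pi w\le w$ to $|S\cap L|\ge |S\cap R|$, the observation that $|L|=|R|$ because $\pi$ is a bijection, and the case split on $i\le n-k$ versus $i>n-k$ are all correct. (You have the labels $\Phi\circ\Psi$ and $\Psi\circ\Phi$ swapped at one point, but both compositions are in fact handled.)

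The gap is in the order-preservation paragraph. As you note, once one knows $\mathcal P_{v,w}=S_{\pi^:(v,w)}$ as subsets of $Gr_{k,n}$, order preservation is automatic; but that identification is precisely the content of \cite[Theorem 5.9]{KLS} quoted in Remark \ref{rem:coincide}, and that remark points back to this very lemma for the bijection, so citing it here is circular --- which you implicitly concede by proposing to verify the identification yourself. Your one-sentence plan for doing so (compute the Grassmann necklace of a Marsh--Rietsch representative and match it with Definition \ref{necklace-to-perm}) hides essentially all of the difficulty: Lemma \ref{lem:minmax} computes only $I_1$, whereas the remaining necklace entries $I_2,\dots,I_n$ are lexicographic minima for the cyclically shifted orders $<_i$, which do not interact in any simple way with the factorization of Definition \ref{d:factorization}; moreover you need the computation for arbitrary points of every component $\mathcal P_{\v,\w}$ with $\v\prec\w$, not just one representative, before the ``both families partition $Gr_{k,n}$ and the index map is a bijection, hence containment forces equality'' argument can close the loop. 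This is where the real content of the order statement lives, and it is not established by your sketch. The route actually taken in \cite{Williams} avoids the geometry entirely and compares the two partial orders combinatorially (the projected Bruhat order on $\Q^k$ against Postnikov's cyclic Bruhat order on decorated permutations); either finish that combinatorial comparison or carry out the necklace computation in full, but as written the order-preserving claim is not proved.
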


Since $\pi_k$ is an isomorphism from $\mathcal R_{v,w}$ to $\mathcal P_{v,w}$, 
it also makes sense to consider projections of Deodhar components in $G/B$
to the Grassmannian.  
For each reduced decomposition $\w$ for $w \in W^k$, and each $\v \prec \w$,
we define $\mathcal P_{\v,\w} = \pi_k(\mathcal R_{\v,\w})$.
Now if for each $w \in W^k$ we choose a reduced decomposition $\w$, then we have 
\begin{equation}\label{e:ProjDeoDecomp}
\mathcal P_{v,w} = \bigsqcup_{\v \prec \w} \mathcal P_{\v,\w}\qquad \text{ and }\qquad
 Gr_{k,n} =\bigsqcup_{w\in W^k} \left(\bigsqcup_{\v\prec \w} \mathcal
 P_{\v,\w}\right).
\end{equation}

\begin{remark}\label{rem:Deodhar-positroid}
By Remark \ref{rem:coincide} and Lemma \ref{lem:bijection},
each projected Deodhar component $\mathcal P_{\v,\w}$ lies in the 
positroid stratum $S_{\pi^:}$, where $\pi^: = (\pi, col)$,
$\pi = vw^{-1}$, and $col$ is given by Lemma \ref{lem:bijection}.
Moreover, each Deodhar component is a union of matroid strata
\cite{TW}.  Therefore the Deodhar decomposition of the Grassmannian
refines the positroid stratification, and is refined by the
matroid stratification.
\end{remark}

Proposition \ref{p:parameterization} gives us a concrete way to 
think about the projected Deodhar components $\mathcal P_{\v,\w}$.  
The projection $\pi_k: G/B \to Gr_{k,n}$ maps 
each $g \in G_{\v,\w}$ to the span of its leftmost $k$ columns.  
More specifically, it maps 
\[
g=\begin{pmatrix}
g_{n,n} & \dots& g_{n,n-k+1} & \dots & g_{n,1}  \\
\vdots & &\vdots&  & \vdots  \\
g_{1,n} & \dots& g_{1,n-k+1} & \dots & g_{1,1} \\
\end{pmatrix}
\quad \longrightarrow\quad
A=
\begin{pmatrix}
g_{1,n-k+1}&  \dots & g_{n,n-k+1}\\
\vdots &  & \vdots\\
g_{1,n} &  \dots & g_{n,n} \\
\end{pmatrix}
\]
Alternatively, we may identify $A\in Gr_{k,n}$ with its image in the Pl\"ucker 
embedding. Let $e_i$ denote the column vector in $\R^n$ 
such that the $i$th entry from the bottom contains a $1$, and all other 
entries are $0$, e.g. $e_n=(1,0, \ldots,0)^T$, the transpose of the row vector $(1,0,\ldots,0)$.
Then the projection $\pi_k$ maps
each $g \in G_{\v,\w}$ (identified with $g \cdot B^+ \in \mathcal R_{\v,\w}$) to 
\begin{align}\label{Plucker}
g \cdot e_{n-k+1} \wedge \ldots \wedge e_{n} &=
\sum_{1\le 
j_1<\ldots<j_k\le n}\Delta_{j_1,\ldots,j_k}(A)e_{j_1}\wedge \cdots\wedge e_{j_k}.
\end{align}
That is, the Pl\"ucker coordinate $\Delta_{j_1,\ldots,j_k}(A)$ is given by
\[
\Delta_{j_1,\ldots,j_k}(A)=\langle e_{j_1}\wedge\cdots\wedge e_{j_k},\,
g\cdot e_{n-k+1}\wedge\cdots\wedge e_{n}\rangle,
\]
where $\langle\cdot,\cdot\rangle$ is the usual inner product on $\wedge^k\mathbb{R}^n$.

\begin{example}
We continue Example \ref{ex:g}.  Note that 
$w \in W^k$ where $k=2$.  Then the map $\pi_2: G_{\v,\w}\to Gr_{2,5}$ 
is given by
\[
g=\begin{pmatrix}
1 & 0 & 0 & 0 & 0 \\
p_3 & 1 & 0 & 0 & 0 \\
0 & p_6 & 1 & 0 & 0 \\
p_2 p_3 & p_2-m_5 p_6 & -m_5 & 1 & 0 \\
0 & -p_4 p_6 & -p_4 & 0 & 1
\end{pmatrix} \quad\longrightarrow \quad
A = 
\begin{pmatrix}
-p_4p_6&p_2-m_5 p_6 & p_6 & 1 & 0\\
0 & p_2 p_3 & 0 & p_3 & 1 \\
\end{pmatrix}.
\]
\end{example}
%1 & 0 & 0 & 0 & 0 \\
%p_2 & 1 & 0 & 0 & 0 \\
%0 & p_4 & 1 & 0 & 0 \\
%p_1 p_2 & p_1-m_1 p_4 & -m_1 & 1 & 0 \\
%0 & -p_3 p_4 & -p_3 & 0 & 1
%\end{pmatrix} \longrightarrow 
%A = 
%\begin{pmatrix}
%-p_3p_4&p_1-m_1 p_4 & p_4 & 1 & 0\\
%0 & p_1 p_2 & 0 & p_2 & 1 \\
%\end{pmatrix}
%\in Gr_{2,5}.
%\]
%\end{example}

%\begin{remark}
%Let ... be a projected Deodhar component in the Grassmannian.
%Then this Deodhar component lies in the positroid stratum ... 
%\end{remark}

%%%%%%%%%%%%%%%%  Combinatorics of Deodhar components  %%%%%%%%%%%

\section{Combinatorics of projected Deodhar components in the Grassmannian}\label{Deodhar-combinatorics}
In this section we explain how to index the Deodhar components in
the Grassmannian $Gr_{k,n}$ by certain
tableaux.  We will display the tableaux in two equivalent ways -- as 
fillings of Young diagrams by $+$'s and $0$'s, which we call 
{\emph{Deodhar diagrams}}, and by fillings of Young diagrams by 
empty boxes, \raisebox{0.12cm}{\hskip0.14cm\circle*{7}\hskip-0.1cm}'s and \raisebox{0.12cm}{\hskip0.14cm\circle{7}\hskip-0.1cm}'s, which we call {\emph{Go-diagrams}}.  We refer to the symbols
 \raisebox{0.12cm}{\hskip0.14cm\circle*{7}\hskip-0.1cm} and \raisebox{0.12cm}{\hskip0.14cm\circle{7}\hskip-0.1cm}  as \emph{black} and \emph{white stones}.
%\emph{Go-diagrams}.
%They are certain tableau in which each box is filled with a
%$+$, $\bullet$, or $\circ$.
%$\bullet$ $\circ$

Recall that $W_k = \langle s_1,s_2,\dots,\hat{s}_{n-k},\dots,s_{n-1} \rangle$
is a parabolic subgroup of $W =\Sym_n$ and $W^k$ is the set of minimal-length
coset representatives of $W/W_k$. 

An element $w \in W$ is {\it fully commutative} if 
every pair of reduced words for $w$ are related by
a sequence of relations of the form $s_i s_j = s_j s_i$.  
The following result is due to Stembridge \cite{Ste} and Proctor~\cite{Pro}.

\begin{theorem}\label{th:fully}
$W^k$ consists of fully commutative
elements.  Furthermore the Bruhat order on $W^k$ 
is a distributive lattice.
\end{theorem}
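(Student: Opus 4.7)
The plan is to exploit the classical identification of $W^k$ with Grassmannian permutations, and from there with partitions inside a rectangle.

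First I would verify the shape of the minimal-length coset representatives. Since $W_k$ is generated by all simple reflections except $s_{n-k}$, an element $w \in \Sym_n$ lies in $W^k$ if and only if $w s_i > w$ for every $i \neq n-k$, i.e.\ if and only if $w$ has at most one descent and that descent occurs at position $n-k$. Equivalently, $w(1)<w(2)<\cdots<w(n-k)$ and $w(n-k+1)<\cdots<w(n)$. These are the Grassmannian permutations.

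Next I would establish full commutativity. By a classical criterion (Billey--Jockusch--Stanley / Fan--Stembridge), a permutation is fully commutative if and only if it is $321$-avoiding. A Grassmannian permutation cannot contain a $321$-pattern: given any three positions $a<b<c$ with $w(a)>w(b)>w(c)$, at least two of these indices would lie in a common monotone block (either both $\leq n-k$ or both $>n-k$), contradicting the increase within each block. Hence every $w \in W^k$ is $321$-avoiding, and therefore fully commutative, proving the first assertion.

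For the second assertion, I would invoke the standard bijection between $W^k$ and partitions $\lambda\subseteq(n-k)^k$. Given $w\in W^k$, the set $\{w(n-k+1),\dots,w(n)\}$ is a $k$-element subset $I(\lambda)$ of $[n]$, and this sets up a bijection with the partitions contained in the $k\times(n-k)$ rectangle (already used in Section~\ref{sec:Gr}). The key fact to cite is that, under this bijection, the Bruhat order on $W^k$ corresponds to componentwise order $\preceq$ on the associated $k$-subsets, which is the same as containment of partitions $\lambda \subseteq \mu$; this is a standard computation comparing reduced words or using the tableau criterion for Bruhat order on Grassmannian permutations.

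Finally, I would identify the poset of partitions inside $(n-k)^k$ under containment with the lattice $J([k]\times[n-k])$ of order ideals of the product poset $[k]\times[n-k]$: a cell $(i,j)$ of the Young diagram corresponds to an element of $[k]\times[n-k]$, and a partition corresponds precisely to an order ideal. By Birkhoff's representation theorem, $J(P)$ is a distributive lattice for any finite poset $P$, so $W^k$ under Bruhat order is distributive, with meet and join given by intersection and union of partitions. The main obstacle is really bookkeeping in step three: one must confirm carefully that Bruhat order on Grassmannian permutations translates to partition containment, but this is well-documented in the literature (e.g.\ \cite{BB}) and can be cited rather than re-derived.
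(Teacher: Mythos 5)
Your proof is correct and is essentially the standard argument: the paper itself gives no proof of this statement, simply citing Stembridge and Proctor, and your chain of reasoning (Grassmannian permutations are $321$-avoiding, hence fully commutative; Bruhat order on $W^k$ is componentwise order on $k$-subsets, i.e.\ containment of partitions in the $k\times(n-k)$ rectangle, i.e.\ $J([k]\times[n-k])$, which is distributive by Birkhoff) is precisely the content of those references. The only step deserving care is the translation of Bruhat order into partition containment, which, as you note, is the tableau criterion and can legitimately be cited from \cite{BB}.
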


Let $Q^k$ be the poset  such that $W^k =
J(Q^k)$, where $J(P)$ denotes the distributive lattice of upper order
ideals in $P$.  The figure below (at the left) shows an example
of the Young diagram of $Gr_{3,8}$.
(The reader should temporarily ignore the labeling of boxes by 
$s_i$'s.)
% we view its boxes as elements of a 
The Young diagram should be interpreted as follows: each
box represents an element of the poset $Q^k$, and if $b_1$ and $b_2$
are two adjacent boxes such that $b_2$ is immediately to the left or
immediately above $b_1$, we have a cover relation $b_1 \lessdot b_2$
in $Q^k$. The partial order on $Q^k$ is the transitive closure of
$\lessdot$. 
Note that the 
minimal and maximal elements of $Q^k$
are the lower right and upper left boxes, respectively.

We now state some facts about $Q^k$ which can be found in
\cite{Ste}.  Let $w_0^k \in W^k$ denote the longest element in
$W^k$.  The simple generators $s_i$ used in a reduced expression
for $w_0^k$ can be used to label  $Q^k$ in a way which
reflects the bijection between the minimal length coset
representatives $w \in W^k$ and upper order ideals $O_w \subset
Q^k$.  Such a labeling is shown in the figure below.
%the label $i$ stands for the simple reflection $s_i$. 
If $b \in O_w$
is a box labelled by $s_i$, we denote the simple generator labeling
$b$ by $s_b:= s_i$.
%; the corresponding index $i \in I$ is the {\it
%simple label} of $b$.
Given this labeling,
if $O_w$ is an upper order ideal in $Q^k$,
the set of linear extensions
$\{e: O_w \to [1,\ell(w)]\}$ of $O_w$ are in bijection with the
reduced words $R(w)$ of $w$: the reduced word (written down from 
left to right) is obtained by reading the labels of $O_w$ in the order
specified by $e$.  We will call the linear extensions of $O_w$ {\it
reading orders}. 
\setlength{\unitlength}{0.7mm}
\begin{center}
  \begin{picture}(60,30)
  
% Young Diagram
% horizontal lines
  \put(5,32){\line(1,0){45}}
  \put(5,23){\line(1,0){45}}
  \put(5,14){\line(1,0){45}}
  \put(5,5){\line(1,0){45}}
% vertical lines
  \put(5,5){\line(0,1){27}}
  \put(14,5){\line(0,1){27}}
  \put(23,5){\line(0,1){27}}
  \put(32,5){\line(0,1){27}}
  \put(41,5){\line(0,1){27}}
  \put(50,5){\line(0,1){27}}

% first row
  \put(7,27){$s_5$}
  \put(16,27){$s_4$}
  \put(25,27){$s_3$}
  \put(34,27){$s_2$}
  \put(43,27){$s_1$}
% second row
  \put(7,18){$s_6$}
  \put(16,18){$s_5$}
  \put(25,18){$s_4$}
  \put(34,18){$s_3$}
  \put(43,18){$s_2$}
% third row 
  \put(7,9){$s_7$}
  \put(16,9){$s_6$}
  \put(25,9){$s_5$}
  \put(34,9){$s_4$}
  \put(43,9){$s_3$}
 
  \end{picture} 
  \quad
  \begin{picture}(60,40)
  
% Young Diagram
% horizontal lines
  \put(5,32){\line(1,0){45}}
  \put(5,23){\line(1,0){45}}
  \put(5,14){\line(1,0){45}}
  \put(5,5){\line(1,0){45}}
% vertical lines
  \put(5,5){\line(0,1){27}}
  \put(14,5){\line(0,1){27}}
  \put(23,5){\line(0,1){27}}
  \put(32,5){\line(0,1){27}}
  \put(41,5){\line(0,1){27}}
  \put(50,5){\line(0,1){27}}

% first row
  \put(7,26){$15$}
  \put(16,26){$14$}
  \put(25,26){$13$}
  \put(34,26){$12$}
  \put(43,26){$11$}
% second row
  \put(7,17){$10$}
  \put(16,17){$~9$}
  \put(25,17){$~8$}
  \put(34,17){$~7$}
  \put(43,17){$~6$}
% third row 
  \put(7,8){$~5$}
  \put(16,8){$~4$}
  \put(25,8){$~3$}
  \put(34,8){$~2$}
  \put(43,8){$~1$}
  
 \end{picture} 
  \quad
  \begin{picture}(60,40)
  
% Young Diagram
% horizontal lines
  \put(5,32){\line(1,0){45}}
  \put(5,23){\line(1,0){45}}
  \put(5,14){\line(1,0){45}}
  \put(5,5){\line(1,0){45}}
% vertical lines
  \put(5,5){\line(0,1){27}}
  \put(14,5){\line(0,1){27}}
  \put(23,5){\line(0,1){27}}
  \put(32,5){\line(0,1){27}}
  \put(41,5){\line(0,1){27}}
  \put(50,5){\line(0,1){27}}
  
  % first row
  \put(7,26){$15$}
  \put(16,26){$12$}
  \put(25,26){$~9$}
  \put(34,26){$~6$}
  \put(43,26){$~3$}
% second row
  \put(7,17){$14$}
  \put(16,17){$11$}
  \put(25,17){$~8$}
  \put(34,17){$~5$}
  \put(43,17){$~2$}
% third row 
  \put(7,8){$13$}
  \put(16,8){$10$}
  \put(25,8){$~7$}
  \put(34,8){$~4$}
  \put(43,8){$~1$}

  \end{picture} 
\end{center}

\begin{remark}
The upper order ideals of $Q^k$ can be identified with the Young diagrams
contained in a $k \times (n-k)$ rectangle,
and the linear extensions
of $O_w$ can be identified with the \emph{reverse} standard tableaux of shape $O_w$, i.e. entries decrease from left to right in rows and from top to bottom
in columns.
\end{remark}

\subsection{$\oplus$-diagrams and Deodhar diagrams}

The goal of this section is to identify subexpressions of reduced words
for elements of $W^k$ 
with certain fillings of the boxes of upper order ideals of $Q^k$.  In particular
we will be concerned with distinguished subexpressions.  

\begin{definition}\cite[Definition 4.3]{LW}
Let $O_w$ be an upper order ideal of $Q^k$, where $w \in W^k$.
An $\oplus$-diagram (``o-plus diagram") of shape $O_w$ is a filling of the boxes of $O_w$
with the symbols $0$ and $+$.
\end{definition}
Clearly there are $2^{\ell(w)}$ $\oplus$-diagrams of shape $O_w$.
The value of an $\oplus$-diagram $D$ at a box $x$ is denoted $D(x)$.
Let $e$ be a reading order for $O_w$; this gives rise to a reduced
expression $\w = \w_e$ for $w$.  The $\oplus$-diagrams $D$ of shape
$O_w$ are in bijection with subexpressions $\v(D)$ of $\w$: we will
make the convention that if a box $b \in O_w$
is filled with a $0$ then the corresponding simple generator $s_b$
is present in the subexpression, while if $b$ is filled with a $+$
then we omit the corresponding simple generator.  The subexpression
$\v(D)$ in turn defines a Weyl group element $v:= v(D) \in W$,
where $v \leq w$.

\begin{example}\label{ex4-1}
Consider the upper order ideal $O_w$ which is $Q^k$ itself for $\Sym_5$ and $k=2$.
Then $Q^k$ is the poset shown in the left diagram.  Let us choose the reading order (linear extension) indicated by the
labeling shown in the right diagram.
\setlength{\unitlength}{0.7mm}
\begin{center}
    \begin{picture}(50,25)
  
% Young Diagram
% horizontal lines
  \put(5,23){\line(1,0){27}}
  \put(5,14){\line(1,0){27}}
  \put(5,5){\line(1,0){27}}
% vertical lines
  \put(5,5){\line(0,1){18}}
  \put(14,5){\line(0,1){18}}
  \put(23,5){\line(0,1){18}}
  \put(32,5){\line(0,1){18}}

% first row
  \put(7,18){$s_3$}
  \put(16,18){$s_2$}
  \put(25,18){$s_1$}
% second row
  \put(7,8){$s_4$}
  \put(16,8){$s_3$}
  \put(25,8){$s_2$}
 
  \end{picture} 
\qquad
  \begin{picture}(50,25)
  
% Young Diagram
% horizontal lines
  \put(5,23){\line(1,0){27}}
  \put(5,14){\line(1,0){27}}
  \put(5,5){\line(1,0){27}}
% vertical lines
  \put(5,5){\line(0,1){18}}
  \put(14,5){\line(0,1){18}}
  \put(23,5){\line(0,1){18}}
  \put(32,5){\line(0,1){18}}

% first row
  \put(8,17){$6$}
  \put(17,17){$5$}
  \put(26,17){$4$}
% second row
  \put(8,8){$3$}
  \put(17,8){$2$}
  \put(26,8){$1$}
  
  \end{picture} 

\end{center}
Then the $\oplus$-diagrams given by
\setlength{\unitlength}{0.7mm}
\begin{center}
    \begin{picture}(40,25)
  
% Young Diagram
% horizontal lines
  \put(5,23){\line(1,0){27}}
  \put(5,14){\line(1,0){27}}
  \put(5,5){\line(1,0){27}}
% vertical lines
  \put(5,5){\line(0,1){18}}
  \put(14,5){\line(0,1){18}}
  \put(23,5){\line(0,1){18}}
  \put(32,5){\line(0,1){18}}

% first row
  \put(8,17){$0$}
  \put(17,17){$0$}
  \put(26,17){$0$}
% second row
  \put(8,8){$0$}
  \put(17,8){$0$}
  \put(26,8){$0$}
 
  \end{picture} 
\quad
  \begin{picture}(40,25)
  
% Young Diagram
% horizontal lines
  \put(5,23){\line(1,0){27}}
  \put(5,14){\line(1,0){27}}
  \put(5,5){\line(1,0){27}}
% vertical lines
  \put(5,5){\line(0,1){18}}
  \put(14,5){\line(0,1){18}}
  \put(23,5){\line(0,1){18}}
  \put(32,5){\line(0,1){18}}

% first row
  \put(8,17){$0$}
  \put(17,17){$+$}
  \put(26,17){$0$}
% second row
  \put(8,8){$0$}
  \put(17,8){$0$}
  \put(26,8){$+$}
  
  \end{picture} 
  \quad
\begin{picture}(40,25)
  
% Young Diagram
% horizontal lines
  \put(5,23){\line(1,0){27}}
  \put(5,14){\line(1,0){27}}
  \put(5,5){\line(1,0){27}}
% vertical lines
  \put(5,5){\line(0,1){18}}
  \put(14,5){\line(0,1){18}}
  \put(23,5){\line(0,1){18}}
  \put(32,5){\line(0,1){18}}

% first row
  \put(8,17){$0$}
  \put(17,17){$+$}
  \put(26,17){$0$}
% second row
  \put(8,8){$+$}
  \put(17,8){$0$}
  \put(26,8){$+$}
  
  \end{picture} 
    \quad
\begin{picture}(40,25)
  
% Young Diagram
% horizontal lines
  \put(5,23){\line(1,0){27}}
  \put(5,14){\line(1,0){27}}
  \put(5,5){\line(1,0){27}}
% vertical lines
  \put(5,5){\line(0,1){18}}
  \put(14,5){\line(0,1){18}}
  \put(23,5){\line(0,1){18}}
  \put(32,5){\line(0,1){18}}

% first row
  \put(8,17){$+$}
  \put(17,17){$+$}
  \put(26,17){$0$}
% second row
  \put(8,8){$+$}
  \put(17,8){$0$}
  \put(26,8){$+$}
  
  \end{picture} 

\end{center}
correspond to the expressions $s_2 s_3 s_4 s_1 s_2 s_3$, $1 s_3 s_4 s_1 1 s_3$,
$1 s_3 1 s_1 1 s_3$, and 
$1 s_3 1 s_1 1 1$.  The first and second are
PDS's (so in particular are distinguished); the third one is not a PDS but it is distinguished; 
and the fourth is not distinguished.
\end{example}

Parts (1) and (2) of this proposition come from \cite[Lemma 4.5 and Proposition 4.6]{LW}.
\begin{proposition}\label{p:whether}
If $b, b' \in O_w$ are two incomparable boxes,  $s_b$ and
$s_{b'}$ commute.  Furthermore, if 
$D$ is an $\oplus$-diagram, then 
\begin{enumerate}
\item
the element $v:=v(D)$ is independent of the choice of reading
word $e$.
\item
whether $\v(D)$ is a PDS depends only on $D$ (and not $e$).
\item whether $\v(D)$ is distinguished depends only on $D$ (and not on $e$).
\end{enumerate}
\end{proposition}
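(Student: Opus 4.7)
The commutativity of $s_b$ and $s_{b'}$ for incomparable $b, b' \in O_w$ is immediate from the labeling of $Q^k$: the box in row $r$, column $c$ carries label $s_{n-k+r-c}$, so two incomparable boxes (which necessarily differ in both row and column) have labels whose subscripts differ by at least $2$. Since parts (1) and (2) are cited from \cite{LW}, I concentrate on (3).

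My strategy for (3) is to reduce to invariance under a single elementary swap. It is a classical fact that any two linear extensions of a finite poset are connected by a sequence of transpositions of two adjacent incomparable elements. Applied to $O_w$, any two reading orders $e, e'$ are related by such swaps; moreover, each such swap of adjacent incomparable boxes corresponds to swapping two commuting generators at adjacent positions in the associated reduced expression (by the commutativity fact above), and is therefore a valid commutation move. It consequently suffices to show that whether $\v(D)$ is distinguished is preserved under one swap of adjacent incomparable boxes $b, b'$ at positions $j, j+1$ of the reading order.

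Fix such a swap, with commuting labels $s_a, s_b$ (so $|a-b|\ge 2$), and set $v=v_{(j-1)}$. The partial product $v_{(j+1)}$ is unchanged by the swap, since $s_a$ and $s_b$ commute and the swap only permutes the contributions in these two slots while preserving which of them are on or off. All distinguished conditions at positions $\ell \notin \{j,j+1\}$ therefore remain identical, and only positions $j$ and $j+1$ need to be analyzed. The key elementary fact is that for commuting $s_a, s_b$, the generator $s_b$ is an ascent of $vs_a$ if and only if it is an ascent of $v$, because $(vs_a)(b)=v(b)$ and $(vs_a)(b{+}1)=v(b{+}1)$ when $|a-b|\ge 2$.

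I then split into four cases according to the fillings of the two boxes in $D$. If both boxes are filled with $0$, both distinguished inequalities at positions $j, j+1$ collapse to equalities, both before and after the swap. If both are filled with $+$, before and after one needs both $s_a$ and $s_b$ to be ascents of $v$, giving identical conditions. In the mixed case, say position $j$ on and $j+1$ off, the nontrivial pre-swap condition at $j+1$ reads ``$s_b$ is an ascent of $vs_a$,'' while the nontrivial post-swap condition at $j$ reads ``$s_b$ is an ascent of $v$''; these are equivalent by the ascent-invariance fact, and the remaining sub-case (position $j$ off and $j+1$ on) is symmetric. The main obstacle is purely organizational: tracking, in each case, exactly which pre-swap and post-swap inequalities must be matched up, and then verifying each pairing via the ascent-invariance principle.
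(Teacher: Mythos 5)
Your proof is correct and follows essentially the same route as the paper's: reduce to a single transposition of adjacent incomparable boxes in the reading order, then check the four cases according to which of the two letters is present, using that right multiplication by a commuting generator does not change whether $s_b$ is a right ascent. The paper leaves the four-case check as ``clear from the definition,'' which you carry out explicitly; the one small imprecision is in your parenthetical for the commutation claim---incomparable boxes are in fact strictly northeast/southwest of one another (not merely in different rows and columns, since diagonally adjacent boxes are comparable and carry the \emph{same} label), and it is this that forces the subscripts to differ by at least $2$.
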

\begin{proof}
The commutation of $s_b$ and $s_{b'}$ follows by inspection.
For part (1), note that two linear extensions of the same poset
(viewed as permutations of the elements of
the poset) can
be connected via transpositions of pairs of incomparable elements.
Therefore $v(D)$ is independent
of the choice of reading word.

Suppose $D$ is an $\oplus$-diagram of shape $O_w$, and consider the
reduced expression $\w:=\w_e = s_{i_1} \dots s_{i_n}$ corresponding
to a linear extension $e$. Suppose $\v(D)$ is a PDS 
of $\w$.
For part (2), it suffices to show that if we swap the $k$-th and
\mbox{$(k+1)$-st} letters of both $\w$ and $\v(D)$, where these
positions correspond to incomparable boxes in $O_w$, then the
resulting subexpression $\v'$ will be a PDS 
of the resulting reduced
expression $\w'$.  If we examine the four cases (based on whether
the $k$-th and $(k+1)$-st letters of $\v(D)$ are $1$ or $s_{i_k}$)
it is clear from the definition  that $\v'$ is a PDS.  The same argument
holds if $\v(D)$ is distinguished.
\end{proof}

This leads to the following definitions. Note that by Theorem \ref{characterize-Le},
Definitions \ref{def:Le2} and \ref{def:Le} agree.
\begin{definition}\cite[Definition 4.7]{LW}\label{def:Le2}
A \emph{$\Le$-diagram} of shape $O_w$ is an $\oplus$-diagram $D$ of shape
$O_w$ such that $\v(D)$ is a PDS.
\end{definition}

\begin{definition}
A \emph{Deodhar diagram} of shape $O_w$ is an $\oplus$-diagram $D$ of shape
$O_w$ such that $\v(D)$ is distinguished.
\end{definition}

\begin{theorem}\cite[Theorem 5.1]{LW} and \cite[Lemma 19.3]{Postnikov}\label{characterize-Le}
An $\oplus$-diagram is a $\Le$-diagram if and only if there is no $0$
which has a $+$ above it (in the same row) and a $+$ to its left (in the same column).
\end{theorem}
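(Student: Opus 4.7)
By Proposition~\ref{p:whether}(2), the PDS property of $\v(D)$ does not depend on the reading order, so I fix the linear extension that reads $O_w$ row by row from bottom to top, and within each row from right to left; the first box processed is then a SE corner (minimal element) of $O_w$. The proof proceeds by induction on $|O_w|$: let $b^* = (r^*,c^*)$ be the first box in reading order, and let $D'$ be the restriction of $D$ to $O_{w'} := O_w \setminus \{b^*\}$, where $w' = s_{b^*} w$.

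\emph{Case 1} ($b^*$ is $+$). Here $\v(D)$ just prepends a trivial skip to $\v(D')$, and the initial PDS check $e < s_{b^*}$ holds automatically, so $\v(D)$ is PDS iff $\v(D')$ is. Moreover, because $b^*$ is a SE corner, no box of $O_w$ lies to its right in row $r^*$ or below in column $c^*$; hence $b^*$ cannot play the role of ``$+$ above'' or ``$+$ to the left'' in any $\Le$-violation, and $D$ has the $\Le$-property iff $D'$ does. The induction hypothesis finishes this case.

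\emph{Case 2} ($b^*$ is $0$). Now $\v(D) = s_{b^*} \cdot \v(D')$ as words, so the partial products satisfy $v_{(k)} = s_{b^*} v'_{(k-1)}$ for $k \geq 1$. After the automatic first step, the PDS conditions at steps $k \geq 2$ become $(s_{b^*} v'_{(k-2)})(i_k) < (s_{b^*} v'_{(k-2)})(i_k+1)$, which differs from the PDS condition for $\v(D')$ at step $k-1$ only when $v'_{(k-2)}$ sends $i_k$ or $i_k+1$ into the pair $\{m,m+1\}$ that $s_{b^*} = s_m$ swaps. The extra constraints that the full $\Le$-property imposes on row $r^*$ and column $c^*$ turn out to be exactly what is needed to ensure the affected length-increase conditions continue to hold.

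The cleanest way to organize Case~2 is the wiring-diagram picture: since $w \in W^k$ is fully commutative by Theorem~\ref{th:fully}, a subword $\v(D)$ of $\w$ is reduced --- equivalently, $\v(D)$ is PDS --- if and only if no two strands cross twice in the diagram obtained by retaining only the crossings at $0$-boxes. Under this dictionary, a forbidden pattern with a $0$ at $b_0$, a $+$ at $b_A$ above in the same column, and a $+$ at $b_L$ to the left in the same row forces the two wires meeting at $b_0$ to have already crossed at an earlier $0$-box, and conversely the absence of such a pattern prevents any double crossing. \textbf{The main obstacle} is thus the $0$-corner case of the induction, where a direct permutation-theoretic argument is delicate because left multiplication by $s_{b^*}$ has nontrivial effects on subsequent length-increase conditions; the wiring-diagram viewpoint is what makes the case tractable, by replacing an intricate length calculation with a transparent ``no double crossing'' check that can be compared to the backwards-$\Le$ pattern by inspection.
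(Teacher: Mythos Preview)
The paper does not supply its own proof of this theorem; it is quoted from \cite[Theorem 5.1]{LW} and \cite[Lemma 19.3]{Postnikov}. So there is no ``paper's proof'' to compare against, and your attempt must stand on its own.

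Unfortunately, your Case~2 argument contains a genuine error. You assert that for fully commutative $w$, a subexpression $\v(D)$ of $\w$ is a PDS if and only if the subword obtained by multiplying the $s_{i_j}$'s over the $0$-boxes is reduced, equivalently ``no two strands cross twice'' in the wiring diagram with crossings at $0$-boxes and bumps at $+$-boxes. This equivalence is \emph{false}. The PDS condition $v_{(j-1)} < v_{(j-1)} s_{i_j}$ must hold at \emph{every} index~$j$, including the $+$-boxes; in wiring-diagram language it says that the two strands passing through box~$j$ have not yet crossed. Reducedness of the subword only rules out two strands crossing twice at $0$-boxes; it does not rule out two strands crossing once at a $0$-box and then meeting again at a later $+$-box, which is exactly what violates the PDS condition there.

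A concrete counterexample is the fourth $\oplus$-diagram in Example~\ref{ex4-1}: with $\w = s_2 s_3 s_4 s_1 s_2 s_3$ and $\v = 1\,s_3\,1\,s_1\,1\,1$, the subword $s_3 s_1$ is certainly reduced, yet the PDS check fails at the final step since $s_3 s_1 s_3 = s_1 < s_3 s_1$. (The paper itself notes this subexpression is not even distinguished.) In the wiring diagram, strands $3$ and $4$ cross at the bottom-middle $0$-box and then meet again at the top-left $+$-box. Observe that this diagram \emph{does} exhibit the forbidden $\Le$-pattern --- so the theorem is consistent here --- but your proposed criterion ``no double crossing'' is simply not the correct translation of PDS. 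The right wiring-diagram condition for PDS is ``no two strands ever meet again after having crossed,'' and it is this stronger condition that must be matched with the $\Le$-pattern. That matching is the actual content of the theorem, and your sketch does not address it. Your inductive framework in Case~1 is fine, but Case~2 needs to be redone with the correct criterion.
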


Theorem \ref{characterize-Le} motivates the following open 
problem (which is slightly reformulated in Problem \ref{prob:Go}).
\begin{problem}
Find an analogue of 
Theorem \ref{characterize-Le} for Deodhar diagrams which characterizes them 
by forbidden patterns.
\end{problem}

\begin{definition}\label{def:DtoPi}
Let $O_w$ be an upper order ideal of $Q^k$, where 
$w\in W^k$ and $W = S_n$.
Consider a Deodhar diagram $D$ of shape $O_w$;
this is contained in a $k \times (n-k)$ rectangle,
and the shape $O_w$ gives rise to a lattice path
from the northeast corner to the 
southwest corner of the rectangle.  Label the steps
of that lattice path from $1$ to $n$; this gives
a natural labeling to every row and column of the rectangle.
We now let $v$ be the permutation with 
reduced decomposition $\v(D)$, and 
we define $\pi^:(D)$ to be the decorated permutation
$(\pi(D),col)$ where $\pi = \pi(D) = vw^{-1}$.  The fixed points of $\pi$
correspond precisely to rows and columns of the rectangle with no $+$'s.
If there are no $+$'s in the row (respectively, column) labeled by $h$,
then $\pi(h)=h$ and this fixed point gets colored with color $1$ (respectively,
$-1$.)%\footnote{should we give an example and a picture?}
\end{definition}

\begin{remark}
It follows from Remark \ref{rem:Deodhar-positroid} and the way we defined
Deodhar diagrams that the projected Deodhar component $\mathcal P_D$ corresponding to 
$D$ is contained in the positroid stratum $S_{\pi^:(D)}$.
\end{remark}

\subsection{From Deodhar diagrams to Go-diagrams 
and labeled Go-diagrams}\label{subsec:Go}

It will  be useful for us to depict Deodhar diagrams in a 
slightly different way.
Consider the distinguished subexpression $\v$ of $\w$: 
for each $k \in J_{\v}^{\circ}$ we will place a \raisebox{0.12cm}{\hskip0.15cm\circle{4}\hskip-0.15cm} in the corresponding box;
for each 
$k \in J_{\v}^{\bullet}$ we will place a \raisebox{0.12cm}{\hskip0.15cm\circle*{4}\hskip-0.15cm} in the corresponding box of $O_w$; 
and for each $k \in J_{\v}^{\Box}$ we will leave the corresponding box blank.
We call the resulting diagram a \emph{Go-diagram}, and refer to the symbols
\wstn ~ and \bstn~  
as {\it white} and {\it  black stones}.

\begin{remark}
Note that a Go-diagram has no black stones if and only if it corresponds to a 
Deodhar diagram $D$ such that $\v(D)$ is a PDS, i.e. a $\Le$-diagram.  Therefore,
slightly abusing terminology,
we will often refer to a Go-diagram with no black stones as a 
$\Le$-diagram.\footnote{Since
$\Le$-diagrams are a special case of Go-diagrams, one might also refer to them
as \emph{Lego} diagrams.}
\end{remark}

Note that the Go-diagrams corresponding to the first three $\oplus$-diagrams
in Example \ref{ex4-1} are 
\setlength{\unitlength}{0.7mm}
\begin{center}
    \begin{picture}(45,30)
  
% Young Diagram
% horizontal lines
  \put(5,25){\line(1,0){30}}
  \put(5,15){\line(1,0){30}}
  \put(5,5){\line(1,0){30}}
% vertical lines
  \put(5,5){\line(0,1){20}}
  \put(15,5){\line(0,1){20}}
  \put(25,5){\line(0,1){20}}
  \put(35,5){\line(0,1){20}}

% first row
  \put(8,20){\hskip0.15cm\circle{5}}
  \put(18,20){\hskip0.15cm\circle{5}}
  \put(28,20){\hskip0.15cm\circle{5}}
% second row
  \put(8,10){\hskip0.15cm\circle{5}}
  \put(18,10){\hskip0.15cm\circle{5}}
  \put(28,10){\hskip0.15cm\circle{5}}
 
  \end{picture} 
\qquad
     \begin{picture}(45,30)
  
% Young Diagram
% horizontal lines
  \put(5,25){\line(1,0){30}}
  \put(5,15){\line(1,0){30}}
  \put(5,5){\line(1,0){30}}
% vertical lines
  \put(5,5){\line(0,1){20}}
  \put(15,5){\line(0,1){20}}
  \put(25,5){\line(0,1){20}}
  \put(35,5){\line(0,1){20}}

% first row
  \put(8,20){\hskip0.15cm\circle{5}}
  \put(18,20){}
  \put(28,20){\hskip0.15cm\circle{5}}
% second row
  \put(8,10){\hskip0.15cm\circle{5}}
  \put(18,10){\hskip0.15cm\circle{5}}
  \put(28,10){}
 
  \end{picture} 
\quad
    \begin{picture}(45,30)
  
% Young Diagram
% horizontal lines
  \put(5,25){\line(1,0){30}}
  \put(5,15){\line(1,0){30}}
  \put(5,5){\line(1,0){30}}
% vertical lines
  \put(5,5){\line(0,1){20}}
  \put(15,5){\line(0,1){20}}
  \put(25,5){\line(0,1){20}}
  \put(35,5){\line(0,1){20}}

% first row
  \put(8,20){\hskip0.15cm\circle*{5}}
  \put(18,20){}
  \put(28,20){\hskip0.15cm\circle{5}}
% second row
  \put(8,10){}
  \put(18,10){\hskip0.15cm\circle{5}}
  \put(28,10){}
 
  \end{picture} 
\end{center}

\begin{problem}\label{prob:Go}
Characterize the fillings of Young diagrams by blank boxes, white stones, and 
black stones which are Go-diagrams.
\end{problem}

\begin{remark}\label{rem:KLpolys2}
Recall from Remark \ref{rem:KLpolys}
that the isomorphisms
$\mathcal R_{\v,\w} \cong (\F_q^*)^{|J^{\Box}_\v|} \times \F_q^{|J^{\bullet}_\v|}$
together with the decomposition \eqref{e:DeoDecomp} 
give formulas for the $R$-polynomials.  Therefore a good combinatorial 
characterization of the Go-diagrams (equivalently, Deodhar diagrams) contained in a given Young diagram
could lead to explicit formulas for the corresponding $R$-polynomials.
\end{remark}

If we choose a reading order of $O_w$, then we will also associate
to a Go-diagram of shape $O_w$ a \emph{labeled Go-diagram},
as defined below.  Equivalently,
a labeled Go-diagram is associated to a pair $(\v,\w)$.

\begin{definition}\label{def:pi}
Given a reading order of $O_w$ and a Go-diagram of shape $O_w$,
we obtain a \emph{labeled Go-diagram} by replacing each \raisebox{0.12cm}{\hskip0.14cm\circle{4}\hskip-0.15cm} with a $1$,
each \raisebox{0.12cm}{\hskip0.14cm\circle*{4}\hskip-0.15cm} with a $-1$, and putting a $p_i$ in each blank square $b$,
where the subscript $i$ corresponds to the label of $b$ inherited from the 
linear extension.
\end{definition}

The labeled Go-diagrams corresponding to 
the examples above using the reading order from
Example \ref{ex4-1} are:
\setlength{\unitlength}{0.7mm}
\begin{center}
    \begin{picture}(45,30)
  
% Young Diagram
% horizontal lines
  \put(5,25){\line(1,0){30}}
  \put(5,15){\line(1,0){30}}
  \put(5,5){\line(1,0){30}}
% vertical lines
  \put(5,5){\line(0,1){20}}
  \put(15,5){\line(0,1){20}}
  \put(25,5){\line(0,1){20}}
  \put(35,5){\line(0,1){20}}

% first row
  \put(8,19){1}
  \put(18,19){1}
  \put(28,19){1}
% second row
  \put(8,9){1}
  \put(18,9){1}
  \put(28,9){1}
 
  \end{picture} 
\qquad
     \begin{picture}(45,30)
  
% Young Diagram
% horizontal lines
  \put(5,25){\line(1,0){30}}
  \put(5,15){\line(1,0){30}}
  \put(5,5){\line(1,0){30}}
% vertical lines
  \put(5,5){\line(0,1){20}}
  \put(15,5){\line(0,1){20}}
  \put(25,5){\line(0,1){20}}
  \put(35,5){\line(0,1){20}}

% first row
  \put(8,19){1}
  \put(18,19){$p_5$}
  \put(28,19){$1$}
% second row
  \put(8,9){$1$}
  \put(18,9){$1$}
  \put(28,9){$p_1$}
 
  \end{picture} 
\quad
    \begin{picture}(45,30)
  
% Young Diagram
% horizontal lines
  \put(5,25){\line(1,0){30}}
  \put(5,15){\line(1,0){30}}
  \put(5,5){\line(1,0){30}}
% vertical lines
  \put(5,5){\line(0,1){20}}
  \put(15,5){\line(0,1){20}}
  \put(25,5){\line(0,1){20}}
  \put(35,5){\line(0,1){20}}

% first row
  \put(7,19){$-1$}
  \put(18,19){$p_5$}
  \put(28,19){$1$}
% second row
  \put(8,9){$p_3$}
  \put(18,9){$1$}
  \put(28,9){$p_1$}
 
  \end{picture} 
\end{center}

%\begin{remark}
In  future work we intend to explore further aspects of Go-diagrams and 
 Deodhar strata.
%\end{remark}

\subsection{The projected Deodhar decomposition does not depend on 
the expressions $\w$}

Recall from Remark \ref{rem:dependence} that the Deodhar decomposition
depends on the choices of reduced decompositions $\w$ of each $w\in W$.
However, its projection to the Grassmannian has a nicer behavior.

\begin{proposition}\label{prop:independence}
Let $w \in W^k$ and choose a reduced expression $\w$ for $w$.  Then
the components of 
%\begin{equation*}
$\bigsqcup_{\v\prec \w} \mathcal
 R_{\v,\w}$
%\end{equation*}
do not depend on $\w$, only on $w$.
\end{proposition}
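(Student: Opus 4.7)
The plan is to exploit the fact that every $w \in W^k$ is fully commutative (Theorem~\ref{th:fully}), which means that any two reduced expressions $\w, \w'$ for $w$ are connected by a sequence of commutation moves $s_i s_j \leftrightarrow s_j s_i$ with $|i-j|\ge 2$. Equivalently, reduced expressions for $w$ correspond to linear extensions of $O_w$, and any two linear extensions differ by a sequence of adjacent swaps of incomparable boxes. So it suffices to check the claim when $\w$ and $\w'$ differ by a single such commutation at adjacent positions $k$ and $k+1$.

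First, by Proposition~\ref{p:whether}, the set of Deodhar diagrams $D$ of shape $O_w$ whose associated subexpression is distinguished depends only on $D$, not on the choice of reading order. Consequently the index set of the decomposition $\bigsqcup_{\v \prec \w} \mathcal{P}_{\v,\w}$ is canonically identified with the set of Deodhar diagrams of shape $O_w$, independent of $\w$. Thus the remaining task is to show that for each Deodhar diagram $D$, the component $\mathcal{P}_{\v(D),\w}$ is unchanged when we replace $\w$ by $\w'$ and $\v(D)$ by the corresponding subexpression of $\w'$.

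Next, I would verify the invariance directly at the level of $G_{\v,\w}$. In the parametrization from Definition~\ref{d:factorization}, the factor at each position is of the form $\dot s_{i_\ell}$, $y_{i_\ell}(p_\ell)$, or $x_{i_\ell}(m_\ell)\dot s_{i_\ell}^{-1}$, each of which lies inside the subgroup $\phi_{i_\ell}(\SL_2) \subset \SL_n$. Since $\phi_i(\SL_2)$ and $\phi_j(\SL_2)$ commute elementwise whenever $|i-j|\ge 2$ (they act on disjoint pairs of rows/columns), any factor of type $*_i$ commutes with any factor of type $*_j$. Therefore the ordered product $g_1 g_2 \cdots g_m$ defining $G_{\v,\w}$ agrees with the product for $(\v',\w')$, giving the equality $G_{\v,\w}=G_{\v',\w'}$ as subsets of $G$. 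Passing to $G/B$ yields $\mathcal{R}_{\v,\w}=\mathcal{R}_{\v',\w'}$, and projecting by $\pi_k$ gives $\mathcal{P}_{\v,\w}=\mathcal{P}_{\v',\w'}$.

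I do not expect any serious obstacle: the content is essentially Theorem~\ref{th:fully} combined with the elementary commutation of $\phi_i(\SL_2)$ and $\phi_j(\SL_2)$ when $|i-j|\ge 2$. The subtler point worth highlighting is \emph{why} the corresponding statement fails for general $w \notin W^k$: in that case passing between reduced expressions requires braid moves $s_i s_{i+1} s_i = s_{i+1} s_i s_{i+1}$, which do not preserve the parametrized set $G_{\v,\w}$, in agreement with Remark~\ref{rem:dependence}. Fully commutative elements are precisely where no braid moves are needed, and that is exactly the input that makes the argument work on the nose.
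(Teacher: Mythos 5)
Your proof is correct and follows essentially the same route as the paper: reduce to a single commutation move via Theorem \ref{th:fully}, then check that the individual factors $\dot s_i$, $y_i(\cdot)$, and $x_i(\cdot)\dot s_i^{-1}$ commute with the corresponding factors in letter $j$ when $|i-j|\ge 2$ (your observation that $\phi_i(\SL_2)$ and $\phi_j(\SL_2)$ commute elementwise packages the paper's four explicit identities into one statement). Your explicit appeal to Proposition \ref{p:whether} to match up the distinguished subexpressions of $\w$ and $\w'$ is a detail the paper leaves implicit, and is a welcome addition.
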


\begin{proof}
Recall from Theorem \ref{th:fully} that 
%each element of $W^k$ 
%is fully commutative, 
any two reduced expressions of $w \in W^k$ can be obtained from 
each other by a sequence
of commuting moves ($s_i s_j = s_j s_i$ where $|i-j| \geq 2$).  
And it is easy to check that if $s_i s_j = s_j s_i$, then 
\begin{enumerate}
\item $y_i(a) y_j(b) = y_j(b) y_i(a)$
\item $y_i(a) \dot s_j = \dot s_j y_i(a)$
\item $(x_i(a) \dot s_i^{-1}) \dot s_j = \dot s_j (x_i(a) \dot s_i^{-1})$
\item $(x_i(a) \dot s_i^{-1}) y_j(b) = y_j(b) (x_i(a) \dot s_i^{-1})$.
\end{enumerate}
The result now follows from Definition \ref{d:factorization} and Proposition
\ref{p:parameterization}.
\end{proof}

%%%%%%%%%%%%%%%%  Plucker coordinates   %%%%%%%%%%%%%%%%%%%

\section{Pl\"ucker coordinates and positivity tests for  projected Deodhar components}\label{sec:positivitytest}

Consider $\mathcal P_{\v,\w} \subset Gr_{k,n}$, where 
$\w$ is a reduced expression for $w \in W^k$ and $\v \prec \w$.
In this section we will provide some formulas for the Pl\"ucker coordinates
of the elements of $\mathcal P_{\v,\w}$, in terms of the parameters used
to define $G_{\v,\w}$.  Some of these formulas are related to corresponding
formulas for $G/B$ in \cite[Section 7]{MR}.

\subsection{Formulas for Pl\"ucker coordinates}

\begin{lemma}\label{lem:minmax}
Choose any element $A$ of $\mathcal P_{\v,\w} \subset Gr_{k,n}$. Let 
\begin{equation*}
I = w\,\{n-k+1,\dots,n-1,n\} \qquad \text{ and }\qquad
I' = v\,\{n-k+1,\dots,n-1,n\}.
\end{equation*}
Then if $\Delta_J(A) \neq 0$, we have
$I \preceq J \preceq I',$ where $\preceq$ is the component-wise
order from Definition \ref{def:prec}.
In particular, the lexicographically minimal and maximal 
nonzero Pl\"ucker coordinates of $A$ are 
$\Delta_I$ and $\Delta_{I'}$.
Note that if we write $I = \{i_1,\dots,i_k\}$, then
% for the lexicographically
%minimal nonzero Pl\"ucker coordinate, then 
$I' = vw^{-1}\{i_1,\dots,i_k\}$.
\end{lemma}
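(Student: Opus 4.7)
The plan is to exploit the containment $G_{\v,\w}\subset U^-\dot v\cap B^+\dot w B^+$ from Proposition~\ref{p:parameterization} and derive the two bounds on $J$ from the two factors separately, after applying the Pl\"ucker embedding formula \eqref{Plucker}. Write $A=\pi_k(g\cdot B^+)$ for some $g\in G_{\v,\w}$.

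For the lower bound $I\preceq J$, I would use only $g\in B^+\dot w B^+$ together with $w\in W^k$. Minimal-length coset representatives in $W^k$ have their unique descent (if any) at position $n-k$, so $w(n-k+1)<\cdots<w(n)$ and the sorted set $w\{n-k+1,\dots,n\}$ equals $I=\{i_1<\cdots<i_k\}$. Thus $\pi_k(B^+\dot w B^+)$ is exactly the Schubert cell $\Omega_{\lambda(I)}$ (an element of which has pivots precisely in columns $I$), and Lemma~\ref{lem:order} gives both $I\preceq J$ whenever $\Delta_J(A)\ne 0$ and $\Delta_I(A)\ne 0$.

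For the upper bound $J\preceq I'$, I would factor $g=u\dot v$ with $u\in U^-$. Applying $g$ to $e_{n-k+1}\wedge\cdots\wedge e_n$ and using that $\dot v$ permutes basis vectors up to sign, we get $g\cdot(e_{n-k+1}\wedge\cdots\wedge e_n)=\pm\, u\cdot(e_{i'_1}\wedge\cdots\wedge e_{i'_k})$, where $I'=\{i'_1<\cdots<i'_k\}$ is the sorted form of $v\{n-k+1,\dots,n\}$. Under the paper's row-from-the-bottom convention, any $u\in U^-$ satisfies $u\cdot e_j=e_j+\sum_{i<j}a_{ij}e_i$, so expanding the wedge yields a sum of terms $e_{m_1}\wedge\cdots\wedge e_{m_k}$ with $m_\ell\le i'_\ell$ for each $\ell$. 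A pigeonhole observation --- for any threshold $t$, the set $\{\ell:i'_\ell\le t\}$ is contained in $\{\ell:m_\ell\le t\}$, so the latter has at least as many elements as the former --- implies that after sorting such a multi-index into increasing order $J=\{j_1<\cdots<j_k\}$ one still has $j_\ell\le i'_\ell$ for all $\ell$, i.e.\ $J\preceq I'$. Comparing with \eqref{Plucker} gives $\Delta_J(A)=0$ for $J\not\preceq I'$; since the $J=I'$ term has coefficient $\pm 1$, also $\Delta_{I'}(A)\ne 0$.

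The two bounds together identify $I$ and $I'$ as the lex-minimum and lex-maximum nonzero Pl\"ucker coordinates of $A$, and the final identity $I'=vw^{-1}\{i_1,\dots,i_k\}$ is immediate from $w^{-1}I=\{n-k+1,\dots,n\}$. The main obstacle is the upper-bound computation: translating the abstract containment $G_{\v,\w}\subset U^-\dot v$ into the component-wise order $\preceq$ on subsets of $[n]$ requires checking the convention that ``lower-triangular'' matrices act on $e_j$ by adding lower-indexed basis vectors, and then running the pigeonhole step carefully to handle the reordering needed to put $v\{n-k+1,\dots,n\}$ in increasing form.
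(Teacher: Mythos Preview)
Your proof is correct and follows essentially the same approach as the paper's. The paper argues more tersely: it observes that $\mathcal R_{\v,\w}\subset\mathcal R_{v,w}=B^+\dot w\cdot B^+\cap B^-\dot v\cdot B^+$ and then cites as ``easy to check (and well-known)'' that the lexicographically minimal and maximal nonzero minors of elements in $\pi_k(B^+\dot w\cdot B^+)$ and $\pi_k(B^-\dot v\cdot B^+)$ are $\Delta_I$ and $\Delta_{I'}$ respectively. You have simply unpacked this well-known fact, using Lemma~\ref{lem:order} for the lower bound and carrying out the explicit $U^-\dot v$ computation (via Proposition~\ref{p:parameterization}) with your pigeonhole argument for the upper bound.
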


\begin{proof}
Recall that $\mathcal P_{\v,\w} = \pi_k(\mathcal R_{\v,\w})$, where
$\mathcal R_{\v,\w} \subset \mathcal R_{v,w}$, and 
$\mathcal R_{v,w}=B^+\dot w\cdot B^+\cap B^-\dot v\cdot B^+$.
Now it is easy to check (and well-known) that the lexicographically minimal nonzero minor of each 
element in the Schubert cell $\pi_k(B^+\dot w\cdot B^+)$ is $\Delta_I$
and the lexicographically maximal minor of each element in the opposite
Schubert cell $\pi_k(B^- \dot v\cdot B^+)$ is $\Delta_{I'}$ where 
$I$ and $I'$ are as above.
\end{proof}

Our next goal is to provide formulas for the lexicographically minimal 
and maximal nonzero Pl\"ucker coordinates of the projected Deodhar components.

\begin{theorem}\label{p:maxmin}
Let $\w=s_{i_1} \dots s_{i_m}$ be a reduced expression for $w \in W^k$ and $\v \prec \w$.
Let $I = w\{n-k+1,\dots,n\}$ and 
$I' = v\{n-k+1,\dots,n\}$.
Let $A = \pi_k(g)$ for any $g \in G_{\v,\w}$.
If we write $g = g_1 \dots g_m$ as in Definition \ref{d:factorization}, then
\begin{equation}
\Delta_I(A) = (-1)^{|J_{\v}^{\bullet}|} \prod_{i\in J_{\v}^{\Box}} p_i\qquad  \text{ and }\qquad
\Delta_{I'}(A) = 1.
\end{equation}
Note that $\Delta_I(A)$ equals the product of all the labels
from the labeled Go-diagram associated to $(\v,\w)$.
\end{theorem}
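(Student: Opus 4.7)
The plan is to compute both extremal Pl\"ucker coordinates directly from the Pl\"ucker embedding formula
\[
\Delta_J(A) = \langle e_J,\, g \cdot (e_{n-k+1} \wedge \cdots \wedge e_n) \rangle
\]
combined with the explicit factorization $g = g_1 g_2 \cdots g_m$ from Definition \ref{d:factorization}. Write $K := \{n-k+1,\ldots,n\}$ and $w^{(\ell)} := s_{i_\ell} s_{i_{\ell+1}} \cdots s_{i_m}$, so that $w^{(1)} = w$ and $w^{(m+1)} = 1$.

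I would first establish an auxiliary identity: for every $y \in S_n$, $\dot y \cdot (e_{n-k+1} \wedge \cdots \wedge e_n) = e_{y(K)}$, with $y(K)$ sorted in increasing order and the overall sign equal to $+1$. This is proved by induction on $\ell(y)$: writing $y = s_i y'$ with $\ell(y) = \ell(y') + 1$ and applying $\dot s_i$ to $e_{y'(K)}$, the only case that could produce a sign $-1$ is $i \in y'(K)$ and $i+1 \notin y'(K)$; but this would force $(y')^{-1}(i) \in K$ and $(y')^{-1}(i+1) \notin K$, hence $(y')^{-1}(i) > (y')^{-1}(i+1)$, contradicting $s_i y' > y'$. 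Applying this to $y = v$ together with Proposition \ref{p:parameterization} (which gives $g = u \dot v$ with $u \in U^-$) yields $\Delta_{I'}(A) = 1$: since $u$ is unipotent lower triangular, its induced action on $\bigwedge^k \R^n$ satisfies $u \cdot e_{I'} = e_{I'} + \sum_{J \prec I'} c_J e_J$, so the coefficient of $e_{I'}$ in $g \cdot e_K$ equals $1$.

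For $\Delta_I(A) = (-1)^{|J_\v^\bullet|} \prod_{i \in J_\v^\Box} p_i$, I would track the lex-min wedge of $\xi_\ell := g_\ell g_{\ell+1} \cdots g_m \cdot e_K$ by descending induction on $\ell$, showing it equals $C_\ell \cdot e_{w^{(\ell)}(K)}$ with $C_\ell = (-1)^{|J_\v^\bullet \cap [\ell,m]|} \prod_{i \in J_\v^\Box \cap [\ell,m]} p_i$. A preliminary fact is that each transition $w^{(\ell+1)}(K) \to w^{(\ell)}(K)$ is a nontrivial down move, meaning $i_\ell + 1 \in w^{(\ell+1)}(K)$ and $i_\ell \notin w^{(\ell+1)}(K)$; this follows from the structure of $W^k$, where reduced expressions correspond to reading orders of Young diagrams (Theorem \ref{th:fully} and the discussion afterward), so each simple reflection in $\w$ changes the pivot set by exactly one element. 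Given this, the three cases of the inductive step are clean: for $\ell \in J_\v^\circ$, $\dot s_{i_\ell}$ gives down-move sign $+1$, so $C_\ell = C_{\ell+1}$; for $\ell \in J_\v^\Box$, the identity $y_{i_\ell}(p_\ell) \cdot e_{w^{(\ell+1)}(K)} = e_{w^{(\ell+1)}(K)} + p_\ell e_{w^{(\ell)}(K)}$ gives $C_\ell = p_\ell C_{\ell+1}$; for $\ell \in J_\v^\bullet$, $\dot s_{i_\ell}^{-1}$ gives down-move sign $-1$ (from $\dot s_i^{-1} e_{i+1} = -e_i$) while $x_{i_\ell}(m_\ell) \in U^+$ only raises indices and so does not affect the lex-min, giving $C_\ell = -C_{\ell+1}$. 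Setting $\ell = 1$ yields the desired formula.

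The main obstacle will be the ``nontrivial down move'' preliminary fact, which requires the combinatorial correspondence between $W^k$ and Young diagrams via the poset $Q^k$. A secondary technicality is carefully tracking signs in the auxiliary identity and in the $J_\v^\bullet$ case, so that the two sign conventions $\dot s_i e_i = -e_{i+1}$ and $\dot s_i^{-1} e_{i+1} = -e_i$ combine consistently to produce the claimed $-1$ factor for each $\ell \in J_\v^\bullet$ (and no extraneous signs elsewhere).
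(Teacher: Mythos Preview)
Your argument for $\Delta_{I'}(A)=1$ is essentially the paper's: both write $g=h\dot v$ with $h\in U^-$ (Proposition~\ref{p:parameterization}) and use that $\dot v\cdot e_K=e_{v(K)}$ with no sign. Your inductive justification of the latter identity is clean and correct.

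For $\Delta_I(A)$ your route is genuinely different from the paper's. The paper fixes the specific row-by-row reduced expression \eqref{red-exp}, writes $g=g^{(1)}\cdots g^{(a)}$ with $g^{(r)}$ the product over row~$r$, and exploits the fact that $g^{(r)}$ is the only block that can move $e_{n-k+r}$; this decouples the computation into a product of one-row contributions. You instead induct letter by letter on the factorization $g=g_1\cdots g_m$, tracking the bottom wedge of $\xi_\ell=g_\ell\cdots g_m\cdot e_K$ for an \emph{arbitrary} reduced expression of $w$. This is more uniform and avoids singling out a particular reading order; the paper's approach is more concrete but relies on that choice (justified via Proposition~\ref{prop:independence}). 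Your ``down-move'' observation---that $i_\ell+1\in w^{(\ell+1)}(K)$ and $i_\ell\notin w^{(\ell+1)}(K)$ for every $\ell$---is exactly right and follows, as you say, from the fact that suffixes $s_{i_\ell}\cdots s_{i_m}$ again lie in $W^k$ (they correspond to upper order ideals of $Q^k$ under any linear extension).

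There is one point to tighten. Your induction hypothesis ``the lex-min wedge of $\xi_\ell$ equals $C_\ell\,e_{w^{(\ell)}(K)}$'' is not quite strong enough by itself: when you apply $g_\ell$, you must know that no other term $e_J$ of $\xi_{\ell+1}$ maps to $e_{w^{(\ell)}(K)}$ or below. Lexicographic minimality alone does not give this. Strengthen the hypothesis to: $\xi_\ell$ is supported on $\{e_J:J\succeq w^{(\ell)}(K)\}$ in the \emph{componentwise} order~$\preceq$, with coefficient $C_\ell$ at $w^{(\ell)}(K)$. The three cases then go through exactly as you describe (one checks that each of $\dot s_{i_\ell}$, $y_{i_\ell}(p_\ell)$, $x_{i_\ell}(m_\ell)\dot s_{i_\ell}^{-1}$ carries $\{e_J:J\succeq J_0\}$ into $\{e_J:J\succeq J_0'\}$, using the down-move fact). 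Alternatively, the componentwise support follows at once from $g_\ell\cdots g_m\in B^+\dot w^{(\ell)}B^+$ (each factor lies in $B^+\dot s_{i_j}B^+$ and $\w^{(\ell)}$ is reduced), together with your auxiliary identity and the fact that $B^+$ only raises indices.
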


Before proving Theorem \ref{p:maxmin}, we record the following lemma,
which can be easily verified.

\begin{lemma}\label{lem:act}
For $1 \leq i \leq n-1$, we have
\begin{enumerate}
\item $\dot s_i e_i = -e_{i+1}$, 
 $\dot s_i e_{i+1} = e_i$, and 
$\dot s_i e_j = e_j \text{ if } j\neq i \text{ or } i+1$.
\item $y_i(a) e_{i+1} = e_{i+1} + ae_i$ and
 $y_i(a) e_j = e_j$ if $j\neq i+1$.
\item $(x_i(a) \dot s_i^{-1}) e_i = e_{i+1}$,   
 $(x_i(a) \dot s_i^{-1}) e_{i+1} = -(e_i+ae_{i+1})$, and 
 $(x_i(a) \dot s_i^{-1}) e_j = e_j$ for $j\neq i$ or $i+1$.
\end{enumerate}
\end{lemma}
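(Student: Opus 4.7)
The lemma is a direct computation from the definitions of $\phi_i$, $\dot{s}_i$, $x_i(a)$, $y_i(a)$, together with the convention that $e_i$ is the standard basis vector whose $1$ sits in the $i$th entry counting from the bottom, i.e., in standard top-down row-indexing, at position $n-i+1$. The plan is to translate all three operators into explicit matrices in standard indexing, then observe that each is the identity outside a $2\times 2$ block acting on the two-dimensional subspace spanned by $\{e_i,e_{i+1}\}$, so all nine assertions reduce to reading off a single column of a $2\times 2$ matrix.

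First, I would unpack the embedding $\phi_i$. By the stipulation that the $(1,1)$-entry of the $2\times 2$ block lands at the $(i+1)$st diagonal entry counting from the southeast corner, and by the convention that rows are numbered from the bottom, the block of $\phi_i\!\left(\begin{smallmatrix}a&b\\c&d\end{smallmatrix}\right)$ occupies rows/columns $n-i$ and $n-i+1$ in the usual top-down indexing. Concretely, $a$ sits at standard position $(n-i,n-i)$, $b$ at $(n-i,n-i+1)$, $c$ at $(n-i+1,n-i)$, and $d$ at $(n-i+1,n-i+1)$. Under this translation, $e_i$ (whose $1$ is at standard position $n-i+1$) and $e_{i+1}$ (at standard position $n-i$) are exactly the two basis vectors the block acts on nontrivially, while all other $e_j$ for $j\neq i,i+1$ lie outside the block and are therefore fixed by any matrix of the form $\phi_i(M)$ with $M\in\mathrm{SL}_2$.

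Having done this setup, each of the three parts of the lemma reduces to reading a column of the relevant $2\times 2$ matrix. For part (1), $\dot{s}_i=\phi_i\!\left(\begin{smallmatrix}0&-1\\1&0\end{smallmatrix}\right)$: the $e_i$-column of the block is $\left(\begin{smallmatrix}-1\\0\end{smallmatrix}\right)$, which, translated back to the $\{e_i,e_{i+1}\}$ basis, gives $-e_{i+1}$; and the $e_{i+1}$-column is $\left(\begin{smallmatrix}0\\1\end{smallmatrix}\right)$, giving $e_i$. For part (2), $y_i(a)=\phi_i\!\left(\begin{smallmatrix}1&0\\a&1\end{smallmatrix}\right)$ differs from the identity only by an entry $a$ at standard position $(n-i+1,n-i)$, and the single relevant column immediately gives $y_i(a)e_{i+1}=e_{i+1}+ae_i$, with all other basis vectors fixed. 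For part (3), I would first compute $\dot{s}_i^{-1}=\phi_i\!\left(\begin{smallmatrix}0&1\\-1&0\end{smallmatrix}\right)$, yielding $\dot{s}_i^{-1}e_i=e_{i+1}$ and $\dot{s}_i^{-1}e_{i+1}=-e_i$, and then apply $x_i(a)=\phi_i\!\left(\begin{smallmatrix}1&a\\0&1\end{smallmatrix}\right)$, which fixes $e_{i+1}$ and sends $e_i\mapsto e_i+ae_{i+1}$. Composing gives $(x_i(a)\dot{s}_i^{-1})e_i=e_{i+1}$ and $(x_i(a)\dot{s}_i^{-1})e_{i+1}=-(e_i+ae_{i+1})$.

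The only genuinely subtle point, and hence the main potential pitfall rather than a real obstacle, is keeping the sign and the row-from-the-bottom convention straight: since $e_i$ corresponds to standard row $n-i+1$, the larger index $i$ of $e_i$ lies in the \emph{lower} of the two rows of the block, which is why $\dot{s}_i e_i=-e_{i+1}$ rather than $+e_{i+1}$. Once this is settled once and for all in the preliminary translation step, all nine identities drop out mechanically from the two-dimensional computation, and the lemma is proved.
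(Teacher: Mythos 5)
Your computation is correct: the paper offers no proof of this lemma beyond the remark that it "can be easily verified," and your direct verification — translating the bottom-up conventions for $\phi_i$ and $e_j$ into standard indexing and reading off columns of the $2\times 2$ blocks — is exactly the intended argument, with all signs and index placements handled correctly.
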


We now turn to the proof of Theorem \ref{p:maxmin}.

\begin{proof}
Recall from \eqref{Plucker} how to identify each $A \in Gr_{k,n}$ with its Pl\"ucker embedding.
We first verify that $\Delta_{I'}(A)=1$. Since 
$G_{\v,\w} \subset U^{-} \dot v$ (see Proposition \ref{p:parameterization}),
% and
% also the last line of the proof of \cite[Lemma 7.4]{MR}),
we can write $g \in G_{\v,\w}$ as $g = h \dot v$ with $h \in U^-$.
Let $\lambda = e_n \wedge e_{n-1} \wedge \dots \wedge e_{n-k+1}$.
Then $\Delta_{I'}(A) = \langle\dot v\cdot \lambda, g\cdot \lambda\rangle=\langle \dot v\cdot\lambda,
h\dot v \cdot \lambda\rangle=1$. 

Now we compute the value of $\Delta_I(A)$.
Recall from Proposition \ref{prop:independence}
that for $w\in W^k$, the Deodhar component 
$\mathcal R_{\v,\w}$ does not depend on the choice of reduced
expression $\w$ for $w$.  Therefore we will fix a linear extension
of $Q^k$, and use that to construct our reduced expressions
for each $w\in W^k$.

It follows that each reduced expression $\w$ for $w\in W^k$ 
where $W= \Sym_n$ has the form
\begin{equation}\label{red-exp}
(s_{j_a} s_{j_a +1}\dots  s_{n-k+a-1})  (s_{j_{a-1}} s_{j_{a-1}+1} \dots s_{n-k+a-2}) 
\dots (s_{j_2} s_{j_2+1} \dots s_{n-k+1})   (s_{j_1} s_{j_1+1} \dots s_{n-k}).
\end{equation}
The four factors above correspond to the products of generators corresponding to the 
last, next-to-last, second, and top rows of the Young diagram, respectively.  
In particular,
$1 \leq a \leq k$ ($a$ is the number of rows in the Young diagram
corresponding to $w$), and $j_1 < j_2 < \dots < j_{a-1} < j_a$.
Moreover, it is easy to check that 
$\{j_1, j_2, \dots, j_a,  n-k+a+1, n-k+a+2,\dots,n-1,n\}$ are the positions
of the pivots  of $A$ (they correspond to the shape of the Young diagram), so 
$I = \{j_1, j_2, \dots, j_a,  n-k+a+1, n-k+a+2,\dots,n-1,n\}$.

Each $g\in G_{\v,\w}$ will be obtained from 
\eqref{red-exp} by replacing the $s_i$'s by 
$\dot s_i$'s, $y_i(a)$'s, or $x_i(m) \dot s_i^{-1}$'s.
Let us write $g = g^{(1)} g^{(2)} \dots g^{(a)}$ 
where $g^{(1)}$ is the product of $g_i$'s corresponding to 
$(s_{j_a} s_{j_a +1}\dots  s_{n-k+a-1})$,
$g^{(2)}$ is the product of $g_i$'s corresponding to 
$(s_{j_{a-1}} s_{j_{a-1}+1} \dots s_{n-k+a-2})$, etc. 
Now consider how such a $g$ acts on $e_n, e_{n-1}, \dots$.
Looking at Lemma \ref{lem:act}, we see that $g^{(1)}$ is the only portion of $g$
which can affect $e_{n-k+a}$ (or any $e_j$ with $j>n-k+a$).  
This is because every $s_i$ appearing in the other factors of \eqref{red-exp}
has the property that $i \leq n-k+a-2$, and in this case,
$\dot s_{i}$, $y_{i}(a)$, and 
$x_{i}(m) \dot s_{i}^{-1}$ all act as the identity on $e_{n-k+a}$ (or any $e_j$ with $j>n-k+a$).
Similarly $g^{(1)} g^{(2)}$ is the only portion of $g$ which can affect $e_{n-k+a-1}$,
and $g^{(1)} g^{(2)} g^{(3)}$ is the only portion of $g$ which can affect $e_{n-k+a-2}$,
etc.

Now we want to determine the value of the lexicographically minimal Pl\"ucker
coordinate $\Delta_I(A).$  So we need to determine the coefficient of $E_I$ in 
$g \cdot e_n \wedge  \dots \wedge e_{n-k+1}$.  From Lemma \ref{lem:act}, 
we see that 
%for each $g_i$ equal to $\dot s_i$, $y_i(a)$, or 
%$x_i(a) \dot s_i^{-1}$, we have that 
%$g_i e_{i+1} =  \lambda e_i + \text{ a higher term}$.
%More specifically,
$\dot s_i e_{i+1} = e_i$, 
$y_i(a) e_{i+1} = a e_i + \text{ a higher term}$, and 
$x_i(a) \dot s_i^{-1} e_{i+1} = -e_i + \text{ a higher term}$.
Therefore from \eqref{red-exp}, we see that the expansion of 
$g \cdot e_{n-k+a}$  in the basis $e_1,\dots,e_n$ has a 
nonzero coefficient in front of $e_{j_a}$.  And that coefficient
is $(-1)^q$ times the product of all the parameters $p$ occurring in 
$g^{(1)}$, where $q$ is the number of $x$-factors in $g^{(1)}$.

Similarly, from \eqref{red-exp}, the expansion of 
$g \cdot e_{n-k+a-1}$ in the basis $e_1,\dots,e_n$ has a 
nonzero coefficient in front of $e_{j_{a-1}}$, and that coefficient is 
$(-1)^q$ times the product of all the parameters $p$ occurring in 
$g^{(2)}$, where $q$ is the number of $x$-factors in $g^{(2)}$.

Continuing in this fashion, 
the expansion of $g \cdot e_{n-k+1}$ in the basis $e_1,\dots,e_n$ has a 
nonzero coefficient in front of $e_{j_{1}}$, and that coefficient is 
$(-1)^q$ times the product of all the parameters $p$ occurring in 
$g^{(a)}$, where $q$ is the number of $x$-factors in $g^{(a)}$.

Additionally, $g$ acts as the identity on $e_{n-k+a+1}$, \dots,
$e_{n-1}$, and $e_n$.  It follows that the coefficient of 
$E_I$ in the expansion of $g \cdot e_n \wedge \dots \wedge e_{n-k+1}$ in the standard basis is 
$(-1)^{|J_{\v}^{\bullet}|} \prod_{i\in J_{\v}^{\Box}} p_i$, as desired. 
\end{proof}

Our next goal is to give a formula for some other Pl\"ucker coordinates besides the lexicographically 
minimal and maximal ones.  First it will be helpful to define some notation.

\begin{definition}\label{inout}
Let $W = \Sym_n$, 
let $\w=s_{i_1} \dots s_{i_m}$ be a reduced expression for $w \in W^k$ and 
choose $\v \prec \w$.   This determines a Go-diagram $D$
in a Young diagram $Y$.
Let $b$ be any box of $D$.  Note that the set of all boxes of $D$ which are weakly southeast
of $b$ forms a Young diagram $Y_b^{\In}$; also the complement of $Y_b^{\In}$ in $Y$ is a Young diagram
which we call $Y_b^{\Out}$ (see Example \ref{ex:out} below).  By looking at the restriction of $\w$ to the positions corresponding
to boxes of $Y_b^{\In}$, we obtained a reduced expression $\w_b^{\In}$ for some permutation 
$w_b^{\In}$, together with a distinguished subexpression $\v_b^{\In}$ for some permutation $v_b^{\In}$.
Similarly, by using the positions corresponding to boxes of $Y_b^{\Out}$, we obtained
$\w_b^{\Out}$, $w_b^{\Out}$, $\v_b^{\Out}$, and $v_b^{\Out}$.  When the box $b$ is understood,
we will often omit the subscript $b$.

For any box $b$, note that it is always possible to choose a linear extension of $O_w$ which orders 
all the boxes of $Y^{\Out}$ after those of $Y^{\In}$.  We can then adjust $\w$ accordingly;
Proposition \ref{p:whether} implies that this does not affect whether the corresponding
expression $\v$ is distinguished.
Having chosen such a linear extension, we can then write
$\w = \w^{\In} \w^{\Out}$ and $\v = \v^{\In} \v^{\Out}$.  We then use
$g^{\In}$ and $g^{\Out}$ to denote the corresponding factors of 
$g \in G_{\v,\w}$.
We define $J^{\Box}_{\v^{\Out}}$ to be the subset of $J^{\Box}_{\v}$ 
coming from the factors of $\v$ contained in $\v^{\Out}$.  Similarly,
for $J^{\circ}_{\v^{\Out}}$  
and $J^{\bullet}_{\v^{\Out}}$. 
\end{definition}

\begin{example}\label{ex:out}
Let $W =\Sym_7$ and $\w = s_4 s_5 s_2 s_3 s_4 s_6 s_5 s_1 s_2 s_3 s_4$
be a reduced expression for $w \in W^3$.  
Let $\v = s_4 s_5 1 1 s_4 1 s_5 s_1 1 1 s_4$ be a distinguished subexpression.
So $w = (3,5,6,7,1,2,4)$ and $v=(2,1,3,4,6,5,7)$.
We can represent this data by the poset $O_w$ and the corresponding
 Go-diagram:\\
%\smallskip
\setlength{\unitlength}{0.7mm}
\begin{center}
  \begin{picture}(60,30)
  
% Young Diagram
% horizontal lines
  \put(5,35){\line(1,0){40}}
  \put(5,25){\line(1,0){40}}
  \put(5,15){\line(1,0){40}}
  \put(5,5){\line(1,0){30}}
% vertical lines
  \put(5,5){\line(0,1){30}}
  \put(15,5){\line(0,1){30}}
  \put(25,5){\line(0,1){30}}
  \put(35,5){\line(0,1){30}}
  \put(45,15){\line(0,1){20}}

% first row
  \put(7,28){$s_4$}
  \put(17,28){$s_3$}
  \put(27,28){$s_2$}
  \put(37,28){$s_1$}
% second row
  \put(7,18){$s_5$}
  \put(17,18){$s_4$}
  \put(27,18){$s_3$}
  \put(37,18){$s_2$}
% third row 
  \put(7,8){$s_6$}
  \put(17,8){$s_5$}
  \put(27,8){$s_4$}
 
  \end{picture} 
  \qquad
  \begin{picture}(50,30)
  
% Young Diagram
% horizontal lines
  \put(5,35){\line(1,0){40}}
  \put(5,25){\line(1,0){40}}
  \put(5,15){\line(1,0){40}}
  \put(5,5){\line(1,0){30}}
% vertical lines
  \put(5,5){\line(0,1){30}}
  \put(15,5){\line(0,1){30}}
  \put(25,5){\line(0,1){30}}
  \put(35,5){\line(0,1){30}}
  \put(45,15){\line(0,1){20}}

% first row
  \put(10,30){\circle*{5}}
  %\put(20,30){$s_3$}
 % \put(30,30){$s_2$}
  \put(40,30){\circle{5}}
 % second row
   \put(10,20){\circle*{5}}
   \put(20,20){\circle{5}}
 
% third row 
  \put(30,10){\circle{5}}
  \put(20,10){\circle{5}}
 % \put(27,8){$s_4$}
 
  \end{picture} 
%  \qquad
\end{center}

Let $b$ be the box of the Young diagram which is in the second row
and the second column (counting from left to right).
%With the reduced expression $\w$, we choose the linear extension as in the diagram below.
Then the diagram below shows: the 
boxes of
$Y^{\In}$ and 
$Y^{\Out}$; 
a linear extension which 
puts the boxes of $Y^{\Out}$ after those of $Y^{\In}$; and 
the corresponding labeled Go-diagram.
Using this linear extension,
$\w^{\In} = s_4 s_5 s_2 s_3 s_4$, $\w^{\Out} = s_6 s_5 s_1 s_2 s_3 s_4$,
$\v^{\In} = s_4 s_5 1 1 s_4$, and $\v^{\Out} = 1 s_5 s_1 1 1 s_4$.
\setlength{\unitlength}{0.7mm}
\begin{center}
  \begin{picture}(60,40)
  
% Young Diagram
% horizontal lines
  \put(5,35){\line(1,0){40}}
  \put(5,25){\line(1,0){40}}
  \put(5,15){\line(1,0){40}}
  \put(5,5){\line(1,0){30}}
% vertical lines
  \put(5,5){\line(0,1){30}}
  \put(15,5){\line(0,1){30}}
  \put(25,5){\line(0,1){30}}
  \put(35,5){\line(0,1){30}}
  \put(45,15){\line(0,1){20}}

% first row
  \put(6,28){out}
  \put(16,28){out}
  \put(26,28){out}
  \put(36,28){out}
% second row
  \put(6,18){out}
  \put(18,18){in}
  \put(28,18){in}
  \put(38,18){in}
% third row 
  \put(6,8){out}
  \put(18,8){in}
  \put(28,8){in}
  \end{picture} 
  \begin{picture}(60,40)
% Young Diagram
% horizontal lines
  \put(5,35){\line(1,0){40}}
  \put(5,25){\line(1,0){40}}
  \put(5,15){\line(1,0){40}}
  \put(5,5){\line(1,0){30}}
% vertical lines
  \put(5,5){\line(0,1){30}}
  \put(15,5){\line(0,1){30}}
  \put(25,5){\line(0,1){30}}
  \put(35,5){\line(0,1){30}}
  \put(45,15){\line(0,1){20}}

% first row
  \put(7,28){$11$}
  \put(17,28){$10$}
  \put(27,28){$~9$}
  \put(37,28){$~8$}
% second row
  \put(7,18){$~7$}
  \put(17,18){$~5$}
  \put(27,18){$~4$}
  \put(37,18){$~3$}
% third row 
  \put(7,8){$~6$}
  \put(17,8){$~2$}
  \put(27,8){$~1$}
 
  \end{picture} 
%  \qquad\quad
   \begin{picture}(45,40)
% Young Diagram
% horizontal lines
  \put(5,35){\line(1,0){40}}
  \put(5,25){\line(1,0){40}}
  \put(5,15){\line(1,0){40}}
  \put(5,5){\line(1,0){30}}
% vertical lines
  \put(5,5){\line(0,1){30}}
  \put(15,5){\line(0,1){30}}
  \put(25,5){\line(0,1){30}}
  \put(35,5){\line(0,1){30}}
  \put(45,15){\line(0,1){20}}
% first row
  \put(7,28){$-1$}
  \put(17,28){$p_{10}$}
  \put(27,28){$~p_{9}$}
  \put(37,28){$~1$}
% second row
  \put(7,18){$-1$}
  \put(17,18){$~1$}
  \put(27,18){$~p_4$}
  \put(37,18){$~p_3$}
% third row 
  \put(7,8){$~p_6$}
  \put(17,8){$~1$}
  \put(27,8){$~1$}
  \end{picture} 
\end{center}
Note that $J^{\bullet}_{\v^{\Out}}=\{7, 11\}$ and 
$J^{\Box}_{\v^{\Out}}=\{6,9,10\}$. 
Then  $g \in G_{\v,\w}$ has the form
$$g = g^{\In} g^{\Out} = (\dot s_4 \dot s_5 y_2(p_3) y_3(p_4) \dot s_4 )\ (y_6(p_6) x_5(m_7) \dot s_5^{-1} \dot s_1
y_2(p_9) y_3(p_{10}) x_4(m_{11}) s_4^{-1}).$$
When we project the resulting $7 \times 7$ matrix to its first three columns, we get
the matrix 
\[
A=
\begin{pmatrix}
-p_9 p_{10} & -p_3 p_{10}& -p_{10} & -m_{11} & 0 & -1 & 0\\
0 & -p_3 p_4 & -p_4 & -m_7 & 1 & 0 & 0\\
0 & 0 & 0 & p_6 & 0 & 0 & 1\\
\end{pmatrix}
\]
\end{example}

%\begin{center}
%\begin{figure}
%\begin{tabular}{||c|c|c|c|c|c||}\hline
%$\tableaux{\\s_4 &s_3 &s_2&s_1 \\
%s_5&s_4&s_3&s_2 \\
%s_6&s_5&s_4\\&} $  &

%$\tableaux{\\0 &+ &+&0 \\
%0 &0&+&+ \\
%+ &0 &0\\&} $  & 

%$\tableaux{\\\bullet &\ &\ & \circ \\
%\bullet &\circ&\ &\ \\
%\ &\circ &\circ\\&} $  & 

%$\tableaux{\\11 &10 &9&8 \\
%7 &5&4&3 \\
%6 &2 &1\\&} $  & 

%$\tableaux{\\-1 &p_{10} &p_9&1 \\
%-1 &1&p_4&p_3 \\
%p_6 &1 &1\\&} $  & 
%
%$\tableaux{\\ \Out &\Out &\Out&\Out \\
%\Out &\In&\In&\In \\
%\Out &\In &\In\\&} $   
%\\ \hline
%\end{tabular}
%\caption{The poset $O_w$, a Deodhar diagram in $O_w$, the corresponding Go-diagram, a
%linear extension, the corresponding labeled Go-diagram, and the boxes of $Y_b^{\Out}$ and $Y_b^{\In}$ for the box
%$b$ labeled $7$.}
%\label{fig:Deodhar-Go}
%\end{figure}
%\end{center}

\begin{theorem}\label{p:Plucker}
Let $\w=s_{i_1} \dots s_{i_m}$ be a reduced expression for $w \in W^k$ and $\v \prec \w$,
and let $D$ be the corresponding Go-diagram.  Choose any box $b$ of $D$,
and let $v^{\In} = v_b^{\In}$ and $w^{\In} = w_b^{\In}$, and   
 $v^{\Out} = v_b^{\Out}$ and $w^{\Out} = w_b^{\Out}$.
Let $A = \pi_k(g)$ for any $g \in G_{\v,\w}$, and let
$I = w\{n,n-1,\dots,n-k+1\}$.
%, so that $\Delta_I$ is the lexicographically minimal
%nonzero Pl\"ucker coordinate on $A$. 
If $b$ is a blank box, define $I_b = v^{\In} (w^{\In})^{-1} I \in {[n] \choose k}$.  
If $b$ contains a white or black stone, define $I_b = v^{\In} s_{b} (w^{\In})^{-1} I \in {[n] \choose k}$.  
If we write $g = g_1 \dots g_m$ as in Definition \ref{d:factorization}, then
\begin{enumerate}
\item If $b$ is a blank box, then $\Delta_{I_b}(A) = (-1)^{|J_{\v^{\Out}}^{\bullet}|} \prod_{i\in J_{\v^{\Out}}^\Box} p_i.$ 
\item If $b$ contains a white stone, then $\Delta_{I_b}(A) = 0.$
\item If $b$ contains a black stone, then $\Delta_{I_b}(A) = (-1)^{|J_{\v^{\Out}}^{\bullet}|+1} m_b \prod_{i\in J_{\v^{\Out}}^\Box} p_i  
+ \Delta_{I_b}(A_b),$ where $m_b$ is the parameter corresponding to $b$,
and $A_b$ is the matrix $A$ with $m_b = 0$.
\end{enumerate}
%Note that $\Delta_{I_b}(A)$ equals the product of all the labels
%in the ``out" boxes of the labeled Go-diagram.% associated to $(\v,\w)$.
\end{theorem}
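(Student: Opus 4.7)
The plan is to adapt the row-by-row computation from the proof of Theorem \ref{p:maxmin} to the refined factorization $g = g^{\In} g^{\Out}$ provided by Definition \ref{inout}. Choosing a linear extension of $O_w$ that orders all boxes of $Y^{\Out}$ after those of $Y^{\In}$, the Pl\"ucker coordinate becomes
\[
\Delta_{I_b}(A) \;=\; \langle e_{I_b},\, g^{\In} \cdot (g^{\Out} \cdot \lambda)\rangle,
\]
where $\lambda = e_n \wedge \cdots \wedge e_{n-k+1}$. The strategy is to first expand $g^{\Out}\cdot \lambda$ in the standard basis of $\wedge^k \R^n$, and then apply $g^{\In}$, keeping track only of the contributions to $e_{I_b}$.

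First I would analyze $g^{\Out}\cdot\lambda$. Although $Y^{\Out}$ is not in general an upper-left-justified Young diagram, the row decomposition of $\w^{\Out}$ still admits the same row-by-row analysis as in Theorem \ref{p:maxmin}: each row yields a sequence of $\dot s_i$, $y_i(p)$, and $x_i(m)\dot s_i^{-1}$ factors that shift one basis vector $e_j$ to a distinguished position, multiplied by the corresponding product of parameters and a sign from the $x$-factors. This identifies the distinguished term
\[
c\cdot e_{w^{\Out}\{n-k+1,\dots,n\}}, \qquad c = (-1)^{|J^{\bullet}_{\v^{\Out}}|}\prod_{i\in J^{\Box}_{\v^{\Out}}} p_i,
\]
together with other basis vectors $e_J$ whose multi-indices $J$ are strictly larger than $w^{\Out}\{n-k+1,\dots,n\}$ in the component-wise order.

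Next I would apply $g^{\In}$, using Lemma \ref{lem:act} together with the fact that $\v^{\In}$ is a distinguished subexpression of $\w^{\In}$ for $v^{\In}$. An induction along the linear extension of $Y^{\In}$ tracks how $g^{\In}$ maps $e_{w^{\Out}\{n-k+1,\dots,n\}}$ into a leading term $\pm e_{v^{\In} w^{\Out}\{n-k+1,\dots,n\}}$ and shows that no non-leading basis vector from $g^{\Out}\cdot\lambda$ can contribute to the coefficient of $e_{I_b}$. The three cases then split according to the factor at the box $b$ itself. For a blank $b$, the factor $y_{i_b}(p_b)$ acts as the identity on the relevant basis vector (its index is not $i_b+1$), so $p_b$ does not appear and the coefficient of $e_{I_b} = e_{v^{\In} w^{\Out}\{n-k+1,\dots,n\}}$ is exactly $c$, with the signs combining to the stated formula. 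For a white stone, the factor $\dot s_{i_b}$ sends the leading term to $\pm e_{v^{\In} w^{\Out}\{n-k+1,\dots,n\}}$, which is a different multi-index from $I_b = v^{\In} s_b w^{\Out}\{n-k+1,\dots,n\}$, giving vanishing. For a black stone, one splits $x_{i_b}(m_b)\dot s_{i_b}^{-1}$ into its $\dot s_{i_b}^{-1}$ part (which reproduces exactly $\Delta_{I_b}(A_b)$) and its $m_b$-linear part (which, up to sign, acts like a rank-one $y_{i_b}$-factor), producing the extra $(-1)^{|J^{\bullet}_{\v^{\Out}}|+1} m_b \prod p_i$ term.

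The main obstacle will be making the vanishing in case (2) and the splitting in case (3) fully rigorous, by ruling out unexpected contributions to the coefficient of $e_{I_b}$ from the non-leading basis vectors of $g^{\Out}\cdot\lambda$ after $g^{\In}$ acts, and by bookkeeping all the signs correctly. A clean way to organize this is by induction on the number of boxes in $Y^{\In}$: the base case $Y^{\In} = \emptyset$ is Theorem \ref{p:maxmin} applied to the subset $I$, and the inductive step peels off one box of $Y^{\Out}$ at a time, adjoining the corresponding factor to $g^{\In}$, thereby reducing each case to an explicit local computation with Lemma \ref{lem:act}.
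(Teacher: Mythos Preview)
Your overall strategy---factor $g=g^{\In}g^{\Out}$, expand $g^{\Out}\cdot\lambda$, then apply $g^{\In}$---is exactly the paper's. But two of your key steps do not go through as written, and the paper uses different mechanisms at those points.

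\textbf{Eliminating the non-leading terms of $g^{\Out}\cdot\lambda$.} You assert that every other basis vector $e_J$ has $J\succ w^{\Out}\{n-k+1,\dots,n\}$ and that therefore, after applying $g^{\In}$, none contributes to $e_{I_b}$. The first part is fine, but the second is not: $g^{\In}\in U^-\dot v^{\In}$ moves indices \emph{down}, and permutations do not respect the componentwise order, so a term with $J\succ w^{\Out}\{\dots\}$ can easily land on $e_{I_b}$. The paper avoids this problem entirely. Working with a specific box $b$ (the NW corner after deleting the top row and left column), it writes each $f_\ell=g\cdot e_{n-k+\ell}=g^{\In}\cdot(q_\ell e_{r_\ell}+\text{higher terms})$ and observes that the higher terms are either $e_n$ (and $g^{\In}e_n=e_n$ while $n\notin I_b$) or already appear as the leading part of some $f_{\ell'}$ with $\ell'>\ell$, hence are \emph{redundant in the wedge}. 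No information about how $g^{\In}$ acts on them is needed.

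\textbf{The three cases.} After the reduction, the paper is left with
\[
\Delta_{I_b}(A)=\Bigl(\prod q_j\Bigr)\,\langle e_{j_2}\wedge\cdots\wedge e_{j_k},\,g^{\In}\cdot(e_{n-k+1}\wedge\cdots\wedge e_{n-1})\rangle.
\]
For case (1) the paper does \emph{not} track the leading coefficient through $g^{\In}$ as you propose; instead it recognizes this pairing as the lexicographically maximal Pl\"ucker coordinate of $\pi_{k-1}(g^{\In})$ in a smaller Grassmannian, which equals $1$ by Theorem~\ref{p:maxmin}. For case (2) your ``different multi-index'' argument only handles the leading--leading contribution. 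The actual vanishing uses that $g^{\In}=\tilde g^{\In}\dot s_b$ with $\tilde g^{\In}=h\dot{\tilde v}^{\In}$, $h\in U^-$, together with the Bruhat inequality $\tilde v^{\In}s_b>\tilde v^{\In}$ (which holds precisely because $b$ is a white stone); this forces the pairing to vanish for all terms, not just the leading one. Case (3) is handled by expanding $x_{i_b}(m_b)\dot s_{i_b}^{-1}\cdot(e_{n-k+1}\wedge\cdots)$ with Lemma~\ref{lem:act}, which indeed gives an $m_b$-linear piece (evaluated using the same $U^-$ argument as above, yielding $-m_b$) plus the $m_b=0$ piece.

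\textbf{The induction.} Your base case ``$Y^{\In}=\emptyset$'' does not correspond to any box $b$, and your inductive step moves $b$ itself, so $I_b$ changes with each step; it is unclear what statement is being carried along. The paper's iteration is different: it proves everything for one specific $b$ by direct computation, and then observes that the identical argument, applied after stripping off the top row and the leftmost column (i.e.\ passing to $\pi_{k-1}(g^{\In})$ in $Gr_{k-1,n-2}$), handles every box weakly southeast of $b$.
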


\begin{remark}
The Pl\"ucker coordinates given by Theorem \ref{p:Plucker} (1) 
are monomials 
in the $p_i$'s.  In particular, they are nonzero, and do not depend on the values of
the $m$-parameters from the $x_i(m)$-factors.

Those minors $\Delta_{I_b}(A)$ correspond to the chamber minors defined in
\cite[Definition 6.3]{MR}.   See also Lemmas 7.4 and 7.5 in \cite{MR}, and note that
the dominant weight for the present case is
$\lambda=e_{n-k+1}\wedge\cdots\wedge e_n$.
\end{remark}

Before proving  Theorem \ref{p:Plucker}, we mention an immediate
corollary.
% of the theorem showing the relations among the Pl\"ucker coordinates at a box and its three adjacent boxes.
\begin{corollary}\label{signatblack}
Use the notation of Theorem \ref{p:Plucker}.  Let $b$ be a box
of the Go-diagram, and let $e$, $s$, and $se$ denote the neighboring boxes
which are at the east, south, and southeast of $b$.
%$I_{b}$ be the index set for the box $b$.  Also let $I_{e},I_{s},I_{se}$ be the index sets for
%adjacent boxes $e,s,se$ at the east, south and southeast sides of the box $b$.
Then we have
\[
\frac{\Delta_{I_e}(A)\Delta_{I_s}(A)}{\Delta_{I_b}(A)\Delta_{I_{se}}(A)}=\left\{
\begin{array}{lll}
1\quad& {\rm if}\quad {\rm box} ~b~{\rm contains~a~white~stone}\\
-1\quad&{\rm if}\quad {\rm box} ~b~{\rm contains~a~black~stone}\\
p_b\quad &{\rm if}\quad  {\rm box} ~b~{\rm is~blank~and~the~labeled~Go~diagram~contains~}p_b\\
%p_b\quad &{\rm if}\quad  {\rm box} ~b~{\rm is~blank~with~}p_b\\
\end{array}\right.
\]
\end{corollary}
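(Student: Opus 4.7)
The plan is to derive the corollary by direct substitution of the formulas in Theorem \ref{p:Plucker} and to track systematic cancellations. I would first establish the combinatorial setup. Writing $b$ at row $i$, column $j$ of the Young diagram, let $C_b$ denote $\{b\}$ together with the boxes of the diagram below $b$ in column $j$, and $R_b$ denote $\{b\}$ together with the boxes of the diagram to the right of $b$ in row $i$, so that $C_b\cap R_b=\{b\}$ and $C_b\cup R_b$ is the hook at $b$. From Definition \ref{inout} one verifies the set identities
\[
Y_e^{\Out}=Y_b^{\Out}\cup C_b,\qquad Y_s^{\Out}=Y_b^{\Out}\cup R_b,\qquad Y_{se}^{\Out}=Y_b^{\Out}\cup C_b\cup R_b.
\]

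Substituting the appropriate formula of Theorem \ref{p:Plucker} into each of $\Delta_{I_e}, \Delta_{I_s}, \Delta_{I_b}, \Delta_{I_{se}}$ and collecting, the contributions attached to boxes in $Y_b^{\Out}$ appear with equal multiplicity in the numerator and denominator of the ratio; those attached to $C_b\setminus\{b\}$ appear once in the numerator (through $\Delta_{I_e}$) and once in the denominator (through $\Delta_{I_{se}}$); and those attached to $R_b\setminus\{b\}$ cancel similarly between $\Delta_{I_s}$ and $\Delta_{I_{se}}$. Hence the residue depends only on the content of the box $b$ itself. If $b$ is blank, then $b$ contributes a factor $p_b$ to each of $P(Y_e^{\Out})$, $P(Y_s^{\Out})$, and $P(Y_{se}^{\Out})$ but not to $P(Y_b^{\Out})$, and the count $p_b\cdot p_b/p_b=p_b$ survives with signs balancing. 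If $b$ is a white stone then $b$ belongs to none of the relevant $J^\Box$ or $J^\bullet$, so the ratio collapses to $1$. If $b$ is a black stone then $b$ adds one extra element to $J^\bullet_{\v_X^{\Out}}$ for each $X\in\{e,s,se\}$, yielding a net sign of $(-1)^{1+1-1}=-1$.

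The main obstacle is the black-stone case, because Theorem \ref{p:Plucker}(3) expresses $\Delta_{I_b}(A)$ as a sum $(-1)^{|J^\bullet_{\v^{\Out}}|+1}m_b\prod_{i\in J^\Box_{\v^{\Out}}}p_i+\Delta_{I_b}(A_b)$ rather than as a monomial, and in some configurations one or more of the other three Plücker coordinates may vanish. My plan is to recast the corollary as the polynomial identity $\Delta_{I_e}\Delta_{I_s}+\Delta_{I_b}\Delta_{I_{se}}=0$ and compare the $m_b$-linear and $m_b$-independent parts separately: the $m_b$-linear part is controlled by the leading term of Theorem \ref{p:Plucker}(3) and vanishes by the cancellation argument above with the extra $-1$ from the black stone, while the $m_b$-independent part reduces to the same identity for the truncated matrix $A_b$ obtained by setting $m_b=0$, where the box $b$ is effectively removed from $\v^{\Out}$ and the argument closes by induction on the number of black stones in the Go-diagram.
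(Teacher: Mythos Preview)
The paper does not give a proof; it simply labels the result an ``immediate corollary'' of Theorem~\ref{p:Plucker}.  Your approach---substitute the formulas of Theorem~\ref{p:Plucker} and track cancellations via the set identities $Y_e^{\Out}=Y_b^{\Out}\cup C_b$, $Y_s^{\Out}=Y_b^{\Out}\cup R_b$, $Y_{se}^{\Out}=Y_b^{\Out}\cup C_b\cup R_b$---is precisely what is intended, and your cancellation in the case where $b$ (and $e,s,se$) are blank is correct and yields $p_b$.

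There is, however, a genuine gap in your treatment of the non-blank cases.  Your cancellation argument tacitly uses the \emph{monomial} formula (1) of Theorem~\ref{p:Plucker} for all four Pl\"ucker coordinates, but that formula applies only when the corresponding box is blank.  In particular, if $b$ is a white stone then Theorem~\ref{p:Plucker}(2) gives $\Delta_{I_b}(A)=0$, so the ratio $\Delta_{I_e}\Delta_{I_s}/(\Delta_{I_b}\Delta_{I_{se}})$ is not ``$1$'' but undefined; your assertion that the ratio ``collapses to $1$'' because $b\notin J^\Box\cup J^\bullet$ is exactly the computation one would make if the monomial formula applied to $\Delta_{I_b}$, which it does not.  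Likewise, in the black-stone case with blank neighbors $e,s,se$, the three coordinates $\Delta_{I_e},\Delta_{I_s},\Delta_{I_{se}}$ are monomials in the $p_i$ alone (no $m$-dependence), whereas by formula~(3) $\Delta_{I_b}$ carries a genuine $m_b$-term.  The cross-multiplied identity $\Delta_{I_e}\Delta_{I_s}+\Delta_{I_b}\Delta_{I_{se}}=0$ would then force the $m_b$-coefficient $(-1)^{|J^\bullet_{\v^{\Out}}|+1}\Delta_{I_{se}}\prod p_i$ to vanish, which it does not for generic $p_i$; so your proposed $m_b$-linear/$m_b$-constant split cannot close as written, and the ``induction on black stones'' does not get off the ground.  (The corollary is in fact loosely stated in these degenerate cases; as Remark~\ref{rem:2-term} indicates, the intended content for a stone at $b$ is a two-term Pl\"ucker relation, and a complete argument would need to identify which neighboring coordinate also vanishes or carries compensating $m_b$-dependence---something neither you nor the paper spells out.)
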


\begin{remark} \label{rem:2-term} 
Each black and white stone corresponds to a \emph{two-term}
Pl\"ucker
relation, that is, a three-term Pl\"ucker relation in which 
one term vanishes.
And each black stone
implies that there are two
Pl\"ucker coordinates with opposite signs.
This will be useful when we discuss the regularity of solitons in Section \ref{sec:regularity}.  Also note that the formulas in Corollary \ref{signatblack}
correspond to the \emph{Generalized Chamber Ansatz} in \cite[Theorem 7.1]{MR}.
\end{remark}

\begin{example}\label{ex:out2}
We continue Example \ref{ex:out}.  
%We first verify Theorem \ref{p:maxmin}.
By Theorem \ref{p:maxmin}, $I = w \{5,6,7\} = \{1,2,4\}$ and 
$I' = v \{5,6,7\} = \{5,6,7\}$, and
the lexicographically minimal and maximal
nonzero Pl\"ucker coordinates for $A$ are
$\Delta_I(A) = p_3 p_4 p_6 p_9 p_{10}$ and 
$\Delta_{I'}(A) = 1$; this can be verified
for the matrix $A$ above.

We now verify Theorem \ref{p:Plucker}
for the box $b$ chosen earlier.  Then 
$I_b = v^{\In} (w^{\In})^{-1}I = \{1,4,6\}$.
Theorem \ref{p:Plucker} says that 
$\Delta_{I_b}(A) = 0$, since this box contains a white stone.
The analogous computations for the boxes labeled 
$7$, $6$, $4$, $3$, $2$, $1$, respectively, 
yield $\Delta_{1,5,7} = -p_9 p_{10}$,
$\Delta_{1,2,7} = p_3 p_4  p_9 p_{10}$,
$\Delta_{1,4,5} = p_6 p_9 p_{10}$,
$\Delta_{1,3,4} = p_4 p_6 p_9 p_{10}$,
$\Delta_{1,2,4} =p_3 p_4 p_6 p_9 p_{10}$, and
$\Delta_{1,2,4} =p_3 p_4 p_6 p_9 p_{10}$.
These can be checked for the matrix $A$ above.
\end{example}

\subsection{The proof of Theorem \ref{p:Plucker}.}

For simplicity of notation, we assume that when we write $A$ in 
row-echelon form, its first pivot is $i_1 = 1$
and its last non-pivot is $n$. (The same proof works without this
assumption, but the notation required would be more cumbersome.)

Choose the box $b$ which is located at the northwest
corner of the Young diagram obtained by  removing the topmost row
and the leftmost column; this is the box labeled $5$ in the diagram
from Example \ref{ex:out}.  We will explain the proof of the theorem
for this box $b$.  The same argument works if $b$ lies in the 
top row or leftmost column; and such an argument can be iterated
to prove Theorem \ref{p:Plucker} for boxes which are 
(weakly) southeast of $b$.

Choose a linear extension of 
$O_w$ which orders all the boxes of $Y^{\Out}$ after those of $Y^{\In}$,
and which orders the  boxes of the top row so that they 
come after those of the leftmost column.  The linear extension
from Example \ref{ex:out} is one such an example.
Choosing the reduced expression $\w$ correspondingly, we  write
$\w = \w^{\In} \w^{\Out}$ and $\v = \v^{\In} \v^{\Out}$, then choose
$g \in G_{\v,\w}$ and write it as $g=g^{\In} g^{\Out}$.
Note that from our choice of linear extension, we have
\begin{equation}\label{w-Out}
\w^{\Out} = (s_{n-1} s_{n-2} \dots s_{n-k+1})(s_1 s_2 \dots s_{n-k}).
\end{equation}

Recall that $I_b = v^{\In} (w^{\In})^{-1} I$ if $b$ is a blank box
and otherwise
$I_b = v^{\In} s_{b} (w^{\In})^{-1} I$,
where 
$I = \{i_1,\dots,i_k \}$, with $i_1 = 1$.  In our case,
$s_b = s_{n-k}$.
Also $w^{-1} I = \{n-k+1,\dots,n-1,n\}$, which implies that 
\begin{equation}
(w^{\In})^{-1}I = w^{\Out} \{n-k+1,\dots,n-1,n\} = 
\{1,n-k+1,n-k+2,\dots,n-1\}.
\end{equation}

Since there is no factor of $s_1$ or $s_{n-1}$ in $\v^{\In}$
(respectively $\v^{\In} s_{n-k}$),
and $I_b = v^{\In} \{1,n-k+1,n-k+2,\dots,n-1\}$
(respectively 
$I_b = v^{\In} s_{n-k} \{1,n-k+1,n-k+2,\dots,n-1\}$),
we have
\begin{equation}
1 \in I_b \qquad \text{ and }\qquad  n \notin I_b.
\end{equation}
Write $I_b = \{j_1,\dots,j_k\}$ with $j_1 = 1$.
Our goal is to compute 
$\Delta_{I_b}(A) = \langle e_{j_1} \wedge \dots
\wedge e_{j_k}, ~g \cdot e_{n-k+1} \wedge \dots \wedge e_{n} \rangle.$

Let $f_{\ell} = g \cdot e_{n-k+\ell}$.
Let $q_{\ell}$ be the product of all labels in the 
``out" boxes of the $\ell$th row of the labeled Go-diagram.
Using Lemma \ref{lem:act} and 
equation \eqref{w-Out}, we obtain
\begin{align*}
f_k = g \cdot  e_n &= g^{\In} \cdot (q_k e_{n-1} + c_n^k e_n) \\
f_{k-1}  = g \cdot e_{n-1} &= g^{\In} \cdot (q_{k-1} e_{n-2}
  +c_{n-1}^{k-1} e_{n-1} + c_n^{k-1} e_n)\\
    & \vdots \hspace{3cm} \vdots \\
f_2 = g \cdot e_{n-k+2} &= g^{\In} \cdot (q_2 e_{n-k+1} + 
c^2_{n-k+2} e_{n-k+2} + \dots + c_n^2 e_n)\\
f_1 = g \cdot e_{n-k+1} &= g^{\In} \cdot (q_1 e_1 + c_2^1 e_2 + 
\dots + c_n^1 e_n).
\end{align*}
Here the $c_i^j$'s are constants depending on $g^{\Out}$.

We now claim that only the first term with coefficient
$q_{\ell}$ in each $f_{\ell}$ contributes to the Pl\"ucker
coordinate $\Delta_{I_b}(A)$.  To prove this claim, 
note that:
\begin{enumerate}
\item Since $n \notin I_b$ and $g^{\In} \cdot e_n = e_n$,
the terms $c_n^{\ell} e_n$ do not affect $\Delta_{I_b}(A)$.
Therefore, we may as well assume that each $c_n^{\ell} = 0$.
Define $\tilde{f}_k = q_k g^{\In} \cdot e_{n-1}$.
\item Now note that the term $c_{n-1}^{k-1} e_{n-1}$ does
not affect the wedge product 
$\tilde{f}_k \wedge f_{k-1}$.  In particular, 
$\tilde{f}_k \wedge f_{k-1} = \tilde{f}_k \wedge \tilde{f}_{k-1}$
where $\tilde{f}_{k-1} = q_{k-1} g^{\In} \cdot e_{n-2}$.
\item Applying the same argument for $2 \leq \ell \leq k-2$, we
can replace
each $f_{\ell}$ by $\tilde{f}_{\ell} = q_{\ell} g^{\In} \cdot e_{n-k+\ell}$,
without affecting the wedge product.
\item Since $1 \in I_b$ and $e_1$ does not appear in any $f_{\ell}$
except $f_1$, for the purpose of computing 
$\Delta_{I_b}(A)$ we may replace $f_1$ by $\tilde{f}_1 = q_1 e_1$.
\end{enumerate}

Now we have
\begin{align}
\Delta_{I_b} (A) &= \langle e_{j_1} \wedge \dots \wedge e_{j_k},\nonumber
 ~ f_1 \wedge \dots \wedge f_k \rangle \\
&= \langle e_{j_1} \wedge \dots \wedge e_{j_k}, \nonumber
~  \tilde{f}_1 \wedge \dots \wedge \tilde{f}_k \rangle \\
&= \bigl(\prod_{j=1}^k q_j \bigr) \label{eq:useful}
\langle e_{j_1} \wedge \dots \wedge e_{j_k}, 
~g^{\In} \cdot (e_{1} \wedge e_{n-k+1} \wedge\dots \wedge e_{n-1}) \rangle \\
&= \bigl(\prod_{j=1}^k q_j \bigr) 
\langle e_{j_2} \wedge \dots \wedge e_{j_k}, 
~g^{\In} \cdot (e_{n-k+1} \wedge \dots \wedge e_{n-1}) \rangle, \label{eq:minor}
\end{align}
where in the last step we used $j_1=1$.
Finally we need to compute the wedge product in \eqref{eq:minor}.

Consider the case that $b$ is a blank box.
Then from the definition of $I_b = \{j_1,\dots,j_k\}$,
we have $\{j_2,\dots,j_k\} = v^{\In} \{n-k+1,n-k+2,\dots,n-1\}$.
It follows that 
\[
\langle e_{j_2} \wedge \dots \wedge e_{j_k}, ~
g^{\In} \cdot (e_{n-k+1} \wedge \dots \wedge e_{n-1}) \rangle =1,
\]
because this is the lexicographically maximal minor for the 
matrix $A' = \pi_{k-1}(g^{\In}) \in Gr_{k-1,n-2}$ corresponding
to the sub Go-diagram obtained by removing the top row and 
leftmost column.
Therefore $\Delta_{I_b}(A) = \prod_{j=1}^k q_j = 
(-1)^{|J_{\v^{\Out}}^{\bullet}|} \prod_{i\in J_{\v^{\Out}}^\Box} p_i$,
as desired. 

Now consider the case that $b$ contains a white or black stone.
Then from the definition of $I_b = \{j_1,\dots,j_k\}$,
we have $\{j_2,\dots,j_k\} = v^{\In} s_{n-k} \{n-k+1,n-k+2,\dots,n-1\}.$
%v^{\In}  \{n-k,n-k+2,\dots,n-1\}.$  
The wedge product in \eqref{eq:minor} is equal to 
$\langle v^{\In} s_{n-k} \cdot (e_{n-k+1} \wedge \dots \wedge e_{n-1}), 
g^{\In} \cdot (e_{n-k+1} \wedge \dots \wedge e_{n-1}) \rangle.$

If $b$ contains a white stone, then the last factor in $v^{\In}$
is $s_{n-k}$ and the last factor in $g^{\In}$ is $\dot s_{n-k}$,
so we can write 
$v^{\In} = \tilde{v}^{\In} s_{n-k}$ and 
$g^{\In} = \tilde{g}^{\In} \dot s_{n-k}$, where
$\tilde{\v}^{\In}$ is also a distinguished expression.
Then $\tilde{g}^{\In} \in G_{\tilde{\v}^{\In}, \w^{\In}}$
so $\tilde{g}^{\In} = h\tilde{v}^{\In}$ where $h\in U^{-}$.
Then we have  
$\langle v^{\In} s_{n-k} \cdot (e_{n-k+1} \wedge \dots \wedge e_{n-1}), 
g^{\In} \cdot (e_{n-k+1} \wedge \dots \wedge e_{n-1}) \rangle =
\langle \tilde{v}^{\In}  \cdot (e_{n-k+1} \wedge \dots \wedge e_{n-1}), 
h \tilde{v}^{\In} \cdot (e_{n-k+1} \wedge \dots \wedge e_{n-1}) \rangle$. 
Since $b$ contains a white stone,
$\tilde{v}^{\In} s_{n-k} > \tilde{v}^{\In}$ in the Bruhat order, and hence
$\tilde{v}^{\In} \{n-k\} < \tilde{v}^{\In} \{n-k+1\}$.  Since
$h\in U^-$, it follows that this wedge product equals $0$.

If $b$ contains a black stone then the last factor in $v^{\In}$
is $s_{n-k}$ and the last two factors in $g^{\In}$ are 
$x_{n-k}(m_b) \dot s_{n-k}^{-1}$.  So we can write 
$v^{\In} = \tilde{v}^{\In} s_{n-k}$ and 
$g^{\In} = \tilde{g}^{\In} x_{n-k}(m_b) \dot s_{n-k}^{-1}$.  
%Again 
%we have 
%$\tilde{g}^{\In} = h\tilde{v}^{\In}$ where $h\in U^{-}$.
Then we have
\begin{align}
&g^{\In} \cdot (e_{n-k+1} \wedge \dots \wedge e_{n-1})\nonumber \\
 = &\label{eq:2nd}
\tilde{g}^{\In} x_{n-k}(m_b) \dot s_{n-k}^{-1} \cdot (e_{n-k+1} \wedge \dots
\wedge e_{n-1})\\
= &-\tilde{g}^{\In} \cdot (m_b  (e_{n-k+1} \wedge \dots \wedge e_{n-1}) +
     (e_{n-k} \wedge e_{n-k+2}\wedge \dots \wedge e_{n-1})) \label{eq:3rd}\\
  = &-m_b \tilde{g}^{\In} \cdot (e_{n-k+1} \wedge \dots \wedge e_{n-1})
    - \tilde{g}^{\In} \cdot (e_{n-k} \wedge e_{n-k+2}\wedge \dots \wedge e_{n-1}). \label{eq:2terms}
\end{align}
Note that to go from \eqref{eq:2nd} to \eqref{eq:3rd} we used
Lemma \ref{lem:act}.

Let us compute the wedge product of the first term in \eqref{eq:2terms} 
with 
$v^{\In} s_{n-k} \cdot (e_{n-k+1} \wedge \dots \wedge e_{n-1}).$
Using $v^{\In} = \tilde{v}^{\In} s_{n-k}$, this can be expressed as 
\begin{align*}
&-m_b \cdot \langle v^{\In} \cdot (e_{n-k} \wedge e_{n-k+2}\wedge \dots \wedge e_{n-1}), ~\tilde{g}^{\In} \cdot 
     (e_{n-k+1} \wedge \dots \wedge e_{n-1})\rangle \\
=&-m_b \cdot \langle \tilde{v}^{\In} \cdot 
             (e_{n-k+1} \wedge \dots \wedge e_{n-1}),
~\tilde{g}^{\In} \cdot (e_{n-k+1} \wedge \dots \wedge e_{n-1})\rangle.
\end{align*}
Since 
we again have $\tilde{g}^{\In} = h\tilde{v}^{\In}$ where $h\in U^{-}$, 
%By Theorem \ref{p:maxmin}, 
the above quantity equals $-m_b$.

Let us now compute the wedge product of the second term
in \eqref{eq:2terms} with 
$v^{\In} s_{n-k} \cdot (e_{n-k+1} \wedge \dots \wedge e_{n-1}).$
This wedge product can be written as 
\begin{align*}
&\langle v^{\In} \cdot (e_{n-k} \wedge e_{n-k+2}\wedge \dots \wedge e_{n-1}), ~\tilde{g}^{\In} \cdot (e_{n-k} \wedge e_{n-k+2}\wedge \dots 
\wedge e_{n-1}) \rangle \\
=&\langle v^{\In} \cdot (e_{n-k} \wedge e_{n-k+2}\wedge  \dots \wedge e_{n-1}), ~ \tilde{g}^{\In} \dot s_{n-k}^{-1} \cdot (e_{n-k+1} \wedge \dots 
\wedge e_{n-1}) \rangle \\
= &\Delta_{j_1,\dots,j_k} (A_b),
\end{align*}
where $A_b$ is the matrix obtained from $A$ by setting $m_b = 0$.
This completes the proof of the theorem.

\begin{corollary} For any box $b$, the rescaled Pl\"ucker coordinate
$$\frac{\Delta_{I_b}(A)}{\prod_{i\in J_{\v}^\Box} p_i}$$ depends only on 
the parameters $p_{b'}$ and $m_{b'}$ which correspond to boxes
$b'$ weakly southeast of $b$ in the Go-diagram.
\end{corollary}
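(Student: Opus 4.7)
The plan is to derive this corollary not from the three case formulas of Theorem~\ref{p:Plucker}, but from the intermediate factorization that drives their proof. The key equation is~\eqref{eq:useful} (and its simplified form~\eqref{eq:minor}), which for any box $b$ expresses the Pl\"ucker coordinate as a product
\[
\Delta_{I_b}(A) \;=\; \Bigl(\prod_{j=1}^{k} q_j\Bigr)\,\cdot\, \Phi_b(g^{\In}),
\]
where $q_\ell$ is the product of the labels in the Out boxes of the $\ell$-th row of the labeled Go-diagram, and $\Phi_b(g^{\In})$ is a pairing of standard basis wedges whose only dependence on $g$ is through the factor $g^{\In}$.

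Next I would unpack the two factors. Because each black Out stone contributes $-1$, each white Out stone contributes $1$, and each blank Out box contributes its parameter~$p_i$, we have
\[
\prod_{j=1}^k q_j \;=\; (-1)^{|J^{\bullet}_{\v^{\Out}}|}\prod_{i\in J^{\Box}_{\v^{\Out}}} p_i.
\]
Using the disjoint decomposition $J^{\Box}_\v = J^{\Box}_{\v^{\In}} \sqcup J^{\Box}_{\v^{\Out}}$, dividing through yields
\[
\frac{\Delta_{I_b}(A)}{\prod_{i\in J^{\Box}_\v} p_i} \;=\; \frac{(-1)^{|J^{\bullet}_{\v^{\Out}}|}}{\prod_{i\in J^{\Box}_{\v^{\In}}} p_i}\,\cdot\, \Phi_b(g^{\In}).
\]
The sign is parameter-free; the denominator involves only $p_i$'s indexed by blank In boxes; and by Definition~\ref{inout} the element $g^{\In}$ is a product of factors $g_i$ for $i$ in the In region, each of which depends only on the corresponding $p_{b'}$ or $m_{b'}$ for $b' \in Y^{\In}_b$. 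Since $Y^{\In}_b$ is precisely the set of boxes weakly southeast of~$b$, all parameter-dependence on the right is confined to those boxes, proving the claim.

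The one thing that requires care is that the explicit derivation of~\eqref{eq:useful} in the proof of Theorem~\ref{p:Plucker} was written out for a specific choice of~$b$. I would invoke Proposition~\ref{p:whether}, which guarantees that, for any fixed~$b$, we may choose a linear extension of $O_w$ ordering all boxes of $Y^{\In}_b$ before those of $Y^{\Out}_b$; this is exactly the hypothesis needed to split $g = g^{\In} g^{\Out}$. Once this reading order is in place, the same computation using Lemma~\ref{lem:act} produces the factorization above for a general~$b$, as indicated by the iteration argument at the end of the proof of Theorem~\ref{p:Plucker}. There is no substantive obstacle beyond this bookkeeping: the corollary is essentially a repackaging of the structure already uncovered.
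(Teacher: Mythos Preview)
Your proof is correct and follows essentially the same approach as the paper: both invoke the factorization \eqref{eq:useful} together with the identity $\prod_{j=1}^k q_j = (-1)^{|J_{\v^{\Out}}^{\bullet}|} \prod_{i\in J_{\v^{\Out}}^\Box} p_i$, and then observe that the remaining factor depends only on $g^{\In}$. Your additional remarks about extending the derivation of \eqref{eq:useful} to a general box $b$ via a suitable linear extension are a helpful elaboration of what the paper leaves implicit in its iteration argument.
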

\begin{proof} This follows immediately from 
\eqref{eq:useful} and the fact that 
$\prod_{j=1}^k q_j = 
(-1)^{|J_{\v^{\Out}}^{\bullet}|} \prod_{i\in J_{\v^{\Out}}^\Box} p_i$.
\end{proof}

\subsection{Positivity tests for projected Deodhar components in the Grassmannian}

We can use our results on Pl\"ucker coordinates 
to obtain \emph{positivity tests} for Deodhar components in the Grassmannian.

\begin{definition}\label{def:pos-test}
Let $D$ be a Go-diagram and $S_D \subset Gr_{k,n}$.
A collection $\J$ of $k$-element subsets of $\{1,2,\dots,n\}$
is called a \emph{positivity test} for $S_D$ if for any $A \in S_D$,
the condition that $\Delta_I(A) >0$ for all $I \in \J$ 
implies that $A \in (Gr_{k,n})_{\geq 0}$.
\end{definition}

\begin{theorem}\label{th:TPTest}
Consider $A\in Gr_{k,n}$ lying in some Deodhar component 
$S_D$, where $D$ is a Go-diagram.
Consider the collection of minors
$\J = \{\Delta_I(A)\} \cup \{\Delta_{I_b}(A) \ \vert \ b \text{ a box of }D\}$,
where $I$ and $I_b$ are defined as in Theorem \ref{p:Plucker}.
If all of these minors are positive, then 
$D$ has no black stones, and all of the parameters $p_i$ must be positive.
It follows that the Deodhar diagram corresponding to $D$ is a $\Le$-diagram,
and $A$ lies in the positroid cell $S_D^{tnn} \subset (Gr_{k,n})_{\geq 0}$.
In particular, $\J$ is a positivity test for $S_D$.
\end{theorem}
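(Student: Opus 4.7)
The plan is to apply Corollary \ref{signatblack} to every box $b$ of $D$ and translate the resulting sign identities into structural constraints on $D$ and its parameters. For a box $b$ with east, south, and southeast neighbors $e$, $s$, $se$ (extending the diagram by the natural convention that out-of-diagram neighbors default to the lex-min set $I$ and contribute the minor $\Delta_I(A) \in \J$), the corollary reads
\[
\frac{\Delta_{I_e}(A)\,\Delta_{I_s}(A)}{\Delta_{I_b}(A)\,\Delta_{I_{se}}(A)} \;=\;
\begin{cases} 1 & b \text{ white,}\\ -1 & b \text{ black,}\\ p_b & b \text{ blank.}\end{cases}
\]

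Step 1 is to rule out black stones. If $b$ carried a black stone, the right side above would be $-1$ while all four minors on the left are strictly positive by hypothesis; this contradiction forces $D$ to have no black stones. Step 2 extracts the positivity of each parameter: for a blank box $b$, the same formula expresses $p_b$ as a ratio of positive quantities, so $p_b > 0$. Together these steps show that the distinguished subexpression $\v$ associated to $D$ satisfies $J^{\bullet}_{\v} = \emptyset$, so $\v$ is non-decreasing and hence a PDS; equivalently $D$ is a $\Le$-diagram. (White stones are independently ruled out by Theorem \ref{p:Plucker}(2), since then $\Delta_{I_b}(A) = 0$ contradicts positivity; this is needed for the hypothesis to be satisfiable, but not for the stated structural conclusion.)

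Step 3 then concludes $A \in (Gr_{k,n})_{\geq 0}$. Since $D$ is a $\Le$-diagram with all parameters $p_b > 0$, Remark \ref{rem:TPcell} --- applied on $G/B$ and then pushed to the Grassmannian via $\pi_k$ --- identifies $A$ with a point of $S_D^{tnn} \subset (Gr_{k,n})_{\geq 0}$. This immediately shows $\J$ is a positivity test for $S_D$.

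The main obstacle will be verifying the extension of Corollary \ref{signatblack} to boundary boxes of $D$ --- those for which $e$, $s$, or $se$ is not itself a box of $D$. The resolution is to trace through the definition of $I_b$ in Theorem \ref{p:Plucker}: when the ``in'' subdiagram of a virtual neighbor shrinks to the empty one, the corresponding index collapses to $I$, so $\Delta_I(A) \in \J$ supplies the needed positive minor. Once this boundary book-keeping is settled, the sign analysis proceeds uniformly over all boxes and the theorem follows.
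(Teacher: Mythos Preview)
Your proof is correct and follows essentially the same strategy as the paper: rule out black stones via the sign identity of Corollary~\ref{signatblack} (which is what the paper's Remark~\ref{rem:2-term} records), extract $p_b>0$ for each blank box, and then invoke Remark~\ref{rem:TPcell} to land in $(Gr_{k,n})_{\geq 0}$.

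The one genuine difference is in how you extract the positivity of the parameters. You use the four-term ratio $\Delta_{I_e}\Delta_{I_s}/(\Delta_{I_b}\Delta_{I_{se}})=p_b$ at each blank box, which forces you to verify the boundary convention for virtual neighbors. The paper instead takes the two-term ratio
\[
\frac{\Delta_I(A)}{\Delta_{I_b}(A)} \;=\; (-1)^{|J^{\bullet}_{\v^{\In}}|}\prod_{i\in J^{\Box}_{\v^{\In}}} p_i
\]
directly from Theorems~\ref{p:maxmin} and~\ref{p:Plucker}(1), and then peels off one $p_i$ at a time by sweeping boxes from southeast to northwest. This sidesteps the boundary bookkeeping entirely, since $\Delta_I$ is always available in $\J$. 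Your boundary convention is correct (a virtual neighbor has empty $Y^{\In}$, hence $v^{\In}=w^{\In}=1$ and $I_{\bullet}=I$), so both routes work; the paper's is just slightly more economical.
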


\begin{proof}
By Remark \ref{rem:2-term}, if all the minors in $\J$ are positive,
then $D$ cannot have  a black stone.

By Theorem \ref{p:maxmin} and Theorem \ref{p:Plucker} we have that 
$$\Delta_I(A) = (-1)^{|J_{\v}^{\bullet}|} \prod_{i\in J_{\v}^{\Box}} p_i \qquad 
\text{ and }\qquad \Delta_{I_b}(A) = (-1)^{|J_{\v^{\Out}}^{\bullet}|} \prod_{i\in J_{\v^{\Out}}^\Box} p_i.$$
Since we are assuming that both of these are positive, it follows that
for any box $b$, we have that 
$$\frac{\Delta_I(A)}{\Delta_{I_b}(A)}=
(-1)^{|J_{\v^{\In}}^{\bullet}|} \prod_{i\in J_{\v^{\In}}^\Box} p_i$$
is also positive.
Now by considering the boxes $b$ of $D$
in an order proceeding from southeast to northwest,
it is clear that every parameter $p_i$ in the labeled
Go-diagram must be positive,
because each 
$\frac{\Delta_I(A)}{\Delta_{I_{b}}(A)}$ must be positive.

Let $v$ and $w$ be the Weyl group elements corresponding to $D$.
Then it follows from Remark \ref{rem:TPcell} that 
$A$ lies in the projection of the totally positive cell 
$\mathcal{R}_{v,w}^{>0}$. And the projection of  
$\mathcal{R}_{v,w}^{>0}$ is precisely the positroid cell
$S_D^{tnn}$ of $(Gr_{k,n})_{\geq 0}$.
\end{proof}

%The following corollary is immediate.
%\begin{corollary}
%If there is at least one black stone in the Go-diagram, then there exists
%a box $b$ in the diagram such that the Pl\"ucker coordinate associated to the box, $\Delta_{I_b}(A)$,
%is negative.
%\end{corollary}

%%%%%%%%%%%%%%%%%%  Soliton solutions  %%%%%%%%%%%%%%%%%%%

\section{Soliton solutions to the KP equation}\label{soliton-background}

We now explain how to obtain a soliton solution 
to the KP equation from a point of $Gr_{k,n}$.
Each soliton solution can be considered as an orbit
with the flow parameters $(x,y,t)\in \mathbb{R}^3$
on $Gr_{k,n}$.

\subsection{From a point of the Grassmannian to a $\tau$-function.}

We start by fixing real parameters $\kappa_j$ such that 
\[
\kappa_1~<~\kappa_2~<~\cdots~<\kappa_n,
\]
which  are {\it generic}, in the sense
that 
the sums $\sum_{m=1} ^p\kappa_{j_m}$ are all distinct for any $p$ with
$1<p<n$.
% (in this paper, we need this up to $p=3$).  
We also 
assume that the differences between consecutive $\kappa_i$'s are similar,
that is, 
$\kappa_{i+1}-\kappa_i$ is of order one (e.g. one can take all $\kappa_j$ to be integers).

We now give a realization of $Gr_{k,n}$ with a specific basis of $\mathbb{R}^n$.  We define a set of vectors $\{\mathsf{E}_j^{\bf 0}: j=1,\ldots,n\}$ by
\[
\mathsf{E}_j^{\bf 0}:=\begin{pmatrix}
\kappa_j^{n-1}\\ \kappa_j^{n-2} \\ \vdots \\ \kappa_j\\ 1
\end{pmatrix} \,\in\,\mathbb{R}^n.
\]
Since all $\kappa_j$'s are distinct, the set $\{\mathsf{E}_j^{\bf 0}:j=1,\ldots,n\}$ forms a basis of $\mathbb{R}^n$.
Now define an $n\times n$ matrix $E^{\bf 0}=(\mathsf{E}_1^{\rm 0},\ldots,\mathsf{E}_n^{\bf 0})$, and let $A$ be a full-rank  $k\times n$ matrix  parametrizing a point on $Gr_{k,n}$. 
Then %a $k$-dimensional subspace in $\mathbb{R}$ with the basis $\{\mathsf{E}_j^{\bf 0}\}$ is given by a set of 
the vectors $\{\mathsf{F}_i^{\bf 0}\in\mathbb{R}^n :i=1,\ldots,k\}$
span a $k$-dimensional subspace in $\mathbb{R}^n$, 
where $\mathsf{F}_i^{\bf 0}$ is defined by
\[
\mathsf{F}_i^{\bf 0}:=\sum_{j=1}^na_{i,j}\,\mathsf{E}_j^{\bf 0},\qquad {\rm or}\qquad
(\mathsf{F}_1^{\bf 0},\ldots, \mathsf{F}_k^{\bf 0})={E}^{\bf 0}A^T.
\]
For $I=\{i_1,\ldots,i_k\}$, define the vector
$\mathsf{E}_{I}^{\bf 0}=\mathsf{E}_{i_1}^{\bf 0}\wedge\cdots\wedge \mathsf{E}_{i_k}^{\bf 0}
$.
Then we have a realization of the  Pl\"ucker embedding:
\[
\mathsf{F}_1^{\bf 0}\wedge\cdots\wedge \mathsf{F}_k^{\bf 0} =\sum_{I\in\binom{n}{k}}\Delta_I(A)\mathsf{E}_I^{\bf 0}.
\]

In \cite{Sato}, Sato showed that each solution of the KP equation is given by an orbit on the Grassmannian. To construct such an orbit, 
we consider a deformation $\mathsf{E}_j^{\bf t}$ of the vector $\mathsf{E}_j^{\bf 0}$, 
defined by:
\[
{\bf t}:= (x,y,t), \qquad
\theta_j(x,y,t)=\kappa_j x+\kappa_j^2 y+\kappa_j^3 t, \qquad 
\mathsf{E}_j^{\bf t}:=\mathsf{E}_j^{\bf 0}\exp\left(\theta_j(x,y,t)\right).
\]

\begin{remark}
Let $E^{\bf t}$ be the $n\times n$ matrix function whose columns are the
vectors $\{\mathsf{E}_j^{\bf t}\}$:
\[
E^{\bf t}:=(\mathsf{E}_1^{\bf t},\ldots,\mathsf{E}_n^{\bf t})=E^{\bf 0}{\rm diag}(e^{\theta_1},e^{\theta_{2}},\ldots,e^{\theta_n}).
\]
Note that $E^{\bf 0}$ is a Vandermonde matrix.
The vector functions $\{\mathsf{E}_j^{\bf t}\}$ form a fundamental set of solutions of a system of differential equations.
More concretely, if we define 
elementary symmetric polynomials in the $\kappa_j$'s by
\[
\sigma_1=\sum_{j=1}^n\kappa_j,\quad \sigma_2=\sum_{i<j}\kappa_i\kappa_j,\quad \sigma_3=\sum_{i<j<k}\kappa_i\kappa_j\kappa_k,\quad \cdots
\]
and let $C_K$ be the 
companion matrix 
\[
C_K=\begin{pmatrix}
\sigma_1  &  -\sigma_2 &  \cdots  &  \cdots  &  \pm\sigma_n \\
1                &    0               &   \cdots  &\cdots  &      0      \\
0                &     1             &     \ddots  &  \vdots  &   0     \\
\vdots      & \vdots          &  \ddots   &     0  & \vdots   \\
0               &     0               &  \cdots   &     1         &    0
\end{pmatrix},
\]
then 
the matrix $E^{\bf t}$ satisfies
\[
\mathcal{L}E^{\bf t}:=\left(\frac{\partial}{\partial x}-C_K\right) E^{\bf t}=0.
\]
So for any ${\bf t}=(x,y,t)$, we have
\[
\mathbb{R}^n\cong {\rm ker}(\mathcal{L})={\rm Span}_{\mathbb R}\{\mathsf{E}_j^{\bf t}:j=1,\ldots,n\}.
\]
Note that $C_K$ can be diagonalized by the Vandermonde matrix $E^{\bf 0}$, i.e.
\[
C_KE^{\bf 0}=E^{\bf 0}D, \qquad {\rm where}\quad D={\rm diag}(\kappa_n,\ldots, \kappa_1).
\]
Each vector function $E^{\bf t}$ satisfies the following  \emph{linear} equations with respect to $y$ and $t$:
\[
 \frac{\partial E^{\bf t}}{\partial y}=\frac{\partial^2E^{\bf t}}{\partial x^2}=C_K^2E^{\bf t}\qquad
 {\rm and}\qquad
\frac{\partial E^{\bf t}}{\partial t}=\frac{\partial^3 E^{\bf t}}{\partial x^3}=C_K^3E^{\bf t}.
\]
This is a key of the ``integrability'' of the KP equation, that is, the solutions of the linear equations
provide a solution of the KP equation.
\end{remark}

We now define an orbit generated by the 
matrix $E^{\bf t}$ on elements of $G = \GL_n$, %$G_{{\bf v},{\bf w}}$, 
\[
g^{\bf t}:=E^{\bf t}g \qquad {\rm for~each}\quad g\in \GL_n. 
%G_{{\bf v},{\bf w}}.
\]
Then $\{g^{\bf t}\cdot e_{n-k+1}\wedge\cdots\wedge e_{n-1}\}$ is 
a flow (orbit) of the highest weight vector on the 
corresponding fundamental representation 
of ${\GL}_n$.

Next we define the $\tau$-function as 
\begin{align*}\label{tau1}
\tau(x,y,t):=&\langle e_1\cdots\wedge e_k,~ \mathsf{F}_1^{\bf t}\wedge\cdots\wedge\mathsf{F}^{\bf t}_k\rangle\\
=&\langle e_1\wedge\cdots\wedge e_k,\,
g^{\bf t}\cdot e_{n-k+1}\wedge \cdots\wedge e_{n}\rangle,
\end{align*}
where $\mathsf{F}^{\bf t}_j:=g^{\bf t}\cdot e_{n-k+j}$.
Given 
$I=\{i_1,\ldots,i_k\} \subset [n]$,  we let 
$E_I(x,y,t)$ denote the scalar function
\begin{equation}\label{Efunction}
\begin{array}{lll}
E_I(x,y,t)&=\langle e_1\wedge\cdots\wedge e_k,\,
\mathsf{E}^{\bf t}_{i_1}\wedge \cdots\wedge\mathsf{E}^{\bf t}_{i_k}\rangle=
\langle e_1\wedge\cdots\wedge e_k,\,
\mathsf{E}^{\bf 0}_{i_1}\wedge \cdots\wedge\mathsf{E}^{\bf 0}_{i_k}\rangle\,e^{\theta_{i_1} + \cdots + \theta_{i_k}}\\[1.5ex]
&=\displaystyle{\left(\prod_{l<m}(\kappa_{i_m}-\kappa_{i_l})\right)\,e^{\theta_{i_1} + \cdots + \theta_{i_k}}}.
\end{array}
\end{equation}
With the projection $\pi_k:\SL_n %G_{{\bf v},{\bf w}}
\to Gr_{k,n}, \ g\mapsto A$, the $\tau$-function
can be also written as
 \begin{equation}\label{tau}
\tau(x,y,t)=\tau_A(x,y,t) = \sum_{I\in\binom{[n]}{k}}\Delta_I(A)\,E_I(x,y,t).
\end{equation}
It follows that if  $A\in (Gr_{k,n})_{\ge 0}$, then 
$\tau_A>0$ for all $(x,y,t)\in\mathbb{R}^3$.  

\begin{remark}
The present definition of the $\tau$-function is quite useful for the study of
the Toda lattice whose solutions are defined on a complete flag manifold.
We will discuss the totally non-negative flag variety and the Toda lattice in
a forthcoming paper.
\end{remark}

\subsection{From the $\tau$-function to solutions of the KP equation}\label{sec:tau}
The KP equation for $u(x,y,t)$ 
\[
\frac{\partial}{\partial x}\left(-4\frac{\partial u}{\partial t}+6u\frac{\partial u}{\partial x}+\frac{\partial^3u}{\partial x^3}\right)+3\frac{\partial^2 u}{\partial y^2}=0
\]
was proposed by Kadomtsev and Petviashvili in 1970 \cite{KP70}, in order to 
study the stability of the soliton solutions of the Korteweg-de Vries (KdV) equation
under the influence of weak transverse perturbations.
The KP equation can be also used to describe  two-dimensional shallow 
water wave phenomena (see for example \cite{K10}).   This equation is now considered to be a prototype of
an integrable nonlinear partial differential equation.
For more background, see \cite{NMPZ84, D91, AC91, H04, MJD00}.

Note that the $\tau$-function defined in \eqref{tau} can be also
written in the Wronskian form 
\begin{equation}\label{tauA}
\tau_A(x,y,t)={\rm Wr}(f_1,f_2,\ldots, f_k),
\end{equation}
with the scalar functions $\{f_j:j=1,\ldots,k\}$ given by
\[
(f_1,f_2,\ldots, f_k)^T=A\cdot (\exp\theta_1,\exp\theta_2,\ldots,\exp\theta_n)^T,
\]
where $(\ldots)^T$ denotes the transpose of the (row) vector $(\ldots)$.

It is then well known
(see \cite{H04, CK1, CK2, CK3})   
that for each choice of constants
$\{\kappa_1,\ldots,\kappa_n\}$ and element
$A\in Gr_{k,n}$,
 the $\tau$-function defined in \eqref{tauA} 
provides a soliton solution of the KP equation,
\begin{equation}\label{KPsolution}
u_A(x,y,t)=2\frac{\partial^2}{\partial x^2}\ln\tau_A(x,y,t).
\end{equation}

If $A \in (Gr_{k,n})_{\ge 0}$, then it is obvious that
$u_A(x,y,t)$ is regular for all $(x,y,t)\in\mathbb{R}^3$.  A 
main result
of this paper is that the converse also holds -- see Theorem \ref{th:regularity}.
%(\cite[Proposition 4.1]{K10}, see also Corollary \ref{tnnonS} below).  
Throughout this paper when we speak of a {\it soliton solution to the KP equation},
we will mean a solution $u_A(x,y,t)$ which has the form
\eqref{KPsolution}, where the $\tau$-function is given by \eqref{tau}.

\begin{remark}
The function $E_I(x,y,t)$ in the $\tau$-function \eqref{tau} can be expressed as the Wronskian form in terms of 
$\{E_{i_j}=e^{\theta_{i_j}}:j=1,\ldots,k\}$, i.e.
\[
E_I(x,y,t)={\rm Wr}(E_{i_1},E_{i_2},\ldots,E_{i_k}).
\]
%In our previous paper \cite{KW,KW2}, we use the equivalence between the set of independent
%functions $\{e^{\theta_j(x,y,t)}:j=1,\ldots,n\}$ and the set of independent vectors $\{\mathsf{E}^{\bf t}_j:j=1,\ldots,n\}$.
\end{remark}

%%%%%%%%%%%%%%%%%%%%  Contour plots   %%%%%%%%%%%%%%%

\section{Contour plots of soliton solutions}
\label{sec:solgraph}

%In this section we will explain how to associate a family of piecewise linear
%curves in $\R^2$ to each soliton solution to the KP equation.
%We will then explain how to associate the more combinatorial
%\emph{soliton graphs} and \emph{generalized plabic graphs}
%to such piecewise linear curves.

%\subsection{Contour plots and soliton graphs}
One can visualize a solution $u_A(x,y,t)$ to the KP equation
by drawing 
level sets of the solution in the $xy$-plane, when the coordinate $t$ is fixed.
For each $r\in \mathbb{R}$, we denote the corresponding level set by 
\[
C_r(t):=\{(x,y)\in\mathbb{R}^2: u_A(x,y,t)=r\}. 
\]
Figure \ref{fig:1soliton} depicts both a three-dimensional image of a solution $u_A(x,y,t)$,
as well as multiple level sets $C_r$. 
These level sets are lines parallel to the line of the wave peak.

%%%%%%%%%%%%%%%%%%%%%%%%%%%%%%
\begin{figure}[h]
\begin{center}
\includegraphics[height=1.7in]{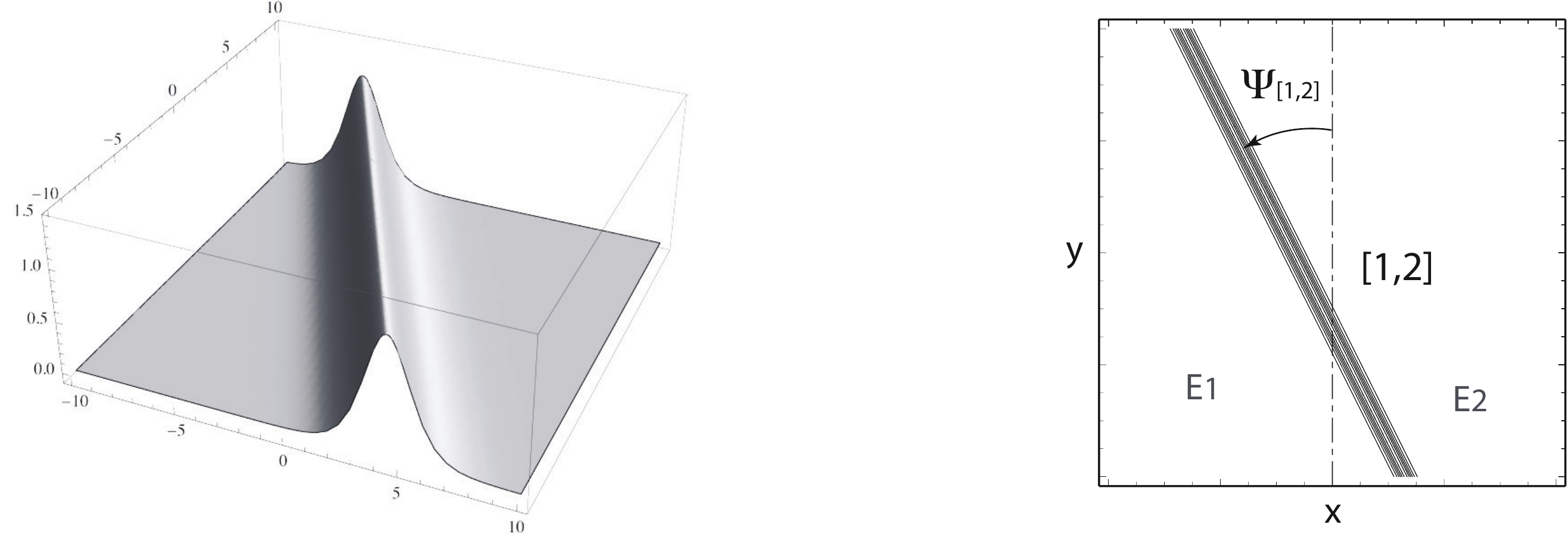}
\par
\end{center}
\caption{A line-soliton solution $u_A(x,y,t)$ where $A=(1,1) \in (Gr_{1,2})_{\geq 0}$, depicted via
the 3-dimensional profile
$u_A(x,y,t)$, and the level sets of $u_A(x,y,t)$ for some $t$.
$E_i$ represents the dominant exponential in each region.
\label{fig:1soliton}}
\end{figure}
%%%%%%%%%%%%%%%%%%%%%%%%%%%%%%%%

To study the behavior of $u_A(x,y,t)$ for $A\in S_{\M} \subset Gr_{k,n}$,
we consider the dominant exponentials in the $\tau$-function \eqref{tau} at each point $(x,y,t)$.  First we write the $\tau$-function in the form
\begin{align*}
\tau_A(x,y,t)&=\sum_{J\in\binom{[n]}{k}}\Delta_J(A)E_J(x,y,t)\\
&=\sum_{J\in\mathcal{M}}
\exp\left(\sum_{i=1}^n(\kappa_{j_i}x+\kappa_{j_i}^2y+\kappa_{j_i}^3t)+\ln(\Delta_J(A)K_J)\right), 
\end{align*}
where $K_J:=\prod_{\ell < m} (\kappa_{j_m}-\kappa_{j_{\ell}})>0$.
Note that in general the terms $\ln(\Delta_J(A)K_J)$ could be imaginary
when some $\Delta_J(A)$ are negative.

Since we are interested in the behavior of the soliton solutions when the variables
$(x,y,t)$ are on a large scale, we rescale the variables with a small positive number $\epsilon$, 
\[
x~\longrightarrow ~\frac{x}{\epsilon},\qquad y~\longrightarrow~\frac{y}{\epsilon},\qquad
t~\longrightarrow~\frac{t}{\epsilon}.
\]
This leads to
\[
\tau_A^{\epsilon}(x,y,t)
=\sum_{J\in\mathcal{M}}
\exp\left(\frac{1}{\epsilon}\,\sum_{i=1}^n(\kappa_{j_i}x+\kappa_{j_i}^2y+\kappa_{j_i}^3t)+\ln(\Delta_J(A)K_J)\right).
\]
Then we define a function $f_A(x,y,t)$ as the limit
\begin{equation}\label{f-contour}
\begin{array}{lll}
{f}_A(x,y,t)&=\displaystyle{\lim_{\epsilon\to 0}\epsilon\ln\left(\tau^{\epsilon}_A(x,y,t)\right)}\\[1.5ex]
&=  \underset{J\in\mathcal{M}}\max \left\{
                     \sum_{i=1}^k (\kappa_{j_i} x + \kappa^2_{j_i} y +\kappa^3_{j_i} t)\right\}.
\end{array}
\end{equation}
Since the above function depends only on the collection $\mathcal{M}$,
we also denote it as $f_{\mathcal{M}}(x,y,t)$.

\begin{definition}\label{contour}
Given a solution $u_A(x,y,t)$ of the KP equation as in 
(\ref{KPsolution}), we define its \emph{contour plot} $\mathcal{C}(u_A)$ to be the 
locus in $\mathbb{R}^3$ where $f_A(x,y,t)$ is not linear.  If we fix $t=t_0$, 
then we let $\mathcal{C}_{t_0}(u_A)$ be the locus in $\mathbb{R}^2$ where $f_A(x,y,t=t_0)$ is not 
linear, and we also refer to this as a \emph{contour plot}.
Because these contour plots depend only on $\mathcal{M}$ and not on $A$, we also refer to them
as $\mathcal{C}(\mathcal{M})$ and $\mathcal{C}_{t_0}(\mathcal{M})$.
\end{definition}

\begin{remark}
The contour plot approximates the locus where 
$|u_A(x,y,t)|$ takes on its maximum values or is singular.
\end{remark}

\begin{remark}
Note that the contour plot generated by the function $f_A(x,y,t)$ at $t=0$ consists of a set of semi-infinite lines attached to the origin $(0,0)$ in the $xy$-plane.
And if $t_1$ and $t_2$ have the same sign,
then the corresponding contour plots
$\CC_{t_1}(\M)$ and $\CC_{t_2}(\M)$ are self-similar.
%Then for a nonzero $t$, the contour plots for different $t$
%are self-similar when the signs of $t$ are the same.  That is, we have
%two distinct contour plots for $f_A(x,y,t)$ for $t>0$ and $t<0$.

Also note that because  our definition of the contour plot ignores the constant 
terms $\ln(\Delta_J(A)K_J)$, there are no phase-shifts in our picture,
and the contour plot for $f_A(x,y,t) = f_{\M}(x,y,t)$ does
not depend on the signs of the Pl\"ucker coordinates.
\end{remark}

It follows from Definition \ref{contour} that $\CC(u_A)$ 
and $\CC_{t_0}(u_A)$
are piecewise linear subsets of $\R^3$ and $\R^2$, respectively, of 
codimension $1$.  In fact it is easy to verify the following.
%In fact, when the $\kappa_i$'s are all integers,
%$\CC(u_A)$ and $\CC_{t_0}(u_A)$ are tropical hypersurfaces.

\begin{proposition}\cite[Proposition 4.3]{KW2}
If each $\kappa_{i}$ is an integer, then 
$\CC(u_A)$ is a tropical hypersurface in $\R^3$,
and $\CC_{t_0}(u_A)$ is a tropical hypersurface (i.e. a tropical curve)
in $\R^2$.  
\end{proposition}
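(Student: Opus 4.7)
The plan is to unwind the definitions and verify that $f_A(x,y,t)$ (respectively $f_A(x,y,t_0)$) is literally a tropical polynomial in the standard sense, so that its non-linear locus is by definition a tropical hypersurface. Recall the formula from \eqref{f-contour}:
\[
f_A(x,y,t) \;=\; \max_{J\in \mathcal{M}} \Bigl\{ \Bigl(\sum_{i=1}^k \kappa_{j_i}\Bigr) x + \Bigl(\sum_{i=1}^k \kappa_{j_i}^2\Bigr) y + \Bigl(\sum_{i=1}^k \kappa_{j_i}^3\Bigr) t \Bigr\}.
\]
First, I would observe that when each $\kappa_i$ is an integer, the three coefficients $a_J := \sum_i \kappa_{j_i}$, $b_J := \sum_i \kappa_{j_i}^2$, and $c_J := \sum_i \kappa_{j_i}^3$ are all integers. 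Thus $f_A$ is a finite maximum (over $J\in\mathcal{M}$) of linear forms $a_J x + b_J y + c_J t$ with integer slopes, i.e. a tropical polynomial in the max-plus algebra on $\mathbb{R}^3$ (with zero constant terms).

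Next, I would invoke the standard definition: for a tropical polynomial $F = \max_\alpha \{\langle \alpha, v\rangle + c_\alpha\}$ with $\alpha\in\mathbb{Z}^n$, its associated tropical hypersurface $V(F)$ is precisely the set of points where the maximum is attained by at least two distinct indices $\alpha$, equivalently the set where $F$ fails to be locally linear. By Definition~\ref{contour}, $\CC(u_A)$ is exactly the locus in $\mathbb{R}^3$ where $f_A$ is not linear, so $\CC(u_A) = V(f_A)$ is a tropical hypersurface.

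For the statement about $\CC_{t_0}(u_A)$, I would fix $t=t_0$ and set
\[
f_A^{t_0}(x,y) \;=\; \max_{J\in\mathcal{M}} \bigl\{ a_J x + b_J y + c_J t_0 \bigr\}.
\]
Since $a_J, b_J \in \mathbb{Z}$ and the additive constants $c_J t_0$ are real numbers, $f_A^{t_0}$ is again a tropical polynomial, now in the two variables $x,y$, and $\CC_{t_0}(u_A)$ is by definition the set where $f_A^{t_0}$ fails to be linear, hence a tropical curve in $\mathbb{R}^2$.

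There is no real obstacle in this proof; it is essentially a matter of matching definitions. The only point that deserves a line of comment is that $\CC_{t_0}(u_A)$ need not be the intersection of $\CC(u_A)$ with the plane $\{t=t_0\}$ in a set-theoretic transverse sense, but as subsets of $\mathbb{R}^2$ and $\mathbb{R}^3$ each is separately cut out by a genuine tropical polynomial, which is all that is claimed.
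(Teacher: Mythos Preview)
Your proposal is correct and matches the paper's approach: the paper does not give a proof at all, but simply says ``it is easy to verify the following'' and cites \cite[Proposition 4.3]{KW2}. Your argument is exactly the routine verification the paper has in mind --- recognizing $f_A$ as a max of linear forms with integer slopes once the $\kappa_i$ are integers, so that the non-linearity locus is by definition a tropical hypersurface.
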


The contour plot $\CC_{t_0}(u_A)$ consists of line segments 
called \emph{line-solitons}, some of 
which have finite length, while others are unbounded
and extend in the $y$ direction to 
$\pm \infty$. 
Each region of the complement of 
$\CC_{t_0}(u_A)$ 
in $\R^2$ is a domain of linearity for $f_A(x,y,t)$, 
and hence each region is  
naturally associated to a {\it dominant exponential}  
$\Delta_J(A) E_J(x,y,t)$ from the $\tau$-function \eqref{tau}.
We label this region by $J$ or $E_J$.
Each line-soliton
represents a balance between two dominant exponentials
in the 
$\tau$-function.

%We call 
%these unbounded lines {\it asymptotic} or {\it unbounded}  line-solitons.
%Among the finite line segments, 
%some of them represent {\it phase shifts}
%and have lengths which are determined by
%the $\kappa$-parameters (see \cite{CK3} for details).
%We will ignore the phase shifts
%in this paper: they are negligable, provided that each 
%$\kappa_{i+1}-\kappa_i$ is of order $1$, and that
%we consider contour plots with large scales of $(x,y)$.
%We call the other finite line segments simply {\it line-solitons}.

%The following lemma is based on the assumption that each line separating two regions
%in the contour plot is a line-soliton, that is, we ignore the phase shifts.

Because of the genericity of the $\kappa$-parameters, the following
lemma is immediate.
\begin{lemma}\label{separating}\cite[Proposition 5]{CK3}
The index sets of the 
dominant exponentials of the $\tau$-function in adjacent regions
of the contour plot in the $xy$-plane are of the form $\{i,l_2,\dots,l_k\}$ and
$\{j, l_2,\dots,l_k\}$.
\end{lemma}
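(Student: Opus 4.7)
My plan is to argue by contradiction using the symmetric exchange property of matroids together with a key additive identity among the linear phase functions.  For brevity write $L_J(x,y,t):=\sum_{i\in J}\theta_i(x,y,t)$, so that the tropical formula \eqref{f-contour} reads $f_A=\max_{J\in\M}L_J$, and adjacent regions correspond to a pair $I,J\in\M$ with an open codimension-one piece of shared boundary on which $L_I=L_J=f_A$ while $L_K<L_I$ for every other $K\in\M$.  I will assume $m:=k-|I\cap J|\ge 2$ and derive a contradiction.

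The key move uses Brualdi's symmetric exchange for matroids: since $I,J\in\M$, one can choose $a\in I\setminus J$ and $b\in J\setminus I$ so that both $K:=(I\setminus\{a\})\cup\{b\}$ and $K':=(J\setminus\{b\})\cup\{a\}$ belong to $\M$.  Because $m\ge 2$, these $K,K'$ are genuinely new indices with $K,K'\notin\{I,J\}$.  A direct computation yields the affine identity $L_K+L_{K'}=L_I+L_J$ on $\R^3$; restricting to the boundary $\{L_I=L_J\}$ gives $(L_K-L_I)+(L_{K'}-L_I)=0$.  Because both $L_K\le L_I$ and $L_{K'}\le L_I$ must hold on this boundary (by maximality of $f_A=L_I$), these two facts together force $L_K=L_{K'}=L_I$ identically on the entire boundary segment.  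In particular, $\theta_b-\theta_a$ vanishes identically along that boundary line.

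The remaining — and main — obstacle is to derive a contradiction from this vanishing.  Since $\kappa_a\ne\kappa_b$, the function $\theta_b-\theta_a$ is a nonzero affine function of $(x,y)$ at fixed $t$, cutting out a single line; coincidence of this line with $\{L_I=L_J\}$ forces the $(x,y)$-gradients of $\theta_b-\theta_a$ and $L_I-L_J$ to be parallel.  A short calculation reduces this proportionality to
\[
(\kappa_a+\kappa_b)\Bigl(\textstyle\sum_{i\in I\setminus J}\kappa_i-\sum_{i\in J\setminus I}\kappa_i\Bigr)=\textstyle\sum_{i\in I\setminus J}\kappa_i^2-\sum_{i\in J\setminus I}\kappa_i^2.
\]
For $m=2$ this simplifies, after cancellation, to $\kappa_{a'}+\kappa_{b'}=\kappa_a+\kappa_b$ for the remaining pair $a'\in I\setminus J$, $b'\in J\setminus I$ with $\{a,b\}\ne\{a',b'\}$, directly violating the distinctness of $2$-element sums of the $\kappa_i$ postulated at the start of Section~\ref{soliton-background}.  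For $m\ge 3$ I plan to exploit the freedom in the symmetric exchange, which supplies several admissible $(a,b)$-pairs; requiring the coincidence for each of them imposes independent polynomial identities on the $\kappa_i$, together cutting out a proper algebraic subvariety of $\kappa$-space excluded by the stated genericity.  The conclusion $m=1$, i.e.\ $|I\cap J|=k-1$, is exactly the claim of the lemma.
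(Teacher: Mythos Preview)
Your argument is essentially correct and more detailed than anything in the paper, which simply declares the lemma ``immediate'' from genericity and cites \cite{CK3}.  A couple of points deserve tightening.

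First, a minor issue of presentation: you state in your setup that $L_K<L_I$ strictly on the shared boundary piece, but you never use this and it is not a priori justified.  Your actual argument (correctly) only uses $L_K\le L_I$ from the maximality of $f_A$, derives $L_K=L_{K'}=L_I$ on the boundary segment, and then proceeds to the gradient proportionality.  Drop the strict inequality from the setup to avoid confusion.

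Second, and more substantively, your treatment of $m\ge 3$ is too loose.  The paper's genericity hypothesis is specifically that all $p$-element sums $\sum\kappa_{j_m}$ are distinct for $1<p<n$; it does not promise avoidance of arbitrary proper algebraic subvarieties.  Fortunately your own identity already handles every $m\ge 2$ uniformly.  Observe that your proportionality equation $(\kappa_a+\kappa_b)P=Q$ has $P$ and $Q$ depending only on $I,J$, not on the exchange pair.  If $P=0$ then $\sum_{i\in I}\kappa_i=\sum_{i\in J}\kappa_i$ with $I\ne J$, already violating distinctness of $k$-sums.  If $P\ne 0$ then $\kappa_a+\kappa_b=Q/P$ is the \emph{same} constant for every symmetric exchange pair $(a,b)$.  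But for $m\ge 2$ there are at least two distinct elements $a_1,a_2\in I\setminus J$, and Brualdi's theorem supplies corresponding $b_1,b_2\in J\setminus I$; the resulting sets $\{a_1,b_1\}$ and $\{a_2,b_2\}$ are distinct (since $a_1\ne a_2$ and $I\setminus J$, $J\setminus I$ are disjoint), so $\kappa_{a_1}+\kappa_{b_1}=\kappa_{a_2}+\kappa_{b_2}$ violates distinctness of $2$-sums.  This replaces your ``algebraic subvariety'' paragraph with a two-line argument matching the stated hypothesis.
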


We call the line-soliton separating the two dominant exponentials in 
Lemma \ref{separating} a {\it line-soliton of type $[i,j]$}.  Its
equation is 
%Locally we have
%\begin{equation*}
%\tau_A\approx \Delta_I(A)E_I+\Delta_J(A)E_J
%=\left(\Delta_I(A)K_I E_i+\Delta_J(A)K_J E_j\right)\prod_{l=2}^kE_{m_l},
%\end{equation*}
%so the equation for this line-soliton is
\begin{equation}\label{eq-soliton}
x+(\kappa_i+\kappa_j)y+(\kappa_i^2+\kappa_i\kappa_j+\kappa_j^2)t=0.
\end{equation}
%Note that the slope of the line is determined by  $\kappa_i+\kappa_j$.
%, and the constant terms
%$\Delta_J(A)K_J$ are ignored.
%and its location is determined by 
%the ratio of the Pl\"ucker coordinates corresponding to the 
%dominant exponentials on either side of the line-soliton.
%associated to the two regions 
%separated by the line-soliton.

\begin{remark}\label{slope}
Consider a line-soliton given by (\ref{eq-soliton}).
Compute the angle $\Psi_{[i,j]}$ between 
the positive $y$-axis and 
the line-soliton 
of type $[i,j]$, 
measured in the counterclockwise direction, so that the negative $x$-axis
has an angle of $\frac{\pi}{2}$ and the positive $x$-axis has an 
angle of $-\frac{\pi}{2}$. Then $\tan \Psi_{[i,j]} = \kappa_i+\kappa_j$,
so we refer to $\kappa_i+\kappa_j$ as the \emph{slope} of the 
$[i,j]$ line-soliton (see Figure \ref{fig:1soliton}).
\end{remark}

In Section \ref{sec:solitonplabic} we will explore 
the combinatorial structure of contour plots,
that is, the ways in which line-solitons may interact. 
Generically we expect a point at which several line-solitons
meet to have degree $3$; we regard such a point as a trivalent 
vertex.  Three line-solitons meeting at a trivalent vertex
exhibit a {\it resonant interaction} (this corresponds to the 
{\it balancing condition} for a tropical curve). 
See \cite[Section 4.2]{KW2}.  
One may also have two line-solitons which cross over
each other, forming an $X$-shape: we call this
an \emph{$X$-crossing}, but do not regard it as a vertex.
See Figure \ref{contour-soliton}. 
%As mentioned above, in general, there exists a phase-shift at each $X$-crossing (see  \cite{CK3}).
%However the line-segments corresponding to the phase-shifts have the constant lengths, and we ignore them in this paper.
Vertices of degree greater than $4$
are also possible.  

\begin{definition}\label{def:blackwhiteX}
Let $i<j<k<\ell$ be positive integers.  
An $X$-crossing involving two line-solitons of types
$[i,k]$ and $[j,\ell]$ is called a 
\emph{black $X$-crossing}.  An $X$-crossing involving
two line-solitons of types $[i,j]$ and $[k,\ell]$, or 
of types $[i,\ell]$ and $[j,k]$, is called a 
\emph{white $X$-crossing}.
\end{definition}

\begin{definition}\label{def:generic}
A contour plot $\CC_{t}(u_A)$ is called \emph{generic} if all interactions of line-solitons
are at trivalent vertices or are $X$-crossings. 
\end{definition}

%\begin{remark}
%Note that the genericity condition on the $\kappa$-parameters 
%implies that the contour plot $\CC_t(u_A)$ is generic 
%if and only if $t\neq 0$.
%\end{remark}

\begin{example}\label{ex:Gr49CP}
Consider some $A\in Gr_{4,9}$ which is the projection of an element
 $g\in G_{\v,\w}$ with
%\begin{align*}
\begin{equation*}
{\bf w}=s_7s_8s_4s_5s_6s_7s_2s_3s_4s_5s_6s_1s_2s_3s_4s_5 \quad \text{ and }\quad
{\bf v}=s_711s_51s_7s_21s_4111s_21s_4s_5.
%\end{align*}
\end{equation*}
Then  $v=1$ and $\pi=vw^{-1} = %$ is given by
%\[
(6,7,1,8,2,3,9,4,5).$
%\]
%and $I=w\,\{6,7,8,9\}=\{1,2,4,7\}$.
% and $I'=v\,\{6,7,8,9\}=\{6,7,8,9\}$.
The matrix $g\in G_{\v,\w}$ is given by
\begin{align*}
g=
\dot s_7y_8(p_2)
&
y_4(p_3)\dot s_5y_6(p_5)x_7(m_6)\dot s_7^{-1} \dot s_2y_3(p_8)\dot s_4y_5(p_{10})y_6(p_{11})\\
&\cdot  
y_1(p_{12})x_2(m_{13})\dot s_2^{-1}y_3(p_{14})x_4(m_{15})\dot s_4^{-1}x_5(m_{16})\dot s_5^{-1}.
\end{align*}
The Go-diagram and the labeled Go-diagram are as follows:
\medskip
\[
\young[5,5,4,2][10][\hskip0.4cm\circle*{5},\hskip0.4cm\circle*{5},,\hskip0.4cm\circle*{5},,,,\hskip0.4cm\circle{5},,\hskip0.4cm\circle{5},\hskip0.4cm\circle*{5},,\hskip0.4cm\circle{5},,
,\hskip0.4cm\circle{5}]\hskip2cm \young[5,5,4,2][10][$-1$,$-1$,$p_{14}$,$-1$,$p_{12}$,
$p_{11}$,$p_{10}$,$1$,$p_8$,$1$,$-1$,$p_5$,$1$,$p_3$,$p_2$,$1$]
\]
\medskip\noindent
The $A$-matrix is then given by
\[
A=\begin{pmatrix}
-p_{12}p_{14}& q_{13} & p_{14} & q_{15} & -m_{16} & 1 & 0 & 0 & 0 \\
0 & p_8p_{10}p_{11} & 0 & p_{11}(p_3+p_{10}) & p_{11} & 0 & 1 & 0 & 0 \\
0  &  0 &  0  & -p_3p_5 & -p_5 & 0 & -m_6 & 1 & 0 \\
0 & 0 & 0 & 0 & 0 & 0 & p_2 & 0 & 1
\end{pmatrix},
\]
where the matrix entry $q_{13}=-m_{13}p_{14}+m_{15}p_8-m_{16}p_8p_{10}$ and $q_{15}=m_{15}-m_{16}(p_3+p_{10})$.  
In Figure \ref{fig:CP}, we show
contour plots $\CC_t(u_A)$ for the solution $u_A(x,y,t)$ at $t=-10, 0, 10$,
using the choice of 
parameters 
$(\kappa_1,\ldots,\kappa_9)=(-5,-3,-2,-1,0,1,2,3,4)$,  
$p_j=1$ for all $j$, and $m_l=0$ for all $\ell$. Note that:

%The unbounded line-solitons at $|y|\gg 0$ are as follows:
\begin{itemize}
\item[(a)] For $y\gg 0$, there are four unbounded 
line-solitons, whose types from right to left are:
\[
[1,6],\quad [2,7],\quad [4,8],\quad {\rm and}\quad [7,9]
\]
\item[(b)] For $y\ll 0$, there are five unbounded 
line-solitons, whose types  from left to right are:
\[
[1,3],\quad [2,5],\quad [3,6],\quad [4,8],\quad {\rm and}\quad [5,9]
\]
\end{itemize}
Apparently the line-solitons for $y\gg0$ correspond to the excedances 
in $\pi=
(6,7,1,8,2,3,9,4,5)$, while 
those for $y\ll0$ correspond to the nonexcedances.
In Section \ref{sec:unbounded} we will give a theorem explaining
the relationship between the unbounded line-solitons of 
$\CC_t(u_A)$ and the positroid stratum containing $A$.
%%%%%%%%%%%%%%%%%%%%%%%%%%%%%%%%%%
\begin{figure}[h]
\begin{center}
\includegraphics[height=1.9in]{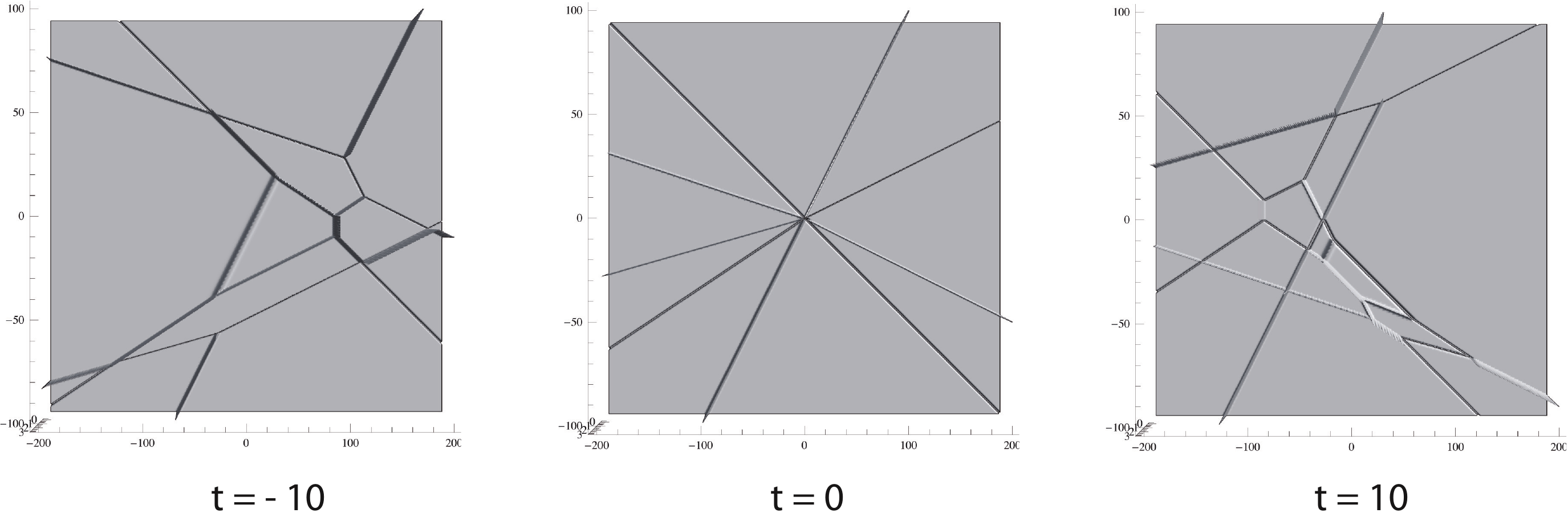}
\end{center}
\caption{Example of contour plots $\CC_t(u_A)$ for
$A\in Gr_{4,9}$.
The contour plots are obtained by ``Plot3D'' of Mathematica 
(see the details in the text).
 \label{fig:CP}}
\end{figure}
%%%%%%%%%%%%%%%%%%%%%%%%%%%%%%%%

Note that if there are two adjacent regions of the contour plot
whose Pl\"ucker coordinates have different signs, then the line-soliton
separating them is singular.  
For example,
the line-soliton of type $[4,8]$ (the second soliton from the left in $y\gg0$) is 
singular, because the Pl\"ucker coordinates corresponding 
to the (dominant exponentials of the) adjacent regions are 
\[
\Delta_{1,2,4,9}=p_3p_5p_8p_{10}p_{11}p_{12}p_{14}=1\qquad {\rm and}\qquad
\Delta_{1,2,8,9}=-p_8p_{10}p_{11}p_{12}p_{14}=-1.
\]
\end{example}

\section{Unbounded line-solitons at $y\gg0$ and $y\ll0$}\label{sec:unbounded}

In this section we show that the unbounded line-solitons
at $|y|\gg0$ of a contour plot
$\CC_{t}(u_A)$ are determined by which positroid stratum
contains  $A$. % \in Gr_{k,n}$ lies in.  
Conversely, the unbounded line-solitons of
$\CC_{t}(u_A)$ determine which positroid stratum
$A$ lies in.
The main result
of this section is Theorem \ref{thm:soliton-perm}.

\begin{theorem}\label{thm:soliton-perm}
Let $A \in Gr_{k,n}$ lie in the positroid stratum 
%$S_{\I}$, and let
$S_{\pi^:}$, where $\pi^: = %\pi^:(\I)=
(\pi,col)$.
%be the decorated permutation associated to $\I$.  
Consider the contour plot
$\CC_{t}(u_A)$ for any time $t$.  
Then the excedances (respectively, nonexcedances)
of $\pi$ are in bijection with the 
unbounded line-solitons of $\CC_t(u_A)$ at $y\gg0$ (respectively, $y\ll0$).
%, and the 
%nonexcedances of $\pi^:$ are in bijection with the
%unbounded line-solitons of $\CC_t(u_A)$ at $y\ll0$.
More specifically, in %the contour plot 
$\CC_t(u_A)$, 
\begin{itemize}
\item[(a)]  
there is 
an unbounded line-soliton of  $[i,h]$-type at $y\gg0$
if and only if
$\pi(i)=h$ for $i<h$, 
\item[(b)] 
there is 
an unbounded line-soliton of $[i,h]$-type at $y\ll 0$  
if and only if
$\pi(h)=i$ for $i<h$.
\end{itemize}
Therefore $\pi^:$ determines
the unbounded line-solitons at $y\gg 0$ and $y\ll0$
of %the contour plot
$\CC_{t}(u_A)$ for any time $t$.  

Conversely, given a 
contour plot
$\CC_{t}(u_A)$ at any time $t$ where $A \in Gr_{k,n}$,
one can construct $\pi^:=(\pi,col)$ such that $A \in S_{\pi^:}$ as follows.
The excedances and nonexcedances of $\pi$ are constructed as above
from the unbounded line-solitons. 
If there is an $h \in [n]$ such that $h\in J$ for every 
dominant exponential $E_J$ labeling the contour plot, 
then set $\pi(h)=h$ with $col(h)=1$.
If there is an $h\in [n]$ such that $h\notin J$ for any
dominant exponential $E_J$ labeling the contour plot,
then set $\pi(h)=h$ with $col(h)=-1$.
\end{theorem}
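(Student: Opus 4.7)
The plan is to reduce the theorem to an analysis of the piecewise-linear function
$$f_A(x,y,t)=\max_{J\in\M_A}\sum_{i\in J}(\kappa_i x+\kappa_i^2 y+\kappa_i^3 t),$$
whose non-linearity locus is $\CC_t(u_A)$, and to match its asymptotic behavior as $y\to\pm\infty$ with the excedances and nonexcedances of $\pi$. First I would fix $t$ and send $y\to+\infty$. Writing $x=\alpha y$, the dominant index $J$ asymptotically maximizes $\alpha a_J+b_J$, where $a_J:=\sum_{i\in J}\kappa_i$ and $b_J:=\sum_{i\in J}\kappa_i^2$. Consequently the unbounded line-solitons at $y\gg 0$ are in bijection with the edges of the upper convex hull of the planar point set $\{P_J:=(a_J,b_J):J\in\M_A\}$: an edge joining $P_J$ and $P_{J'}$ with $J\triangle J'=\{i,j\}$, $i<j$, has slope $\kappa_i+\kappa_j$ and produces a line-soliton of type $[i,j]$ with asymptotic slope $-(\kappa_i+\kappa_j)$ in the $xy$-plane, consistent with Remark~\ref{slope} and \eqref{eq-soliton}. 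In particular this asymptotic depends only on $\M_A$, not on $t$. A parallel argument with $y\to-\infty$ produces the lower convex hull.

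Next I would identify the upper-hull edges with the excedances of $\pi$. The main tool is the Grassmann necklace $\I=(I_1,\dots,I_n)$ of $\M_A$, the characterization that $J\in\M_A$ iff $I_j\preceq_j J$ for all $j\in[n]$, and Lemma~\ref{Postnikov-permutation} (the lex-min base $I_1$ is the weak-excedance set of $\pi$). Parameterizing the upper hull by $\alpha$ decreasing from $+\infty$ to $-\infty$, the starting base is the lex-max base of $\M_A$ and the ending base is $I_1$. I would show that each transition swaps out a position $h$ and adjoins $i<h$ with $\pi(i)=h$, so each transition records an excedance and every excedance appears exactly once. A symmetric analysis of the lower hull (traversed in the opposite order of $\alpha$) matches each transition with a nonexcedance $\pi(h)=i<h$.

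The converse statement and the colored-fixed-point rule follow at once: an unbounded line-soliton of type $[i,h]$ at $y\gg 0$ or $y\ll 0$ determines the pair $(i,h)=(i,\pi(i))$ or $(h,\pi(h))$, respectively; and an index $h$ appearing in every (respectively, no) dominant-exponential label $J$ of $\CC_t(u_A)$ is exactly a coloop (respectively, loop) of $\M_A$, which is equivalent by Definition~\ref{necklace-to-perm} to $\pi(h)=h$ with $\operatorname{col}(h)=1$ (respectively, $-1$).

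The main obstacle is the combinatorial step in the second paragraph: identifying upper-hull edges with excedances. Two verifications are required. First, each excedance $\pi(i)=h$ must contribute a genuine upper-hull edge; one must exhibit adjacent bases $J,J'\in\M_A$ with $J\setminus J'=\{h\}$, $J'\setminus J=\{i\}$ whose segment $P_J P_{J'}$ is unshadowed from above by any other $P_{J''}$ with $J''\in\M_A$. Second, no extraneous edges appear. Both are delicate because they depend not on generic convexity of a lattice-point set but on the specific positroid structure of $\M_A$. I would handle both using the Grassmann-necklace inequalities: any $J''\in\M_A$ rising above a candidate upper-hull edge would violate some $I_j\preceq_j J''$, and conversely any hypothetical extraneous upper-hull swap would force a Grassmann-necklace inequality incompatible with the necklace of $\pi^{:}$.
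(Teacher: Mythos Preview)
Your convex-hull framing of the asymptotics is correct and is essentially a restatement of the setup the paper uses.  But the paper does not try to read excedances off the hull directly via positroid inequalities.  Instead it interposes a rank criterion on cyclic column intervals (Theorem~\ref{th:rank}): there is an unbounded $[i,h]$-soliton at $y\ll0$ if and only if
\[
\rank A(i,\dots,h-1)=\rank A(i+1,\dots,h)=\rank A(i,\dots,h)=\rank A(i+1,\dots,h-1)+1,
\]
and the cyclically complementary condition at $y\gg0$.  The proof is short once one invokes the matroid \emph{greedy algorithm} (Proposition~\ref{greedy}): on the line $L_{i,h}$ at $|y|\gg0$ the weights $\theta_m$ are totally ordered with $\theta_i=\theta_h$ strictly between the two cyclic blocks (Lemma~\ref{cor:ij-order}), so the greedy construction of the weight-maximal base of $\M_A$ is forced until the step where one must choose between columns $i$ and $h$, and that step bifurcates into two outputs iff the rank equality above holds.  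A separate, purely combinatorial computation with the Grassmann necklace (Theorem~\ref{th:unbounded}) then shows this rank condition is exactly $I_{i+1}=(I_i\setminus\{i\})\cup\{h\}$, i.e.\ $\pi(h)=i$.

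Your route has a genuine gap at precisely the point you flag as the obstacle.  The inequalities $I_j\preceq_j J$ for all $j$ cut out the bases of the \emph{positroid envelope} of $\M_A$, not of $\M_A$ itself, and for general $A\in Gr_{k,n}$ the containment $\M_A\subset\text{(positroid)}$ can be strict.  So even if you show that no positroid base $J''$ rises above a candidate segment $P_JP_{J'}$, you have not established $J,J'\in\M_A$, and you need both endpoints in $\M_A$ for the edge to appear in the hull of $\{P_J:J\in\M_A\}$.  In the other direction, to kill an extraneous hull edge you must exhibit a dominating $J''\in\M_A$, and again the positroid inequalities only hand you a positroid base.  What you are missing is exactly the greedy algorithm: it constructs the relevant bases inside $\M_A$ from the actual rank function of $A$, and the cyclic-interval rank condition is what falls out.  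Since ranks of cyclic intervals are positroid invariants, the link to $\pi$ is then a clean necklace argument, and at that stage your positroid reasoning becomes legitimate.
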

\begin{proof}
This result will follow immediately from 
Theorems \ref{th:rank} and \ref{th:unbounded} below.
%, whenever
%$\pi(h)=i$, there must be an unbounded line-soliton labeled by $i$ and $h$,
%either at $y\gg0$ or $y\ll0$, based on the order of $i$ and $h$.
%Now using Theorem \ref{th:derangement},
%we see that $\pi^{!}(h)=i$.
%This shows that $\pi^{!}$ and $\pi$ coincide.
\end{proof}

\begin{remark} Chakravarty and Kodama \cite[Prop. 2.6 and 2.9]{CK1} and 
\cite[Theorem 5]{CK3} associated a derangement to each 
\emph{irreducible} element $A$
in the \emph{totally non-negative part} $(Gr_{k,n})_{\geq 0}$ 
of the Grassmannian. 
Theorem \ref{thm:soliton-perm} generalizes their result by 
dropping the hypothesis of irreducibility and extending the 
setting from 
$(Gr_{k,n})_{\geq 0}$ to $Gr_{k,n}$.
\end{remark}

%\begin{remark}
%Use the notation of Theorem \ref{thm:soliton-perm}.
%Recall from Remark \ref{perm-pivot} the bijection between the pivots of $A$
%and the weak excedances of $\pi^:$.
%It follows that whenever $[i,h]$ labels an unbounded line-soliton of 
%$\CC_t(u_A)$ at $y\gg0$, $i$ is a pivot position.
%And whenever $[i,h]$ labels an unbounded line-soliton of 
%$\CC_t(u_A)$ at $y\ll0$, $h$ is a non-pivot position.
%The converse is true if and only if $A$ is irreducible
%(and hence $\pi^:$ has no fixed points).
%\end{remark}

%\begin{corollary}
%Let $A \in Gr_{k,n}$, and consider the contour plot $\CC_t(u_A)$
%for any time $t$.  Then
%the decorated permutation $\pi^{:}(A)$ and hence the soliton asymptotics
%of $\CC_t(u_A)$ depends only on which positroid stratum 
%$S_{\I}$ the element $A$ lies in.
%\end{corollary}

Before stating Theorems \ref{th:rank}
and \ref{th:unbounded},
we need to introduce some notation.

Given a  matrix $A$ with $n$ columns,
let $A(k,\dots,\ell)$ be the submatrix of $A$ obtained
from columns $k, k+1,\dots, \ell-1, \ell$, where the columns
are listed in the circular order 
$k, k+1,\dots,n-1, n, 1, 2, \dots,k-1$.  

The following result generalizes \cite[Lemma 3.4]{BC06} from
$(Gr_{k,n})_{\geq 0}$ to $Gr_{k,n}$.  
Our proof of Theorem \ref{th:rank} 
will be similar to that
of \cite{BC06}, but some arguments can be clarified using some 
basic theory of matroids.

\begin{theorem}\label{th:rank}
Let $A \in Gr_{k,n}$ and  consider the contour plot
$\CC_{t}(u_A)$ for any time $t$.  Choose $i, h \in\{1,\dots,n\}$
with $i<h$.

Then there is an unbounded line-soliton of $\CC_t(u_A)$ at $y\ll0$
labeled $[i,h]$
if and only if 
\begin{equation}\label{eq:rank}
\rank A(i,\dots,h-1) = \rank A(i+1,\dots,h) = \rank A(i,\dots,h) = 
\rank A(i+1,\dots,h-1) + 1.
\end{equation}
There is an unbounded line-soliton of $\CC_t(u_A)$ at $y\gg0$
labeled $[i,h]$
if and only if 
\begin{equation}\label{eq:rank2}
\rank A(h,\dots,i-1) = \rank A(h+1,\dots,i) = \rank A(h,\dots,i) = 
\rank A(h+1,\dots,i-1) + 1.
\end{equation}
\end{theorem}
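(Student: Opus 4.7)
The plan is to reduce the question to a matroid optimization problem parameterized by a single direction variable, then use the greedy algorithm to translate the resulting swap condition into the rank equalities \eqref{eq:rank} and \eqref{eq:rank2}. I will describe the $y\ll 0$ case; the case $y\gg 0$ is entirely analogous after a sign change. Along the ray $(x,y)=M(\xi,-1)$ with $M\to+\infty$ and $t$ fixed, the function $f_A$ of \eqref{f-contour} has the asymptotic form $M\cdot\max_{J\in\M_A}\bigl(\xi\,\kappa_J^{(1)}-\kappa_J^{(2)}\bigr)+O(1)$, where $\kappa_J^{(p)}:=\sum_{j\in J}\kappa_j^p$. So the dominant exponential along the ray is the basis $J\in\M_A$ \emph{minimizing} the weight $w_\xi(J):=\sum_{j\in J}\kappa_j(\kappa_j-\xi)$, and an unbounded line-soliton of type $[i,h]$ at $y\ll 0$ corresponds to a pair $J$, $J'=J\triangle\{i,h\}$ of simultaneous minimizers at some critical direction $\xi_*$.

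To pin down $\xi_*$, note that $w_\xi(i)-w_\xi(h)=(\kappa_i-\kappa_h)(\kappa_i+\kappa_h-\xi)$, so $\xi_*=\kappa_i+\kappa_h$; by the genericity hypothesis these values are distinct across pairs, so only the pair $(i,h)$ can swap at $\xi_*$. A short computation then gives $w_{\xi_*}(j)-w_{\xi_*}(i)=(\kappa_j-\kappa_i)(\kappa_j-\kappa_h)$, which is negative on the middle block $\{i+1,\ldots,h-1\}$, zero on $\{i,h\}$, and positive outside $\{i,\ldots,h\}$. Running the greedy algorithm for a minimum-weight basis of $\M_A$ at $\xi=\xi_*$ proceeds in three stages: first pick a maximum independent subset of the middle block, of size $r_1:=\rank A(i+1,\ldots,h-1)$; then resolve the tied elements $\{i,h\}$; then absorb the positive-weight elements. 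Two distinct minimum bases $J,J'$ differing only by $\{i,h\}$ exist if and only if exactly one of $i,h$ can be added at the tied stage (i.e.\ $\rank A(i,\ldots,h)=r_1+1$) \emph{and} both $i,h$ are individually independent over the middle block (i.e.\ $\rank A(i,\ldots,h-1)=\rank A(i+1,\ldots,h)=r_1+1$). These four rank equalities are precisely \eqref{eq:rank}. The converse direction — producing $J,J'$ from the rank conditions — uses a matroid exchange argument: extend a common $B\subseteq\{i+1,\ldots,h-1\}$ by $\{i\}$ or $\{h\}$, and then by the \emph{same} set of outside columns.

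For \eqref{eq:rank2} I would run the parallel argument along rays $(x,y)=M(\xi,1)$, asking for the \emph{maximum} of $\sum_{j\in J}\kappa_j(\xi+\kappa_j)$ over bases of $\M_A$. The critical direction becomes $\xi_*=-(\kappa_i+\kappa_h)$, the roles of ``middle'' and ``outside'' swap, and the outside block $\{1,\ldots,i-1\}\cup\{h+1,\ldots,n\}$ coincides with the block of consecutive columns $A(h+1,\ldots,i-1)$ in the circular notation, producing \eqref{eq:rank2}.

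The main obstacle will be the passage from ``$J$ and $J'$ are both minimizers of $w_{\xi_*}$'' to ``the line-soliton of type $[i,h]$ appears as an unbounded edge of $\CC_t(u_A)$ at $y\ll 0$.'' A priori a third basis $J''$ might dominate $J$ and $J'$ on one side of $\xi_*$, cutting off the edge before it can reach infinity, or additional swaps could coincide at $\xi_*$ and destroy the simple $[i,h]$-exchange. Handling this will require the genericity of $\{\kappa_j\}$ to ensure that critical $\xi$-values for distinct pairs do not coincide, together with the matroid exchange axiom to show that neighboring optimal bases along the ray really differ by a single swap. Once this is in place, the equation \eqref{eq-soliton} for $[i,h]$-solitons identifies the locus where $J$ and $J'$ are balanced as exactly the desired unbounded edge.
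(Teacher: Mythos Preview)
Your approach is essentially the same as the paper's: both reduce the question to the greedy algorithm for weight-optimal bases of the matroid $\M_A$, with weights determined by the asymptotic behavior of the $\theta_j$'s, and both translate the resulting ``exactly one of $i,h$ can be added after saturating the middle block'' condition into the rank equalities. The only cosmetic difference is that the paper works directly on the line $L_{i,h}=\{\theta_i=\theta_h\}$ and invokes a lemma ordering the $\theta_m$'s along it, whereas you parameterize rays by their direction $\xi$ and compute the quadratic weight $w_\xi$ explicitly; your formula $w_{\xi_*}(j)-w_{\xi_*}(i)=(\kappa_j-\kappa_i)(\kappa_j-\kappa_h)$ is exactly the content of that lemma.

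The ``main obstacle'' you flag is not really an obstacle, and the paper's framing shows why: if you work on $L_{i,h}$ itself rather than on rays from the origin, then showing that $J$ and $J\triangle\{i,h\}$ are the \emph{only} two weight-maximal bases along $L_{i,h}$ for $|y|\gg 0$ immediately gives an unbounded $[i,h]$-edge of the contour plot, with no possibility of a third basis $J''$ cutting it off. In your ray parameterization the same conclusion holds because the regions where $J$ and $J'$ dominate are adjacent full-dimensional cones meeting along an unbounded edge of slope $\kappa_i+\kappa_h$; but the line-$L_{i,h}$ viewpoint makes this transparent. Your observation that the same set of outside columns completes both $B\cup\{i\}$ and $B\cup\{h\}$ is correct and is used implicitly in the paper too: since $\rank A(i,\dots,h)=r_1+1$, the spans of $B\cup\{i\}$ and $B\cup\{h\}$ coincide, so greedy proceeds identically from that point on.
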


Recall from Section \ref{soliton-background} that 
$\theta_j(x,y,z) = \kappa_j x + \kappa_j^2 y + \kappa_j^3 t$.
Fix $i,j\in \{1,\dots,n\}$, %with $i<j$, 
and let 
$L_{i,j}$ denote the line defined by $\theta_i = \theta_j$. 
Define subsets of $[n]$ by 
\begin{align*}
P &= \{\max(i,j)+1,\dots,\min(i,j)-1\}:=\{1,\dots,\min(i,j)-1\} \cup 
                                     \{\max(i,j)+1,\dots,n\} \text{ and }\\
Q&= \{\min(i,j)+1,\dots,\max(i,j)-1\}.
\end{align*}

In order to study the unbounded solitons at $y\gg 0$ and $y\ll 0$,
we first record the following lemma.

\begin{lemma}\cite[Lemma 3.1]{BC06}\label{cor:ij-order}
For $|y|\gg 0$, we have the following ordering among the 
$\theta_j$'s on the line $L_{i,j}$:
%, we have the following orderings among $\theta_j$'s:
\begin{enumerate}
\item For $y \gg 0$ on the line $L_{i,j}$, $\theta_m < \theta_i=\theta_j$
for all $m \in Q$, and $\theta_m > \theta_i=\theta_j$
for all $m\in P$.
\item For $y \ll 0$ on the line $L_{i,j}$, $\theta_m > \theta_i=\theta_j$
for all $m \in Q$, and $\theta_m < \theta_i=\theta_j$
for all $m\in P$.
\end{enumerate}
\end{lemma}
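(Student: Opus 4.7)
The plan is to reduce the comparison of $\theta_m$ with $\theta_i = \theta_j$ on $L_{i,j}$ to a single-variable sign analysis in $y$. Without loss of generality assume $i<j$, so $\kappa_i<\kappa_j$. The defining equation $\theta_i=\theta_j$ of $L_{i,j}$, after dividing the difference by $\kappa_i-\kappa_j\neq 0$, becomes exactly the line-soliton equation \eqref{eq-soliton}:
\[
x = -(\kappa_i+\kappa_j)\,y - (\kappa_i^2+\kappa_i\kappa_j+\kappa_j^2)\,t .
\]
First I would substitute this expression for $x$ into
\[
\theta_m - \theta_i = (\kappa_m-\kappa_i)\,x + (\kappa_m^2-\kappa_i^2)\,y + (\kappa_m^3-\kappa_i^3)\,t
\]
and simplify. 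The key algebraic identity, which I would verify by direct expansion using $\kappa_m^2+\kappa_m\kappa_i-\kappa_i\kappa_j-\kappa_j^2 = (\kappa_m-\kappa_j)(\kappa_m+\kappa_i+\kappa_j)$, is the clean factorization
\[
\bigl.(\theta_m-\theta_i)\bigr|_{L_{i,j}} \;=\; (\kappa_m-\kappa_i)(\kappa_m-\kappa_j)\,\bigl[\,y + (\kappa_m+\kappa_i+\kappa_j)\,t\,\bigr].
\]

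With this identity in hand, fix $t$ and let $|y|$ be large enough that $|y|>|(\kappa_m+\kappa_i+\kappa_j)t|$ for every $m\in[n]$ (such a uniform threshold exists because $n$ is finite). Then the bracketed factor has the same sign as $y$, so
\[
\operatorname{sign}\bigl((\theta_m-\theta_i)|_{L_{i,j}}\bigr) \;=\; \operatorname{sign}\bigl((\kappa_m-\kappa_i)(\kappa_m-\kappa_j)\bigr)\cdot\operatorname{sign}(y).
\]
Since $\kappa_1<\kappa_2<\cdots<\kappa_n$, the sign of $(\kappa_m-\kappa_i)(\kappa_m-\kappa_j)$ is immediate from the position of $m$: if $m\in Q=\{i+1,\dots,j-1\}$ then $\kappa_i<\kappa_m<\kappa_j$ and the product is negative, whereas if $m\in P$ then $\kappa_m$ lies outside $[\kappa_i,\kappa_j]$ and the product is positive. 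Combining these two sign determinations yields both (1) and (2) simultaneously: on $L_{i,j}$ with $y\gg 0$ we get $\theta_m>\theta_i$ for $m\in P$ and $\theta_m<\theta_i$ for $m\in Q$, and the inequalities reverse when $y\ll 0$.

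This is essentially a one-line computation once the factorization is spotted, so there is no serious obstacle. The only subtle point worth flagging is that the asymptotic nature of the statement is genuinely needed: the bracketed factor $y+(\kappa_m+\kappa_i+\kappa_j)t$ depends on $t$, so for any fixed $t$ the orderings claimed hold only outside a bounded interval in $y$, and the size of that interval grows with $|t|$. Because there are only finitely many indices $m$, however, a single threshold suffices to make all $n-2$ comparisons behave as stated.
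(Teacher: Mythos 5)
Your proof is correct and follows essentially the same route as the paper: both substitute the equation of $L_{i,j}$ into $\theta_m-\theta_i$ and read off the sign of the coefficient $(\kappa_m-\kappa_i)(\kappa_m-\kappa_j)$ of $y$, which is negative for $m\in Q$ and positive for $m\in P$. Your version merely makes the constant term explicit as $(\kappa_m-\kappa_i)(\kappa_m-\kappa_j)(\kappa_m+\kappa_i+\kappa_j)t$ (the paper calls it $\delta$) and records the uniform threshold on $|y|$, which are welcome but not substantive differences.
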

\begin{proof}
For a fixed $t$, the equation of the line $L_{i,j}$ (which is defined by $\theta_i=\theta_j$) has the form
\[
x+(\kappa_i+\kappa_j)y={\rm constant}.
\]
Then along $L_{i,j}$, we have
\[
\theta_m-\theta_{m'}=(\kappa_m-\kappa_{m'})[(\kappa_m+\kappa_{m'})-(\kappa_i+\kappa_j)]y+\delta,
\]
where $\delta$ does not depend on $x$ or $y$.  The lemma
now follows from 
the fact that
$\kappa_1<\kappa_2<\cdots<\kappa_n$.
\end{proof}
Then it follows immediately that
\begin{corollary}\label{cor:tot-order}
For $y=y_0 \gg 0$  (respectively $y=y_0 \ll0$) 
there is a well-defined total order on 
$\theta_1,\dots,\theta_n$ on the line $L_{i,j}$ 
(with $\theta_i=\theta_j$), and this order does not 
change if we increase $y$  (resp., decrease $y$). 
\end{corollary}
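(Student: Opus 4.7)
The plan is to reduce the statement to the elementary observation that, along $L_{i,j}$, each pairwise difference $\theta_m - \theta_{m'}$ is an affine function of $y$ alone, and then to invoke the genericity of the $\kappa$-parameters to rule out the degenerate (constant) case. Once this is done, the conclusion is immediate because a nonconstant affine function of $y$ has a sign that stabilizes at $\pm\infty$.

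First I would parameterize $L_{i,j}$ by $y$: since $t$ is fixed and the equation of $L_{i,j}$ reads $x + (\kappa_i + \kappa_j)y = \text{const}$, the coordinate $x$ is an affine function of $y$ on this line. Substituting into $\theta_\ell = \kappa_\ell x + \kappa_\ell^2 y + \kappa_\ell^3 t$ and taking differences, as already computed in the proof of Lemma~\ref{cor:ij-order}, yields
\[
\theta_m - \theta_{m'} \;=\; (\kappa_m - \kappa_{m'})\bigl[(\kappa_m + \kappa_{m'}) - (\kappa_i + \kappa_j)\bigr]\,y \;+\; \delta_{m,m'},
\]
where $\delta_{m,m'}$ is independent of $y$. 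For every pair $\{m,m'\} \neq \{i,j\}$ with $m \neq m'$, the factor $\kappa_m - \kappa_{m'}$ is nonzero, and the genericity hypothesis stated at the beginning of Section~\ref{soliton-background} (the pairwise sums $\kappa_a + \kappa_b$ are all distinct) forces $\kappa_m + \kappa_{m'} \neq \kappa_i + \kappa_j$; thus the coefficient of $y$ above is nonzero. For the pair $\{i,j\}$ itself both sides vanish, consistent with the identification $\theta_i = \theta_j$ along the line.

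Next I would observe that a nonconstant affine function of $y$ changes sign at most once. Consequently, for each pair $\{m,m'\} \neq \{i,j\}$ the sign of $\theta_m - \theta_{m'}$ is eventually constant as $y \to +\infty$; choosing $y_0$ larger than all the finitely many sign-change points simultaneously produces a well-defined total order on $\{\theta_1,\ldots,\theta_n\}$ at $y=y_0$ (with $\theta_i$ and $\theta_j$ identified), and by construction this order is preserved for every $y > y_0$. The same argument with $y \to -\infty$ handles the $y \ll 0$ case, and the location of the common value $\theta_i = \theta_j$ within each of the two stable orders is precisely what Lemma~\ref{cor:ij-order} records, ensuring compatibility with the previous lemma.

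I do not anticipate any real obstacle: the argument is essentially ``a nonzero affine function of $y$ stabilizes in sign as $y \to \pm\infty$.'' The only mildly delicate point is confirming that genericity excludes $\kappa_m + \kappa_{m'} = \kappa_i + \kappa_j$ when $\{m,m'\}\neq\{i,j\}$, but this is exactly the $p=2$ case of the genericity hypothesis on the $\kappa_\ell$'s.
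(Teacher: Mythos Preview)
Your proposal is correct and follows exactly the approach the paper intends: the paper simply states that the corollary ``follows immediately'' from Lemma~\ref{cor:ij-order}, and your argument unpacks precisely that, using the affine formula for $\theta_m-\theta_{m'}$ from the lemma's proof together with the genericity of the $\kappa$-parameters.
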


The following matroidal result will be useful to us.

\begin{proposition}\cite[Theorem 1.8.5]{Oxley} \label{greedy}
Consider a matroid $\M$ of rank $k$ on the set $[n]$, and let 
$\omega = (\omega_1,\dots,\omega_n)\in \R^n$.
Define the \emph{weight} of a basis 
$J = (j_1,\dots,j_k)$ of $\M$ to be $\omega_{j_1} + \dots + \omega_{j_k}$.
Then the basis (or bases) of maximal weight are precisely the possible 
outcomes of the greedy algorithm: 
Start with $J = \emptyset$. At
each stage, look for an $\omega$-maximum element of $[n]$ which can be added to $J$
without making it dependent, and add it. After $k$ steps, output the basis $J$.
\end{proposition}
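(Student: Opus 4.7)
The plan is to prove both inclusions in the claim: (i) every greedy output is a maximum-weight basis, and (ii) every maximum-weight basis arises as a greedy output under some tie-breaking. The key tool in both directions is the \emph{augmentation property} for independent sets (i.e.\ subsets of bases), which follows from the basis-exchange axiom of Definition~\ref{def:matroid}: if $I_1$ and $I_2$ are independent with $|I_1|<|I_2|$, then there exists $e\in I_2\setminus I_1$ such that $I_1\cup\{e\}$ is independent.

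For (i), let $J=\{j_1,\dots,j_k\}$ be a greedy output indexed in selection order, so $\omega_{j_1}\ge\cdots\ge\omega_{j_k}$, and let $B=\{b_1,\dots,b_k\}$ be any basis relabeled so $\omega_{b_1}\ge\cdots\ge\omega_{b_k}$. I would prove by induction on $i$ that $\omega_{j_i}\ge\omega_{b_i}$: the sets $\{j_1,\dots,j_{i-1}\}$ and $\{b_1,\dots,b_i\}$ are independent of sizes $i-1$ and $i$, so augmentation yields some $\ell\le i$ with $\{j_1,\dots,j_{i-1},b_\ell\}$ independent. Since greedy chose $j_i$ to be of maximum weight among all valid extensions at step $i$, we get $\omega_{j_i}\ge\omega_{b_\ell}\ge\omega_{b_i}$. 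Summing over $i$ yields $\sum_i\omega_{j_i}\ge\sum_i\omega_{b_i}$, establishing that $J$ has maximum weight.

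For (ii), let $B=\{b_1,\dots,b_k\}$ be a max-weight basis with $\omega_{b_1}\ge\cdots\ge\omega_{b_k}$, and argue that selecting $b_i$ at step $i$ is always a valid greedy choice. If instead some $y\notin\{b_1,\dots,b_{i-1}\}$ with $\omega_y>\omega_{b_i}$ had $\{b_1,\dots,b_{i-1},y\}$ independent, note first that $y\notin B$ (else $y=b_j$ for some $j>i$, but the sorting then forces $\omega_y\le\omega_{b_i}$). Iterated augmentation from $\{b_1,\dots,b_{i-1},y\}$ using elements of $B$ produces a basis $B'$ containing $\{b_1,\dots,b_{i-1},y\}$ and differing from $B$ exactly by dropping one $b_j$ with $j\ge i$ in exchange for $y$, yielding $\sum_{a\in B'}\omega_a-\sum_{a\in B}\omega_a=\omega_y-\omega_{b_j}>0$ and contradicting maximality of $B$.

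The only real subtlety is justifying that the augmentation in (ii) can be carried out entirely within $B$, so that $B'$ differs from $B$ by a single swap; this is automatic, since at every intermediate stage the current independent set has size strictly less than $|B|=k$, and augmentation against the basis $B$ therefore supplies an element of $B$ to extend it. Once this is in place, both implications reduce to short exchange-axiom computations.
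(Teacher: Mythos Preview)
The paper does not prove this proposition; it simply cites it from Oxley's textbook and uses it as a black box. Your proof is correct and is essentially the standard argument one finds in matroid theory texts (including Oxley's): the coordinatewise inequality $\omega_{j_i}\ge\omega_{b_i}$ via augmentation for part (i), and the single-swap contradiction for part (ii), are exactly the textbook route. The one point you gloss over is that the augmentation property for independent sets genuinely requires a short derivation from the basis-exchange axiom of Definition~\ref{def:matroid} (the two axiom systems are equivalent, but not identical); since this equivalence is itself a standard early result in Oxley, treating it as known is reasonable here.
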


We now turn to the proof of Theorem \ref{th:rank}.  We will 
prove the result for unbounded line-solitons at $y\gg0$ (the other
part of the proof is analogous).

\begin{proof}
Let $\M$ be the matroid associated to $A$.  Its ground set $[n]$
is identified with the columns of $A$.
First suppose that for $i,j \in [n]$, with $i>j$ we have 
\begin{equation}\label{eq:rankhypothesis}
\rank A(i,\dots,j-1) = \rank A(i+1,\dots,j) = \rank A(i,\dots,j) = 
\rank A(i+1,\dots,j-1) + 1.
\end{equation}
By Corollary \ref{cor:tot-order}, at $y\gg0$ we have a 
well-defined total order on the $\theta_m$'s on the line $L_{i,j}$. 
At $y\gg0$ the problem
of computing the dominant exponential is equivalent to finding 
the basis of $\M$ with the maximal weight with respect to 
$(\theta_1,\dots,\theta_n)$.  

By Proposition \ref{greedy}, 
we can compute such a weight-maximal basis using the greedy algorithm.
By Corollary \ref{cor:ij-order}, the greedy algorithm
will first choose as many columns of $A(i+1,\dots,j-1)$
as possible. All of the $\theta_m$'s are distinct except for 
$\theta_i=\theta_j$, so there will be a unique way to 
add a maximal independent set of columns of $A(i+1,\dots,j-1)$
to the basis we are building. 
Note that by \eqref{eq:rankhypothesis},
the rank of $A(i+1,\dots,j-1)$ is less than $k$, so our 
weight-maximal basis must additionally contain at least one column that 
is not from $A(i+1,\dots,j-1)$.  By Corollary \ref{cor:ij-order},
columns $i$ and $j$ share a weight which is greater than any of the 
other remaining columns, so the next step is to add one of columns $i$ and $j$
to the basis we are building.
By \eqref{eq:rankhypothesis},
we cannot add both columns, because doing so 
will only increase the rank by $1$.  Therefore we now have two ways 
to build a weight-maximal basis, by adding either one of the columns $i$ and $j$.
If the two bases we are building do not yet have rank $k$, then there is now
a unique way to add columns from $A(j+1,\dots,i-1)$ to complete both of them.

We have now shown that along  $L_{i,j}$ at $y\gg0$, there are 
precisely two dominant exponentials,
$E_I$ and $E_J$, where $I = (J \cup \{i\}) \setminus \{j\}$.  Therefore
there is an unbounded line-soliton at $y\gg0$ labeled $[j,i]$.

Conversely, suppose that for $i>j$, there is an unbounded line-soliton
labeled $[j,i]$ at $y\gg0$.  Then on the line $L_{i,j}$ there are two 
dominant exponentials $E_I$ and $E_J$ with 
$J = (I \cup \{j\}) \setminus \{i\}$.  By Proposition \ref{greedy},
these must be the two outcomes of the greedy algorithm.
As before, by Corollary \ref{cor:ij-order}, the greedy 
algorithm will first choose as many columns of $A(i+1,\dots,j-1)$
as possible while keeping the collection
linearly independent, and then the next step will be to add exactly one
of the columns $i$ and $j$.  Since neither dominant exponential 
contains both $i$ and $j$, adding both columns must not increase
the rank more than adding just one of them.  Therefore 
equation \eqref{eq:rankhypothesis} must hold.
\end{proof}

%The same arguments from \cite[Prop. 2.6 and 2.9]{CK1} and 
%\cite[Theorem 5]{CK3} prove the following.
%\begin{theorem}\label{th:derangement}
%Use the notation of Theorem \ref{th:rank}. Define 
%$\pi^{!}:= \pi^{!}(A)$ by  
%setting $\pi^{!}(h) = i$ for all pairs $\{i,h\}$ as above.
%Then $\pi^{!}$ is a derangement on $n$ letters with $k$
%weak excedances.
%\end{theorem}

%\begin{proof}
%summarize the arguments?
%\end{proof}

%The following result was contained in the proof of 
%\cite[Proposition 5.4]{KW} in the context of 
%$(Gr_{k,n})_{\geq 0}$, but the proof works for $Gr_{k,n}$ as well.
%For completeness, we include the proof here.

\begin{theorem}\label{th:unbounded}
Let $A \in Gr_{k,n}$ lie in the positroid stratum 
$S_{\pi^:}$ where $\pi^: = (\pi,col)$.
% where $\I = (I_1,\dots,I_n)$.  Let $\pi^: = \pi^:(\I)$
%be the decorated permutation associated to $\I$,
Choose $1 \leq i<h \leq n$.
%\in [n]$ with $i<h$.  
Then 
$\pi(h)=i$ if and only if equation \eqref{eq:rank} holds,
and $\pi(i)=h$ if and only if equation \eqref{eq:rank2} holds.
\end{theorem}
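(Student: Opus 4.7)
The plan is to interpret both sides directly through the Grassmann necklace $\mathcal{I}(A) = (I_1, \dots, I_n)$ associated to $A$. Applying Proposition \ref{greedy} to the matroid $\mathcal{M}_A$ with weights that are strictly decreasing in the cyclic order $<_j$, one obtains the following concrete description of $I_j$: an index $p$ lies in $I_j$ if and only if column $A_p$ is linearly independent of $\{A_q : q <_j p\}$. In particular, when $i < h$, we have
\begin{align*}
h \in I_{i+1} &\iff \rank A(i+1,\dots,h) = \rank A(i+1,\dots,h-1)+1, \\
h \notin I_i &\iff \rank A(i,\dots,h) = \rank A(i,\dots,h-1),
\end{align*}
while $i \in I_i$ is equivalent to column $A_i$ being nonzero, and $i \notin I_{i+1}$ is equivalent to column $A_i$ not being a coloop.

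For the forward direction, Definition \ref{necklace-to-perm} gives that $\pi(h)=i$ is equivalent to the four membership conditions $i\in I_i$, $i\notin I_{i+1}$, $h\in I_{i+1}$, $h\notin I_i$, together with $I_{i+1} = (I_i \setminus \{i\}) \cup \{h\}$. The two conditions on $h$ translate directly into the rank equalities $\rank A(i+1,\dots,h) = \rank A(i+1,\dots,h-1)+1$ and $\rank A(i,\dots,h) = \rank A(i,\dots,h-1)$, which are two of the three independent equalities in \eqref{eq:rank}. The remaining equality $\rank A(i,\dots,h-1) = \rank A(i+1,\dots,h-1)+1$ then follows from the strict inclusion of column spans $\operatorname{span} A(i,\dots,h-1) \supsetneq \operatorname{span} A(i+1,\dots,h-1)$, which is forced by column $A_h$ lying in the former but not in the latter.

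For the converse, starting from \eqref{eq:rank} I would recover the four membership conditions as follows: the equality $\rank A(i,\dots,h-1) = \rank A(i+1,\dots,h-1)+1$ forces column $A_i$ to be nonzero, so $i\in I_i$; the equality $\rank A(i,\dots,h) = \rank A(i+1,\dots,h)$ writes column $A_i$ as a linear combination of columns $i+1,\dots,h$, so column $A_i$ is not a coloop and hence $i\notin I_{i+1}$; the remaining two equalities give $h\in I_{i+1}$ and $h\notin I_i$ by the greedy principle recalled above. Since $i\in I_i$ implies via Lemma \ref{lem:necklace} that $I_{i+1} = (I_i \setminus \{i\}) \cup \{j\}$ for some $j\in[n]$, and since $h$ is forced to be the unique element of $I_{i+1}\setminus I_i$, we obtain $j=h$ and therefore $\pi(h)=i$. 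The second assertion, $\pi(i)=h \iff \eqref{eq:rank2}$, will be proved by the identical argument applied to the transition $I_h \to I_{h+1}$ with the cyclic interval from $h+1$ to $i-1$ playing the role previously played by $i+1,\dots,h-1$; I expect no new obstacle, as the delicate step of showing that column $A_h$ contributes an extra rank beyond that cyclic interval's columns is handled by the same strict-span argument.
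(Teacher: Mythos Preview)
Your argument is correct and follows essentially the same route as the paper's proof: both relate $\pi(h)=i$ to the transition $I_i \to I_{i+1}$ in the Grassmann necklace, then translate the membership conditions $h\in I_{i+1}$, $h\notin I_i$, $i\in I_i$ into the rank equalities of \eqref{eq:rank} via the greedy description of each $I_j$. Your organization is somewhat cleaner---stating the greedy characterization of $I_j$ once up front and reading off all membership conditions from it, and deducing the third rank equality from the strict containment of spans---whereas the paper establishes the same facts through separate claims and a short case analysis on the position of $h$ relative to the largest element of $I_i$; but the underlying ideas are identical.
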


\begin{proof}
Let $\I = (I_1,\dots,I_n)$ be the Grassmann necklace associated to 
$A$, so $\pi^: = \pi^:(\I).$  Then 
by Lemma \ref{lem:necklace}, 
$I_i=\{x_1, x_2,\dots,x_{k}\}$ is the lexicographically
minimal $k$-subset
with respect to the  order 
$i < i+1 < \dots < n < 1 < \dots < i-1$
such that $\Delta_{I_i}(A) \neq 0$. 
Similarly
$I_{i+1}$ is the lexicographically
minimal $k$-subset 
with respect to the order 
$i+1 < \dots < n < 1 < \dots < i-1 < i$
such that $\Delta_{I_{i+1}}(A) \neq 0$.

%Consider an element $A \in S_{\M} \subset Gr_{k,n}$.
%Then $\Delta_I(A) \neq 0$ if and only if $I \in \M$.
%Consider the decorated permutation $\pi^:=\pi^:({\mathcal I}(\mathcal M))$.
We will prove the first statement of the theorem (the 
proof of the second is analogous, so we omit it.)
Suppose that 
$\pi(h)=i$.  %(The proof in the case that $\pi(i)=h$ is analogous so 
%we omit it.)
Then $x_1 = i$; otherwise 
the $i$th column of $A$ is the zero-vector and $\pi(i)=i$.
Using Definition \ref{necklace-to-perm} and 
Lemma \ref{lem:necklace},
$h$ has the following characterization.
%$I_{i+1}$ %=(I_i\setminus \{i\}) \cup \{j\}$ 
%is obtained 
%$I_i\setminus \{i\}$ 
Consider the column indices
in the order $i+1, i+2, \dots, n, 1, 2, \dots, i$ and greedily choose
the earliest index $h$ such that the columns of $A$
indexed by the set $\{x_2,\dots,x_{k}\} \cup \{h\}$
are linearly independent.
Then $I_{i+1} =(I_i\setminus \{i\}) \cup \{h\}$. 

Now consider the ranks of various submatrices of $A$ obtained
by selecting certain columns.

{\it Claim 0.}  $\rank A(i+1,\dots, h-1,h)=
  1+\rank A(i+1,\dots, h-1)$.  This claim follows from
the characterization of $h$ and the fact that 
$I_{i+1}$ is the lexicographically minimal $k$-subset with 
respect to the order $i+1<\dots<n<1<\dots<i$
such that $\Delta_{I_{i+1}}(A) \neq 0$.

{\it Claim 1.}  $\rank A(i,i+1,\dots,h) = \rank A(i,i+1,\dots,h-1)$.
To prove this claim, we consider two cases.  Either
$i <_i h <_i x_k$ or $i <_i x_k <_i h$, where $<_i$
is the total order $i<i+1<\dots<n<1<\dots<i-1$.
In the first case, the claim follows, because
$h$ is not contained in the set $I_i$.
% but {\it is} contained
%in $I_{i+1}$.  
In the second case,
$\rank A(i,i+1,i+2,\dots,x_k) = k$, and the index set
$\{i,i+1,\dots,x_k\}$ is a proper subset of
$\{i,i+1,\dots,h\}$, so
$\rank A(i,\dots,h) = \rank A(i,\dots,h-1)=k$.

Now let $R = \rank A(i+1,i+2,\dots,h-1)$.
By Claim 0, $\rank A(i+1,\dots,h) = R+1$.
Therefore we have
$\rank A(i,\dots,h) \geq \rank A(i+1,\dots,h) = R+1$.
By Claim 1, $\rank A(i,\dots,h) = \rank A(i,\dots,h-1)$,
but $\rank A(i,\dots,h-1) \leq R+1$, so $\rank A(i,\dots,h) \leq R+1$.
We now have $\rank A(i,\dots,h) = R+1$.
But also $\rank A(i,\dots,h-1)=\rank A(i,\dots,h)=R+1$.
Therefore
$\rank A(i,i+1,\dots,h-1)= \rank A(i+1,\dots,h-1,h)=\rank A(i,\dots,h)=
  \rank A(i+1,\dots,h-1) +1$, as desired.

Conversely, suppose that 
$\rank A(i,i+1,\dots,h-1)= \rank A(i+1,\dots,h-1,h)=\rank A(i,\dots,h)=
  \rank A(i+1,\dots,h-1) +1$.  Let $I_i$ and $I_{i+1}$ be the 
lexicographically minimal $k$-subsets with respect to the 
total orders $<_i$ and $<_{i+1}$, such that 
$\Delta_{I_i}(A) \neq 0$ and $\Delta_{I_{i+1}}(A) \neq 0$.
Since 
$\rank A(i,i+1,\dots,h-1)= \rank A(i,\dots,h)$, we have
$h \notin I_i$.  And since 
$\rank A(i+1,\dots,h-1,h)=
  \rank A(i+1,\dots,h-1) +1$, we have $h \in I_{i+1}$.
We now claim that $i \in I_i$.
Otherwise, by the definition of Grassmann necklace,
$I_{i+1} = I_i$, which contradicts the fact that
$\rank A(i,i+1,\dots,h-1) = \rank A(i+1,\dots,h-1)+1$.
Therefore the claim holds, and by 
Definition \ref{necklace-to-perm}, 
we must have $\pi(h)=i$.
\end{proof}

%%%%%%%%%%%%%%%%%%  Soliton graphs  %%%%%%%%%%%%%%%%%

\section{Soliton graphs and generalized plabic graphs}\label{sec:solitonplabic}

The following notion of {\it soliton graph}
forgets the metric
data of the contour plot, but preserves
the data of how line-solitons interact and which exponentials dominate.

\begin{definition}\label{soliton-graph}
Let $A \in Gr_{k,n}$ and consider a generic contour plot $\CC_{t}(u_A)$ 
for some time $t$.
Color a trivalent
vertex black (respectively, white)
if it has a unique edge extending downwards (respectively, upwards) from it.
We preserve the labeling of regions and edges that was used 
in the contour plot: we label a region by $E_I$ if the dominant 
exponential in that region is $\Delta_I E_I$, and label
each line-soliton by its {\it type} $[i,j]$ 
(see Lemma \ref{separating}).
We also preserve the topology of the graph,
but forget the metric structure.
We call this labeled graph with bicolored vertices
the \emph{soliton graph} $G_{t_0}(u_A)$.
% Note that Lauren removed reference to having $n$ unbounded line-solitons.
\end{definition}

\begin{example}
We continue 
Example \ref{ex:Gr49CP}.
% which corresponds to the matrix  $g\in G_{\v,\w}$ with
%\begin{align*}
%{\bf w}&=s_7s_8s_4s_5s_6s_7s_2s_4s_5s_6s_1s_2s_3s_4s_5,\\
%{\bf v}&=s_711s_51s_7s_21s_4111s_21s_4s_5.
%\end{align*}
Figure \ref{contour-soliton} contains
the same contour plot $\CC_t(u_A)$ as that at the left of 
Figure \ref{fig:CP}.
%for the solution $u_A(x,y,t)$ at $t=-10$,
%using the parameters $p_i = 1$ for all $i$ and $m_j = 0$ for all $j$.
One may use Lemma \ref{separating} to label all regions and edges
in the soliton graph.
After computing the Pl\"ucker coordinates, 
% from Theorem \ref{p:Plucker}, 
one can identify the singular solitons, which are indicated
by the dotted lines in the soliton graph.
% Although we took all $m_l=0$ in the example,  for nonzero values of those parameters, 
% we still  get the 
%same pattern of the contour plot as that at $t=-10$.
%This is because those parameters do not affect the values of the Pl\"ucker coordinates appearing in the soliton graph
%%%%%%%%%%%%%%%%%%%%%%%%%%%%%%%%%%
\begin{figure}[h]
\begin{center}
\includegraphics[height=1.8in]{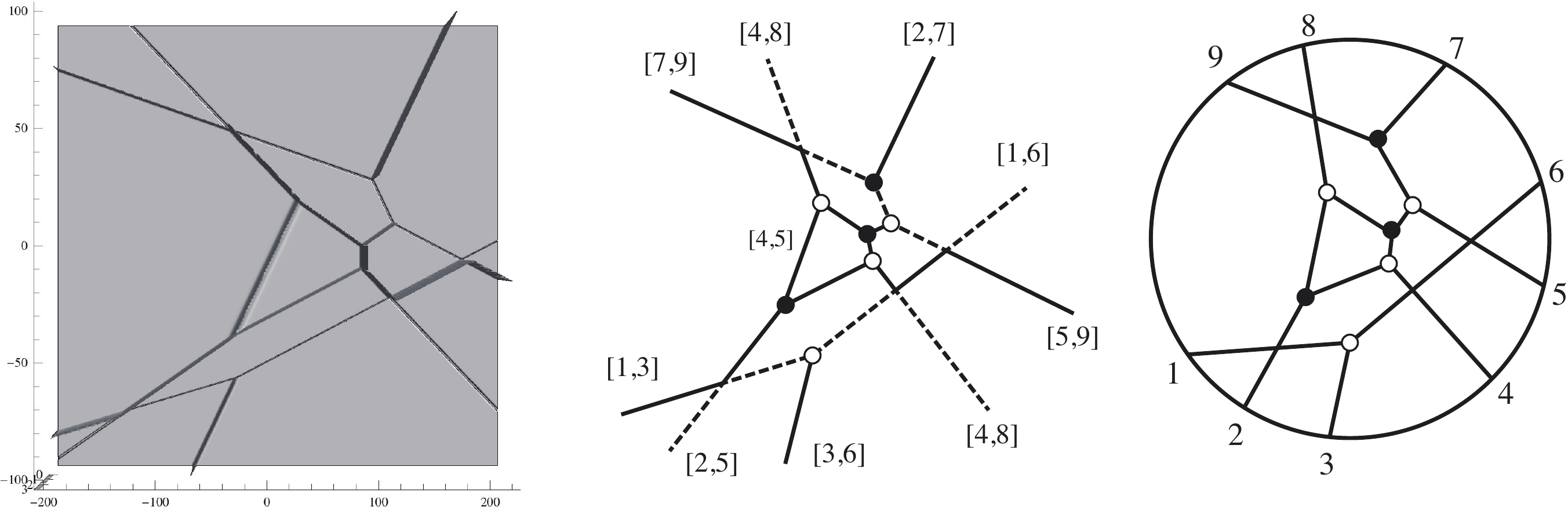}
\end{center}
\caption{Example of a contour plot $\CC_t(u_A)$, its
soliton graph $C=G_t(u_A)$, and its generalized plabic
graph $Pl(C)$.   The parameters used are
those from Example \ref{ex:Gr49CP}.
In particular,
%$\kappa$-parameters are 
$(\kappa_1,\dots,\kappa_9)=(-5,-3,-2,-1,0,1,2,3,4)$, and 
$\pi = (6,7,1,8,2,3,9,4,5).$
 \label{contour-soliton}}
\end{figure}
%%%%%%%%%%%%%%%%%%%%%%%%%%%%%%%%
\end{example}

%\subsection{From soliton graphs to generalized plabic graphs}

We now describe how to pass 
%make one more combinatorial reduction, which is to pass from 
from a soliton graph to a \emph{generalized plabic graph}. 
%The following notions generalize Postnikov's definitions of 
%plabic graph and trip from \cite{Postnikov}.

\begin{definition}
A \emph{generalized plabic graph\/}
is an undirected graph $G$ drawn inside a disk
with $n$ \emph{boundary vertices\/} labeled
$\{1,\dots,n\}$.
We require that each boundary vertex $i$ is either isolated 
(in which case it is colored with color $1$ or $-1$), or
is incident
to a single edge; and each internal vertex is colored black or white.
Edges are allowed to cross each 
other in an $X$-crossing  (which is not considered to be a vertex). 
\end{definition}

By Theorem \ref{thm:soliton-perm}, the following construction
is well-defined.
\begin{definition}\label{soliton2plabic}
Fix a positroid stratum $\S_{\pi^{:}}$ of $Gr_{k,n}$ where
$\pi^{:} = (\pi,col)$.
To each soliton graph $C$ coming from a point of that stratum we associate 
a generalized plabic graph
$Pl(C)$ by:
\begin{itemize}
\item embedding $C$ into a disk, so that each unbounded line-soliton
of $C$ ends at a \emph{boundary vertex};
\item labeling the boundary vertex 
incident to the edge with labels $i$ and $\pi(i)$ %$\{i,\pi(i)\}$ 
by $\pi(i)$;
\item adding an isolated boundary vertex labeled $h$ with color
$1$ (respectively, $-1$) whenever $h\in J$ for each region label $E_J$
(respectively, whenever $h\notin J$ for any region label $E_J$);
\item forgetting the labels of all edges
and regions.
\end{itemize}
\end{definition}
See Figure \ref{contour-soliton} for a soliton graph $C$ together with the 
corresponding generalized plabic graph $Pl(C)$.

\begin{definition}\label{gen:trip}
Given a generalized plabic graph $G$,
the \emph{trip} $T_i$ is the directed path which starts at the boundary vertex 
$i$, and follows the ``rules of the road": it turns right at a 
black vertex,  left at a white vertex, and goes straight through the 
$X$-crossings.  
Note that $T_i$  will also 
end at a boundary vertex.  
If $i$ is an isolated vertex, then 
$T_i$ starts and ends at $i$.  
Define $\pi_G(i)=j$ whenever $T_i$
ends at $j$. %, and if $h$ is not a boundary vertex, then define
It is not hard to show that $\pi_G$ is a permutation,
which we call the \emph{trip permutation}.
%$\pi_G(h)=h$.  
\end{definition}

We use the trips to label the edges and regions
of each generalized plabic graph.

\begin{definition}\label{labels}
Given a generalized plabic graph $G$, 
start at each non-isolated boundary
vertex $i$ and label every edge along trip $T_i$ with $i$.
Such a trip divides the disk containing $G$ into two parts: 
the part to the left of $T_i$, and the part to the right.
Place an $i$ in every region which is to the left of $T_i$.
If $h$ is an isolated boundary vertex with color $1$,
put an $h$ in every region of $G$.
After repeating this procedure for each boundary vertex,
each edge will be labeled by up to two numbers (between $1$ and $n$),
and each region will be labeled by a collection of numbers.
Two regions separated by an edge labeled by both $i$ and $j$ will have 
region labels $S$ and $(S\setminus \{i\}) \cup \{j\}$.
When an edge is labeled by two numbers $i<j$, we write $[i,j]$
on that edge, or $\{i,j\}$ or $\{j,i\}$ if we do not wish to specify
the order of $i$ and $j$.  
\end{definition}

Although the following result was proved for irreducible cells of  
$(Gr_{k,n})_{\geq 0}$, the same proof holds for arbitrary
positroid strata of $Gr_{k,n}$.
\begin{theorem}\label{soliton-plabic}\cite[Theorem 7.6]{KW2}
Consider a soliton graph $C=G_t(u_A)$ coming from a point $A$
of a positroid stratum
$\S_{\pi^:}$, where $\pi^: = (\pi,col)$.
Then the trip permutation of $Pl(C)$ is $\pi$, 
and by labeling edges of $Pl(C)$ according
to Definition \ref{labels}, we will recover the original edge 
and region
labels in $C$.  
%If we label regions of $Pl(C)$ according to 
%Definition \ref{labels} and then add an $h$ to every region
%for every fixed point $h$ of $\pi^:$ which has color $1$,
%we will recover the original region labels in $C$.
\end{theorem}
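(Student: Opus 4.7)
The plan is to reduce Theorem \ref{soliton-plabic} to a purely local verification at each internal vertex and each $X$-crossing of the soliton graph $C$, and then to a boundary identification provided by Theorem \ref{thm:soliton-perm}. I will first classify the local pictures that can occur, then check that the ``rules of the road'' in Definition \ref{gen:trip} propagate the soliton labels $[i,j]$ (from Lemma \ref{separating}) in a way consistent with the edge labels, and finally match the endpoints of the resulting trips to $\pi$.

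At a trivalent vertex three line-solitons meet, and by Lemma \ref{separating} the three adjacent regions must carry labels $\{i\}\cup L$, $\{j\}\cup L$, $\{k\}\cup L$ for a common $(k-1)$-subset $L$ and three indices $i<j<k$; hence the three edges carry types $[i,j]$, $[j,k]$, $[i,k]$. Using the slopes $\kappa_i+\kappa_j<\kappa_i+\kappa_k<\kappa_j+\kappa_k$ from Remark \ref{slope} together with the tropical balancing of the contour plot, exactly one of two local pictures occurs: either the edges of types $[i,j]$ and $[j,k]$ extend upward and $[i,k]$ extends downward (a black vertex), or the mirror configuration (a white vertex). A direct case-check then shows that, entering along any one of the three edges and following the appropriate rule of the road (turning right at black, left at white), one exits along the unique neighboring edge sharing an index with the incoming one; the shared index is precisely the label propagated by the trip. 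At an $X$-crossing one similarly verifies, using Definition \ref{def:blackwhiteX}, that the two crossing solitons share no index in their types, so that ``going straight through'' preserves the label of the trip.

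With these local compatibilities in hand, each trip $T_m$ traces a consistent sequence of edges all carrying $m$ as one of their type-indices, and the region-labeling rule of Definition \ref{labels} matches the dominant-exponential labels inherited from the contour plot, since a region lies on the left of $T_m$ exactly when the index set of its dominant exponential contains $m$. Matching the endpoints of the $T_m$'s against the boundary behavior established in Theorem \ref{thm:soliton-perm} identifies $\pi_{Pl(C)}$ with $\pi$, and the coloring of isolated boundary vertices corresponds precisely to the two fixed-point cases in that theorem.

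The main obstacle will be handling the higher-valence vertices alluded to after Definition \ref{def:blackwhiteX}. The presence of singular line-solitons (as in Example \ref{ex:Gr49CP}) causes no new difficulty, because Lemma \ref{separating}, the slope formula, and the tropical balancing are all insensitive to the signs of the Pl\"ucker coordinates; the argument depends only on the underlying tropical data $f_A(x,y,t)$ of \eqref{f-contour}, which is determined by the matroid $\M$ alone. Higher-valence vertices will be dealt with by slightly perturbing the $\kappa_j$'s so as to resolve each such vertex into a configuration of trivalent ones (to which the local analysis above applies), and then showing that the trip combinatorics is preserved in the limit. This is precisely the point at which the proof of \cite[Theorem 7.6]{KW2} for irreducible totally non-negative cells extends verbatim to the general positroid stratum treated here.
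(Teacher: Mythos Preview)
The paper does not actually prove this theorem; it merely cites \cite[Theorem 7.6]{KW2} and remarks that ``the same proof holds for arbitrary positroid strata of $Gr_{k,n}$.'' Your outline is essentially the argument one would expect to find in \cite{KW2}: a local check that the rules of the road at black/white trivalent vertices and at $X$-crossings propagate the shared index of the two soliton types on the incoming/outgoing edges, followed by a global identification of trip endpoints with $\pi$ via Theorem~\ref{thm:soliton-perm}. In that sense your approach matches the paper's (implicit) one.

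One remark: your stated ``main obstacle'' is not actually present. By Definition~\ref{soliton-graph} the soliton graph is only defined for a \emph{generic} contour plot, and by Definition~\ref{def:generic} genericity means precisely that every interaction is a trivalent vertex or an $X$-crossing. So higher-valence vertices do not occur in $C$, and no perturbation argument is needed. Your local analysis at trivalent vertices and $X$-crossings already covers all cases. (The perturbation idea would be the right tool if one wanted to extend the statement to non-generic contour plots, but that is not what is being claimed.)
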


We invite the reader to verify Theorem \ref{soliton-plabic} for 
the graphs in Figure \ref{contour-soliton}.

\begin{remark} By Theorem \ref{soliton-plabic}, we can identify 
each soliton graph $C$ with its generalized plabic graph $Pl(C)$.
From now on, we will often ignore the labels of edges and regions
of a soliton graph, and simply record the labels on boundary vertices.
\end{remark}

\section{The contour plot  for $t\ll0$}\label{sec:t<<0}

Consider a matroid stratum $S_{\mathcal{M}}$ contained in 
the Deodhar component $S_{{D}}$,
where $D$ is the corresponding or Go-diagram.
From Definition \ref{contour} it is clear that the contour plot associated to 
any $A\in S_{\mathcal{M}}$ depends only on $\mathcal{M}$, not on $A$.  
In fact for $t\ll0$ a stronger statement is true -- 
the contour plot for 
any $A \in S_{\mathcal{M}} \subset S_D$ depends only on $D$, 
and not on $\mathcal{M}$.
In this section
we will explain how to use $D$ to construct first a 
generalized plabic graph $G_-(D)$,
and then the contour plot $\CC_{t}(\mathcal{M})$ for $t\ll 0.$

\subsection{Definition of the contour plot for $t \ll0$.}\label{sec:contour}
Recall from \eqref{f-contour} the definition of $f_{\mathcal{M}}(x,y,t)$.
To understand how it behaves for $t \ll 0$, let us
rescale everything by $t$.
Define $\bar{x} = \frac{x}{t}$ and $\bar{y} = \frac{y}{t}$,
and set
\[
\phi_i(\bar{x}, \bar{y}) = \kappa_i \bar{x} + 
\kappa_i^2 \bar{y} + \kappa_i^3,
\]
that is, $\kappa_i x + \kappa_i^2 y + \kappa_i^3 t = t\phi_i(\bar{x},\bar{y})$.
%Note that if $t$ is positive then $x$ and $y$ have the
%same signs as $\bar{x}$ and $\bar{y}$, respectively.
Note that because $t$ is negative,  $x$ and $y$ have the opposite
signs of $\bar{x}$ and $\bar{y}$.  This
leads to the following definition of the contour plot
for $t \ll 0$.

\begin{definition}\label{contourplot-infinity}
We define the contour plot
%$\CC_{\infty}(\mathcal{M})$, and
$\CC_{-\infty}(\mathcal{M})$
to be the locus in $\R^2$ where
\begin{equation*}
%\underset{J\in\mathcal{M}}\max \left\{
%                     \sum_{i=1}^k \phi_{j_i}(\bar{x},\bar{y}) \right\},\qquad 
%\text{ and } \qquad
\underset{J\in\mathcal{M}}\min \left\{
                     \sum_{i=1}^k \phi_{j_i}(\bar{x},\bar{y}) \right\}\quad 
\text{ is not linear .}
\end{equation*}
\end{definition}

\begin{remark}\label{rem:rotate}
%$\CC_{\infty}(\mathcal{M})$ is the limit
%of $\CC_{t}(u_A)$ as $t\to \infty$ with the coordinates $(\bar{x}=\frac{x}{t},\bar{y}=\frac{y}{t})$, for any $A\in S_{\mathcal{M}}^{tnn}$.
After a $180^{\circ}$ rotation,  $\CC_{-\infty}(\mathcal{M})$ is the limit of
 $\CC_{t}(u_A)$  as $t\to -\infty$, for any $A\in S_{\mathcal{M}}$.
Note that the rotation is required because the positive $x$-axis
(respectively, $y$-axis)
corresponds to the negative $\bar{x}$-axis (respectively,
$\bar{y}$-axis).
\end{remark}

\begin{definition}\label{def:v}
Define $v_{i,\ell,m}$ to be the point in $\R^2$ where
$\phi_i(\bar{x},\bar{y}) = 
\phi_{\ell}(\bar{x},\bar{y}) = 
\phi_m(\bar{x},\bar{y}).$
A simple calculation yields that 
the point $v_{i,\ell,m}$ has the following
coordinates in the $\bar{x}\bar{y}$-plane:
\[
v_{i,\ell,m}=(\kappa_i \kappa_{\ell} + \kappa_i \kappa_m +
\kappa_{\ell} \kappa_m, -(\kappa_i+\kappa_{\ell}+\kappa_m)).
\]
\end{definition}
Some of the points $v_{i,\ell,m}\in \mathbb{R}^2$ correspond to trivalent vertices in the contour plots we construct; such a point is the location
of the resonant interaction of three line-solitons of types $[i,\ell]$, $[\ell,m]$ and $[i,m]$
(see Theorem \ref{t<<0} below).
Because of our assumption on the genericity of the $\kappa$-parameters,
those points are all distinct.

\subsection{Main results on the contour plot for $t \ll0 $}

The results of this section generalize those of 
\cite[Section 8]{KW} to a soliton solution coming from an arbitrary point of the real Grassmannian (not just the non-negative part).
We start by giving an algorithm to construct a 
generalized plabic graph $G_-(D)$,
which will be used to construct  $\CC_{-\infty}(\mathcal{M})$.
Figure \ref{GoPlabic} illustrates the steps of Algorithm \ref{GoToPlabic},
starting
from the Go-diagram of the Deodhar component
$S_{D}$ where $D$ is as in the upper left corner of Figure \ref{GoPlabic}.

\begin{algorithm}  \label{GoToPlabic} From a Go-diagram $D$ to $G_-(D)$:
\begin{enumerate}
\item Start with a Go-diagram $D$ contained in a $k\times (n-k)$
rectangle, and
replace each \wstn, \bstn, and blank box by a cross, a cross, and a
pair of {\it elbows},
respectively.  Label the $n$ edges along the southeast border 
of the Young diagram by the numbers $1$ to $n$,
from northeast to southwest.  The configuration of crosses and elbows forms $n$ ``pipes" 
which travel from the southeast border to the northwest border; label the endpoint of 
each pipe by the label of its starting point.
%Rotate the diagram 180 degrees.
\item Add a pair of black and white vertices to each pair of elbows,
and connect them by an edge,
as shown
in the upper right of Figure \ref{GoPlabic}.  Forget the labels
of the southeast border.  If there is an endpoint of a pipe on the east or south border whose pipe
starts by going straight, then erase the straight portion preceding the first elbow.
If there is a horizontal (respectively, vertical) pipe starting at $i$ with no elbows,
then erase it, and add an isolated boundary vertex labeled $i$
with color $1$ (respectively, $-1$).
\item Forget any degree $2$ vertices, and forget
any edges of the graph which end
at the southeast border of the diagram.
Denote the resulting
graph  $G_-(D)$.
\item After embedding the graph in a disk
with $n$ boundary vertices (including isolated vertices)
we obtain a generalized plabic graph,
which we also denote $G_-(D)$.
If desired, stretch and rotate $G_-(D)$ so that the boundary vertices
at the west side of the diagram are at the north instead. 
%Then the edges appearing in
%the north (south) side are identified as the line-solitons in $y\gg0$ ($y\ll0$).
\end{enumerate}
\end{algorithm}
%%%%%%%%%%%%%%%%%%%%%%%%%%%%%%
\begin{figure}[h]
\centering
\includegraphics[height=3.3in]{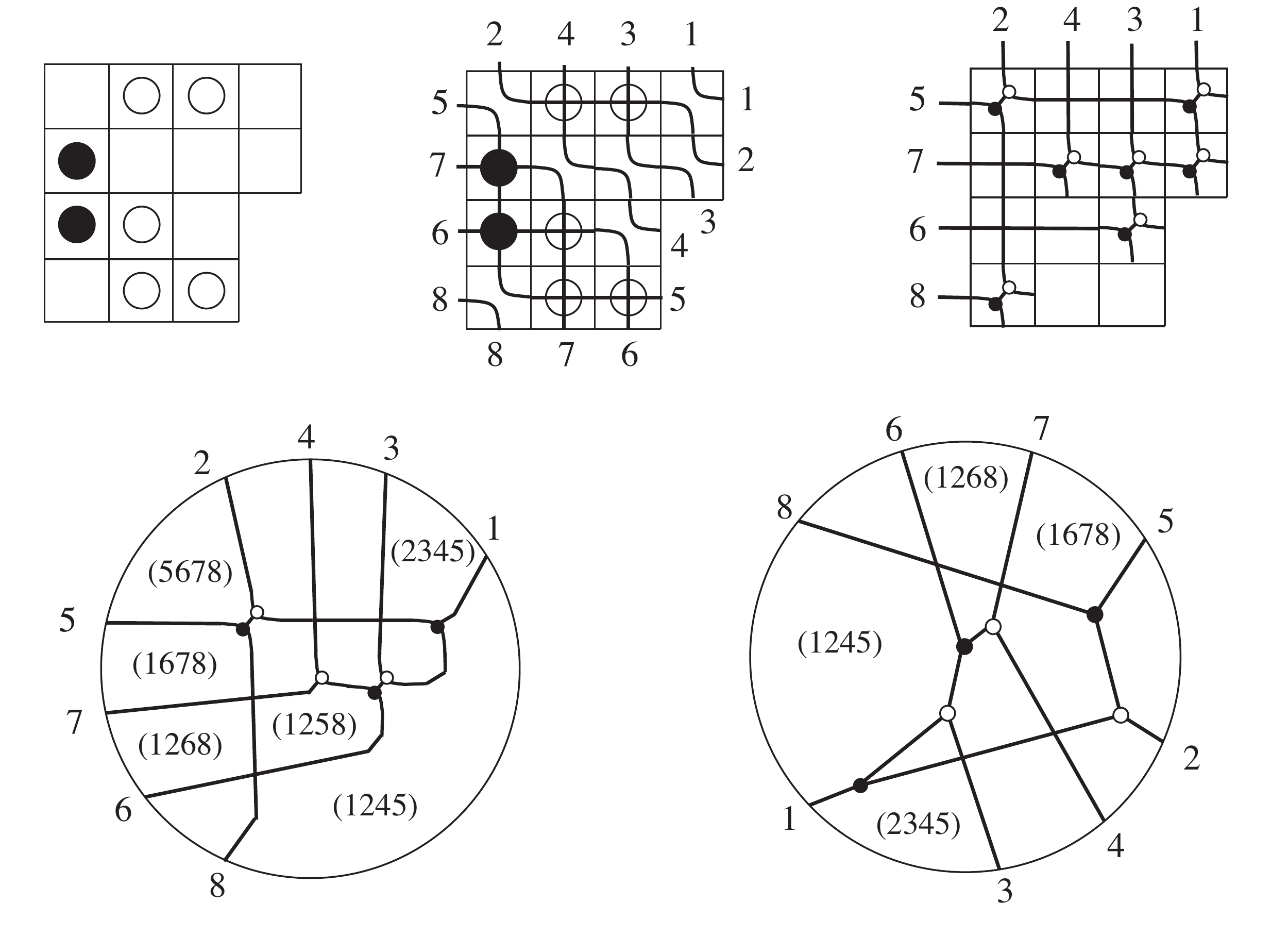}
\caption{Construction of the generalized plabic graph $G_-(D)$ associated to the Go-diagram $D$.  The labels of the regions of the graph indicate the index sets
of the corresponding Pl\"ucker coordinates.
Using the notation of Definition \ref{def:DtoPi}, we have
$\pi(D)=vw^{-1}=(5,7,1,6,8,3,4,2)$.}
\label{GoPlabic}
\end{figure}
%%%%%%%%%%%%%%%%%%%%%%%%%%%%
%After labeling the edges according to the rules of the road,
%we will produce the graph from Figure
%\ref{??}.

\begin{remark}\label{rem:G-Dtest}
If there are no black stones in $D$, then this algorithm reduces to 
\cite[Algorithm 8.7]{KW2}.  In this case, by \cite[Theorem 11.15]{KW2},
the Pl\"ucker coordinates corresponding to the regions of $G_-(D)$ include 
the set of minors $\J$ described in Theorem \ref{th:TPTest}.  In particular,
the set of Pl\"ucker coordinates labeling the regions of $G_-(D)$
comprise a positivity test for $S_D$.
\end{remark}

The following is the main result of this section.   
\begin{theorem}\label{t<<0}
Choose a matroid stratum $S_{\mathcal{M}}$ and let $S_D$ be the 
Deodhar component containing $S_{\mathcal{M}}$.
Recall the definition of $\pi(D)$ from 
Definition \ref{def:DtoPi}.
Use Algorithm \ref{GoToPlabic} 
to obtain $G_-(D)$.  
Then $G_-(D)$ has trip permutation $\pi(D)$, and we can use it to explicitly
construct $\CC_{-\infty}(\M)$ as follows.
Label the edges of $G_-(D)$
according to the rules of the road.  Label by $v_{i,\ell,m}$
each trivalent
vertex which is incident to edges labeled $[i,\ell]$, $[i,m]$, and $[\ell,m]$,
and give that vertex the coordinates
$(\bar{x},\bar{y}) = (\kappa_i \kappa_{\ell}+\kappa_i \kappa_m + \kappa_{\ell} \kappa_m,
-(\kappa_i+\kappa_{\ell}+\kappa_m))$.
%\footnote{check signs -- 
%I reversed them because of the $x$ and $\bar{x}$ difference!}
Replace each edge labeled $[i,j]$ which ends 
at a boundary vertex by an unbounded line-soliton 
with slope $\kappa_i + \kappa_j$.
(Each edge labeled $[i,j]$ between two trivalent vertices 
%bounded line-soliton of type $[i,j]$ 
will automatically
have slope
$\kappa_i + \kappa_j$.)
In particular, $\CC_{-\infty}(\M)$ is determined by $D$.
%the Deodhar component
%$S_D$ containing $S_{\M}$.
Recall from Remark \ref{rem:rotate} that after a
$180^{\circ}$ rotation,  $\CC_{-\infty}(\M)$ is the limit
of $\CC_{t}(u_A)$ as $t\to -\infty$, for any $A\in S_{\mathcal{M}}$.
\end{theorem}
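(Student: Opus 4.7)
The proof naturally breaks into three pieces: the combinatorial statement that $G_-(D)$ has trip permutation $\pi(D)$, the identification of Pl\"ucker-coordinate labels on the regions of $G_-(D)$ with the minors $I$ and $I_b$ from Theorems \ref{p:maxmin} and \ref{p:Plucker}, and an asymptotic analysis that singles out these minors as the dominant exponentials in the $t \to -\infty$ limit. The first two pieces generalize rather directly from the $\Le$-diagram case treated in \cite[Section 8]{KW2}; the third is the substantive new ingredient.

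First I would verify the trip permutation claim. In Algorithm \ref{GoToPlabic} both black and white stones become crosses, while blank boxes become elbows, so the resulting pipe arrangement depends on the set $J^\circ_\mathbf{v}\cup J^\bullet_\mathbf{v}$ (i.e.\ on $J^\Box_\mathbf{v}$) but not on the distinction between black and white stones. A direct comparison with the reduced expression $\mathbf{w}$ shows the pipe permutation is exactly $vw^{-1}=\pi(D)$. Converting the tiling to a plabic graph in Steps~(2)--(4) translates the pipe rule into the trip rule of Definition \ref{gen:trip} by the usual local argument (turning right at a black vertex, left at a white vertex, straight through $X$-crossings). Labeling the regions of $G_-(D)$ by the rules of the road (Definition \ref{labels}) produces $k$-element subsets of $[n]$ which, by induction on the boxes of $D$ proceeding from southeast to northwest, coincide with $I$, $I'$, and the $I_b$'s from Theorems \ref{p:maxmin} and \ref{p:Plucker}. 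The only novelty compared with the $\Le$-case is that black-stone crosses behave identically to white-stone crosses combinatorially, with the difference only appearing in signs of Pl\"ucker coordinates.

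The substantive content is the asymptotic analysis. By Definition \ref{contourplot-infinity} we must identify, for each $(\bar{x},\bar{y})\in\mathbb{R}^2$, the subsets $J\in\mathcal{M}$ minimizing $\sum_{j\in J}\phi_j(\bar{x},\bar{y})$. I would show these minimizers always lie in the distinguished collection $\{I\}\cup\{I_b : b \text{ blank}\}$: by Theorems \ref{p:maxmin} and \ref{p:Plucker}(1), those coordinates are nonzero monomials in the $p_i$'s (hence contribute genuine exponentials), while Theorem \ref{p:Plucker}(2) shows $\Delta_{I_b}(A)=0$ for white-stone boxes, explaining why certain candidate index sets are absent. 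Adjacent dominant regions differ by swapping a single index (Lemma \ref{separating}), so the boundary between them is a line-soliton of the predicted type with equation \eqref{eq-soliton} and slope $\kappa_i+\kappa_j$ (Remark \ref{slope}). At a trivalent vertex where three such line-solitons of types $[i,\ell]$, $[\ell,m]$, $[i,m]$ meet, the three index sets $J\cup\{i\}$, $J\cup\{\ell\}$, $J\cup\{m\}$ are simultaneously dominant, forcing $\phi_i=\phi_\ell=\phi_m$, which by Definition \ref{def:v} places the vertex at $v_{i,\ell,m}$.

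The main obstacle is the black-stone case. Theorem \ref{p:Plucker}(3) shows that when $b$ carries a black stone, $\Delta_{I_b}(A)$ is not a pure monomial but has an additive correction involving the parameter $m_b$, and many other Pl\"ucker coordinates acquire similar $m_b$-dependent corrections. The plan is to exploit that in the $t\to -\infty$ limit the rescaled exponent $t\sum_j\phi_j(\bar{x},\bar{y})$ dominates any algebraic correction in the coefficient $\Delta_J(A)$, so that only the leading monomial in $(p_i,m_b)$ matters for deciding which $J$ is dominant in a given region. A black stone then contributes only a sign flip in that leading monomial, not a change of support, and the two-term relation of Corollary \ref{signatblack} provides the precise comparison between $\Delta_{I_b}(A)$ and its three neighbors $\Delta_{I_e}(A)$, $\Delta_{I_s}(A)$, $\Delta_{I_{se}}(A)$ that is needed to close the induction. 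Once this is in hand, combining the three ingredients above yields the description of $\mathcal{C}_{-\infty}(\mathcal{M})$ from $G_-(D)$, and in particular shows that the contour plot for $t\ll 0$ depends only on the Go-diagram $D$.
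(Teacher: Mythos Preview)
Your approach diverges from the paper's in a substantial way, and in its current form it has a genuine gap.

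The paper does \emph{not} try to identify the dominant exponentials with the collection $\{I\}\cup\{I_b : b \text{ blank}\}$ directly. Instead it proceeds by induction on the number of rows of $D$: if $D'$ is $D$ with its top row removed and $\mathcal{M}'$ is the matroid of the corresponding submatrix $A'$, then Theorem~\ref{induction} (imported from \cite{KW}) says that a polyhedral piece of $\CC_{-\infty}(\mathcal{M})$ literally coincides with $\CC_{-\infty}(\mathcal{M}')$, and the only new trivalent vertices are of the form $v_{1,b,c}$ along the trip $T_1$. The inductive hypothesis then supplies $\CC_{-\infty}(\mathcal{M}')$, and Theorem~\ref{thm:soliton-perm} pins down the new unbounded solitons. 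The heart of the argument is showing that the new vertices $v_{1,b,c}$ occur along $T_1$ in the same order in the contour plot as in $G_-(D)$; this reduces to showing that $G_-(D)$ never produces two vertices $v_{1,i,\ell}$ and $v_{1,j,k}$ with $i<j<k<\ell$, and \emph{this} is where the distinguished property of $\mathbf{v}$ enters (via a pipe-dream argument, see Figure~\ref{fig:pipecrossing2}).

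Your third paragraph asserts that the minimizers of $\sum_{j\in J}\phi_j(\bar x,\bar y)$ over $J\in\mathcal{M}$ always lie in $\{I\}\cup\{I_b\}$, but the only justification you give is that these Pl\"ucker coordinates are nonzero. That is necessary but nowhere near sufficient: the matroid $\mathcal{M}$ has many other bases, and you must rule out every one of them dominating somewhere. The paper sidesteps this entirely by the inductive/geometric route through Theorem~\ref{induction}. Relatedly, your discussion of the black-stone obstacle is aimed at the wrong target: once the limit in \eqref{f-contour} is taken, the actual values of $\Delta_J(A)$ (and hence the $m_b$'s) have already disappeared from $f_{\mathcal{M}}$; the genuine role of the distinguished condition is the ordering argument along $T_1$ described above, which your proposal does not address.
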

\begin{remark}
Since the contour plot $\CC_{-\infty}(\M)$ depends only on $D$, we also 
refer to it as $\CC_{-\infty}(D)$.
\end{remark}

\begin{remark}\label{rem:t>>0}
The results of this section may be extended to the case $t \gg0$
by duality considerations (similar to the way in which
our previous paper \cite{KW2} described 
contour plots for both $t\ll0$ and $t\gg0$).  Note that 
the Deodhar decomposition of $Gr_{k,n}$ depends on a choice
of ordered basis $(e_1,\dots,e_n)$.  Using the ordered basis
$(e_n,\dots,e_1)$ instead and the corresponding
Deodhar decomposition, one may explicitly describe
contour plots at $t\gg0$.
\end{remark}

\begin{remark}\label{rem:differ}
Depending on the choice of the parameters $\kappa_i$,
the contour plot 
$\CC_{-\infty}(D)$ may have a slightly different topological structure than the soliton
graph $G_-(D)$.  While the incidences of line-solitons
with trivalent vertices are determined by $G_-(D)$, the locations 
of $X$-crossings may vary based on the $\kappa_i$'s. More specifically,
changing the $\kappa_i$'s may change the contour plot via a sequence
of \emph{slides}, see Section \ref{sec:slides}.
\end{remark}

Our proof of Theorem \ref{t<<0}
is similar to the proof of \cite[Theorem 8.9]{KW}.
The main
strategy is to use induction on the number of rows in the 
Go-diagram $D$.  More specifically,
let $D'$ denote the Go-diagram $D$ with its
top row removed.  
In Lemma \ref{lem:algo} we will explain that $G_-(D')$ can be seen 
as a labeled subgraph of $G_-(D)$.  In Theorem \ref{induction}, we will explain
that there is a polyhedral
subset of $\CC_{-\infty}(D)$ which coincides with 
$\CC_{-\infty}({D'})$.  And moreover, every vertex of 
$\CC_{-\infty}({D'})$ appears as a vertex of 
$\CC_{-\infty}({D})$.  By induction we 
can assume that Theorem \ref{t<<0} correctly computes 
$\CC_{-\infty}(D')$, which in turn provides us with 
a description of 
``most" of $\CC_{-\infty}(D)$, including all 
line-solitons and vertices whose indices do not include $1$.  
On the other hand, Theorem \ref{thm:soliton-perm} gives
a complete description of the unbounded solitons of both 
$\CC_{-\infty}({D'})$ and
$\CC_{-\infty}({D})$ in terms of $\pi(D')$ and $\pi(D)$.  
In particular, 
$\CC_{-\infty}({D})$ contains one more unbounded soliton
at $y\gg 0$ than does 
$\CC_{-\infty}({D})$.
%and $\CC_{-\infty}({D})$ contains $\ell$ more unbounded solitons at $y\ll 0$
%where $\ell$ is the difference in length of the first two rows.
%number of $+$'s in the first row before the second pivot.
This information together with 
the resonance property allows us to complete the description of 
$\CC_{-\infty}({D})$ and match it up with the combinatorics
of $G_-(D)$.

\begin{lemma}\label{lem:asymptotics}
%Let $\pi = \pi(D)$ be the permutation associated to $D$.  Then
The generalized plabic graph $G_-(D)$ from Algorithm \ref{GoToPlabic} 
has trip permutation  $\pi(D)$.  
\end{lemma}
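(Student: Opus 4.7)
Algorithm \ref{GoToPlabic} treats white and black stones identically in step~(1)---both become crosses---and only blank boxes give rise to the paired black/white vertices added in step~(2). Thus the topology of $G_-(D)$, and in particular its trip permutation, depends only on the positions of the stones in $D$. Since by Proposition \ref{p:whether}(1) the Weyl group element $v$ associated to $\v(D)$ depends only on $D$ (not on the linear extension used to read $\w$), so does $\pi(D) = vw^{-1}$. Hence the lemma reduces to showing that the pipe permutation of the underlying pipe dream---crosses at stones, elbows at blanks---equals $vw^{-1}$.

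I would first verify that the rules of the road in $G_-(D)$ correctly implement pipe-tracing, box by box. At a cross, which becomes an $X$-crossing in the plabic graph, both pipes traverse straight, in agreement with the rule ``go straight through $X$-crossings''. At a blank box, the black and white vertices added in step~(2) each have degree three, and one checks directly that turning right at the black vertex and left at the white vertex propagates the trip along exactly the bent path of the elbow. Some care is needed for the erasure rules in steps~(2)--(3) (deleting straight initial segments and pipes with no elbows), but those operations preserve the combinatorial endpoint of each pipe and produce isolated boundary vertices whose colors $\pm 1$ match the fixed-point decoration demanded by Definition \ref{def:DtoPi}.

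With this identification in hand, I would compute the pipe permutation by induction on the number of boxes, processing them in a chosen linear extension of $O_w$. If the newly added box is a stone, both $\w$ and $\v$ acquire a new factor $s_b$, so $v$ and $w$ both right-multiply by $s_b$ and $vw^{-1}$ is unchanged; correspondingly, the new crossing in the pipe dream transposes two pipes, and the resulting effect on the pipe permutation exactly cancels the shift in boundary labels coming from $w \mapsto ws_b$. If the new box is blank, only $\w$ gains a factor $s_b$, so $vw^{-1}$ becomes $vw^{-1} s_b$; the new elbow extends two pipes without crossing them, and one checks the update matches. The base case (empty diagram) is immediate.

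\textbf{Main obstacle.} The subtle part is aligning the conventions precisely: the labeling of the $n$ boundary edges along the SE border, the direction in which trips traverse pipes, and the exact effect of the erasure rules in Algorithm \ref{GoToPlabic}(2)--(3). A small miscount could yield $wv^{-1}$ or $v^{-1}w$ instead of $vw^{-1}$, so I would anchor the induction by checking it against the worked example of Figure \ref{GoPlabic}, where $\pi(D) = vw^{-1} = (5,7,1,6,8,3,4,2)$.
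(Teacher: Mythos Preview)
Your inductive plan is different from the paper's direct argument, and it contains a genuine error in the blank-box step. You write that when the new box is blank, ``only $\w$ gains a factor $s_b$, so $vw^{-1}$ becomes $vw^{-1} s_b$.'' But appending $s_b$ to $\w$ on the right gives $w\mapsto ws_b$, hence $w^{-1}\mapsto s_b w^{-1}$, and therefore $vw^{-1}\mapsto v\,s_b\,w^{-1}$. This is an insertion in the middle, not a right-multiplication, and it does not in general equal $(vw^{-1})s_b$. So the update you would need to verify on the pipe-permutation side is not the simple transposition you describe, and the induction as stated does not close. This is more than the convention slip you flagged under ``main obstacle'': even with all conventions pinned down, the blank-box step requires tracking $v$ and $w$ separately rather than their quotient $vw^{-1}$.

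The paper sidesteps this entirely by a direct two-phase description of each trip: starting from the boundary vertex labeled $i$, the rules of the road first carry the trip southeast along the pipe (through $X$-crossings at stones and around elbows at blanks), and then---because the edges meeting the southeast border have been erased---the trip turns and runs straight west or north back to a northwest boundary vertex. The first phase is governed by the wiring diagram for $v$ (crosses exactly at stones), the second by the shape $O_w$ and hence by $w$; composing the two gives $vw^{-1}$ on boundary labels. This factorization is what makes the proof a one-paragraph observation rather than an induction. If you want to salvage your approach, reorganize it to track the pair $(v,w)$ through the induction and match each to the corresponding phase of the trip; but at that point you are essentially reproducing the paper's argument with extra scaffolding.
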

\begin{proof}
%It is clear from the construction that 
%$G_-(D)$ is a generalized plabic graph.
If we follow the rules of the road starting from
a boundary vertex of $G_-(D)$, we will first follow a
``pipe" southeast (compare the lower left and the top middle pictures in Figure \ref{GoPlabic})
and then travel straight west along the row or north along the column where that pipe
ended. Recall 
from Definition \ref{def:DtoPi} that
 $\pi(D)=vw^{-1}$. 
Noting that we can read off $v$ and $w$ from the pipes in the top middle
picture of Figure \ref{GoPlabic}, we see that following the rules of the road 
has the same effect as computing $vw^{-1}$.
\end{proof}

The next lemma 
explains the relationship between $G_-(D)$ and $G_-(D')$,
where $D'$ is the Go-diagram $D$ with the top row removed.
It should be clear after examining  Figure \ref{GoPlabic1}.

%%%%%%%%%%%%%%%%%%%%%%%%%%%%%%
\begin{figure}[h]
\centering
\includegraphics[height=3.8in]{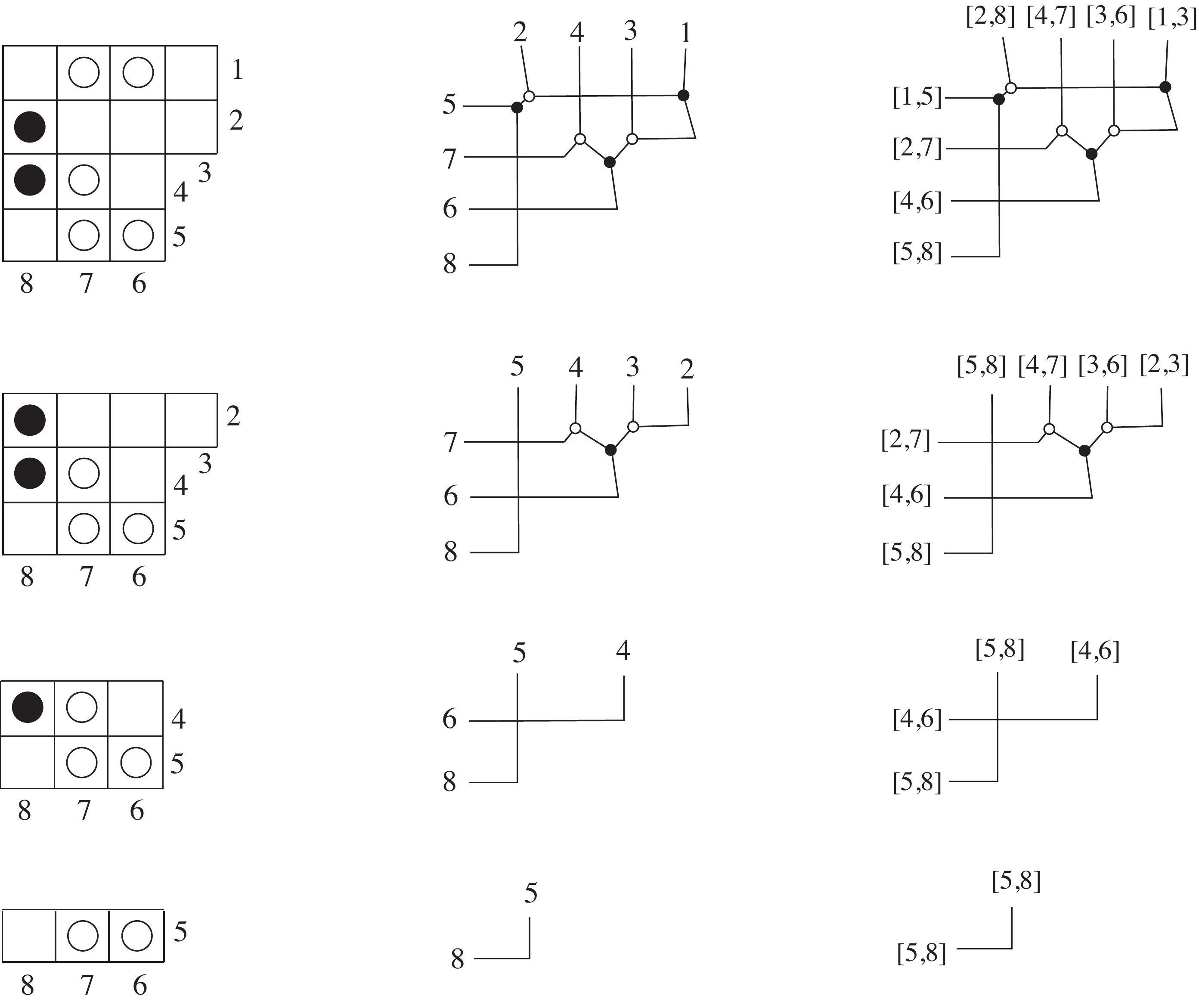}
\caption{Inductive construction of the generalized plabic graph $G_-(D)$
associated to the Go-diagram $D$, cf. Figure \ref{GoPlabic}.}
\label{GoPlabic1}
\end{figure}
%%%%%%%%%%%%%%%%%%%%%%%%%%%%

\begin{lemma}\label{lem:algo}
Let $D$ be a Go-diagram with $k$ rows and $n-k$ columns, 
and let $G = G_-(D)$ be the edge-labeled plabic graph constructed by
Algorithm \ref{GoToPlabic}.
Form a new Go-diagram $D'$ from $D$ by removing the top row of $D$; 
suppose that $\ell$ is the sum of the 
number of rows and columns in $D'$.
Let $G'$ be the edge-labeled plabic graph associated to $D'$,
but instead of using the labels $\{1,2,\dots,\ell\}$, use the 
labels $\{n-\ell+1, n-\ell+2,\dots,n\}$.  
Let $h$ denote the label of the top row of $D$.
Then $G'$ is obtained from $G$ by removing the trip $T_h$ 
starting at $h$ and all edges to 
its right which have a trivalent vertex on $T_h$.
\end{lemma}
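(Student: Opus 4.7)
The plan is to compare the outputs of Algorithm \ref{GoToPlabic} on $D$ and $D'$ directly, identifying how the top-row contributions to $G_-(D)$ are precisely captured by trip $T_h$ together with the specified right-edges. First I would decompose the pipe dream of $D$ into its top row and the sub-diagram on rows $2$ through $k$: the pipes entering at the top-row SE-border edges (labels $1,\dots,n-\ell$ in $D$'s numbering) traverse only row 1, while the pipes labeled $n-\ell+1,\dots,n$ enter at SE-border edges shared with $D'$. In $D$ the latter pipes may extend upward into row 1 via N-going segments in row 2, but in $D'$ they exit directly at the new top boundary. This sets up a natural correspondence between $G_-(D')$ and the sub-structure of $G_-(D)$ arising from rows $2,\dots,k$, up to reinterpreting some internal edges as boundary edges.

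Next, using Lemma \ref{lem:asymptotics} together with its proof, I would trace $T_h$ carefully. Starting at the boundary vertex $h$, it first follows pipe $h$ in reverse (through the SE-border endpoint merged out in Step 3 of the algorithm), then travels westward along the top row by the rules of the road, possibly dipping through merged chains into empty columns at the east end of row 2. By induction on the cells of the top row (processed from east to west), I would show that $T_h$ visits exactly the internal vertices contributed by the blank cells of the top row (post-merging), and that the ``edges to the right of $T_h$ with a trivalent vertex on $T_h$'' are precisely the S-edges from top-row white vertices to row-2 black vertices (or their merged-chain analogues). Removing $T_h$ (its edges and its two boundary endpoints) along with these right-edges from $G_-(D)$ then leaves a graph coinciding with $G_-(D')$, with boundary relabeled $n-\ell+1,\dots,n$; the key observation is that the row-2 black vertices, which had N-edges running to top-row whites in $G_-(D)$, acquire dangling endpoints that become the new NW-boundary edges of $G_-(D')$ at the top of row 2.

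The main obstacle is the case analysis on the contents of the top-row cells, especially black and white stones (which become crosses participating in merged chains rather than contributing new vertices). I expect that verifying the precise path of $T_h$ and the consistent identification of its right side across all configurations will require a detailed cell-by-cell analysis of the cyclic orderings at merged vertices when applying the rules of the road. Fortunately this analysis can be carried out by inspecting the small finite list of local pipe-dream configurations around each top-row cell and its neighbors, after which the remaining bookkeeping of vertices, edges, and boundary relabeling is routine.
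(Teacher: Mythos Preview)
The paper does not actually give a proof of this lemma; the entire justification is the sentence ``It should be clear after examining Figure \ref{GoPlabic1}.'' So your proposal is far more detailed than what the authors provide, and your strategy---decomposing the pipe dream into the top row and the remaining rows, tracing $T_h$ explicitly via the rules of the road, and then verifying by local case analysis that deleting $T_h$ together with its right-edges recovers $G_-(D')$---is exactly the verification one would carry out to make the picture rigorous. In that sense your approach is the same as the paper's, just written out rather than left to the reader.

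One small comment on the details: your worry that $T_h$ might ``dip through merged chains into empty columns at the east end of row 2'' is probably unnecessary. From the proof of Lemma \ref{lem:asymptotics}, the trip from the boundary vertex $h$ first follows pipe $h$ toward the southeast and then travels straight west along the row where that pipe began; since $h$ labels the top row, pipe $h$ enters at the east end of row~1, and the westward portion of $T_h$ remains in row~1. The only interaction with row~2 is through the vertical edges joining the top-row elbow vertices to the vertices below them, and these are precisely the ``edges to the right of $T_h$ which have a trivalent vertex on $T_h$'' named in the statement. So the case analysis you anticipate (over the possible fillings of each top-row box and the resulting local vertex configurations after Steps~2--3 of the algorithm) is genuinely finite and local, and should go through without the additional complication you flagged.
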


%Let $\{i_1,\dots,i_k\}$ denote the lexicographically 
%minimal element of $\mathcal{M}$.  (This corresponds to the collection
%of pivots for any $A\in S_{\mathcal{M}}^{tnn}$.)  
From now on, 
we will {\it assume without loss of generality that $i_1 = 1$}  is a 
pivot for $A \in S_D$.  
%Now set $\mathcal{M'} = \mathcal{M}(D')$.  We can also describe
%$\mathcal{M'} = \{ J\setminus \{1\} \ \vert \ 1\in J \text{ and } J\in \mathcal{M} \}.$
%Our next goal is to explain in Theorem \ref{induction} the relationship between 
%$\CC_{-\infty}(D)$ and 
%$\CC_{-\infty}({D'})$.  
%However, we first provide a useful 
%lemma from \cite{KW}.  

%\begin{lemma}\label{dominant}\cite[Lemma 8.16]{KW}
%Consider the point $v_{a,b,c}$ where $1 \notin \{a,b,c\}$.  Then at this 
%point, we have that $\phi_1 < \phi_a = \phi_b = \phi_c$.
%It follows that every region $R$ in $\CC_{-\infty}({D})$
%incident to the point $v_{a,b,c}$ is labeled by a 
%dominant exponential $E_J$ such that $1\in J$.
%\end{lemma}

\begin{definition}\label{def:M'}
Let $\mathcal{M}$ be a matroid on $[n]$ such that $1$ is contained in at least
one base.  Let $\mathcal{M'}$ be the matroid
$\{ J\setminus \{1\} \ \vert \ 1\in J \text{ and } J\in \mathcal{M} \}.$
\end{definition}

Using arguments similar to those in the proof of Theorem \ref{p:Plucker},
one can verify the following.
\begin{lemma}\label{rem:project}
If $A \in S_{\M} \subset S_D$ is in row-echelon form and 
$A'$ is %obtained from 
the span of rows $2,3,\dots,k$ in 
$A \in S_{\mathcal{M}} \subset Gr_{k,n}$, 
%by taking the span of columns $2,3,\dots,k$,
then $A' \in S_{\mathcal{M'}} \subset S_{D'}$, where 
$D'$ is %the Go-diagram 
obtained from $D$ by removing its top row.
\end{lemma}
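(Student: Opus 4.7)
My plan is to verify the two assertions $A' \in S_{\mathcal{M}'}$ and $S_{\mathcal{M}'} \subseteq S_{D'}$ separately, using techniques parallel to those in the proof of Theorem~\ref{p:Plucker}.

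For the matroid identification, I will exploit that $A$ is in row-echelon form with pivot at column~$1$: the first column of $A$ is $(1,0,\ldots,0)^T$, so the first column of $A'$ vanishes. For a $(k-1)$-subset $J' \subseteq [n]$, if $1 \in J'$ then $\Delta_{J'}(A') = 0$, matching $J' \notin \mathcal{M}'$ by definition. If $1 \notin J'$, cofactor expansion of $\Delta_{\{1\} \cup J'}(A)$ along column~$1$ gives $\Delta_{\{1\} \cup J'}(A) = \Delta_{J'}(A')$, so $J' \in \mathcal{M}(A')$ iff $\{1\} \cup J' \in \mathcal{M}$ iff $J' \in \mathcal{M}'$. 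Hence $\mathcal{M}(A') = \mathcal{M}'$, i.e., $A' \in S_{\mathcal{M}'}$.

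For the Deodhar containment, I will build an explicit parameterization of $A'$ inside $S_{D'}$. By Proposition~\ref{prop:independence} I may choose a reduced expression for $w$ arising from a linear extension of $O_w$ that orders the top row of the Young diagram last; this yields a factorization $\w = \w' \cdot \u$, where $\u = s_1 s_2 \cdots s_{n-k}$ is the top-row factor (using $j_1 = 1$). A direct computation shows that $w' := w \cdot s_{n-k} s_{n-k-1} \cdots s_1$ satisfies $w'(1) = 1$, $w'(j) = w(j-1)$ for $2 \le j \le n-k+1$, and $w'(j) = w(j)$ for $j \ge n-k+2$; combined with $w \in W^k$, this forces $w' \in W^{k-1}$, so $\w'$ is a reduced expression for $w'$. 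Writing $\v = \v' \cdot \u'$ accordingly, the prefix $\v'$ is distinguished in $\w'$ (since the distinguished condition is pointwise along a prefix) and its Go-diagram is precisely $D'$. Now I take any $g \in G_{\v,\w}$ with $\pi_k(g) = A$ and factor $g = g' \cdot g''$ with $g' \in G_{\v',\w'}$ and $g''$ corresponding to $\u'$. By Lemma~\ref{lem:act}, each factor of $g''$ (of the form $\dot s_i$, $y_i(p_\ell)$, or $x_i(m_\ell) \dot s_i^{-1}$ with $i \in \{1,\ldots,n-k\}$) fixes $e_j$ for every $j \ge n-k+2$, hence $g \cdot e_j = g' \cdot e_j$ for such $j$. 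Therefore
\[
A' \;=\; \mathrm{span}(g \cdot e_{n-k+2}, \ldots, g \cdot e_n) \;=\; \mathrm{span}(g' \cdot e_{n-k+2}, \ldots, g' \cdot e_n) \;=\; \pi_{k-1}(g') \;\in\; \mathcal{P}_{\v',\w'} \;=\; S_{D'}.
\]

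Finally, since each Deodhar component is a union of matroid strata (Remark~\ref{rem:Deodhar-positroid}), the fact that $A' \in S_{\mathcal{M}'} \cap S_{D'}$ forces $S_{\mathcal{M}'} \subseteq S_{D'}$. The main technical obstacle will be verifying that the chosen reading order indeed yields the factorization $\w = \w' \cdot \u$ with $w' \in W^{k-1}$, and that $g''$ acts trivially on $e_{n-k+2}, \ldots, e_n$; both hinge on the hypothesis $j_1 = 1$, which ensures that the top row of the Young diagram attains its maximal length $n-k$ and that $\u$ involves only simple reflections $s_i$ with $i \le n-k$.
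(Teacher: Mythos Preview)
Your proof is correct and follows the approach the paper sketches (``arguments similar to those in the proof of Theorem~\ref{p:Plucker}''): factor $g$ according to a reading order that isolates the top row, and analyze the action on the standard basis vectors. The paper gives no detailed argument, so there is nothing further to compare.

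One step deserves an extra sentence of justification. You assert
\[
A' \;=\; \mathrm{span}(g \cdot e_{n-k+2}, \ldots, g \cdot e_n),
\]
but the hypothesis $\pi_k(g) = A$ only identifies the full $k$-dimensional spans; it does not a priori match the $(k-1)$-dimensional subspace cut out by rows $2,\ldots,k$ of the row-echelon matrix with the one cut out by $g\cdot e_{n-k+2},\ldots,g\cdot e_n$. To close this, observe that both subspaces equal $\{v \in \pi_k(g) : v_1 = 0\}$, where $v_1$ denotes the $e_1$-component. For the row-echelon $A$ this is immediate from the pivot at column~$1$. For the $g$-side, note that every box of $D$ outside the top row carries a label $s_i$ with $i \ge 2$ (the $r$th row of the Young diagram is labeled $s_{n-k+r-1},\ldots,s_r$), so $g'$ is a product of factors that preserve $\mathrm{span}(e_2,\ldots,e_n)$; hence $g\cdot e_j = g'\cdot e_j$ has zero $e_1$-component for $j \ge n-k+2$. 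Dimension count then gives equality. With this sentence added, the argument is complete.
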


The following result is a combination of \cite[Theorem 8.17]{KW} 
and \cite[Corollary 8.18]{KW}.  Although in \cite{KW} the context was 
$A \in (Gr_{k,n})_{\geq 0}$ and in this paper we are allowing 
$A \in Gr_{k,n}$, the proofs
from \cite{KW}  hold without any modification.
See Figure \ref{fig:induction} for an illustration of  the theorem.
%the contour plot $\CC_{-\infty}(\mathcal{M'})$ sits inside
%$\CC_{-\infty}(\mathcal{M}).$  

\begin{theorem}\label{induction}\cite{KW}
Let $\mathcal{M}$ be a matroid such that $1$ is contained in at least one base.
Then there is an unbounded polyhedral subset
$\mathcal{R}$ of $\CC_{-\infty}({\mathcal{M}})$ whose boundary
is formed by line-solitons, such that 
every region in $\mathcal{R}$ is labeled by 
a dominant exponential 
$E_J$ such that $1\in J$.  In $\mathcal{R}$,
$\CC_{-\infty}(\mathcal{M})$ coincides with  
$\CC_{-\infty}(\mathcal{M'})$. 
Moreover, every region of 
$\CC_{-\infty}(\mathcal{M'})$ which is incident to a trivalent
vertex and labeled by $E_{J'}$ corresponds to a region of 
$\CC_{-\infty}(\mathcal{M})$ which is labeled by 
$E_{J' \cup \{1\}}$.
%\begin{corollary}\label{cor:trip}

In particular, the set of trivalent vertices in $\CC_{-\infty}(\mathcal{M})$ is equal to 
the set of trivalent vertices in $\CC_{-\infty}(\mathcal{M'})$  together
with some vertices of the form $v_{1,b,c}$.  These vertices comprise the
vertices along the trip $T_1$ (the set of line-solitons labeled $[1,j]$ for any $j$).  In particular, every line-soliton
in $\CC_{-\infty}(\mathcal{M})$ which was not present in $\CC_{-\infty}(\mathcal{M'})$ and is not on $T_1$
must be unbounded.  And every new bounded line-soliton in
$\CC_{-\infty}(\mathcal{M})$ that did not come from a line-soliton 
in $\CC_{-\infty}(\mathcal{M'})$ is of type $[1,j]$ for some $j$.
%\end{corollary}
\end{theorem}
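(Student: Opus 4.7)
The plan is to work with the piecewise-linear tropical function
\[
F_{\mathcal{M}}(\bar{x}, \bar{y}) := \min_{J \in \mathcal{M}} \sum_{i \in J} \phi_i(\bar{x}, \bar{y}),
\]
whose non-linearity locus is $\CC_{-\infty}(\mathcal{M})$ by Definition \ref{contourplot-infinity}. First I would define $\mathcal{R}$ to be the closure of the open polyhedral set where every $J$ minimizing $F_{\mathcal{M}}$ satisfies $1 \in J$. To see $\mathcal{R}$ is nonempty and unbounded, consider the open cone $\mathcal{C}$ cut out by the $n-1$ strict inequalities $\phi_1 < \phi_i$ for $i = 2, \ldots, n$. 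After dividing by $\kappa_i - \kappa_1 > 0$, these reduce to
\[
\bar{x} + (\kappa_1 + \kappa_i)\bar{y} + (\kappa_1^2 + \kappa_1\kappa_i + \kappa_i^2) > 0,
\]
all of which are satisfied as $\bar{x} \to +\infty$; hence $\mathcal{C}$ is nonempty and unbounded. On $\mathcal{C}$, $\phi_1$ is the strict minimum of the $\phi_i$'s, so by Proposition \ref{greedy} the greedy algorithm forces $1$ into every minimum-weight base of $\mathcal{M}$, using the hypothesis that $1$ belongs to at least one base. Thus $\mathcal{C} \subset \mathcal{R}$.

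Second, on $\mathrm{int}(\mathcal{R})$ every minimizer contains $1$, so the function decomposes as $F_{\mathcal{M}} = \phi_1 + F_{\mathcal{M}'}$. Since adding a linear function cannot change the non-linearity locus, $\CC_{-\infty}(\mathcal{M})$ and $\CC_{-\infty}(\mathcal{M}')$ coincide inside $\mathrm{int}(\mathcal{R})$, with region labels related by $E_{J'} \mapsto E_{J' \cup \{1\}}$. To characterize $\partial \mathcal{R}$, note that a boundary point admits both a minimizer $J \ni 1$ and a minimizer $J'' \not\ni 1$; by Lemma \ref{separating} these differ in exactly one index, so $J'' = (J \setminus \{1\}) \cup \{j\}$ for some $j > 1$. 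Hence $\partial \mathcal{R}$ is a union of line-solitons, all of type $[1, j]$.

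Third, I would handle the statements about trivalent vertices and new bounded line-solitons in three cases. A trivalent vertex in $\mathrm{int}(\mathcal{R})$ is, by the linear decomposition, a trivalent vertex of $\CC_{-\infty}(\mathcal{M}')$ with all three labels augmented by $1$. A trivalent vertex on $\partial\mathcal{R}$ is incident to at least one $[1, j]$ edge; by the resonance condition (Lemma \ref{separating} applied around the vertex), its remaining two edges are forced to be of types $[1, k]$ and $[j, k]$, giving the vertex the form $v_{1, j, k}$ and placing it on the trip $T_1$. A trivalent vertex strictly outside $\mathcal{R}$ involves only bases with $1 \notin J$, and matching the local adjacencies with $\CC_{-\infty}(\mathcal{M}')$ via the matroid exchange axiom shows such a vertex was already present in $\CC_{-\infty}(\mathcal{M}')$. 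The parallel analysis for bounded line-solitons shows that any new bounded line-soliton absent from $\CC_{-\infty}(\mathcal{M}')$ must separate a region labeled by a base containing $1$ from one that does not, and therefore has type $[1, j]$.

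The hard part will be the case analysis in the third step, and in particular verifying that no genuinely new structures arise strictly outside $\mathcal{R}$. While the linear decomposition trivializes the interior of $\mathcal{R}$, one must chase the matroid exchange axiom and the resonance condition across $\partial \mathcal{R}$ to ensure that the picture outside $\mathcal{R}$ matches that of $\CC_{-\infty}(\mathcal{M}')$ up to the label transformation $J' \mapsto J' \cup \{1\}$, despite the fact that the global tropical function there is not simply $\phi_1 + F_{\mathcal{M}'}$.
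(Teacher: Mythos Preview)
First, a factual note: the present paper does \emph{not} supply its own proof of this theorem. It is quoted from \cite{KW} (as a combination of Theorem~8.17 and Corollary~8.18 there), with the remark that ``the proofs from \cite{KW} hold without any modification'' in the more general setting of $Gr_{k,n}$. So there is no proof in the paper to compare against; what follows is an assessment of your outline on its own merits.

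Your first two steps are sound and are essentially the right framework. Defining $\mathcal{R}$ as the locus where every minimizing base contains $1$, showing it is nonempty and unbounded via the cone $\{\phi_1 < \phi_i \text{ for all } i>1\}$, and observing that on $\mathrm{int}(\mathcal{R})$ one has $F_{\mathcal{M}} = \phi_1 + F_{\mathcal{M}'}$ so that the two contour plots agree there with labels shifted by $J'\mapsto J'\cup\{1\}$ --- all of this is correct and clean. Likewise, the identification of $\partial\mathcal{R}$ with a union of $[1,j]$ line\nobreakdash-solitons is right.

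The genuine gap is in your third step, and it is more serious than you indicate. When you write that a trivalent vertex of $\CC_{-\infty}(\mathcal{M})$ strictly outside $\mathcal{R}$ ``involves only bases with $1 \notin J$, and matching the local adjacencies with $\CC_{-\infty}(\mathcal{M}')$ via the matroid exchange axiom shows such a vertex was already present in $\CC_{-\infty}(\mathcal{M}')$,'' you are conflating two different matroids. The bases of $\mathcal{M}$ that avoid $1$ are the bases of the \emph{deletion} $\mathcal{M}\setminus 1$, whereas $\mathcal{M}' = \mathcal{M}/1$ is the \emph{contraction}. There is no reason, from matroid exchange alone, why the tropical structure governed by $\mathcal{M}\setminus 1$ outside $\mathcal{R}$ should match that of $\mathcal{M}/1$. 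What the theorem actually asserts is that there are \emph{no} trivalent vertices of $\CC_{-\infty}(\mathcal{M})$ strictly outside $\mathcal{R}$; equivalently, every trivalent vertex of $\CC_{-\infty}(\mathcal{M}')$ already lies in $\mathcal{R}$. This is not a matroid\nobreakdash-theoretic fact but a metric one: it hinges on the cubic form of $\phi_m$. Concretely, at a point $v_{a,b,c}$ with $1<a<b<c$ one computes
\[
\phi_m(v_{a,b,c}) - \phi_a(v_{a,b,c}) = (\kappa_m-\kappa_a)(\kappa_m-\kappa_b)(\kappa_m-\kappa_c),
\]
so in particular $\phi_1 < \phi_a = \phi_b = \phi_c$ there. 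This inequality (in the same spirit as Lemma~\ref{lem:less} of the paper) is the engine that forces $1$ into the minimizing bases at such points; the greedy/exchange argument alone, without this analytic input, cannot rule out a strictly better base avoiding $1$. Your outline needs to incorporate this computation and then carry the greedy argument through carefully to close the gap.
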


%%%%%%%%%%%%%%%%%%%%%%%%
\begin{figure}
\centering
\includegraphics[height=6cm]{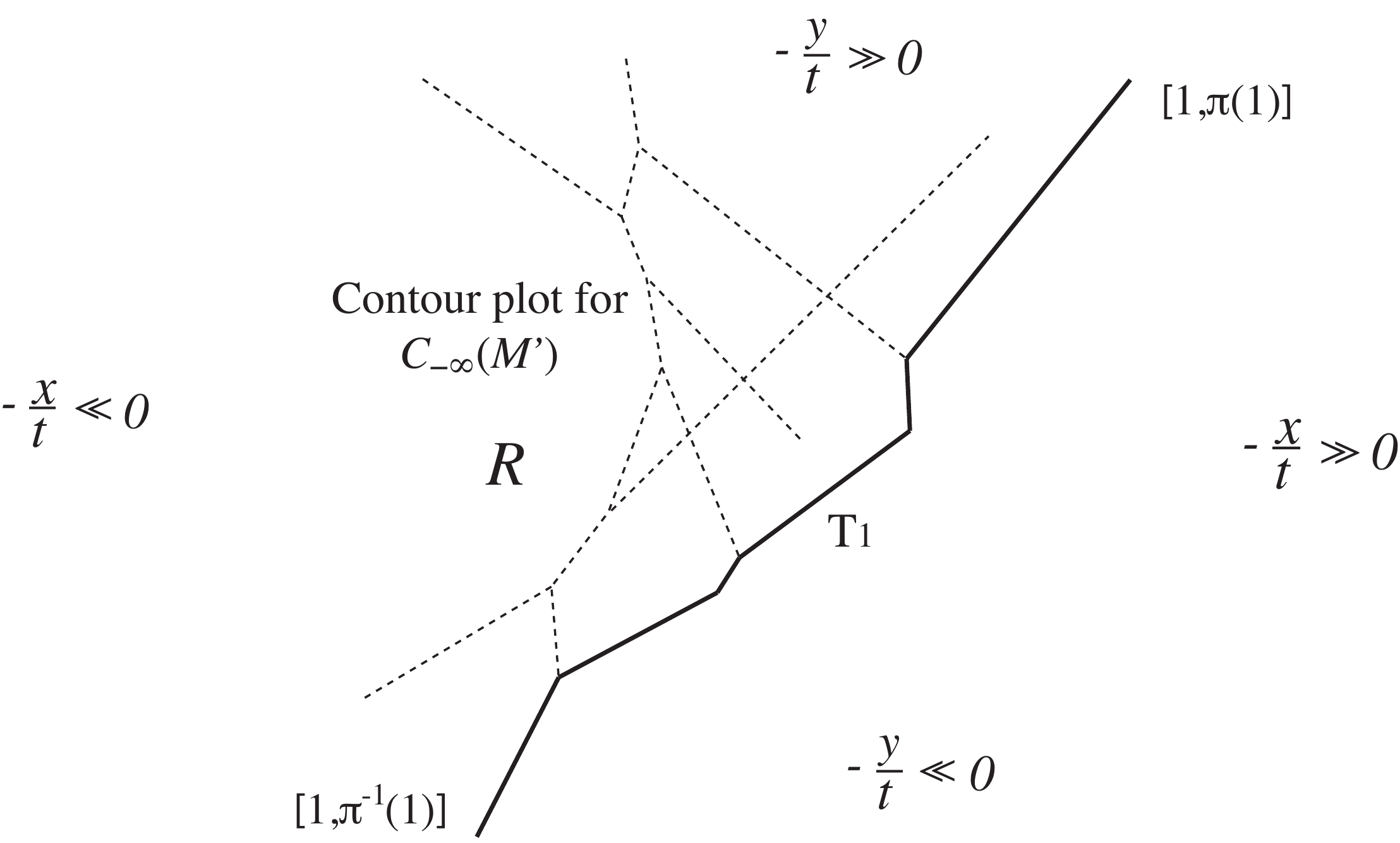}
\caption{The contour plot $\CC_{-\infty}(\mathcal{M'})$ within
the contour plot $\CC_{-\infty}(\mathcal{M}).$} \label{fig:induction}
\end{figure}
%%%%%%%%%%%%%%%%%%%%%%%%%%%%%%%%

We  now prove Theorem \ref{t<<0}, using
the characterization of unbounded line-solitons
in Theorem \ref{thm:soliton-perm}.

\begin{proof}
Choose $A$ in the Deodhar component $S_D$.  Let $\M$ be the 
matroid such that $A\in S_{\M}$.  
We will prove Theorem \ref{t<<0} using induction on the number of rows of $A$.
Using the notation of 
Definition \ref{def:M'} and Lemma \ref{rem:project},
we have that 
$A'\in S_{\M'} \subset S_{D'}$.

By Theorem \ref{induction}, the contour plot
$\CC_{-\infty}(\mathcal{M})$ is equal to 
the contour plot $\CC_{-\infty}(\mathcal{M'})$ together with some trivalent vertices
of the form $v_{1,b,c}$, all edges along the trip $T_1$,
%bounded line-solitons of type $[1,j]$, 
and some new unbounded line-solitons
(which are all to the right of the trip $T_1$).
By the inductive hypothesis, 
$\CC_{-\infty}(\mathcal{M'})$ is constructed by Theorem \ref{t<<0}; 
in particular, Algorithm \ref{GoToPlabic} produces a (generalized) plabic graph which
describes the trivalent vertices of 
$\CC_{-\infty}(\mathcal{M'})$ and the interactions of all line-solitons 
at trivalent vertices.

Using Lemma \ref{lem:asymptotics} 
and Theorem \ref{thm:soliton-perm}, we see that Algorithm \ref{GoToPlabic} produces
a (generalized) plabic graph whose labels on unbounded edges agree with 
the labels of the unbounded line-solitons for the contour plot $\CC_{-\infty}(\mathcal{M})$
of any $A\in S_{D}$.  The same is true for $A' \in S_{D'}$.

By Lemma \ref{lem:algo}, the plabic graph $G$ which Algorithm \ref{GoToPlabic}
associates to $D$ is equal to $G'$ together with the trip $T_1$ starting at $1$ 
at some new line-solitons emanating right from trivalent vertices of $T_1$.

We  now characterize the new vertices and line-solitons
which $\CC_{-\infty}(\mathcal{M})$ contains, but which 
$\CC_{-\infty}(\mathcal{M'})$ did not.  
We claim that the set of new vertices is precisely 
the set of $v_{1,b,c}$
(where $1<b<c$), such that 
either $c\to b$ is a nonexcedance of
$\pi = \pi(\mathcal{M})$, or $c\to b$ is a nonexcedance of 
$\pi' = \pi(\mathcal{M'})$, but not both.
Moreover, if $c\to b$ is a nonexcedance of $\pi$, then 
$v_{1,b,c}$ is white, while if $c\to b$ is a nonexcedance of 
$\pi'$, then $v_{1,b,c}$ is black.  The proof is identical
to that of the same claim in the proof of \cite[Theorem 8.8]{KW2}.

Now, if one analyzes the steps of Algorithm \ref{GoToPlabic} (see in particular the second and third 
diagrams in Figure \ref{GoPlabic}), it becomes apparent that the above description
also characterizes the set of new vertices which the algorithm associates to the top
row of the Go-diagram $D$.  In particular, the nonexcedances of the corresponding permutation 
$\pi$ correspond to the vertical edges at the top of the second and third diagrams; when
one labels these edges using the rules of the road, each edge gets the label $[b,c]$,
where $b$ comes from the label of its pipe, and $c$ comes from the label of its column
(shown at the bottom of the second diagram).  The nonexcedances of $\pi'$ are labeled in the 
same way but come from vertical edges which are present in the second row of $D$.
Therefore each new trivalent vertex in 
the top row gets the label $v_{1,b,c}$ where $b$ and $c$ are as above,
and where $c\to b$ is a nonexcedance of precisely one of $\pi$ and $\pi'$.

Finally, we discuss the order in which the vertices
$v_{1,b,c}$ occur along the trip $T_1$ in the contour plot.
First note that the trip $T_1$ starts at $y\ll 0$ and along each line-soliton it 
always heads up (towards $y\gg 0$). This follows from the resonance condition
(see e.g. \cite[Figure 9]{KW2} and take $i=1$).
Therefore the 
order in which we encounter the vertices $v_{1,b,c}$ along the trip
is given by the total order on the $y$-coordinates of the vertices,
namely $\kappa_1 + \kappa_b + \kappa_c$.

We now claim that this total order is identical 
to the total order on the positive integers $1+b+c$ -- 
that is, it does not depend on the 
choice of $\kappa_i$'s, as long as 
$\kappa_1 < \dots < \kappa_n$.  If we can show this,
then we will be done, because this is precisely the order
in which the new vertices occur along the trip 
$T_1$ in the graph $G_-(L)$.

To prove the claim,
it is enough to show that among the set of new vertices
$v_{1,b,c}$, there are not two of the form 
$v_{1,i,\ell}$ and $v_{1,j,k}$ where 
$i<j<k<\ell$.  To see this, recall
that the indices $b$ and $c$ of the new vertices $v_{1,b,c}$ 
can be read off from the second and third diagrams illustrating 
Algorithm \ref{GoToPlabic}:
$c$ will come from
the bottom label of the corresponding column, while
$b$ will come from the label of the pipe
that $v_{1,b,c}$ lies on.  Therefore, 
if there are two new vertices $v_{1,i,\ell}$
and $v_{1,j,k}$, then they must come from 
a pair of pipes which have crossed each 
other an odd number of times, as in Figure 
\ref{fig:pipecrossing2}.

%%%%%%%%%%%%%%%%%%%%%
 \begin{figure}[h]
\centering
\includegraphics[height=3.8cm]{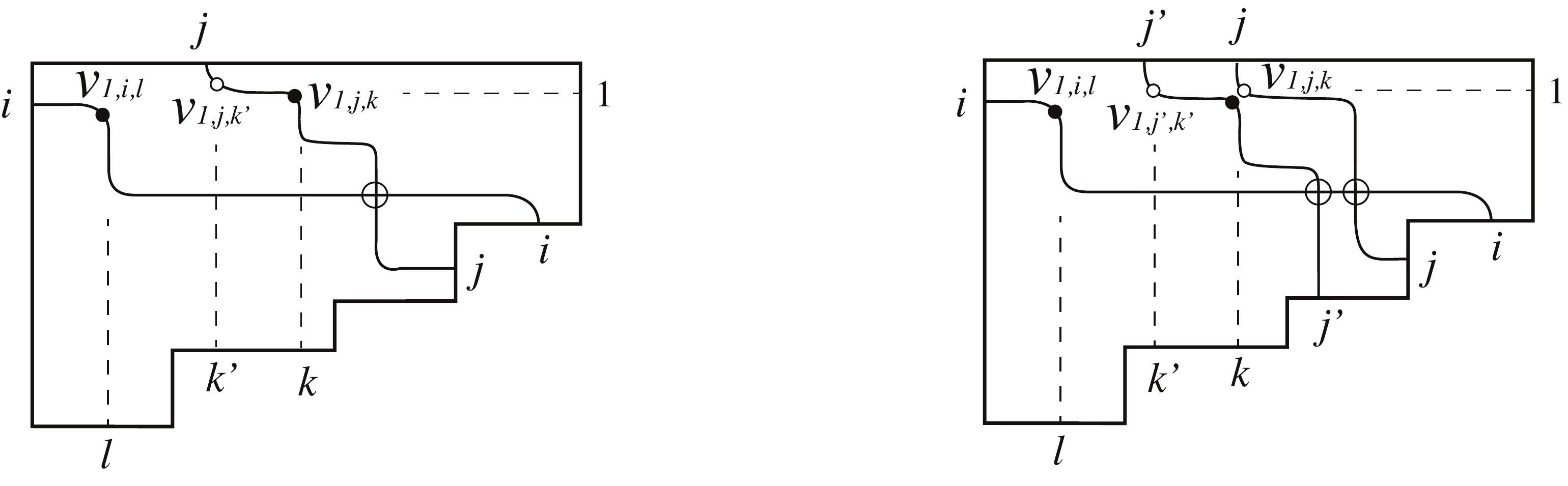}
\caption{}
\label{fig:pipecrossing2}
\end{figure}
%%%%%%%%%%%%%%%%%%%%%%%%%%%%%

Note that the second diagram of 
Figure \ref{GoPlabic} depicts a ``pipe dream" (or ``wiring diagram") 
encoding the 
distinguished subexpression $\v$ of a reduced expression $\w$.
If two pipes pass over each other in a given box
we will say that they {\it cross} at that box,
while if two pipes pass through the same box without crossing,
we will say that they {\it kiss} at that box.
Let us now follow a pair of pipes from southeast to northwest.  
The property of $\v$ being distinguished mean that two pipes starting
at $i$ and $j$ must not kiss each other after having 
crossed each other an odd number of times.

Assume that Algorithm \ref{GoToPlabic} produces 
two vertices $v_{1,i,\ell}$ and $v_{1,j,k}$ where 
$i<j<k<\ell$.   Choose such a pair of vertices which minimizes
$|\ell-k|$.  We consider two cases, based on whether 
$v_{1,j,k}$ is black or white.  In the first case
(see the left of Figure \ref{fig:pipecrossing2}),
since $v_{1,j,k}$ is black, its pipe $j$ will continue west
from $v_{1,j,k}$ and must eventually turn up, at some column $k'$
such that $k < k' < \ell$.  But then Algorithm \ref{GoToPlabic}
produces another vertex $v_{1,j,k'}$ such that 
$i<j<k'<\ell$, so this vertex together with $v_{1,i,\ell}$
form a pair of vertices where $|\ell-k'|<|\ell-k|$, contradicting
our assumption of minimality of $|\ell-k|$.

In the second case (see the right of Figure \ref{fig:pipecrossing2}),
since $v_{1,j,k}$ is white, there is another black 
vertex $v_{1,j',k}$
to its left in the same box $b$, whose pipe
starts at $j'$.  Because $\v$ is distinguished,
$j'$ must be greater than $j$. (Otherwise the 
pipes starting at $j$ and $j'$ would cross each 
other an odd number of times and then kiss at box $b$.)
Now since $v_{1,j',k}$ is black, its pipe
must travel west from it and eventually turn up, 
at some column $k'$ such that $j'<k'<\ell$.
But then Algorithm \ref{GoToPlabic}
produces another vertex $v_{1,j',k'}$ such that 
$i<j'<k'<\ell$.  But now we have 
a pair of vertices $v_{1,i,\ell}$ and $v_{1,j',k'}$
such that $i<j'<k'<\ell$ where 
$|\ell-k'|<|\ell-k|$.  This contradicts our assumption
of minimality of $|\ell-k|$, and completes the proof of the claim.

Finally, using Definition \ref{def:v} for the vertex $v_{i,\ell,m}$, we obtain the contour plot
from $G$ by giving the trivalent vertices the explicit coordinates
from Theorem \ref{t<<0}.
\end{proof}

%%%%%%%%%%%%%%%%%%%%%%%%% Slides %%%%%%%%%%%%%%%%%%%%%%%%%%%%%%

\section{$X$-crossings, slides, and contour plots}\label{sec:slides}

In this section we discuss how 
our choice of the parameters $\kappa_i$ may affect the 
topology of the contour plot
$\CC_{-\infty}(D)$
(and hence $\CC_{t}(u_A)$ for $t\ll 0$ and $A \in S_D$),
 namely, by changing the locations of the 
$X$-crossings.  See Remark \ref{rem:differ}.  We also discuss
the relation between $X$-crossings and Pl\"ucker coordinates.

\subsection{Slides and the topology of contour plots}
The following definition will be useful for understanding
the dependence of the contour plot on the $\kappa_i$'s.

%%%%%%%%%%%%%%%%%%%%%%%%%%%%%%%%%%%%%%%%%%%%%%%
\begin{figure}[h]
\centering
\includegraphics[height=1.6in]{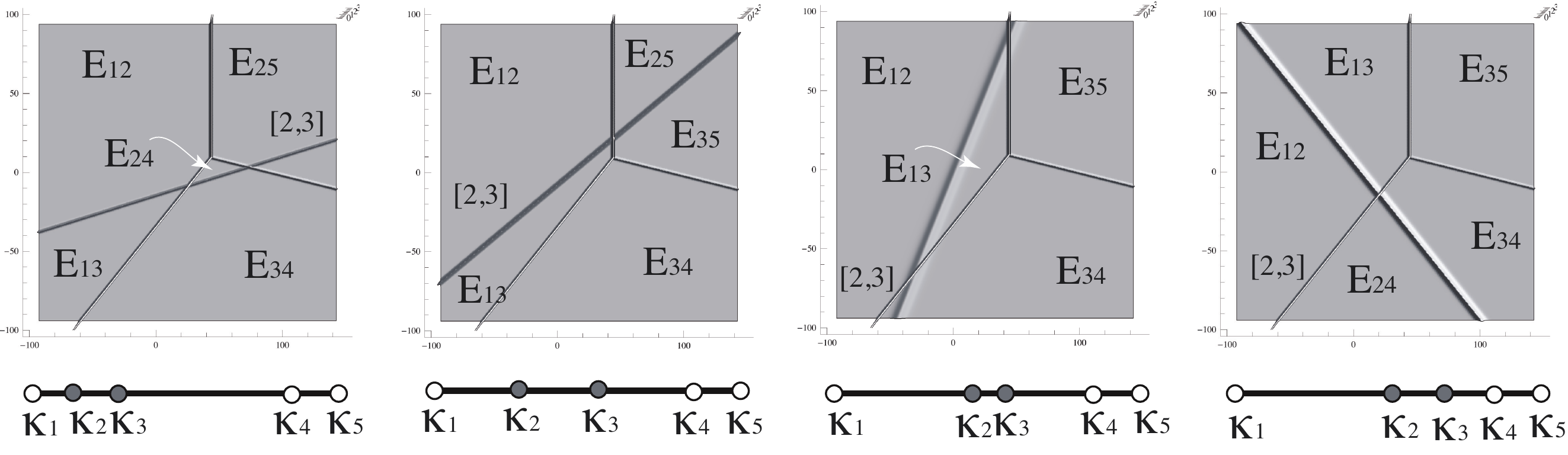}
\caption{Some slides involving white $X$-crossings. 
These contour plots correspond to the same Le-diagram $D$ with $\pi(D) = (5,3,2,1,4)$,
but they  differ from $G_-(D)$.}
\label{fig:slide}
\end{figure}
%%%%%%%%%%%%%%%%%%%%%%%%%%%%%%%%%%%%%%%%%%%%%%%

\begin{definition}\label{def:slide}
Consider a generalized plabic graph $G$ with at least one $X$-crossing.
Let $v_{a,b,c}$ be a trivalent vertex
(with edges labeled $[a,b]$, $[a,c]$, and $[b,c]$)
which has a small neighborhood $N$ containing one or two $X$-crossings
with a line labeled $[i,j]$, but no other trivalent vertices or $X$-crossings.
Here $\{a,b,c\}$ and $\{i,j\}$ must be disjoint.
Then a \emph{slide} is a local deformation of the graph $G$ which moves
the line $[i,j]$ so that it intersects a different set of edges of $v_{a,b,c}$,
creating or destroying at most one region in the process.
\end{definition}

See Figure \ref{fig:slide} for  examples.  
Recall the notions of black and white $X$-crossings from 
Definition \ref{def:blackwhiteX}.

\begin{remark}\label{rem:slides}
Theorem \ref{t<<0} determines everything about
the combinatorics and topology of the contour plot 
$\CC_{-\infty}(D)$ except for which pairs of 
line-solitons form an $X$-crossing.  Therefore if one 
deforms the parameters $\kappa_i$, the only way that 
the contour plot can change so as to change the topology
is via a sequence of slides.
\end{remark}

%\begin{definition}\label{def:slides}
%Consider a soliton graph $G$ and let $L_0$, $L_1$, $L_2$, and $L_3$ be
%line-solitons, such that $L_1$, $L_2$, and $L_3$ are incident to a trivalent
%vertex $v$, and $L_0$ intersects one or two of $\{L_1, L_2, L_3\}$ in an $X$-crossing.
%A \emph{slide} on $G$ is a local operation which 
%slides $L_0$ past $v$, so that it forms $X$-crossings precisely with the line-solitons
%in $\{L_1,L_2,L_3\}$ which it previously did not intersect.
%\end{definition}

See Figure \ref{fig:Gr48AB} for an example
of two different contour plots associated to the same Go-diagram
and element $A\in Gr_{4,8}$, but 
obtained using different choices of the $\kappa$-parameters. The two contour plots differ
by precisely one slide. 
For another example, compare Figure \ref{contour-soliton} to Figure \ref{contour-soliton2}.
Both of them are based on the Go-diagram from Example \ref{ex:Gr49CP}
and the same matrix $A$.  The only difference is the value of 
$\kappa_1$.  Note that this affects the $X$-crossings formed by the unbounded
$[1,6]$
line-soliton, and that one contour plot can be obtained from the other via
a sequence of three slides.

%%%%%%%%%%%%%%%%%%%%%%%%%%%%%%%%
\begin{figure}
\centering
\includegraphics[height=4.5cm]{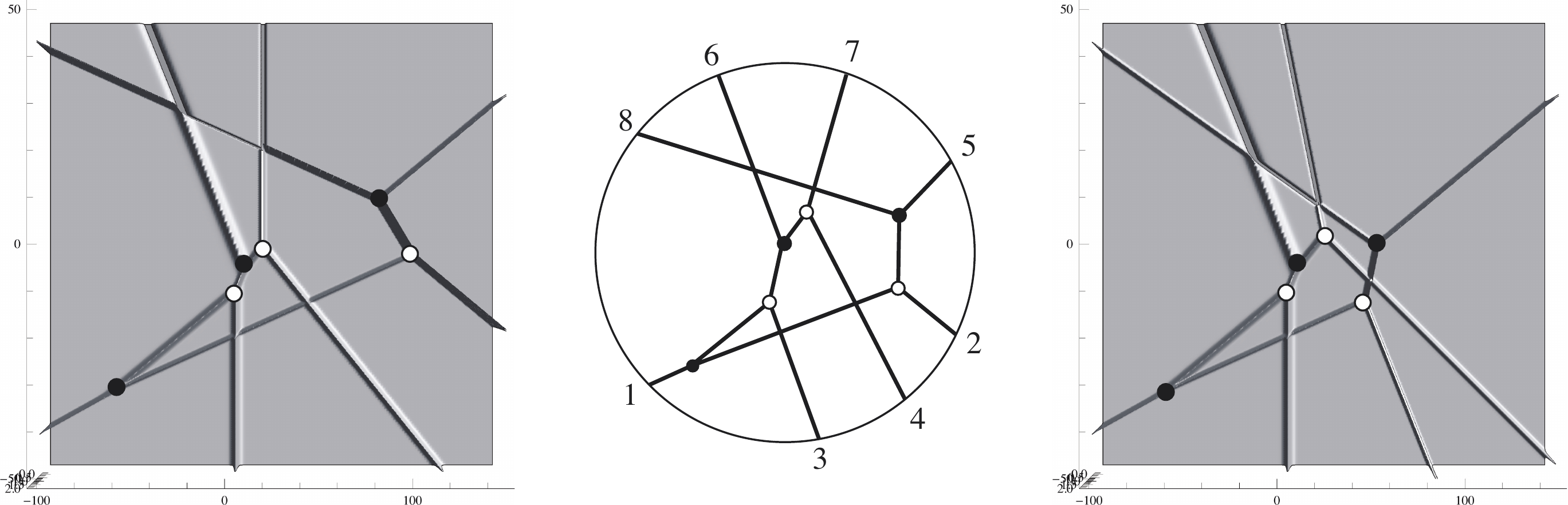}
\caption{Contour plots $\CC_t(u_A)$ constructed using 
the same $t$ and $A\in S_D \subset Gr_{4,8}$ 
but with different choices of the $\kappa$-parameters.
The left plot uses $(\kappa_1,\ldots,\kappa_8)=(-3.5,-2,-1,0,0.5,1,2,5)$ while
the right one uses $(-3.5,-2,-1,0,0.5,1,2.5,3)$.  This
affects the location of the $[4,7]$ line-soliton.
In the middle we have the generalized plabic graph $G_-(D)$
using the Go-diagram $D$ of Figure \ref{GoPlabic}.} \label{fig:Gr48AB}
\end{figure}
%%%%%%%%%%%%%%%%%%%%%%%%%%%%%%%%%%%%%%%%%%%%%%%%%%%%%%%%%%%

\begin{figure}[h]
\begin{center}
\includegraphics[height=1.8in]{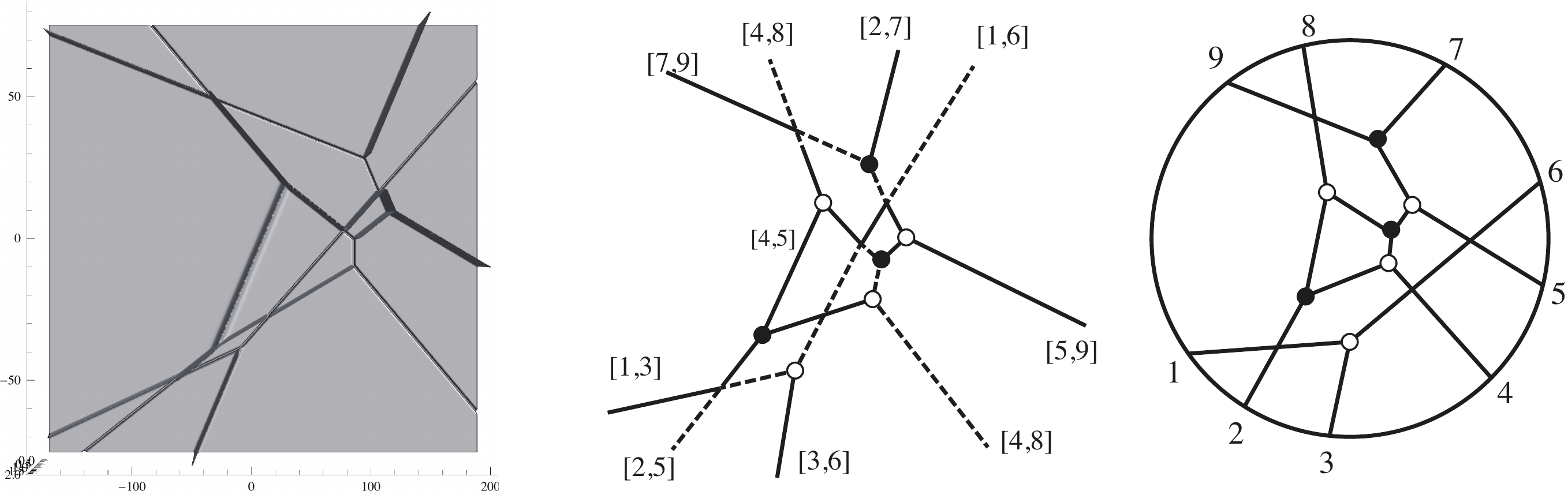}
\end{center}
\caption{A contour plot $\CC_t(u_A)$, soliton graph $C=G_t(u_A)$ and generalized plabic
graph $G_-(D)$ coming from a Go-diagram where $A\in S_D$.  The $\kappa$-parameters are the same as
those used for Figure \ref{contour-soliton} except that $\kappa_1=-3.1$ now, i.e. $(\kappa_1,\ldots,\kappa_9)=(-3.1,-3,-2,-1,0,1,2,3,4)$.
 \label{contour-soliton2}}
\end{figure}
%%%%%%%%%%%%%%%%%%%%%%%%%%%%%%%%

We now show that a slide on a contour plot preserves the number of black $X$-crossings.
\begin{theorem}\label{th:blackX}
Consider two contour plots $\CC$ and $\CC'$ (for the same $A\in Gr_{k,n}$ and time $t$
but for different $\kappa$-parameters) which differ by a slide.
Then $\CC$ and $\CC'$ have the same number of black $X$-crossings.
\end{theorem}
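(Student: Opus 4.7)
The plan is to localize at the trivalent vertex $v_{a,b,c}$ at which the slide occurs, determine exactly which $X$-crossings appear on either side of the slide, and verify by a finite case analysis that the black count is the same on both sides. Throughout write $s_{xy}:=\kappa_x+\kappa_y$, so that $a<b<c$ gives $s_{ab}<s_{ac}<s_{bc}$.

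First I would compute the directions in which the three edges of $v_{a,b,c}$ emanate. From the gradient analysis of the phase functions $\phi_x=\kappa_x\bar x+\kappa_x^2\bar y+\kappa_x^3$, the edge $[x,y]$ extends from $v_{a,b,c}$ in the direction where $\phi_z>\phi_x=\phi_y$ (here $z$ is the third index of $\{a,b,c\}$), yielding the direction vectors $\vec r_{ab}=(-(\kappa_a+\kappa_b),1)$, $\vec r_{ac}=(\kappa_a+\kappa_c,-1)$, $\vec r_{bc}=(-(\kappa_b+\kappa_c),1)$. The line $[i,j]$ has normal $\vec n=(1,\kappa_i+\kappa_j)$, and in the slide it moves from one side of $v_{a,b,c}$ to the other; an edge $[x,y]$ is crossed on a given side exactly when $\vec r_{xy}\cdot\vec n$ has the appropriate sign. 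A direct computation gives
\[
\vec r_{ab}\cdot\vec n=s_{ij}-s_{ab},\qquad
\vec r_{ac}\cdot\vec n=s_{ac}-s_{ij},\qquad
\vec r_{bc}\cdot\vec n=s_{ij}-s_{bc}.
\]
Hence the partition of $\{[a,b],[a,c],[b,c]\}$ into (edges crossed before)$\mid$(edges crossed after) is determined by which of the four intervals cut out on the real line by $s_{ab}<s_{ac}<s_{bc}$ contains $s_{ij}$: the four partitions are $\{[a,c]\}\mid\{[a,b],[b,c]\}$ (case A), $\{[a,b],[a,c]\}\mid\{[b,c]\}$ (case B), $\{[a,b]\}\mid\{[a,c],[b,c]\}$ (case C), and $\{[a,b],[b,c]\}\mid\{[a,c]\}$ (case D).

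Next I would establish two combinatorial facts about the colors (Definition \ref{def:blackwhiteX}) of the three potential $X$-crossings of $[i,j]$ with $\{[a,b],[a,c],[b,c]\}$. \emph{Claim 1:} the total number of black crossings is always $0$ or $2$, verified by enumerating the ten orderings of $\{i,j\}$ inside $\{a,b,c,i,j\}$. \emph{Claim 2:} when the total is $2$, the two edges producing black crossings lie on opposite sides of whichever partition above is forced. The key compatibility is: if $[a,b]$ and $[b,c]$ are both black then $a<i<b<j<c$, forcing $s_{ab}<s_{ij}<s_{bc}$ (cases B or C), and both of these partitions separate $[a,b]$ from $[b,c]$; if $[a,b]$ and $[a,c]$ are both black then either $i<a<j<b$ (forcing $s_{ij}<s_{ab}$, case A) or $a<i<b<c<j$ (forcing $s_{ij}>s_{ac}$, case C or D), and every such partition separates $[a,b]$ from $[a,c]$; the analogous check for $[a,c]$ and $[b,c]$ is symmetric.

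Combining, on each side of the slide partition the number of black crossings is the same, either both $0$ (total $0$) or both $1$ (total $2$), so the slide preserves the number of black $X$-crossings in a neighborhood of $v_{a,b,c}$; since $\CC$ and $\CC'$ agree outside this neighborhood, the theorem follows. The main obstacle is Claim 2, which requires coordinating the ``slope case'' from the geometric step with the ``index interleaving'' from the combinatorial step; in every subcase in which two potential crossings are black, the forced slope range happens to land in a case whose partition separates those two edges, and this compatibility is exactly what makes the theorem true.
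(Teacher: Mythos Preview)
Your proof is correct and follows essentially the same route as the paper's: a finite case analysis based on how the indices $i,j$ interleave with $a,b,c$ and where the slope $s_{ij}$ falls relative to $s_{ab}<s_{ac}<s_{bc}$. The paper organizes the analysis by first fixing the index ordering (five cases, some with slope subcases) and then reading off the picture; you instead separate the argument into a geometric step (the four partition cases A--D determined by the slope interval) and a combinatorial step (Claims~1 and~2 on the parity and distribution of black crossings), and then check compatibility. The content of the verification is identical---your five ``total $=2$'' orderings together with their forced slope ranges reproduce exactly the paper's Cases~1--5 and subcases---but your packaging via Claim~2 makes the mechanism (the two black crossings are always split by the partition) more transparent. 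One small point: your direction-vector computation is phrased in the $\bar x\bar y$-plane (i.e., for $\CC_{-\infty}$), whereas the theorem concerns arbitrary $t$; the analysis in the $xy$-plane just reverses all the $\vec r$'s, which swaps the two sides of each partition and leaves the conclusion unchanged, so this is harmless but worth a sentence.
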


\begin{proof}
Suppose that $\CC$ and $\CC'$ differ by a slide involving the trivalent vertex
$v_{a,b,c}$ and the line-soliton $[i,j]$ for $a<b<c$ and $i<j$, where the 
sets $\{a,b,c\}$ and $\{i,j\}$ are disjoint.
We assume that $v_{a,b,c}$ is white.  (The case where it is black is analogous.)
There are five cases to consider:
\begin{enumerate}
\item[Case 1.] $i<a<j<b<c$, which implies that $\kappa_i+\kappa_j < \kappa_a+\kappa_b < 
                                          \kappa_a+\kappa_c < \kappa_b+\kappa_c.$
\item[Case 2.] $i<a<b<j<c$, which implies that 
                 (a.) $\kappa_i+\kappa_j < \kappa_a+\kappa_b < 
                     \kappa_a +\kappa_c  < \kappa_b+\kappa_c$, or \\
                 (b.) $\kappa_a+\kappa_b < \kappa_i+\kappa_j < 
                     \kappa_a +\kappa_c  < \kappa_b+\kappa_c$.
\item[Case 3.] $a<i<b<j<c$, which implies that 
     (a.) $\kappa_a+\kappa_b < \kappa_i+\kappa_j < \kappa_a+\kappa_c < \kappa_b+\kappa_c$, or \\
     (b.) $\kappa_a+\kappa_b < \kappa_a+\kappa_c < \kappa_i+\kappa_j < \kappa_b+\kappa_c$.
\item[Case 4.] $a<i<b<c<j$, which implies that 
    (a.) $\kappa_a+\kappa_b < \kappa_a+\kappa_c < \kappa_i + \kappa_j < \kappa_b + \kappa_c$, or \\
    (b.) $\kappa_a+\kappa_b < \kappa_a+\kappa_c < \kappa_b + \kappa_c < \kappa_i + \kappa_j$.
\item[Case 5.] $a<b<i<c<j$, which implies that 
    $\kappa_a + \kappa_b < \kappa_a + \kappa_c < \kappa_b + \kappa_c < \kappa_i + \kappa_j.$
\end{enumerate}
(Note that any other ordering on $a,b,c,i,j$, such as $i<j<a<b<c$, would imply that
there are no black $X$-crossings involving the edges incident to 
$v_{a,b,c}$ and the $[i,j]$ soliton.)

%%%%%%%%%%%%%%%%%%%%%%%%%%%
\begin{figure}[h]
\begin{center}
\includegraphics[height=7.2cm]{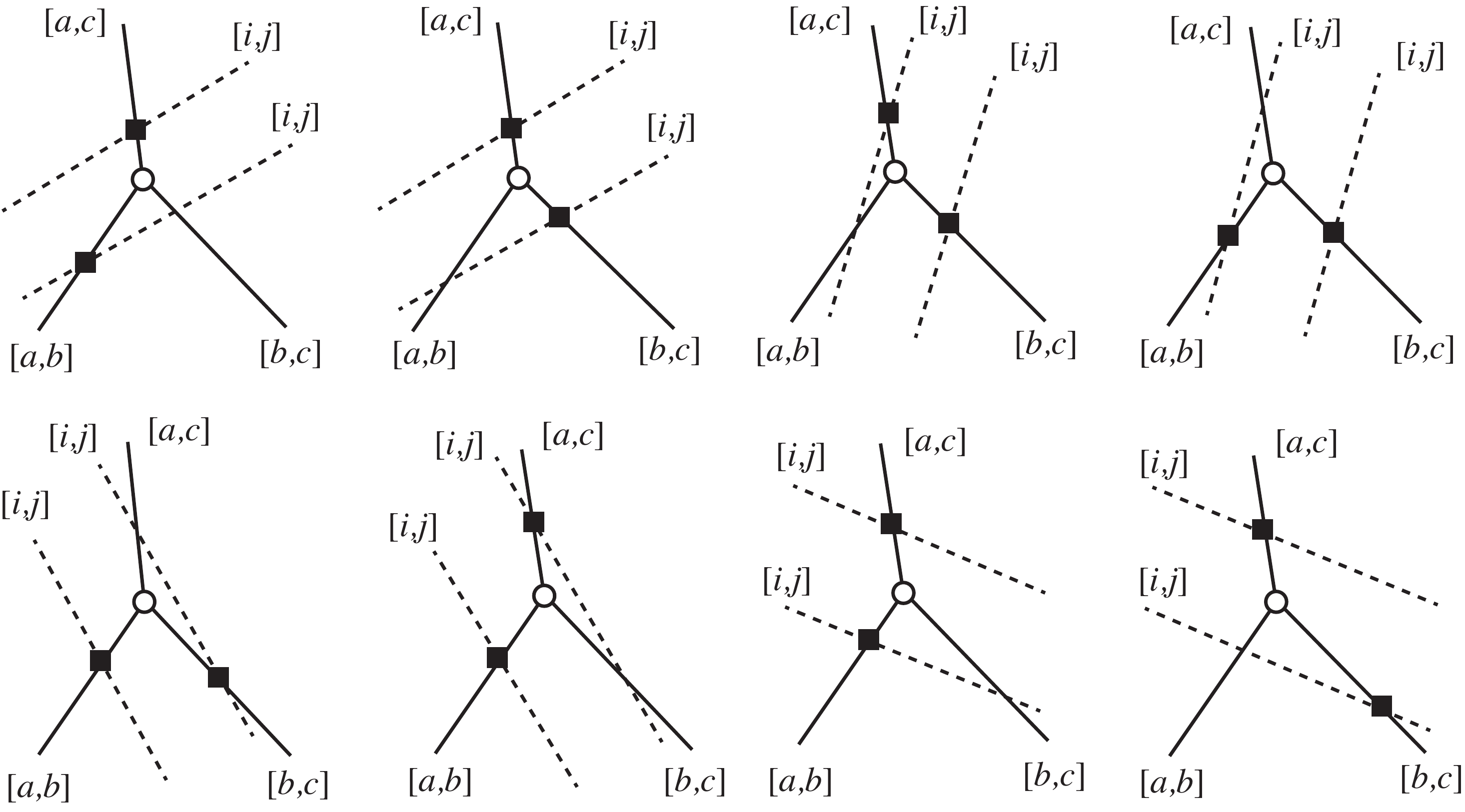}
\end{center}
\caption{Various types of $X$-crossings involving the line-solitons incident to $v_{a,b,c}$
and the $[i,j]$ line-soliton.  The top row shows 
Cases 1, 2a, 2b, and 3a from left to right, while the bottom row shows
Cases 3b, 4a, 4b, and 5 from left to right.}
\label{fig:Xslide}
\end{figure}
%%%%%%%%%%%%%%%%%%%%%%%%%

Consider Case 1.  Recall that ``slope" of the $[i,j]$ line-soliton -- 
that is, the tangent of the angle measured counterclockwise from the positive 
$y$-axis to the $[i,j]$ line-soliton -- 
is equal to $\kappa_i+\kappa_j$.   Therefore from the order on 
the slopes, the $[i,j]$ soliton may intersect either the $[a,c]$ soliton
or both the $[a,b]$ and $[b,c]$ solitons, as in the top-left diagram of Figure \ref{fig:Xslide}. 
The black $X$-crossings are denoted by a solid black square.
In both cases, precisely one of the intersections is a black $X$-crossing.
The other cases are similar -- see Figure \ref{fig:Xslide}.
\end{proof}

\begin{remark}
In fact one can show that the slides from Cases 3a and 3b in Figure \ref{fig:Xslide}
are impossible at $t\ll0$.  More specifically, it is impossible for 
the $[i,j]$ line-soliton to intersect the $[b,c]$ line-soliton.  To show this,
one may compute the coordinates $(x_v,y_v)$ of the trivalent vertex $v$ where
the $[a,b]$, $[a,c]$, and $[b,c]$ solitons intersect.  Then one can show that
the intersection of the $[i,j]$ soliton and the line $y = y_v$ has 
$x$-coordinate which is strictly less than $x_v$.
\end{remark}

\subsection{Slides and Pl\"ucker coordinates}

In \cite[Theorem 9.1]{KW2}, we proved that the presence of 
$X$-crossings in contour plots at $|t|\gg0$ implies 
that there is a two-term Pl\"ucker relation.

\begin{theorem}\cite[Theorem 9.1]{KW2}\label{2term}
Suppose that there is an 
$X$-crossing in a contour plot $\CC_t(u_A)$ 
for some $A \in Gr_{k,n}$ where $|t|\gg0$.  
Let $I_1$, $I_2$, $I_3$, and $I_4$ be the $k$-element subsets of $\{1,\dots,n\}$
corresponding to the dominant exponentials incident
to the $X$-crossing listed in circular order.  
\begin{itemize}
\item If the $X$-crossing is white, 
we have $\Delta_{I_1}(A) \Delta_{I_3}(A) = \Delta_{I_2}(A) \Delta_{I_4}(A).$
\item If
the $X$-crossing is black, 
we have $\Delta_{I_1}(A) \Delta_{I_3}(A) = -\Delta_{I_2}(A) \Delta_{I_4}(A).$
\end{itemize}
\end{theorem}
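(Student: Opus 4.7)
The plan is to derive the two-term relation by combining the classical three-term Pl\"ucker relation on $Gr_{k,n}$ with a short asymptotic analysis at the crossing point. First I would unpack the combinatorics. Since an $X$-crossing involves two line-solitons of disjoint types $[p,q]$ and $[r,s]$, and since by Lemma~\ref{separating} adjacent regions have labels differing by a single transposition, the four labels $I_1,I_2,I_3,I_4$ listed in circular order must share a common $(k-2)$-subset $S$ and must be of the form $S\cup\{x,y\}$ with $x\in\{p,q\}$ and $y\in\{r,s\}$. There are exactly four such pairs, accounting for all $I_r$, so the two two-element subsets of $\{i,j,k,\ell\}=\{p,q\}\cup\{r,s\}$ \emph{not} appearing among the $I_r$ are precisely $\{p,q\}$ and $\{r,s\}$ themselves. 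Writing the three-term Pl\"ucker relation for $i<j<k<\ell$,
\[
\Delta_{S\cup\{i,j\}}\Delta_{S\cup\{k,\ell\}}-\Delta_{S\cup\{i,k\}}\Delta_{S\cup\{j,\ell\}}+\Delta_{S\cup\{i,\ell\}}\Delta_{S\cup\{j,k\}}=0,
\]
and deleting the term $\Delta_{S\cup\{p,q\}}\Delta_{S\cup\{r,s\}}$, one reads off the desired two-term identity in each of the three cases of Definition~\ref{def:blackwhiteX}: the $+$ sign arises for white $X$-crossings (where the soliton pairing $\{\{p,q\},\{r,s\}\}$ is non-crossing and so kills one of the two \emph{outer} terms) and the $-$ sign arises for black $X$-crossings (where the soliton pairing is crossing and so kills the \emph{middle} term $-\Delta_{S\cup\{i,k\}}\Delta_{S\cup\{j,\ell\}}$).

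Next I would show that $\Delta_{S\cup\{p,q\}}\Delta_{S\cup\{r,s\}}=0$ by a dominance argument. At the $X$-crossing, the defining equations of the two solitons give $\theta_p=\theta_q$ and $\theta_r=\theta_s$. The four dominant exponentials $E_{I_r}$ all then share the common exponent $\Theta=\sum_{a\in S}\theta_a+\theta_p+\theta_r$, while $E_{S\cup\{p,q\}}$ and $E_{S\cup\{r,s\}}$ have exponents $\Theta+(\theta_p-\theta_r)$ and $\Theta+(\theta_r-\theta_p)$ respectively. These two deviations are opposite in sign, so provided $\theta_p\neq\theta_r$ at the crossing, exactly one of the two, say $E_J$ with $J\in\{S\cup\{p,q\},S\cup\{r,s\}\}$, has exponent strictly exceeding $\Theta$. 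If $\Delta_J$ were nonzero, then by continuity of the $\theta_a$ in $(x,y)$, the label $J$ would still achieve the strict maximum in the definition of $f_A$ in \eqref{f-contour} throughout an open neighborhood of the crossing, forcing at least one of the four local regions to be labeled $J$ rather than one of the $I_r$ --- contradicting the assumption that the four regions adjacent to the $X$-crossing carry the labels $I_1,\dots,I_4$. Hence $\Delta_J=0$, and in particular $\Delta_{S\cup\{p,q\}}\Delta_{S\cup\{r,s\}}=0$, which combined with the Pl\"ucker relation above finishes the proof.

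The main obstacle will be the verification of the genericity condition $\theta_p\neq\theta_r$ at the crossing, since $\theta_p=\theta_r$ would place the $X$-crossing on the $[p,r]$ line-soliton and thereby collapse the interaction to a higher-order vertex. This is where the hypothesis $|t|\gg 0$ is used: for $|t|$ sufficiently large and the $\kappa_i$'s chosen generically as in the standing conventions of Section~\ref{soliton-background}, the three soliton lines $\theta_p=\theta_q$, $\theta_r=\theta_s$, and $\theta_p=\theta_r$ cannot pass through a common point, so every $X$-crossing in $\mathcal{C}_t(u_A)$ is non-degenerate in the required sense. Once this is in hand, the case analysis reduces to the three combinatorial configurations above, and the sign is read off directly from which term of the Pl\"ucker relation is eliminated.
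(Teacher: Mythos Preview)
The paper does not prove this theorem here; it is quoted from \cite[Theorem 9.1]{KW2} and used as a black box.  Your argument is the natural one and is essentially correct: identify the common $(k-2)$-subset $S$ carried by all four region labels, invoke the three-term Pl\"ucker relation for $S\cup\{i,j\},\,S\cup\{i,k\},\dots$, and use dominance at the crossing point to force $\Delta_{S\cup\{p,q\}}\Delta_{S\cup\{r,s\}}=0$.  The sign bookkeeping you describe (non-crossing pairing kills an outer term, crossing pairing kills the middle term) is exactly right.  This is almost certainly the argument given in \cite{KW2}.

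One remark on what you flag as ``the main obstacle'': your attribution of the nondegeneracy $\theta_p\neq\theta_r$ to the hypothesis $|t|\gg 0$ is misplaced.  In fact this inequality holds for \emph{every} $t\neq 0$ as soon as the $\kappa_i$ are distinct, with no further genericity needed.  Concurrency of $L_{p,q}$, $L_{r,s}$ and $L_{p,r}$ would force $L_{r,s}$ to pass through the point $v_{p,q,r}$ of Definition~\ref{def:v}; plugging $(\bar x,\bar y)=v_{p,q,r}$ into $\phi_s-\phi_r$ gives
\[
\phi_s-\phi_r\big|_{v_{p,q,r}}=(\kappa_s-\kappa_r)(\kappa_s-\kappa_p)(\kappa_s-\kappa_q)\neq 0,
\]
so four distinct $\theta$'s can never coincide at a point.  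Thus the ``obstacle'' evaporates, and the hypothesis $|t|\gg 0$ plays no role in this particular step (its role in the theorem as stated is rather to place us in the asymptotic regime where the contour plot, and hence the notion of $X$-crossing, is meaningful).
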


The following corollary is immediate.
\begin{corollary}\label{prop:opposite}
If there is a black $X$-crossing in a contour plot at $t\ll 0$ or $t\gg 0$, 
then among the Pl\"ucker coordinates associated to 
the dominant exponentials incident 
to that black $X$-crossing, three must be positive and one negative,
or vice-versa. 
\end{corollary}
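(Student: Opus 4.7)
The plan is to derive this directly from Theorem \ref{2term} together with a short parity argument on signs. Since by hypothesis we are at $t\ll 0$ or $t\gg 0$ and the $X$-crossing is black, Theorem \ref{2term} immediately gives the two-term relation
\begin{equation*}
\Delta_{I_1}(A)\,\Delta_{I_3}(A) \;=\; -\,\Delta_{I_2}(A)\,\Delta_{I_4}(A),
\end{equation*}
where $I_1,I_2,I_3,I_4$ are the index sets of the four dominant exponentials incident to the $X$-crossing, listed in circular order.

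Next I would observe that all four of these Pl\"ucker coordinates are nonzero: each $I_j$ labels a region of the contour plot, so by Definition \ref{contour} the corresponding exponential $E_{I_j}$ appears as the dominant summand in $f_A$ on some open region, which in particular requires $I_j$ to index a base of the matroid $\mathcal{M}_A$, i.e. $\Delta_{I_j}(A)\neq 0$.

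Taking the sign of both sides of the displayed identity yields $\mathrm{sgn}\bigl(\Delta_{I_1}\Delta_{I_2}\Delta_{I_3}\Delta_{I_4}\bigr) = -1$. Hence the number of negative Pl\"ucker coordinates among $\Delta_{I_1},\Delta_{I_2},\Delta_{I_3},\Delta_{I_4}$ is odd, and since it lies in $\{0,1,2,3,4\}$ it must be $1$ or $3$. This is exactly the statement that three have one sign and the fourth has the opposite sign, completing the corollary.

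There is no real obstacle here: the content of the result is packaged entirely in Theorem \ref{2term}, and everything beyond that is the elementary parity observation above together with the non-vanishing of the four Pl\"ucker coordinates that label adjacent regions of the contour plot. The only point one needs to be slightly careful about is confirming that ``dominant exponential'' in the sense of Definition \ref{contour} really does force $\Delta_{I_j}(A)\neq 0$, which follows because $f_A$ is defined as a max over $J\in\mathcal{M}_A$ and the regions of linearity of $f_A$ are labeled by such $J$.
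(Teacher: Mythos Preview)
Your proof is correct and is exactly the reasoning the paper intends: the authors state that the corollary is ``immediate'' from Theorem \ref{2term}, and your parity argument on the sign of $\Delta_{I_1}\Delta_{I_2}\Delta_{I_3}\Delta_{I_4}$ together with the observation that dominant exponentials correspond to bases of $\mathcal{M}_A$ (hence nonzero Pl\"ucker coordinates) is precisely what makes it immediate.
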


\begin{corollary}\label{cor:white}
Let $D$ be a $\Le$-diagram, that is, a Go-diagram with no black stones.
Let $A \in S_D$ and $t \ll0$.  Choose any $\kappa_1 < \dots < \kappa_n$.
Then the contour plot $\CC_t(u_A)$ can have only white $X$-crossings.
\end{corollary}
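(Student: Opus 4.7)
The plan is to reduce to the totally nonnegative case, where positivity of the Pl\"ucker coordinates rules out black $X$-crossings via Corollary~\ref{prop:opposite}. The key observation is that for $t\ll 0$, by Theorem~\ref{t<<0} and the discussion at the beginning of Section~\ref{sec:t<<0}, the contour plot $\CC_t(u_A)$ depends only on the Go-diagram $D$ (and on the choice of $\kappa$-parameters), not on the specific point $A\in S_D$. In particular, the set of $X$-crossings, together with the pairs of line-soliton types $\{[i,j],[k,\ell]\}$ that meet at each $X$-crossing---and therefore the color of each $X$-crossing in the sense of Definition~\ref{def:blackwhiteX}---is determined by $D$.

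Next, I would use the fact that since $D$ is a $\Le$-diagram, the intersection $S_D\cap (Gr_{k,n})_{\geq 0}$ is the totally nonnegative positroid cell $S_D^{tnn}$, which is nonempty (by Remark~\ref{rem:TPcell}, together with the statement in the introduction that the unique Deodhar component in each positroid stratum meeting $(Gr_{k,n})_{\geq 0}$ is the one associated to a $\Le$-diagram). I would pick any $A'\in S_D^{tnn}$; by the preceding invariance, $\CC_t(u_{A'})$ has the same $X$-crossings, with the same colors, as $\CC_t(u_A)$. Now suppose for contradiction that this common contour plot contained a black $X$-crossing. By Corollary~\ref{prop:opposite}, among the four Pl\"ucker coordinates labeling the dominant exponentials incident to that crossing, three have one sign and one has the opposite sign. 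But every Pl\"ucker coordinate of $A'$ is nonnegative, and those indexing dominant exponentials in the $\tau$-function must in fact be nonzero (otherwise the corresponding term in $\tau_{A'}$ would vanish and could not dominate); hence all four are strictly positive, contradicting the mixed-sign conclusion. This forces $\CC_t(u_{A'})$---and therefore $\CC_t(u_A)$---to have only white $X$-crossings.

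The main obstacle I anticipate is the invariance step: one must ensure not only that the underlying locus $\CC_t(u_A)$ for $t\ll 0$ is determined by $D$, but also that the labeling of each line-soliton by its type (and hence the designation of each $X$-crossing as black or white) is determined by $D$. This follows from Theorem~\ref{t<<0}, because Algorithm~\ref{GoToPlabic} produces the generalized plabic graph $G_-(D)$ from $D$ alone, and the rules-of-the-road edge labels on $G_-(D)$ match the line-soliton types in the contour plot; once this is in place, the reduction to the totally nonnegative case and the appeal to Corollary~\ref{prop:opposite} are straightforward.
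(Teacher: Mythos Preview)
Your argument is correct and follows essentially the same approach as the paper: use Theorem~\ref{t<<0} to conclude that the contour plot at $t\ll 0$ depends only on $D$, replace $A$ by some $A'\in S_D\cap (Gr_{k,n})_{\geq 0}$ (which exists because $D$ is a $\Le$-diagram), and then rule out black $X$-crossings by the sign constraint from the two-term Pl\"ucker relation. The only cosmetic difference is that the paper cites Theorem~\ref{2term} directly rather than its immediate consequence Corollary~\ref{prop:opposite}, and does not spell out (as you do) why the four dominant-exponential Pl\"ucker coordinates of $A'$ are strictly positive rather than merely nonnegative.
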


\begin{proof}
From Theorem \ref{t<<0}, it follows that 
the contour plot $\CC_{-\infty}(u_A)$ has no dependence on the signs of the Pl\"ucker
coordinates of $A$.  (In fact it has no dependence on $A$, only on the Deodhar stratum 
$S_D$ containing $A$.) Since $D$ is a $\Le$-diagram, we can choose
an element $A' \in S_D \cap (Gr_{k,n})_{\geq 0}$, and 
$\CC_{-\infty}(u_A) = 
\CC_{-\infty}(u_{A'})$.   But now since the Pl\"ucker coordinates of $A'$ are
all non-negative, by Theorem \ref{2term}, there cannot be any black $X$-crossings
in the contour plot.
\end{proof}

\begin{lemma}\label{lem:slide2}
Consider two contour plots for $A \in Gr_{k,n}$ which differ by a single slide.
Let $\J$ and $\J'$ denote the two sets of Pl\"ucker coordinates
corresponding to the dominant exponentials in the two contour plots.
Then from the values of the Pl\"ucker coordinates in $\J$,
one can reconstruct the values of the Pl\"ucker coordinates in $\J'$,
and vice-versa.  
\end{lemma}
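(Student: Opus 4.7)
The plan is to exploit the fact that a slide is a purely local modification of the contour plot, together with Theorem \ref{2term}, which provides two-term Pl\"ucker relations at every $X$-crossing.

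First I would observe that in the setting of Definition \ref{def:slide}, the slide takes place in a small neighborhood $N$ of a trivalent vertex $v_{a,b,c}$ through which a line-soliton $[i,j]$ passes, with $\{a,b,c\}\cap\{i,j\}=\emptyset$. Outside $N$, the two contour plots coincide, so the regions outside $N$ carry identical labels by dominant exponentials, and their Pl\"ucker coordinates contribute the same elements to $\J$ and to $\J'$. Thus $\J\cap\J'$ already contains every Pl\"ucker coordinate whose region is unaffected by the slide, and it remains to show that the finitely many Pl\"ucker coordinates in the symmetric difference $\J\triangle\J'$ (corresponding to the at most one region created/destroyed by the slide, together with any "transverse" regions whose index sets shift from containing $i$ to containing $j$ or vice versa) can be recovered from $\J\cap\J'$.

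Next I would do a case analysis following the sub-cases enumerated in the proof of Theorem \ref{th:blackX}. In each sub-case, the slide converts a configuration with one $X$-crossing in $N$ into a configuration with two $X$-crossings (or vice versa); compare Figure \ref{fig:Xslide}. Each $X$-crossing in the neighborhood has four surrounding regions labeled by sets $I_1,I_2,I_3,I_4$ (in circular order) whose dominant exponentials correspond to nonzero Pl\"ucker coordinates, and by Theorem \ref{2term} these satisfy
\[
\Delta_{I_1}(A)\,\Delta_{I_3}(A) \;=\; \pm\,\Delta_{I_2}(A)\,\Delta_{I_4}(A),
\]
with sign determined by the color of the crossing. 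Since at least three of the four surrounding regions at each $X$-crossing adjoin regions outside $N$ (or regions already solved for in a previous step of the local analysis), one may solve the two-term relation for the remaining unknown Pl\"ucker coordinate; nonvanishing of the pivot is guaranteed because dominant exponentials have nonzero Pl\"ucker coefficients. Iterating through the $X$-crossings present in the target configuration then expresses every element of $\J'\setminus\J$ as a rational monomial in elements of $\J\cap\J'$, and the symmetric argument reconstructs $\J\setminus\J'$ from $\J'\cap\J$.

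The main obstacle is not any single computation but rather the bookkeeping: one must check that in every sub-case of the case analysis, the relations at the $X$-crossings can be triangulated so that each unknown Pl\"ucker coordinate is determined by previously known ones. This works because the "middle" region (between the two $X$-crossings when the $[i,j]$ line lies on the side of $v_{a,b,c}$ with two crossings) shares boundaries with two $X$-crossings, each contributing a relation, while the "outer" regions are always adjacent to regions outside $N$. The case analysis itself is essentially the same as that tabulated in the proof of Theorem \ref{th:blackX}, so no additional geometric input is required.
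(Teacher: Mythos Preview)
Your proposal is correct and follows essentially the same approach as the paper: both arguments rest on Theorem~\ref{2term} (the two-term Pl\"ucker relation at each $X$-crossing) together with the observation that a slide is local and creates or destroys at most one region, so the single ``new'' Pl\"ucker coordinate is determined by three known ones via that relation. The paper's version is much terser---it simply invokes Theorem~\ref{2term}, notes that at most one region changes, and leaves the verification to inspection of Figure~\ref{fig:slide}---whereas you spell out the triangulation and the case analysis from Theorem~\ref{th:blackX}; but the substance is the same.
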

\begin{proof}
By Theorem \ref{2term}, the four Pl\"ucker coordinates
incident to an $X$-crossing  satisfy
a ``two-term" Pl\"ucker relation.
Now it is easy to verify the lemma by inspection, since each slide only creates or removes
one region, and there is a dependence among the Pl\"ucker coordinates
labeling the dominant exponentials.  The reader may wish to check this
by looking at the first and second, or the second and third, or the third and fourth
contour plots in Figure
\ref{fig:slide}.
\end{proof}

\begin{corollary}\label{cor:pos-pos}
Let $D$ be a $\Le$-diagram,
such that $S_D \subset Gr_{k,n}$.
Let $\CC_{-\infty}(D)$ and 
$\CC'_{-\infty}(D)$ be two contour plots
defined using two different sets of parameters
$\kappa_1 < \dots < \kappa_n$ and 
$\kappa'_1 < \dots < \kappa'_n$.
Let $\J$ and $\J'$ be the $k$-element subsets
corresponding to the dominant exponentials in 
$\CC_{-\infty}(D)$ and 
$\CC'_{-\infty}(D)$.
If $\Delta_I(A)>0$ for each
$I \in \J$, then $\Delta_I(A)>0$ for each $I \in \J'$.
In particular, if $\J$ is a positivity test for $S_D$ then 
so is $\J'$.
%Moreoever, if the set of Pl\"ucker coordinates labeling the 
%regions before a slide are all positive, then the set of Pl\"ucker coordinates
%labeling the regions after a slide will also be positive.  
\end{corollary}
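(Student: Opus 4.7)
The plan is to reduce the statement to the case where $\CC_{-\infty}(D)$ and $\CC'_{-\infty}(D)$ differ by a single slide, and then use the positive two-term Pl\"ucker relations supplied by white $X$-crossings to propagate positivity from $\J$ to $\J'$.

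First I would observe that the two choices of parameters lie in the open convex chamber $\{\kappa_1<\kappa_2<\cdots<\kappa_n\}\subset\mathbb{R}^n$, which is path-connected. Along any continuous path joining them, Theorem \ref{t<<0} shows that the combinatorial description of $\CC_{-\infty}(D)$ is completely determined by $D$ together with the data of which pairs of line-solitons form $X$-crossings; by Remark \ref{rem:slides} the latter changes only when an $X$-crossing passes through a trivalent vertex, i.e.\ only via a slide. A generic perturbation of the path ensures that slides happen one at a time, giving a finite chain $\CC_{-\infty}(D)=\CC_0,\CC_1,\dots,\CC_m=\CC'_{-\infty}(D)$ with consecutive plots differing by a single slide. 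Induction on $m$ then reduces the claim to the case $m=1$.

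In the single-slide case, since $D$ is a $\Le$-diagram, Corollary \ref{cor:white} guarantees that every $X$-crossing in both contour plots is white, so by Theorem \ref{2term} each $X$-crossing yields a two-term Pl\"ucker relation with a positive sign,
\[
\Delta_{I_1}(A)\,\Delta_{I_3}(A)=\Delta_{I_2}(A)\,\Delta_{I_4}(A).
\]
Inspection of Definition \ref{def:slide} (and Figure \ref{fig:slide}) shows that a slide alters only the region labels in a small neighborhood of the trivalent vertex and the moving $[i,j]$-line: the symmetric difference $\J\triangle \J'$ consists of at most one subset $I$ removed and at most one subset $I'$ added, while all other region labels are common to both $\J$ and $\J'$.

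The heart of the argument is the sign propagation. For any $I'\in\J'\setminus\J$, the corresponding region in $\CC'_{-\infty}(D)$ is incident to a white $X$-crossing whose three other adjacent regions are indexed by elements of $\J\cap\J'$; by hypothesis each of those three Pl\"ucker coordinates is strictly positive, so the positive two-term relation at that $X$-crossing forces $\Delta_{I'}(A)>0$ as well. Lemma \ref{lem:slide2} ensures that this sign determination is the unique consistent one. The main obstacle is a careful case analysis verifying, for each slide type illustrated in Figure \ref{fig:slide}, that the three Pl\"ucker coordinates appearing alongside $\Delta_{I'}(A)$ in the two-term relation really do lie in $\J\cap\J'$. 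Once this is settled, the ``in particular'' assertion follows immediately by symmetry: running the same propagation in reverse shows that $\Delta_I(A)>0$ for all $I\in\J'$ implies $\Delta_I(A)>0$ for all $I\in\J$, so if $\J$ detects membership in $(Gr_{k,n})_{\geq 0}$ then so does $\J'$.
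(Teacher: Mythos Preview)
Your proposal is correct and follows essentially the same approach as the paper's proof: deform the $\kappa$-parameters continuously through the connected chamber, reduce to a sequence of slides via Remark~\ref{rem:slides}, invoke Corollary~\ref{cor:white} to guarantee all $X$-crossings are white, and use the positive two-term Pl\"ucker relations from Theorem~\ref{2term} (packaged in Lemma~\ref{lem:slide2}) to propagate positivity across each slide. Your write-up is somewhat more explicit about the induction on the number of slides and the local region-label analysis, but the underlying argument is identical to the paper's.
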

\begin{proof}
One may use a continuous deformation of the parameters to get 
from $\kappa_1 < \dots < \kappa_n$ to
$\kappa'_1 < \dots < \kappa'_n$.  As one deforms the parameters
the contour plot will change by a sequence of slides.  At each step
along the way, the contour plot will contain only white $X$-crossings
(by Corollary \ref{cor:white}).
By Lemma \ref{lem:slide2}, if we know the values of the Pl\"ucker
coordinates labeling dominant exponentials before a slide,
then we can compute the Pl\"ucker coordinates labeling dominant exponentials
after a slide.  Moreoever, since this computation involved only 
two-term Pl\"ucker relations and all the $X$-crossings are white,
the positivity of the Pl\"ucker coordinates in $\J$ implies 
the positivity of the Pl\"ucker coordinates in $\J'$.
\end{proof}

%Recall from Sections \ref{sec:projections} and \ref{sec:positivitytest}
%that there are explicit parameterizations of Deodhar components
%in the Grassmannian.  In particular, if $D$ is a $\Le$-diagram
%(that is, it has no black stones) with $r$ boxes, then 
%$S_D$ is isomorphic to $(\R^*)^r$.  This map is written in terms
%of parameters $p_1$, $p_2$, \dots, $p_r$.  If one restricts
%the $p_i$'s to positive  real numbers, then the parameterization
%restricts to an isomorphism between $(\R^+)^r$ and 
%$S_D \cap (Gr_{k,n})_{\geq 0} = S_D^{tnn}$.

%The following is an immediate consequence of Corollary \ref{cor:pos-pos}.
%\begin{corollary}\label{cor:slidepositive}
%Let $D$ be a $\Le$-diagram and 
%$S_D \subset Gr_{k,n}$.  
%Let $\CC_{-\infty}(D)$ and 
%$\CC'_{-\infty}(D)$ be two contour plots
%defined using two different sets of parameters
%$\kappa_1 < \dots < \kappa_n$ and 
%$\kappa'_1 < \dots < \kappa'_n$.
%Let $\J$ and $\J'$ denote the sets of $k$-element
%subsets of $\{1,2,\dots,n\}$
%corresponding to the dominant exponentials
%in $\CC_{-\infty}(D)$ and $\CC'_{-\infty}(D)$, respectively.
%In particular, if $\J$ is a positivity test for $S_D$ then 
%so is $\J'$.
%\end{corollary}

%%%%%%%%%%%%%%%%%%%%%%%%%  Regularity    %%%%%%%%%%%%%%%%

\section{The regularity problem for KP solitons}\label{sec:regularity}
In this section, we first discuss the regularity of KP solitons.
Given a soliton solution $u_A$ coming from an element $A\in Gr_{k,n}$,
we show that if $u_A(x,y,t)$ is regular for $t\ll0$, then in fact 
$A$ must lie in the totally non-negative part
$(Gr_{k,n})_{\ge 0}$ of the Grassmannian.
%for the regularity of the solution $u_A$}?''.  
We then discuss the uniqueness (and lack thereof) of the pattern
when the soliton solution is not regular.

Our main theorem is the following.
\begin{theorem}\label{th:regularity}
Fix parameters $\kappa_1 < \dots < \kappa_n$ and 
an element $A \in Gr_{k,n}$.  Consider the corresponding soliton solution
$u_A(x,y,t)$ of the KP equation.  This solution is regular at $t\ll0$
if and only if $A \in (Gr_{k,n})_{\geq 0}$.
Therefore this solution is regular for all times $t$ 
if and only if $A \in (Gr_{k,n})_{\geq 0}$.
\end{theorem}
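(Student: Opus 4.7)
The plan is to prove the two implications separately. The easy direction ($\Leftarrow$) is immediate: if $A\in(Gr_{k,n})_{\geq 0}$, then every Pl\"ucker coordinate of $A$ is non-negative, so $\tau_A(x,y,t)=\sum_I\Delta_I(A)E_I(x,y,t)$ is a sum of non-negative terms (each $E_I>0$) with at least one strictly positive; hence $\tau_A>0$ everywhere and $u_A=2\partial_x^2\log\tau_A$ is regular for all $(x,y,t)$. For the statement about all $t$, this also immediately handles one direction.

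For the hard direction I would contrapose and assume $A\notin(Gr_{k,n})_{\geq 0}$. Let $S_D$ be the Deodhar component containing $A$, and parametrize $A=\pi_k(g)$ as in Proposition~\ref{p:parameterization}. Since $A$ is not totally non-negative, Theorem~\ref{th:TPTest} together with Remark~\ref{rem:TPcell} implies that either $D$ has a black stone, or $D$ is a $\Le$-diagram but some parameter $p_i$ of $g$ is negative. My goal in both cases is to exhibit two regions of the contour plot $\CC_{-\infty}(D)$ (produced by Theorem~\ref{t<<0}) that are adjacent across a single line-soliton and whose labeling Pl\"ucker coordinates have strictly opposite signs.

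The key tool is Corollary~\ref{signatblack}: for each interior box $b$ of $D$, the four Pl\"ucker coordinates labeling the surrounding regions satisfy
\[
\frac{\Delta_{I_e}(A)\,\Delta_{I_s}(A)}{\Delta_{I_b}(A)\,\Delta_{I_{se}}(A)}=\begin{cases}p_b,&b\text{ blank},\\ -1,&b\text{ a black stone},\\ 1,&b\text{ a white stone}.\end{cases}
\]
If $b$ is a black stone, or a blank box with $p_b<0$, this ratio is negative, so the four surrounding Pl\"ucker coordinates cannot all share a sign; in particular some neighboring pair has strictly opposite signs. Boxes on the boundary of $D$ are handled via the conventions $\Delta_{I'}(A)=1$ and $\Delta_I(A)=(-1)^{|J^{\bullet}_{\v}|}\prod p_i$ from Theorem~\ref{p:maxmin}, combined with how Algorithm~\ref{GoToPlabic} assembles $G_-(D)$. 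Thus $\CC_{-\infty}(D)$ contains such an adjacent pair.

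Finally I would show that such an adjacent pair of opposite-sign regions forces a singular line-soliton at $t\ll 0$. Near this line-soliton the $\tau$-function has the form
\[
\tau_A=\Delta_{I_1}(A)E_{I_1}+\Delta_{I_2}(A)E_{I_2}+(\text{subdominant exponentials}),
\]
and as $t\to-\infty$ the subdominant terms become arbitrarily small compared to the two written ones in a shrinking tubular neighborhood of the soliton. When $\Delta_{I_1}(A)$ and $\Delta_{I_2}(A)$ have opposite signs, the two dominant terms cancel along a curve asymptotic to the line-soliton, so $\tau_A$ vanishes there; consequently $u_A=2\partial_x^2\log\tau_A$ blows up and is singular at $t\ll 0$, contradicting regularity. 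Combined with the easy direction, this gives both assertions of the theorem. The main obstacle is this last asymptotic step: one must rigorously control the subdominant exponentials to guarantee that the zero set of the \emph{full} $\tau_A$, rather than just of its two-term truncation, actually meets every sufficiently thin neighborhood of the singular line-soliton as $|t|\to\infty$.
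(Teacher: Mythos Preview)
Your easy direction is fine, and your identification of the two cases (black stone in $D$ vs.\ $\Le$-diagram with some $p_i<0$) is correct. But the main gap is not the asymptotic step you flag at the end; it is the step where you pass from Corollary~\ref{signatblack} to ``two \emph{adjacent regions of the contour plot} with opposite-sign Pl\"ucker coordinates.'' The four minors $\Delta_{I_b},\Delta_{I_e},\Delta_{I_s},\Delta_{I_{se}}$ in Corollary~\ref{signatblack} are attached to boxes of the Go-diagram, not to regions of $\CC_{-\infty}(D)$. Remark~\ref{rem:G-Dtest} only says that \emph{when $D$ is a $\Le$-diagram} these minors are among the region labels of the combinatorial graph $G_-(D)$; and even then, by Remark~\ref{rem:differ}, the actual contour plot $\CC_{-\infty}(D)$ can differ from $G_-(D)$ by slides, so its set of dominant exponentials need not be the collection $\{I_b\}$. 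When $D$ has a black stone, no such identification is established at all.

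This is exactly the content the paper supplies and you have skipped. For the black-stone case the paper does \emph{not} use Corollary~\ref{signatblack} directly; instead it proves (Theorem~\ref{cor:bX}, via a special choice of $\kappa$'s together with the slide-invariance Theorem~\ref{th:blackX}) that $\CC_{-\infty}(D)$ must contain a black $X$-crossing, and then Corollary~\ref{prop:opposite} forces opposite signs among actual dominant exponentials. For the $\Le$-diagram case the paper proves separately (Theorem~\ref{th:Le}, using Corollary~\ref{cor:pos-pos} to transport positivity across slides) that the dominant exponentials in $\CC_t(u_A)$ form a positivity test for $S_D$. Your proposal essentially assumes the conclusions of these two theorems without argument; the ``main obstacle'' you identify is, by contrast, taken as essentially immediate in the paper (regularity forces all dominant-exponential Pl\"ucker coordinates to share a sign).
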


We will prove Theorem \ref{th:regularity} in Section 
\ref{sec:pos-reg}, after establishing some results on 
black $X$-crossings.

\subsection{Lemmas on black $X$-crossings}

%Recall
%the notions of black and white $X$-crossings from 
%Definition \ref{def:blackwhiteX}.
%The proof of the theorem will be straightforward once we have established Lemma \ref{lem:X-crossing}, 
%Proposition \ref{prop:opposite}, and Corollary \ref{cor:bX}, below.

Recall from Section \ref{sec:contour}
that $\phi_i(\bar{x},\bar{y}) = \kappa_i \bar{x} + \kappa_i^2 \bar{y} + \kappa_i^3.$
The following lemma is easy to check.
\begin{lemma}
For $1 \leq i < j \leq n$, let $L_{ij}$ be the line in the $\bar{x}\bar{y}$-plane
where $\phi_i(\bar{x},\bar{y})=\phi_j(\bar{x},\bar{y})$.
For $i<j<k<\ell$, let $b_{i,j,k,\ell}$ be the point where
the lines $L_{ik}$ and $L_{j\ell}$ intersect.
Then $L_{ij}$ has the equation
$$\bar{x} + (\kappa_i + \kappa_j) \bar{y} + (\kappa_i^2+\kappa_i \kappa_j + \kappa_j^2) = 0,$$ and the point $b_{i,j,k,\ell} = (b_{i,j,k,\ell}^{\bar{x}},
b_{i,j,k,\ell}^{\bar{y}})$ has the coordinates
\begin{align*}
b_{i,j,k,\ell}^{\bar{x}} &= \frac{\kappa_i^2 \kappa_j+\kappa_i^2 \kappa_{\ell} - 
\kappa_i \kappa_j^2+\kappa_i \kappa_j \kappa_k-\kappa_i \kappa_j \kappa_{\ell} + \kappa_i \kappa_k \kappa_{\ell} - \kappa_i \kappa_{\ell}^2 - \kappa_j^2 \kappa_k + \kappa_j \kappa_k^2-\kappa_j \kappa_k \kappa_{\ell} + \kappa_{k}^2 \kappa_{\ell} - \kappa_k \kappa_{\ell}^2}{\kappa_i - \kappa_j + \kappa_k - \kappa_{\ell}}\\
b_{i,j,k,\ell}^{\bar{y}} &= 
\frac{-\kappa_i^2 - \kappa_i \kappa_k + \kappa_j^2+\kappa_j \kappa_{\ell}  
-\kappa_k^2 + \kappa_{\ell}^2}{\kappa_i - \kappa_j + \kappa_k - \kappa_{\ell}}.
\end{align*}
\end{lemma}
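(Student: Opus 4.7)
The plan is to carry out a direct computation, as the statement is algebraic and does not require any structural argument. For the equation of $L_{ij}$, I would begin from
\[
\phi_i(\bar{x},\bar{y}) - \phi_j(\bar{x},\bar{y}) = (\kappa_i - \kappa_j)\bar{x} + (\kappa_i^2 - \kappa_j^2)\bar{y} + (\kappa_i^3 - \kappa_j^3),
\]
and then extract the common factor $\kappa_i - \kappa_j$ using the identities $\kappa_i^2 - \kappa_j^2 = (\kappa_i - \kappa_j)(\kappa_i + \kappa_j)$ and $\kappa_i^3 - \kappa_j^3 = (\kappa_i - \kappa_j)(\kappa_i^2 + \kappa_i \kappa_j + \kappa_j^2)$. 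Since $i \neq j$ we have $\kappa_i \neq \kappa_j$, so dividing through by $\kappa_i - \kappa_j$ yields the claimed defining equation of $L_{ij}$.

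For the coordinates of $b_{i,j,k,\ell}$, I would solve the $2\times 2$ linear system formed by the equations of $L_{ik}$ and $L_{j\ell}$ written in the form just obtained. Subtracting one equation from the other eliminates $\bar{x}$ and gives
\[
\bigl[(\kappa_i+\kappa_k)-(\kappa_j+\kappa_\ell)\bigr]\bar{y} \;=\; (\kappa_j^2 + \kappa_j\kappa_\ell + \kappa_\ell^2) - (\kappa_i^2 + \kappa_i\kappa_k + \kappa_k^2),
\]
from which the stated formula for $b^{\bar{y}}_{i,j,k,\ell}$ follows after factoring the numerator. Back-substituting this value of $\bar{y}$ into, say, the equation of $L_{ik}$ and collecting terms over the common denominator $\kappa_i - \kappa_j + \kappa_k - \kappa_\ell$ produces the formula for $b^{\bar{x}}_{i,j,k,\ell}$; the simplification is routine but tedious, amounting to grouping the cubic monomials in the $\kappa$'s by degree of appearance of each index.

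The only point requiring care is to check that the denominator $\kappa_i - \kappa_j + \kappa_k - \kappa_\ell$ is nonzero, for otherwise the two lines $L_{ik}$ and $L_{j\ell}$ would be parallel and $b_{i,j,k,\ell}$ would not exist. This is guaranteed by the genericity assumption imposed on the parameters at the beginning of Section~\ref{soliton-background}: the sums $\sum_m \kappa_{j_m}$ are pairwise distinct, so in particular $\kappa_i + \kappa_k \neq \kappa_j + \kappa_\ell$ whenever $\{i,k\} \neq \{j,\ell\}$, which is our case since $i<j<k<\ell$. This is the only genuine (but very mild) obstacle; the remainder of the proof is bookkeeping.
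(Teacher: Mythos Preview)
Your proposal is correct and is exactly the direct computation the paper has in mind; the paper itself gives no proof beyond the sentence ``The following lemma is easy to check.'' Your additional remark verifying the nonvanishing of the denominator via the genericity assumption is a nice point that the paper leaves implicit.
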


\begin{lemma}\label{lem:less}
Consider the point $b_{i,j,k,\ell}$ where $1 \notin \{i,j,k,\ell\}$.
Then at this point we have 
$\phi_1 < \phi_i = \phi_k$ and $\phi_1 < \phi_j = \phi_{\ell}$.
\end{lemma}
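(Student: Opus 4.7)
The plan is to view $\phi_m(\bar{x},\bar{y}) = \kappa_m \bar{x} + \kappa_m^2 \bar{y} + \kappa_m^3$ as the evaluation at $\kappa = \kappa_m$ of the cubic $q(\kappa) := \kappa \bar{x} + \kappa^2 \bar{y} + \kappa^3$ in a single variable $\kappa$, with $(\bar{x},\bar{y}) = (b_{i,j,k,\ell}^{\bar{x}}, b_{i,j,k,\ell}^{\bar{y}})$ held fixed. At this point, the defining conditions $\phi_i = \phi_k$ and $\phi_j = \phi_\ell$ say that $q(\kappa) - q(\kappa_i)$ has $\kappa_i$ and $\kappa_k$ among its three roots, and $q(\kappa) - q(\kappa_j)$ has $\kappa_j$ and $\kappa_\ell$ among its three roots. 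Both cubics are monic with $\kappa^2$-coefficient equal to $\bar{y}$, so Vieta's formula pins down the respective third roots as $r_1 := -\bar{y} - \kappa_i - \kappa_k$ and $r_2 := -\bar{y} - \kappa_j - \kappa_\ell$.

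Next I would use the two factorizations
\[
q(\kappa) - q(\kappa_i) = (\kappa - \kappa_i)(\kappa - \kappa_k)(\kappa - r_1), \qquad
q(\kappa) - q(\kappa_j) = (\kappa - \kappa_j)(\kappa - \kappa_\ell)(\kappa - r_2),
\]
evaluated at $\kappa = \kappa_1$. Because $\kappa_1$ is strictly less than each of $\kappa_i, \kappa_j, \kappa_k, \kappa_\ell$, the first two factors in each line are negative, so $\phi_1 < \phi_i$ is equivalent to $r_1 > \kappa_1$ and $\phi_1 < \phi_j$ is equivalent to $r_2 > \kappa_1$. The identity $r_1 - r_2 = (\kappa_j - \kappa_i) + (\kappa_\ell - \kappa_k) > 0$ gives $r_1 > r_2$, so the entire lemma reduces to the single inequality $r_2 > \kappa_1$.

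To establish this I would extract a second Vieta relation: the coefficient of $\kappa$ in $q(\kappa) - C$ is $\bar{x}$ regardless of $C$, so both $\kappa_i \kappa_k + r_1(\kappa_i + \kappa_k)$ and $\kappa_j \kappa_\ell + r_2(\kappa_j + \kappa_\ell)$ equal $\bar{x}$. Equating them, substituting $r_1 = r_2 + (\kappa_j + \kappa_\ell - \kappa_i - \kappa_k)$, and solving for $r_2$ gives a closed formula for $r_2$ in the $\kappa_m$'s alone. After clearing the positive factor $\kappa_j + \kappa_\ell - \kappa_i - \kappa_k > 0$, the inequality $r_2 > \kappa_1$ becomes the polynomial inequality
\[
\kappa_i \kappa_k - \kappa_j \kappa_\ell + (\kappa_j + \kappa_\ell - \kappa_i - \kappa_k)(\kappa_i + \kappa_k - \kappa_1) > 0.
\]

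The main obstacle is verifying this last inequality, but the verification is routine. The left-hand side is linear in $\kappa_1$ with coefficient $\kappa_i + \kappa_k - \kappa_j - \kappa_\ell < 0$, so it is strictly decreasing in $\kappa_1$; it thus suffices to check positivity at the larger value $\kappa_1 = \kappa_i$. Direct expansion at this value collapses the expression to $(\kappa_\ell - \kappa_k)(\kappa_k - \kappa_j)$, which is strictly positive because $j < k < \ell$, completing the proof.
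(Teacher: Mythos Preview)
Your proof is correct, and the underlying algebra is essentially equivalent to the paper's, but your organization is genuinely cleaner. The paper simply plugs the explicit coordinates of $b_{i,j,k,\ell}$ into $\phi_i-\phi_1$ and $\phi_j-\phi_1$, announces the resulting factored expressions (each of which has the factors $(\kappa_i-\kappa_1)(\kappa_k-\kappa_1)$, resp.\ $(\kappa_j-\kappa_1)(\kappa_\ell-\kappa_1)$, times a third factor over the common denominator $\kappa_j-\kappa_i+\kappa_\ell-\kappa_k$), and checks each third factor is positive by inspection of the ordering of the $\kappa$'s. You recover exactly this factorization conceptually by writing $\phi_m=q(\kappa_m)$ for the monic cubic $q$ and applying Vieta, so the paper's mysterious third factors are just $r_1-\kappa_1$ and $r_2-\kappa_1$. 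Your reduction $r_1>r_2$ then lets you check \emph{one} inequality rather than two, and the final substitution $\kappa_1\to\kappa_i$ (using monotonicity in $\kappa_1$) is a nice way to collapse the verification to $(\kappa_\ell-\kappa_k)(\kappa_k-\kappa_j)>0$. The payoff of your route is that it explains \emph{why} the factorizations in the paper exist and avoids ever writing down the explicit coordinates of $b_{i,j,k,\ell}$; the paper's route is shorter to state but leans on an unilluminating ``a calculation shows.''
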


\begin{proof}
By definition of $b_{i,j,k,\ell}$ we have that at this point
$\phi_i = \phi_k$ and $\phi_j = \phi_{\ell}$.  So we just need
to show that at $b_{i,j,k,\ell}$, 
$\phi_1 <\phi_i$ and $\phi_1 < \phi_j$.
A calculation shows that
$\phi_i(b_{i,j,k,\ell}) - \phi_1(b_{i,j,k,\ell})$ is equal to 
\begin{equation*}
\frac{(\kappa_k - \kappa_1)(\kappa_i-\kappa_1)[(\kappa_j - \kappa_1)(\kappa_j-\kappa_i + \kappa_{\ell}-\kappa_k) + (\kappa_{\ell}-\kappa_i)(\kappa_{\ell}-\kappa_k)]}{\kappa_j - \kappa_i + \kappa_{\ell} - \kappa_k}, 
\end{equation*}
and $\phi_j(b_{i,j,k,\ell}) - \phi_1(b_{i,j,k,\ell})$ is equal to 
\begin{equation*}
\frac{(\kappa_{\ell} - \kappa_1)(\kappa_j-\kappa_1)[(\kappa_i - \kappa_1)(\kappa_j-\kappa_i + \kappa_{\ell}-\kappa_k) + (\kappa_{\ell}-\kappa_k)(\kappa_{k}-\kappa_j)]}{\kappa_j - \kappa_i + \kappa_{\ell} - \kappa_k}. 
\end{equation*}
Because $\kappa_1 < \kappa_i < \kappa_j < \kappa_k < \kappa_{\ell}$,
we can readily verify that the above quantities are positive.
\end{proof}
\begin{remark}
Lemma \ref{lem:less}
 will be instrumental in proving Proposition \ref{prop:blackX}
below regarding black $X$-crossings.  Note that 
if in the lemma we took the order 
$i<k<j<\ell$ or $i<j<\ell<k$ then our proof would
not work.  So Proposition \ref{prop:blackX} does not
necessarily hold for white X-crossings.
\end{remark}

\begin{proposition}\label{prop:blackX}
Use the hypotheses and notation of Theorem \ref{induction}.
Then every black $X$-crossing of  
$\CC_{-\infty}(\mathcal{M'})$ remains a 
black $X$-crossing in 
$\CC_{-\infty}(\mathcal{M})$; and
each region in 
$\CC_{-\infty}(\mathcal{M'})$ 
which is incident to a 
black $X$-crossing 
and is labeled by $E_{J'}$ corresponds to a region of 
$\CC_{-\infty}(\mathcal{M})$ which is labeled by 
$E_{J' \cup \{1\}}$.
\end{proposition}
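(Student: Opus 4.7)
My plan is to reduce the proposition to the containment statement that every black $X$-crossing of $\CC_{-\infty}(\mathcal{M}')$ occurs at a point of the polyhedral subset $\mathcal{R}$ from Theorem \ref{induction}. Once this is in hand, Theorem \ref{induction} identifies the two contour plots near the crossing and promotes each label $J'$ to $J'\cup\{1\}$, and Definition \ref{def:blackwhiteX} guarantees the crossing remains black because $1\notin\{i,j,k,\ell\}$ leaves the two line-soliton types $[i,k]$ and $[j,\ell]$ unchanged.

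First I would fix a black $X$-crossing $b=b_{i,j,k,\ell}$ of $\CC_{-\infty}(\mathcal{M}')$ with $2\le i<j<k<\ell\le n$ and record its local data: the four adjacent regions carry labels $J'_{a,c}=S\cup\{a,c\}$ for $(a,c)\in\{i,k\}\times\{j,\ell\}$ and a fixed $(k-3)$-set $S\subseteq\{2,\dots,n\}\setminus\{i,j,k,\ell\}$, all four realizing the common value $\sigma:=\sum_{m\in S}\phi_m+\phi_i+\phi_j$ at $b$ (via $\phi_i=\phi_k$ and $\phi_j=\phi_\ell$). By genericity of the $\kappa_m$'s, $\sigma$ is the unique strict minimum of $\sum_{m\in J''}\phi_m$ over $J''\in\mathcal{M}'$. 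Lemma \ref{lem:less} applied at $b$ then yields the strict inequalities $\phi_1(b)<\phi_i(b)=\phi_k(b)$ and $\phi_1(b)<\phi_j(b)=\phi_\ell(b)$, so $1$ is strictly cheaper at $b$ than any element of $\{i,j,k,\ell\}$.

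To verify $b\in\mathcal{R}$, I would argue by contradiction. Suppose some $J\in\mathcal{M}$ with $1\notin J$ realizes $F(b):=\min_{J\in\mathcal{M}}\sum_{m\in J}\phi_m(b)<\sigma+\phi_1$. Let $C\subseteq J\cup\{1\}$ be the fundamental circuit of $1$ with respect to $J$; for each $z\in C\setminus\{1\}$ the basis $(J\setminus\{z\})\cup\{1\}$ lies in $\mathcal{M}$, so $J\setminus\{z\}\in\mathcal{M}'$, and comparing $\sum_J$ with $\sum_{(J\setminus\{z\})\cup\{1\}}$ forces $\phi_z(b)\le\phi_1(b)$. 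Lemma \ref{lem:less} then yields $C\cap\{i,j,k,\ell\}=\emptyset$. If for some $z\in C\setminus\{1\}$ the set $J\setminus\{z\}$ is not one of the four minimizers $J'_{a,c}$, then $\sum_{J\setminus\{z\}}>\sigma$ strictly, which combined with $\phi_z\le\phi_1$ is inconsistent with $\sum_J<\sigma+\phi_1$. Otherwise $J=S\cup\{a,c,z\}$ for some $(a,c)\in\{i,k\}\times\{j,\ell\}$ and some $z\notin\{1,i,j,k,\ell\}$; a secondary symmetric exchange of $J$ against a neighboring minimizer $J'_{a',c'}\cup\{1\}$ then either produces the basis $S\cup\{c,z,1\}\in\mathcal{M}$ (whose sum at $b$ is $\sum_J+\phi_1-\phi_a<\sum_J$ by Lemma \ref{lem:less}, contradicting minimality), or produces yet another minimizer of the same form $S\cup\{a'',c'',z\}\in\mathcal{M}$. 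Iterating this against all three alternative minimizers $J'_{a',c'}\cup\{1\}$ eventually forces an injection of $1$ in place of some element of $\{a,c\}\subseteq\{i,j,k,\ell\}$, yielding the required strict decrease of the sum at $b$.

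The main obstacle is this iterative secondary-exchange step, which must rule out a cascade of matroid coincidences among the four-fold family $\{S\cup\{a'',c'',z\}:a''\in\{i,k\},c''\in\{j,\ell\}\}$; it parallels but is more delicate than the trivalent-vertex analysis in \cite[Theorem 8.17]{KW}, and it relies essentially on the strict uniqueness of the $\mathcal{M}'$-minimum at $b$ and on Lemma \ref{lem:less}. Once $b\in\mathcal{R}$ is established, Theorem \ref{induction} and Definition \ref{def:blackwhiteX} finish the proof as described in the first paragraph.
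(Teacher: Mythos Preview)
Your high-level plan---show that the black $X$-crossing lies in the polyhedral subset $\mathcal R$ of Theorem~\ref{induction}, then read off the label promotion $J'\mapsto J'\cup\{1\}$---is exactly the paper's route, and Lemma~\ref{lem:less} is the decisive input in both. The paper's execution, however, is far shorter than yours. It argues region by region: for each of the four regions $R_m$ incident to the crossing $b_{a,b,c,d}$, the $\mathcal M'$-label $J_m$ contains some element of $\{a,b,c,d\}$ (say $a\in J_1$); Lemma~\ref{lem:less} gives $\phi_1<\phi_a$ on a neighborhood $N$; and the paper concludes directly that $J_1\cup\{1\}$ is the $\mathcal M$-minimizer on $N\cap R_1$, hence that the corresponding region of $\CC_{-\infty}(\mathcal M)$ carries the label $E_{J_1\cup\{1\}}$. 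No fundamental-circuit or exchange argument appears.

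Your attempt to supply that missing justification contains a concrete logical error. In your third paragraph you assert that $\sum_{J\setminus\{z\}}>\sigma$ strictly, together with $\phi_z\le\phi_1$, ``is inconsistent with $\sum_J<\sigma+\phi_1$''. But those hypotheses only give
\[
\sum_{J}=\sum_{J\setminus\{z\}}+\phi_z>\sigma+\phi_z,
\]
and since $\phi_z\le\phi_1$ (not $\ge$), this is perfectly compatible with $\sum_J<\sigma+\phi_1$; the inequality points the wrong way for a contradiction. Consequently your reduction to the case $J=S\cup\{a,c,z\}$ is unjustified. The subsequent ``secondary symmetric exchange'' paragraph, which you yourself flag as the main obstacle, is likewise incomplete: symmetric exchange does not let you dictate which element gets swapped in, so you cannot force the insertion of $1$ in place of an element of $\{i,j,k,\ell\}$ at will, and your proposed iteration does not terminate in a contradiction as stated. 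In short, you are attempting to fill in a step the paper simply asserts, but the argument you have written does not go through.
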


\begin{proof}
Consider a black $X$-crossing $b_{a,b,c,d}$ of 
$\CC_{-\infty}(\mathcal{M'})$ in which the 
line-solitons $[a,c]$ and $[b,d]$ intersect
(here $a<b<c<d$).  Since this is taking place in 
$\CC_{-\infty}(\mathcal{M'})$,
$1 \notin \{a,b,c,d\}$.  
The four regions $R_1, R_2, R_3, R_4$
incident to  $b_{a,b,c,d}$
are labeled by $E_{J_1}, E_{J_2}, E_{J_3}, E_{J_4}$.
In particular,  this means that at region 
$R_1$, $J_1$ is the subset 
$\{j_1,\dots,j_{k-1}\}$ of $\mathcal{M'}$ which 
minimizes the value $\theta_{j_1} + \dots + \theta_{j_{k-1}}$.
Without loss of generality we can assume that $a \in J_1$.
But then by 
Lemma \ref{lem:less}, there is a neighborhood $N$
of $b_{a,b,c,d}$ where $\phi_1$ is less than  
$\phi_a$.
%, $\phi_b$, $\phi_c$ or $\phi_d$.  
It follows that 
in $N \cap R_1$, $J_1 \cup \{j_k = 1\}$ is the subset of $\M$
that minimizes the value 
$\theta_{j_1} + \dots + \theta_{j_{k}}$.
Therefore the region $R_1$ of 
$\CC_{-\infty}(\mathcal{M'})$ which is labeled 
by $E_{J_1}$ corresponds to a region of 
$\CC_{-\infty}(\mathcal{M})$ which is labeled 
by $E_{J_1 \cup \{1\}}$.  Similarly for $R_2$, $R_3$, and $R_4$.
In particular, the black $X$-crossing from 
$\CC_{-\infty}(\mathcal{M'})$  
will remain a black $X$-crossing in 
$\CC_{-\infty}(\mathcal{M})$.
\end{proof}

%\begin{lemma}\label{lem:X-crossing}
%A generalized plabic graph which is constructed via Algorithm \ref{GoToPlabic} may contain
%some $X$-crossings; each $X$-crossing results from a white or black stone in the Go-diagram.
%The $X$-crossing resulting from a white stone is a white $X$-crossing,
%while the $X$-crossing resulting from a black stone is a black $X$-crossing
%(recall that black and white $X$-crossings
%were defined in Definition \ref{def:blackwhiteX}).
%comes from the crossing of two
%line-solitons of types $[i,a]$ and $[b,j]$, where $i<a<b<j$.  And the $X$-crossing resulting from
%a black stone comes from the crossing of two line-solitons of types $[i,b]$ and $[a,j]$, where
%$i<a<b<j$.  
%\end{lemma}

%\begin{proof}
%This follows immediately if one applies the rules of the road to the plabic graph constructed
%in Algorithm \ref{GoToPlabic}.  
%See Figure \ref{fig:PGo} for an illustration.
%\end{proof}
%\begin{figure}
%\centering
%\includegraphics[height=3.5cm]{P-Go}
%\caption{Types of soliton interactions.  The left panel shows a resonant interaction forming a trivalent vertex. The middle and right panels show the X-crossings at white and black stones.
%Notice the difference in the X-crossings, that is, at the white stone the soliton indices have no
%overlapping, and at the black stone, those solitons have the same indices as
%in the case of the blank box.  
%Here the indices have the order $i<a<b<j$,
%and in particular, $i$ is a pivot index and $j$ is a non-pivot index.
%\label{fig:PGo}}
%\end{figure}

Recall the notion of a slide from Definition \ref{def:slide}.
\begin{proposition}\label{prop:slide}
Choose a Go-diagram $D$ such that $S_D \subset Gr_{k,n}$.
Let $\kappa_1 < \dots < \kappa_n$ and 
$\kappa'_1 < \dots < \kappa'_n$ be two choices of parameters,
and let $\CC_{-\infty}(D)$ and 
$\CC'_{-\infty}(D)$ be the corresponding contour plots.
Then if 
$\CC_{-\infty}(D)$ has $r$ black $X$-crossings, then 
$\CC'_{-\infty}(D)$ has $r$ black $X$-crossings.
\end{proposition}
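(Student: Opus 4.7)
The plan is to use a continuity/connectedness argument together with the slide invariance result already established in Theorem~\ref{th:blackX}. The key observation is that the open chamber $\{(\kappa_1,\ldots,\kappa_n)\in\R^n : \kappa_1<\kappa_2<\cdots<\kappa_n\}$ is convex and hence path-connected, so we can join our two prescribed parameter tuples $(\kappa_1,\ldots,\kappa_n)$ and $(\kappa_1',\ldots,\kappa_n')$ by a continuous path within this chamber. Along this path, Theorem~\ref{t<<0} (together with Algorithm~\ref{GoToPlabic}) pins down the combinatorics of $\CC_{-\infty}(D)$ except for the locations of the $X$-crossings, whose coordinates depend continuously on the $\kappa_i$.

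First I would set up the deformation argument carefully. Choose a piecewise-linear path $\gamma:[0,1]\to\R^n$ with $\gamma(0)=(\kappa_1,\ldots,\kappa_n)$ and $\gamma(1)=(\kappa_1',\ldots,\kappa_n')$, staying inside the open chamber of strictly increasing tuples and chosen generically so that the genericity assumption on the $\kappa$'s (the sums $\sum\kappa_{j_m}$ being distinct) fails only at finitely many times $0<s_1<s_2<\cdots<s_N<1$. Between consecutive critical times the contour plot $\CC_{-\infty}(D)$ varies without any topological change, so the number of black $X$-crossings is constant on each open interval $(s_i,s_{i+1})$.

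Next I would analyze what happens at a critical time $s_i$. By Remark~\ref{rem:slides}, since the combinatorial structure of $\CC_{-\infty}(D)$ (the trivalent vertices, the incidences of line-solitons, and the labels) is determined by $D$ alone via Theorem~\ref{t<<0}, the only topological change the contour plot can undergo as $s$ passes through $s_i$ is a slide in the sense of Definition~\ref{def:slide}. (By choosing the path $\gamma$ sufficiently generically we may further assume that only one slide occurs at each $s_i$; if three or more line-solitons were to become concurrent at a single point for a non-generic reason, we could perturb $\gamma$ to resolve this into a sequence of slides.) Theorem~\ref{th:blackX} then applies directly: each slide preserves the number of black $X$-crossings.

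Combining these two ingredients, the number of black $X$-crossings of $\CC_{-\infty}(D)$ is constant on each interval $(s_i,s_{i+1})$ and is preserved across each $s_i$, hence is the same at $s=0$ and $s=1$. The main technical point to pin down is the genericity of the path $\gamma$: one must verify that the finite list of ``bad'' configurations (triple concurrences not coming from trivalent vertices, simultaneous slides, or line-solitons colliding with vertices) forms a codimension-at-least-one subset of the parameter chamber so that a generic $\gamma$ crosses it transversely in a sequence of isolated slide events. Once this mild transversality statement is verified, the proposition follows immediately from Theorem~\ref{th:blackX}.
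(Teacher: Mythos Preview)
Your proposal is correct and takes essentially the same approach as the paper: deform the $\kappa$-parameters continuously, invoke Remark~\ref{rem:slides} to conclude that the contour plot changes only by a sequence of slides, and apply Theorem~\ref{th:blackX} to see that each slide preserves the number of black $X$-crossings. The paper's proof is simply the two-sentence version of what you wrote, with the continuity and genericity details absorbed into Remark~\ref{rem:slides} rather than spelled out.
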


\begin{proof}
By Remark \ref{rem:slides}, the two contour plots differ by a series of slides.  
And by Theorem \ref{th:blackX}, each slide preserves the number of black $X$-crossings.
\end{proof}

\begin{theorem}\label{cor:bX}
If $D$ is a Go-diagram with at least one black stone, then the
contour plot $\CC_{-\infty}(D)$ 
contains a black $X$-crossing.
\end{theorem}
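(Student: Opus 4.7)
My plan is an induction on the number of rows of $D$, with Proposition \ref{prop:blackX} doing the heavy lifting in the inductive step. By Proposition \ref{prop:slide}, the number of black $X$-crossings in $\CC_{-\infty}(D)$ is independent of the choice of parameters $\kappa_1<\dots<\kappa_n$, so I may fix whichever ones are convenient. As usual, I also assume without loss of generality that $i_1=1$.

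For the inductive step, suppose $D$ has a black stone outside its top row, and let $D'$ denote the Go-diagram obtained from $D$ by deleting the top row. Then $D'$ still contains a black stone, so by the inductive hypothesis $\CC_{-\infty}(D')$ has a black $X$-crossing. Picking $A \in S_D$ and letting $\M,\M'$ be the matroids of Definition \ref{def:M'} and Lemma \ref{rem:project}, Proposition \ref{prop:blackX} lifts this $X$-crossing to a black $X$-crossing of $\CC_{-\infty}(\M)=\CC_{-\infty}(D)$, with each region label augmented by $\{1\}$.

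The base case is that every black stone of $D$ lies in its top row. Pick such a black stone $b$, and let $e$, $s$, $se$ denote the east, south, and southeast neighbors of $b$ in $D$ (treating out-of-diagram positions as empty boxes). By Algorithm \ref{GoToPlabic}, the box $b$ is replaced by a cross in the pipe dream, and this cross survives the cleanup of steps (2)--(4) as a genuine $X$-crossing $x$ in $G_-(D)$. The crux is then to identify the four regions of $G_-(D)$ incident to $x$ with the index sets $I_b$, $I_e$, $I_s$, $I_{se}$ appearing in Theorem \ref{p:Plucker}. Granting this identification, Corollary \ref{signatblack} gives
$$\Delta_{I_e}(A)\,\Delta_{I_s}(A) \;=\; -\,\Delta_{I_b}(A)\,\Delta_{I_{se}}(A)$$
for any $A \in S_D$ with generic nonzero parameters $p_i$, and Theorem \ref{2term} then forces $x$ to be a \emph{black} $X$-crossing.

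The principal obstacle is the region-identification step above. It requires tracking the rules of the road (Definition \ref{labels}) back along the pipes of the pipe dream and matching the resulting region labels against the formulas $I_b=v^{\In}s_b(w^{\In})^{-1}I$ (and analogues for $I_e$, $I_s$, $I_{se}$), using the ``in/out'' decomposition of the Young diagram at $b$. This is a natural extension of the region-labeling of $G_-(D)$ for $\Le$-diagrams noted in Remark \ref{rem:G-Dtest}, but the presence of a black stone at $b$ means one must verify that the sign contributions from Theorem \ref{p:Plucker}(3) propagate correctly through the assignment of labels to adjacent regions. Once that verification is made, the chain ``Algorithm \ref{GoToPlabic} $\to$ Corollary \ref{signatblack} $\to$ Theorem \ref{2term}'' completes the argument.
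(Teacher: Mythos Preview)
Your inductive reduction via Proposition~\ref{prop:blackX} mirrors the paper's, but the base case has a genuine gap: you conflate the generalized plabic graph $G_-(D)$ with the contour plot $\CC_{-\infty}(D)$. Theorem~\ref{t<<0} guarantees only that the trivalent vertices and the line-soliton incidences of $\CC_{-\infty}(D)$ match those of $G_-(D)$; as Remark~\ref{rem:differ} makes explicit, the \emph{locations of $X$-crossings} may differ. So knowing that the cross at box $b$ survives as an $X$-crossing in $G_-(D)$ does not by itself put an $X$-crossing in $\CC_{-\infty}(D)$: the two line-solitons through $b$ are bounded or semi-infinite segments, and for a given choice of $\kappa_1<\dots<\kappa_n$ they need not actually intersect in the contour plot. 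Proposition~\ref{prop:slide} compares two \emph{contour plots} for different $\kappa$'s; it does not let you transport an $X$-crossing from the combinatorial object $G_-(D)$ (which is not, a priori, a contour plot for any $\kappa$) into some $\CC_{-\infty}(D)$.

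The paper fills exactly this gap. After the same inductive reduction to a black stone $b$ in the top row, it identifies the two lines through that box as line-solitons of types $[i,b]$ and $[a,j]$ with $i<a<b<j$ (so \emph{black} by Definition~\ref{def:blackwhiteX}, no appeal to Theorem~\ref{2term} needed), reads off the trivalent endpoints of these segments from $G_-(D)$, and then \emph{chooses specific $\kappa$-parameters} so that the $y$-coordinate of the would-be crossing lies between the $y$-coordinates of those endpoints, forcing an intersection in the actual contour plot. Proposition~\ref{prop:slide} is then used to remove the dependence on this choice. Your chain ``Corollary~\ref{signatblack} $\to$ Theorem~\ref{2term}'' is also misapplied: blackness of an $X$-crossing is a combinatorial condition on indices, and Theorem~\ref{2term} derives the sign of the Pl\"ucker relation \emph{from} that, not the other way around; moreover the region identification $I_b,I_e,I_s,I_{se}$ you propose is established only for $\Le$-diagrams (Remark~\ref{rem:G-Dtest}), not for Go-diagrams with black stones.
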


\begin{proof}
Let $i$ denote the bottom-most row of $D$ which contains 
a black stone.  Choose $A \in S_D$ and put it in row-echelon form;
let $A'$ denote the span of rows $i,i+1,\dots,k$
of $A$.  So $A' \in S_{D'} \subset Gr_{k-i+1,n}$, 
where $D'$ is the Go-diagram obtained from 
rows $i,i+1,\dots,k$ of $D$.
Then by Proposition \ref{prop:blackX}, 
if we can show that 
the contour plot $\CC_{-\infty}(D')$ contains a black 
$X$-crossing, then 
$\CC_{-\infty}(D)$ must also contain a black $X$-crossing.

Our goal now is to show that there is a choice of the 
$\kappa$-parameters such that 
$\CC_{-\infty}(D')$ contains a black $X$-crossing.
If we can show this, then by Proposition \ref{prop:slide},
we will be done.

Note that for $t=-1$, we have the following.
\begin{itemize}
\item[(i)]  If $i<j<k$, then the y-coordinate $y_{i,j,k}$ of the 
trivalent vertex $v_{i,j,k}$ where the $[i,j]$, $[j,k]$ and $[i,k]$ solitons meet is:
%For the trivalent vertices intersecting three line-solitons with
%$\{\kappa_i<\kappa_j<\kappa_k\}$ (i.e. $[i,j]$-, $[j,k]$- and $[i,k]$-solitons), we have
\begin{equation*}
y_{i,j,k}=\kappa_i+\kappa_j+\kappa_k.
\end{equation*}
% x&=(\kappa_i\kappa_j+\kappa_j\kappa_k+\kappa_i\kappa_k)t.
%\end{align*}
\item[(ii)]  If $i<j<k<\ell$, then the $y$-coordinate $y_{i,j,k,\ell}$ 
of an $X$-crossing between the $[i,k]$ and $[j,\ell]$ solitons is:
%For the X-crossings intersecting two line-solitons with $\{\kappa_i<\kappa_j<\kappa_k<\kappa_l\}$ (i.e. $[i,k]$- and $[j,l]$-solitons), we have
\begin{equation*}
y_{i,j,k,\ell}=\kappa_i+\kappa_j+\kappa_k+\kappa_{\ell}-\frac{\kappa_i\kappa_k-\kappa_j\kappa_{\ell}}{(\kappa_i+\kappa_k)-(\kappa_j+\kappa_{\ell})}.
%x&=-\frac{1}{2}\left((\kappa_i+\kappa_j+\kappa_k+\kappa_l)y+(\kappa_i^2+\kappa_j^2+\kappa_k^2+\kappa_l^2+\kappa_i\kappa_k+\kappa_j\kappa_l)t\right).
\end{equation*}
\end{itemize}

%In the induction, let $i$ is the first row containing a black stone.
Consider the left-most black stone $b$ in $D'$.
Let $[i,b]$ and $[a,j]$ with $i<a<b<j$ be the pair of lines in $G_-(D')$
which cross at this black stone.
Then there are two cases:
% (the first case is just a special case of the second one):
\begin{itemize}
\item[(a)] There is no empty box to the left of $b$ in $D'$,
and so there is an unbounded $[i,b]$-soliton at $y\gg 0$ in the corresponding contour plot.
Because $b$ is a black stone, the $[i,b]$-soliton must have a trivalent vertex $v_{i,b,j'}$ at one end,
where $j' \geq b$.  Additionally, $[a,j]$ is an unbounded soliton at $y\ll0$,
and it has a trivalent vertex $v_{i',a,j}$ at one end, where $i' \leq a$.
See Figure \ref{fig:ChoiceK}.
%as an asymptotic soliton
%and attaches to a vertex at one end.
%%%%%%%%%%%%%%%%%%%%%%%%%%%
\begin{figure}[h]
\begin{center}
\includegraphics[height=4.5cm]{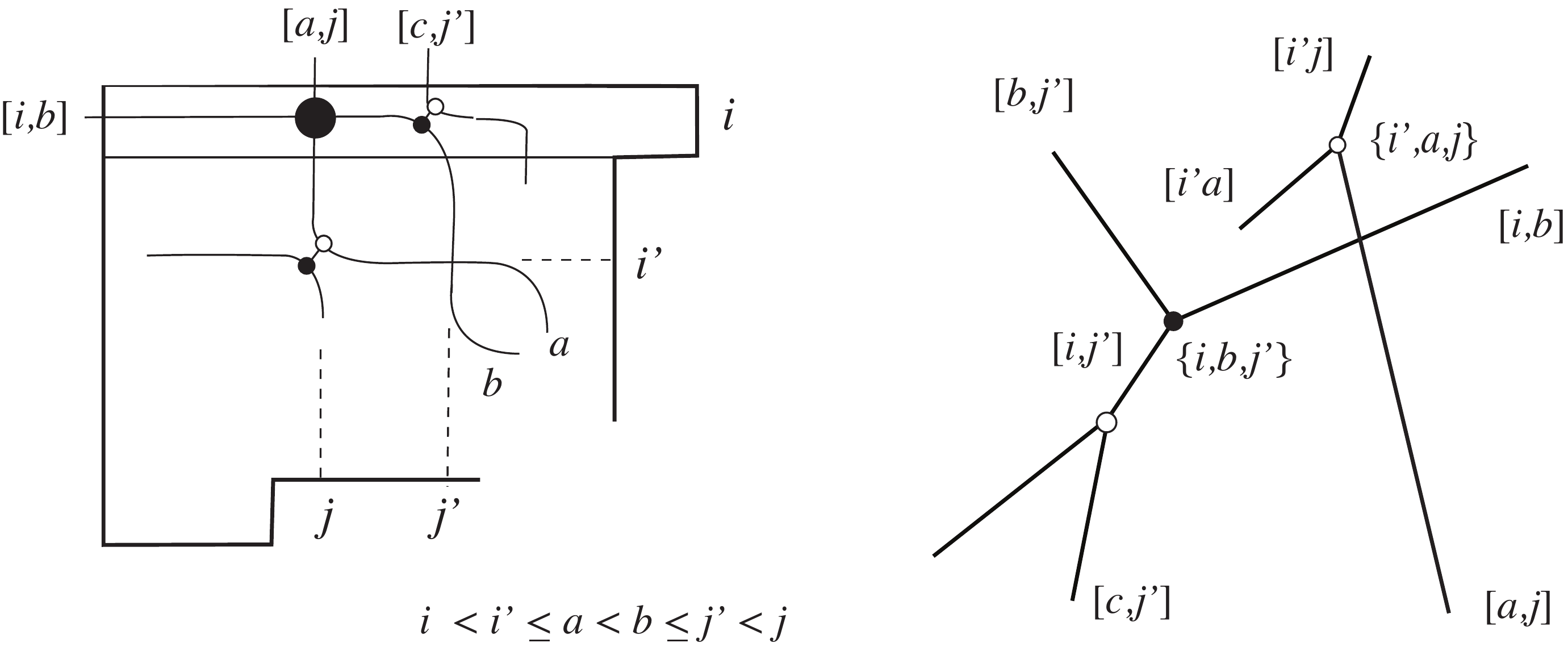}
\end{center}
\caption{
}
\label{fig:ChoiceK}
\end{figure}
%%%%%%%%%%%%%%%%%%%%%%%%%

%This trivalent vertex $V$ is labeled $v_{i,b,j'}$ for some $j'\geq b$,
%and has the $y$-coordinate
%\[
%y_V=\kappa_{i}+\kappa_b+\kappa_{j'}.
%\]
%Meanwhile the point $X$ where the $[i,b]$ and $[a,j]$ lines intersect has
%$y$-coordinate 
%\[
%y_X=\kappa_i+\kappa_a+\kappa_b+\kappa_j-\frac{\kappa_i\kappa_b-\kappa_a\kappa_j}{(\kappa_i+\kappa_b)-(\kappa_a+\kappa_j)}.
%\]

If we can choose the $\kappa$-parameters such that 
$y_{i',a,j} > y_{i,a,b,j} > y_{i,b,j'}$ then it follows that there
is an intersection of the $[a,j]$ and $[i,b]$ line-solitons in the contour plot.

%Then we impose two conditions: The vertex $v_{i',a,j}$ is below the X-crossing,
%and the vertex $v_{i'a,j}$ is above the X-crossing, i.e.
%\[
%y_{i'a,j}>y_X>y_{i,b,j'}.
%\]

One simple choice is to require that 
\begin{align}\label{ineq1}
%\[
\kappa_j&=-\kappa_i>0\quad \text{ and }\quad  \kappa_b=-\kappa_a>0; \quad\text{ and also} \\
\kappa_{i'}&>\frac{1}{2}\kappa_i \quad\text{ and }\quad  \kappa_{j'}<\frac{1}{2}\kappa_j.\label{ineq2}
\end{align}
By \eqref{ineq1}, we  have 
%The first pair of inequalities implies that 
$y_{i,a,b,j} = 0$.
By \eqref{ineq2}, 
%Using the second pair of inequalities 
together with 
$\kappa_a > \kappa_{i'}$ and $\kappa_b < \kappa_{j'}$, we have that 
\[
y_{i',a,j} = \kappa_{i'}+\kappa_a+\kappa_j~>~
0~ > ~\kappa_{i}+\kappa_b+\kappa_{j'} = y_{i,b,j'}.
\]
One concrete choice of parameters satisfying the required inequalities is 
$(\kappa_i, \kappa_{i'}, \kappa_a, \kappa_b, \kappa_{j'}, \kappa_j) = 
  (-4r, -2r, -r, r, 2r, 4r)$ where $r>0$.\\

\item[(b)] The second
 case is that there is an empty box to the left of $b$ in $D'$,
and so the $[i,b]$ line-soliton has trivalent vertices at both ends.
Figure \ref{fig:ChoiceK2} illustrates this situation.
These vertices are the white vertex $v_{i,b,j''}$ 
and the black vertex $v_{i,b,j'}$ where $i<i'\leq a < b \leq j' < j < j''$.
As before, $[a,j]$ is an unbounded line-soliton at $y\ll0$ which is incident
to the trivalent vertex $v_{i',a,j}$.
%%%%%%%%%%%%%%%%%%%%%%%%%%%
\begin{figure}[h]
\begin{center}
\includegraphics[height=5cm]{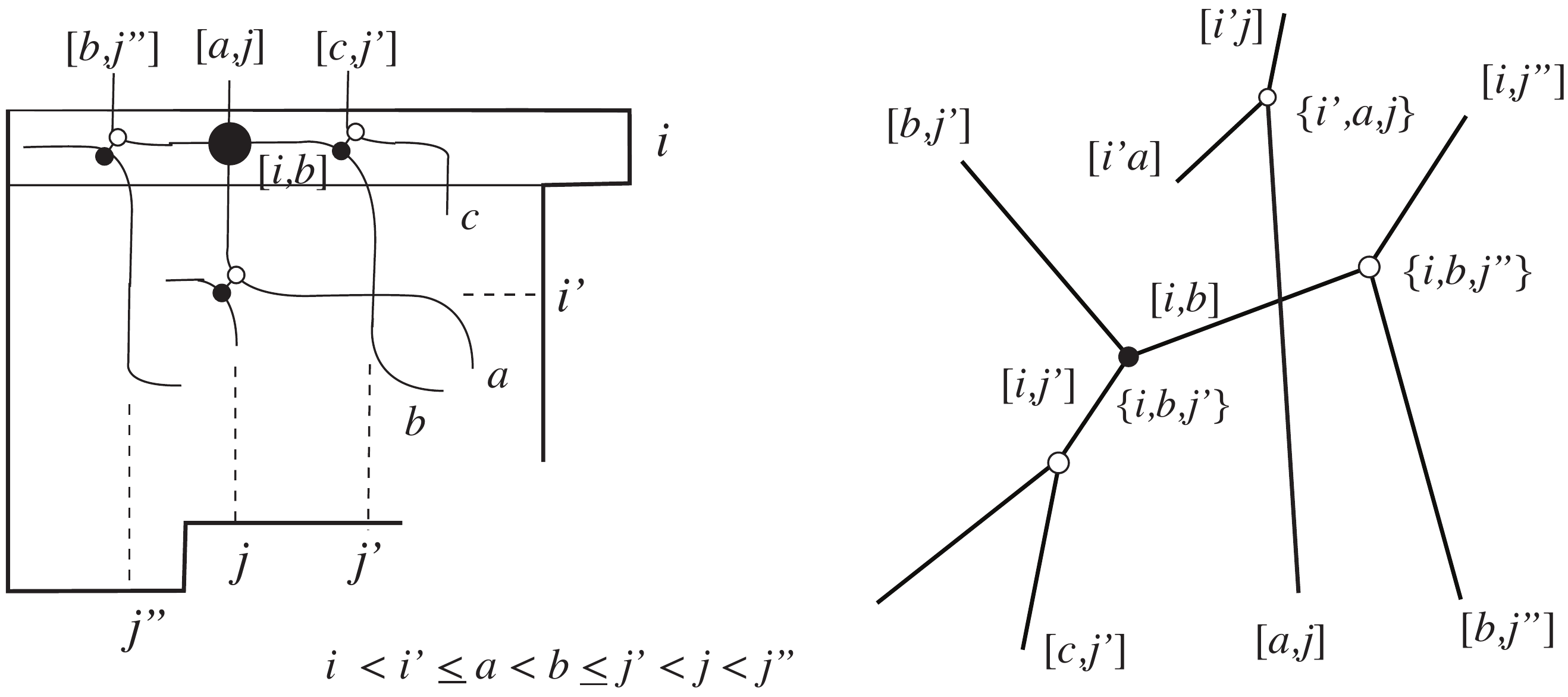}
\end{center}
\caption{
}
\label{fig:ChoiceK2}
\end{figure}
%%%%%%%%%%%%%%%%%%%%%%%%%
%In this case, we have the following coordinates
%\begin{align*}
%y_{i,b,j''}&=\kappa_i+\kappa_b+\kappa_{j''}\\
%y_{i,b,j'}&=\kappa_i+\kappa_b+\kappa_{j'}\\
%y_{i',a,j}&=\kappa_{i'}+\kappa_a+\kappa_j\\
%y_{i,a,b,j}&=\kappa_i+\kappa_a+\kappa_b+\kappa_j-\frac{\kappa_i\kappa_b-\kappa_a\kappa_j}{(\kappa_i+\kappa_b)-(\kappa_a+\kappa_j)}.
%\end{align*}
Since $v_{i,b,j''}$ is a white vertex, if  we can show that 
%The following order for those $y$-coordinates implies that the lines $[a,j]$ and $[i,b]$ intersect
%between the vertices $v_{i,b,j'}$ and $v_{i,b,j''}$,
\[
y_{i,b,j''}>y_{i,a,b,j}>y_{i,b,j'},\qquad {\rm and}\qquad y_{i'a,j}>y_{i,a,b,j},
\]
then it follows that the line-solitons of type $[a,j]$ and $[i,b]$ intersect in the 
contour plot.

%This can be understood as follows:
%Consider the line $[i,b]$ (just extend to both $|y|\gg 0$). Then intersect with $[a,j]$ line, the
%intersection point is given by the  formula in the beginning of this note.  Now consider the points on the line $[i,b]$.  There are two vertices $v_{i,b,j''}$ and $v_{i,b,j'}$ which are ordered on this line.
%Then if the $y$-value of the intersection point is between those vertices, then $[a,j]$ should intersect
%with $[i,b]$.

As before, we choose the $\kappa$-parameters so that \eqref{ineq1} and \eqref{ineq2}
are satisfied.
%\[
%\kappa_j=-\kappa_i>0,\qquad \kappa_b=-\kappa_a>0,
%\]
%and also 
%\[
%\kappa_{i'}>\frac{1}{2}\kappa_i\qquad {\rm and}\qquad \kappa_{j'}<\frac{1}{2}\kappa_j.
%\]
Then again we have $y_{i,a,b,j} = 0$,
$y_{i',a,j}>0$, and $y_{i,b,j'} < 0$.
Note that  any choice of $\kappa_{j''} > \kappa_j$ gives
$y_{i,b,j''}>0$, since 
$\kappa_i+\kappa_b + \kappa_{j''} > 
 \kappa_i + \kappa_b + \kappa_j = \kappa_b > 0$.
%Now choosing $\kappa_i=-\kappa_j$ and $\kappa_a=-\kappa_b$, we have $y_X=0$.
%Then use the same choice as in the case (a), i.e.
%\[
%kappa_{i'}>\frac{1}{2}\kappa_i\qquad {\rm and}\qquad \kappa_{j'}<\frac{1}{2}\kappa_j,
%\]
%we have $y_{i,b,j'}<0$ and $y_{i'a,j}>0$.  Then note that any choice of $\kappa_{j''}$ gives $y_{i,b,j''}>0$
%(note just $\kappa_i+\kappa_{j''}>0$ since $\kappa_{j''}>\kappa_{j}=-\kappa_i$).
\end{itemize}
This completes the proof.
%If the contour plot $\CC_{-\infty}(u_A)$ is topologically identical to 
%the generalized plabic graph $G_-(D)$ (see Remark \ref{rem:differ} for an explanation
%of how they could differ depending on the parameters $\kappa_i$), then the result follows from 
%Lemma \ref{lem:X-crossing}.
%If $\CC_{-\infty}(u_A)$ differs from $G_-(D)$, then by Remark \ref{rem:differ}, 
%the soliton graph associated to $\CC_{-\infty}(u_A)$ can be obtained from $G_-(D)$
%by applying some slides.
%And now by Proposition \ref{prop:slide}, this process preserves the number of 
%black $X$-crossings.
\end{proof}

\subsection{Positivity of dominant exponentials and the proof of 
Theorem \ref{th:regularity}}\label{sec:pos-reg}

In this section we prove Theorem \ref{th:Le} below.  Once we have proved it,
the proof of Theorem \ref{th:regularity} will follow easily.

\begin{theorem}\label{th:Le}
Let $A \in S_D \subset Gr_{k,n}$, where $D$ is a $\Le$-diagram, and let
$t \ll0$.  If $\Delta_J(A)>0$ for each dominant exponential $E_J$
in the contour plot $\CC_t(u_A)$,
then  $A \in (Gr_{k,n})_{\geq 0}$.  In other words,
the Pl\"ucker coordinates corresponding to the 
dominant exponentials in $\CC_t(u_A)$ comprise a positivity test for 
$S_D$.
\end{theorem}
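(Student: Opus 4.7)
The plan is to reduce Theorem \ref{th:Le} to the positivity test of Theorem \ref{th:TPTest} by using the $\kappa$-invariance of positivity for dominant-exponential labels, which is available precisely because $D$ has no black stones.

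First, I would translate the hypothesis into the combinatorics of $\CC_{-\infty}(D)$. By Theorem \ref{t<<0}, the contour plot $\CC_t(u_A)$ for $t\ll 0$ has the same region-label data as $\CC_{-\infty}(D)$, and its regions are labeled by $k$-subsets read off from the generalized plabic graph $G_-(D)$ via the rules of the road (Algorithm \ref{GoToPlabic}, Definition \ref{labels}). Let $\J_{\kappa}$ denote the set of $k$-subsets labeling the regions of $\CC_{-\infty}(D)$ for our given $\kappa$-parameters; by hypothesis $\Delta_J(A)>0$ for every $J\in \J_{\kappa}$.

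Next, I would use Remark \ref{rem:G-Dtest}: since $D$ is a $\Le$-diagram, the labels on the regions of $G_-(D)$ itself form a positivity test for $S_D$, because they contain the collection $\mathcal{J} = \{\Delta_I\}\cup\{\Delta_{I_b} : b \text{ a box of } D\}$ from Theorem \ref{th:TPTest}. Now apply Corollary \ref{cor:pos-pos}: deforming $\kappa_1<\cdots<\kappa_n$ continuously to a choice that realizes the $G_-(D)$ arrangement, one passes through a sequence of contour plots related by slides. Corollary \ref{cor:white} guarantees that all $X$-crossings occurring along the way are white, so each slide is governed by a two-term Plücker relation with all four labels positive on one side; these relations propagate positivity from $\J_{\kappa}$ to the label set of $G_-(D)$. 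In particular, every Plücker coordinate in $\mathcal{J}$ becomes positive, so Theorem \ref{th:TPTest} forces $A\in S_D^{\mathrm{tnn}}\subset (Gr_{k,n})_{\geq 0}$, proving the theorem.

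The main technical point to be careful about is that the region sets $\J_{\kappa}$ genuinely depend on $\kappa$: a slide may create or destroy a region (Definition \ref{def:slide}), so it is not automatic that $\J_{\kappa}$ already contains $\mathcal{J}$. This is exactly what Corollary \ref{cor:pos-pos} (and Lemma \ref{lem:slide2}) is built to handle, via the fact that for each white $X$-crossing the four adjacent Plücker coordinates satisfy $\Delta_{I_1}\Delta_{I_3} = \Delta_{I_2}\Delta_{I_4}$ (Theorem \ref{2term}), so that positivity of any three forces positivity of the fourth. Thus the bulk of the argument is assembly: once the invariance statement in Corollary \ref{cor:pos-pos} is interpreted correctly, the black-stone-free hypothesis on $D$ does the rest. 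The deduction of the full Theorem \ref{th:regularity} from Theorem \ref{th:Le} is then straightforward, since any $A \in Gr_{k,n}$ lies in some $S_D$, and if $D$ has a black stone then Theorem \ref{cor:bX} together with Corollary \ref{prop:opposite} produces a negative Plücker coordinate among the dominant-exponential labels, violating regularity.
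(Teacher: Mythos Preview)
Your approach is more direct than the paper's, but it relies on an unproven assumption. You write that one can deform $\kappa_1<\cdots<\kappa_n$ ``to a choice that realizes the $G_-(D)$ arrangement,'' and then invoke Corollary~\ref{cor:pos-pos}. But Corollary~\ref{cor:pos-pos} compares two \emph{contour plots}, each coming from an actual choice of $\kappa$-parameters; it says nothing about $G_-(D)$ itself, which is a combinatorial object. For your argument to work you need the existence of some $\kappa'$ for which the contour plot $\CC'_{-\infty}(D)$ has exactly the same region labels as $G_-(D)$. The paper does not establish this. The only tool available in that direction is Lemma~\ref{lem:choosekappa}, which controls only the \emph{order of the unbounded line-solitons at $y\ll 0$}, not the full topology of the contour plot. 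Remark~\ref{rem:differ} says the contour plot and $G_-(D)$ may differ by slides, but does not assert that some $\kappa'$ eliminates all such differences.

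The paper's proof is organized precisely to avoid this gap. It proceeds by induction on the number of rows of $D$: removing the top row gives a $\Le$-diagram $D'$ and an element $A'\in S_{D'}\subset Gr_{k-1,n}$, and Theorem~\ref{induction} shows that positivity of dominant exponentials in $\CC_t(u_A)$ forces positivity of those in $\CC_t(u_{A'})$ (with any new $X$-crossings being white by Corollary~\ref{cor:white}). By induction $A'\in (Gr_{k-1,n})_{\geq 0}$, so all region labels of $G_-(D')$---i.e., all labels of $G_-(D)$ coming from the bottom $k-1$ rows---are positive. Only then does the paper use a $\kappa$-deformation, and only the weak one provided by Lemma~\ref{lem:choosekappa}: matching the $y\ll 0$ order forces the unbounded regions there to carry exactly the top-row labels of $G_-(D)$, and Corollary~\ref{cor:pos-pos} transfers positivity to those. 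Now every region label of $G_-(D)$ is positive, and Remark~\ref{rem:G-Dtest} finishes the proof. Your shortcut could perhaps be salvaged by arguing directly that $G_-(D)$ and any contour plot $\CC_{-\infty}(D)$ are related by a sequence of slides through white $X$-crossings (extending Lemma~\ref{lem:slide2} beyond the contour-plot setting), but that step needs to be supplied; as written, the proof has a genuine gap.
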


\begin{lemma}\label{lem:basecase}
Theorem \ref{th:Le} holds for elements $A\in Gr_{1,n}$.
%Let $A \in Gr_{1,n}$ and let $t\ll0$.  Then  
%if all the dominant exponentials in the contour plot $\CC_t(u_A)$
%are positive, then $A \in (Gr_{1,n})_{\geq 0}$.
\end{lemma}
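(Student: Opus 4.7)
\emph{Proof plan.} Write $A = (a_1, \ldots, a_n) \in Gr_{1,n}$ so that the matroid of $A$ is $\M = \{i : a_i \neq 0\} = \{m_1 < \cdots < m_r\}$ and the Pl\"ucker coordinates are simply $\Delta_{\{i\}}(A) = a_i$. The plan is to show that every $m_j \in \M$ appears as the label of some region of $\CC_t(u_A)$ at $t \ll 0$; the hypothesis that every such $\Delta_{\{m_j\}}(A) > 0$ will then force $a_{m_j} > 0$ for every $j$, while $a_i = 0$ for $i \notin \M$, so that $A \in (Gr_{1,n})_{\geq 0}$.

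First I would observe that the assumption ``$D$ is a $\Le$-diagram'' is automatic when $k=1$. A reduced expression $\w$ for any $w \in W^1$ has the form $s_1 s_2 \cdots s_m$ (each successive generator acting on a previously untouched pair of indices), so for any subexpression $\v$ of $\w$ and any $k$ with $i_k = k$, the partial product $v_{(k-1)}$ is supported on $\{s_1, \ldots, s_{k-1}\}$ and therefore fixes $k$ and $k+1$; hence $v_{(k-1)} s_k > v_{(k-1)}$, and every distinguished $\v \prec \w$ is in fact positive distinguished. Consequently $J^{\bullet}_\v = \emptyset$ and the associated Go-diagram contains no black stones, so every $A \in Gr_{1,n}$ lies in a $\Le$-component and the theorem imposes no restriction beyond $A \in Gr_{1,n}$.

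Next I would compute the decorated permutation $\pi^{{:}}$ of the positroid stratum of $A$ using Lemma \ref{lem:necklace} and Definition \ref{necklace-to-perm}: a direct calculation with the Grassmann necklace (which is constant on the arcs between consecutive elements of $\M$ and jumps at each $m_j$) shows that $\pi$ is the $r$-cycle $(m_1 \, m_r \, m_{r-1} \cdots m_2)$. Thus $\pi(m_1) = m_r$ is the unique excedance, $\pi(m_j) = m_{j-1}$ for $j = 2, \ldots, r$ are the nonexcedances, and every $i \notin \M$ is a fixed point colored $-1$. Theorem \ref{thm:soliton-perm} then identifies the unbounded line-solitons of $\CC_t(u_A)$ at $y \ll 0$ as exactly those of types $[m_{j-1}, m_j]$ for $j = 2, \ldots, r$. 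By Lemma \ref{separating}, consecutive regions along $y \ll 0$ separated by such a line-soliton carry labels differing in exactly one index; sweeping from $x \ll 0$ (where $\theta_{m_1}$ dominates, since $\kappa_{m_1} = \min_{i \in \M} \kappa_i$) to $x \gg 0$ (where $\theta_{m_r}$ dominates) therefore produces the full sequence of region labels $\{m_1\}, \{m_2\}, \ldots, \{m_r\}$, exhibiting every $m_j \in \M$ as a region label. The degenerate case $r = 1$ is handled trivially: $\tau_A = a_{m_1} e^{\theta_{m_1}}$ never vanishes, the contour plot is empty, and the unique dominant exponential $E_{\{m_1\}}$ gives $a_{m_1} > 0$ directly. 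There is essentially no technical obstacle in this base case; all the heavy lifting has already been done by Theorem \ref{thm:soliton-perm}, and the only observation beyond its statement is that for $k=1$, differing by ``one index'' means differing by the whole singleton.
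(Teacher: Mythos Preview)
Your proof is correct and reaches the same conclusion as the paper's, namely that every nonzero Pl\"ucker coordinate of $A$ labels some region of the contour plot, so positivity of the dominant exponentials forces $A\in (Gr_{1,n})_{\ge 0}$.  The route, however, is slightly different.  The paper argues through the combinatorics of $G_-(D)$: for a one-row $\Le$-diagram with $r$ blank boxes, Algorithm~\ref{GoToPlabic} produces a plabic graph with $r+1$ regions, one for each nonzero Pl\"ucker coordinate, and then Theorem~\ref{t<<0} transfers this to $\CC_t(u_A)$.  You instead compute the decorated permutation directly from the Grassmann necklace, invoke Theorem~\ref{thm:soliton-perm} to list the unbounded solitons $[m_{j-1},m_j]$ at $y\ll 0$, and sweep across them to exhibit each singleton $\{m_j\}$ as a region label.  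Your path is a bit more self-contained for this base case since it avoids the plabic-graph machinery, whereas the paper's argument is more natural as a warm-up for the inductive step, which leans heavily on $G_-(D)$.

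One small correction: the reduced expression for $w\in W^1$ is not $s_1 s_2\cdots s_m$ but rather $s_{j_1} s_{j_1+1}\cdots s_{n-1}$ for some $j_1$ (cf.\ \eqref{red-exp} with $a=1$).  This does not affect your argument, since the property you actually use---that the simple reflections appearing are consecutive and each $s_{i_k}$ moves an index untouched by $v_{(k-1)}$---still holds, and hence every distinguished subexpression is positive distinguished.  Your observation that the $\Le$-hypothesis is automatic for $k=1$ is correct and pleasant, though not strictly needed since the theorem already assumes $D$ is a $\Le$-diagram.
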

\begin{proof}
Let $A\in S_D \subset Gr_{1,n}.$  If 
$D$ contains $r$ empty boxes, then $S_D$ has dimension $r$.  Meanwhile,
the element $A$ will have precisely $r+1$ nonzero Pl\"ucker coordinates.
(We can normalize the lexicographically
minimal one to be $1$.)  It is 
easy to see that $G_-(D)$ and hence $\CC_{t}(u_A)$ will have $r+1$ 
regions, each one labeled by a different dominant exponential
corresponding to a Pl\"ucker coordinate $\Delta_J(A)$ such that 
$\Delta_J(A) \neq 0$.
Therefore if each such $\Delta_J(A) >0$,
then $A \in (Gr_{1,n})_{\geq 0}$.
\end{proof}

\begin{lemma}\label{lem:choosekappa}
Let $A \in S_D \subset Gr_{k,n}$.
Then it is possible to choose $\kappa_1 < \kappa_2 < \dots < \kappa_n$ such that 
the unbounded line-solitons at $y\ll 0$ in the corresponding contour plot $\CC_t(u_A)$
(for any time $t$)
appear in the same order as they do in the generalized plabic graph $G_-(D)$.
\end{lemma}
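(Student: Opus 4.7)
The plan is first to identify the combinatorial left-to-right order of unbounded line-solitons at $y \ll 0$ in the generalized plabic graph $G_-(D)$, and then to produce parameters $\kappa_1 < \cdots < \kappa_n$ that realize this order in the contour plot $\CC_t(u_A)$.

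By Theorem \ref{thm:soliton-perm}, the unbounded line-solitons at $y\ll 0$ in $\CC_t(u_A)$ are in bijection with the nonexcedance pairs $(i_s, h_s)$ of $\pi(D)$, where $\pi(D)(h_s) = i_s < h_s$; in $G_-(D)$ these correspond to boundary edges ending at the boundary vertices labeled $h_s$. A careful reading of Algorithm \ref{GoToPlabic}, tracing pipes from their SE start labels through the Young diagram to their NW endpoints, shows that the nonexcedance boundary vertices of $G_-(D)$ appear along the bottom arc of the disk in the order $h_1 < h_2 < \cdots < h_r$, that is, in increasing order of the larger index $h_s$.

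Given this, I would take $\kappa_j = 2^j$. For any $h < h'$ and any nonexcedance pairs $(i, h)$ and $(i', h')$ with $i < h$ and $i' < h'$, one has $\kappa_{h'} - \kappa_h \geq 2^h$, while $\kappa_i - \kappa_{i'} \leq \kappa_{h-1} = 2^{h-1}$, so
\[
(\kappa_{i'} + \kappa_{h'}) - (\kappa_i + \kappa_h) = (\kappa_{h'} - \kappa_h) - (\kappa_i - \kappa_{i'}) \geq 2^h - 2^{h-1} = 2^{h-1} > 0.
\]
By Remark \ref{slope}, the left-to-right order at $y \ll 0$ of the unbounded line-solitons in $\CC_t(u_A)$ is by increasing slope $\kappa_i + \kappa_h$; so the realized order matches the order $h_1 < h_2 < \cdots < h_r$ coming from $G_-(D)$. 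Note that this argument depends only on the slopes and is thus independent of $t$.

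The hard part will be the combinatorial claim of the first paragraph: that the nonexcedance boundary vertices of $G_-(D)$ appear in increasing order of $h$ along the disk's bottom arc. This requires a careful tracing of pipes through Algorithm \ref{GoToPlabic}, accounting for the effect of both blank boxes (elbows) and stones (crosses) in $D$, and pinning down the convention for which arc of the disk boundary is the ``bottom.'' For $\Le$-diagrams (no black stones) this should follow from conventions established in \cite{KW2}; extending to general Go-diagrams requires verifying that black stones (which become crosses, just like white stones) do not alter the relative cyclic order of nonexcedance endpoints, a check that is ultimately bookkeeping once the conventions are fixed.
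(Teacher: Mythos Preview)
Your proposal is correct and follows essentially the same approach as the paper: both identify that the $y\ll 0$ boundary solitons in $G_-(D)$ appear in increasing order of their larger index, then choose geometrically growing $\kappa$-parameters so that this order agrees with the slope order $\kappa_i+\kappa_j$. Your choice $\kappa_j = 2^j$ is in fact a slight simplification of the paper's choice $\kappa_i - \kappa_{i-1} = r^i$, and your verification of the key inequality is clean. The combinatorial claim you flag as ``the hard part'' is one the paper also does not argue in detail (it says ``one may easily check''), and your observation that black stones become crosses just like white stones in Algorithm~\ref{GoToPlabic}, so the pipe routing and hence the boundary order is unchanged, is exactly the right reason why the $\Le$-diagram argument extends.
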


\begin{proof}
Recall that in a contour plot, the unbounded line-solitons $[i,j]$ at $y\ll 0$
appear from left to right in increasing order  of the slope $\kappa_i + \kappa_j$.
While in $G_-(D)$, one may easily check that the unbounded line-solitons
$[i,j]$ at $y\ll 0$  appear from left to right  in increasing order of $j$.

Now let us choose $\kappa_1, \dots, \kappa_n$ so that 
$\kappa_{i} - \kappa_{i-1} = r^i$ for some constant $r > 1$.
To prove the lemma, it suffices to prove that given 
two line-solitons $[a,b]$ and $[c,d]$ at $y \ll0$, where $b<d$,
we have that 
\begin{equation} \label{check}
\kappa_a + \kappa_b < \kappa_c + \kappa_d, \text{ or equivalently, }
\kappa_d - \kappa_b > \kappa_a - \kappa_c.
\end{equation}

Since $a<b$ and $c<d$, we have
$a<d$.
%Because the line-solitons at $y \ll0$ come from nonexcedances of 
%a permutation, we have that either all of $a$, $b$, $c$, $d$ or distinct,
%or else $b=c$.  In the latter case, \eqref{check} is obvious.
By our choice of the $\kappa_i$'s, 
$\kappa_d - \kappa_b \geq r^d$.
If $a<c$ then $\kappa_a - \kappa_c <0$, so \eqref{check} is obvious.
On the other hand, if $a>c$, then $\kappa_a - \kappa_c \leq r^a + r^{a-1} + \dots + 1 < r^{a+1}.$
And since $a<d$,  equation \eqref{check} follows.
\end{proof}

We now prove Theorem \ref{th:Le}.
\begin{proof}
Our strategy is to use induction on the number of rows of $A$.
%and show that if the dominant exponentials in the contour plot  are positive,
%then so are the parameters $p_i$ from Sections \ref{sec:projections}
%and \ref{sec:positivitytest}.  
%This would imply that $A \in (Gr_{k,n})_{\geq 0}$.
Lemma \ref{lem:basecase} takes care of the base case of the induction.
%More precisely, we claim that the $p_i$'s must be positive 
%if we know that all 
%dominant exponentials incident to a trivalent vertex in the contour plot
%or labeleling an unbounded region at $y\ll0$ are positive.  
%(Note sure here if this is precisely the set  of dominant exponentials
%I want to consider)
We suppose that $A$ is in row-echelon form, and let $A'$ be the element of $Gr_{k-1,n}$ 
obtained from the bottom $k-1$ rows of $A$.   Then $A' \in S_{D'}$
where $D'$ is also a $\Le$-diagram (it is the restiction of $D$ to its  bottom $k-1$
rows).
Recall from Theorem \ref{induction} that ``most" of
the contour plot $\CC_t(u_{A'})$ is contained in the contour plot
$\CC_t(u_{A})$.  More precisely,
every region of 
$\CC_t(u_{A'})$ which is incident to a trivalent vertex 
and labeled by $E_{J'}$ corresponds to a region of 
$\CC_t(u_{A})$ which is labeled by $E_{J' \cup \{1\}}$.
Because $A$ is in row-echelon form with a pivot in row $1$,
$\Delta_{J' \cup \{1\}}(A) = 
\Delta_{J'}(A')$, so 
the fact that each $\Delta_{J' \cup \{1\}}(A)>0$ implies
that $\Delta_{J'}(A')>0$.

We now claim that all Pl\"ucker coordinates
corresponding to the dominant exponentials of the 
contour plot $\CC_t(u_{A'})$ are positive.
To prove this, note that 
from $\CC_t(u_A)$ we can in fact construct $\CC_t(u_{A'})$: 
all of the trivalent vertices of $\CC_t(u_{A'})$ are present in 
$\CC_t(u_{A})$, so it is just a matter of extending some line-solitons
that were finite in 
$\CC_t(u_{A})$ but are unbounded in 
$\CC_t(u_{A'})$.  These line-solitons may create some new
white $X$-crossings but cannot create black $X$-crossings,
because $D'$ is a $\Le$-diagram.   If a single white $X$-crossing
is created, then because three of its four regions are incident
to a trivalent vertex, three of the four corresponding Pl\"ucker
coordinates are positive.  But then by the two-term
Pl\"ucker relation relating the four Pl\"ucker coordinates,
the fourth Pl\"ucker coordinate is positive as well.
If multiple white $X$-crossings are created, then one can 
iterate the above argument, starting with a white $X$-crossing
with three of its four regions incident to a trivalent vertex
in the contour plot.  This proves the claim.  So 
by the inductive hypothesis, $A' \in (Gr_{k-1,n})_{\geq 0}$.

Since $A' \in (Gr_{k-1,n})_{\geq 0}$, it follows that 
all the Pl\"ucker coordinates labeling the regions of $G_-(D')$
are positive.  And so all of the Pl\"ucker coordinates
labeling the regions of $G_-(D)$ which correspond to the bottom
$k-1$ rows of $D$ are positive.  (Recall again that 
$\Delta_{J' \cup \{1\}}(A) = 
\Delta_{J'}(A')$.)  If we can show that the Pl\"ucker coordinates
labeling the regions of $G_-(D)$ which come from the top row of 
$D$ are positive, then by Remark \ref{rem:G-Dtest}, it will follow
that $A \in (Gr_{k,n})_{\geq 0}$.

By Lemma \ref{lem:choosekappa},  we can deform
the $\kappa$-parameters so that the resulting contour plot $\CC'_t(u_A)$
has its unbounded line-solitons at $y\ll0$ in the same order as those in $G_-(D)$.
Then the dominant exponentials at $y\ll0$ in $\CC'_t(u_A)$
are precisely those of $G_-(D)$,
which in turn come from the top row of $D$.  By Corollary \ref{cor:pos-pos}, since 
the dominant exponentials of $\CC_t(u_{A})$ are positive, so are those of 
$\CC'_t(u_{A})$.  In particular, the dominant exponentials of 
$\CC'_t(u_A)$ at $y\ll0$ are positive, so we can conclude that 
all of the Pl\"ucker coordinates labeling the regions of $G_-(D)$ are positive.
Therefore $A\in (Gr_{k,n})_{\geq 0}$.
\end{proof}

Finally we are ready to prove Theorem \ref{th:regularity}.

\begin{proof}
Recall the definition of $u_A(x,y,t)$ in terms of the $\tau$-function from 
Section \ref{sec:tau}.
It is easy to verify that if $A \in (Gr_{k,n})_{\geq 0}$, then
$u_A(x,y,t)$ is regular for all times $t$: the reason is that 
$\tau_A(x,y,t)$ is  strictly positive, and hence its logarithm is well-defined.

Conversely, let $A \in Gr_{k,n}$,
and suppose that 
$u_A(x,y,t)$ is regular for 
$t\ll0$.
This means that the Pl\"ucker coordinates $\Delta_J$ corresponding
to the dominant exponentials in the contour plot
$\CC_t(u_A)$ must all have the same sign.
Since the Grassmannian is a projective variety, we may assume
that all of these Pl\"ucker coordinates are positive.

Let $S_D$ be the Deodhar stratum containing $A$.  If $D$ has a black stone,
then by Theorem \ref{cor:bX}, the contour plot $\CC_{-\infty}(u_A)$ contains
a black $X$-crossing.  But then by Corollary \ref{prop:opposite}, two dominant 
exponentials incident to that black $X$-crossing must have opposite signs, which is 
a contradiction.  Therefore we conclude that $D$ has no black stones.  It follows that
the Deodhar diagram corresponding to $D$ is a $\Le$-diagram.  
But now by Theorem \ref{th:Le}, it follows that $A \in (Gr_{k,n})_{\geq 0}$.

Finally, note that if $A \in Gr_{k,n}$ and $u_A(x,y,t)$ is 
regular for all times $t$, then in particular it is regular for $t\ll0$,
so the arguments of the previous two paragraphs apply.  Therefore 
$A \in (Gr_{k,n})_{\geq 0}$.
\end{proof}

\begin{remark}
Corollary \ref{prop:opposite} 
implies that 
there are singularities among
the line-solitons forming a black $X$-crossing in a contour plot,
and the singular solitons form 
a V-shape.
%, i.e. singular solitons are adjacent to each other.
\end{remark}

\begin{example}
We revisit the example from Figures \ref{GoPlabic}
and \ref{fig:Gr48AB}.  Note 
that the contour plot at the left of Figure \ref{fig:Gr48AB}
is topologically identical to 
$G_-(D)$.  The 
Go-diagram and labeled Go-diagram are as follows.
%\medskip
\[
\young[4,4,3,3][10][,\hskip0.4cm\circle{5},\hskip0.4cm\circle{5},,\hskip0.4cm\circle*{5},,,,\hskip0.4cm\circle*{5},\hskip0.4cm\circle{5},,,\hskip0.4cm\circle{5},\hskip0.4cm\circle{5}]\hskip2cm \young[4,4,3,3][10][$p_{14}$,$1$,$1$,$p_{11}$,$-1$,
$p_{9}$,$p_{8}$,$p_7$,$-1$,$1$,$p_4$,$p_3$,$1$,$1$]
\]
\medskip\noindent
The $A$-matrix is given by
\[
A=\begin{pmatrix}
p_{11}p_{14}&  p_{14} & 0 & 0& 1 & 0 & 0 & 0 \\
0 &- p_7p_{8}p_{9} &-p_8p_9 & - p_{9} & -m_{10} & 0 & -1 & 0 \\
0  &  0 &  0  & -p_4 &-m_6 & 1 & 0&0 \\
  0 & 0 & 0 & 0 & p_3 & 0 &0 & 1
\end{pmatrix}.
\]
Recall from Theorem \ref{p:Plucker} 
that we associate a Pl\"ucker coordinate 
$\Delta_{I_b}$ to each box $b$ of the Go-diagram, via
%For each box $b$ in the Go-diagram, the index set for the corresponding
%Pl\"ucker coordinate can be found by 
$I_b=v^{\rm in}(w^{\rm in})^{-1}\{1,2,4,5\} = 
\{j_1,j_2,j_3,j_4\}$.
For brevity, we simply write %$J=\{j_1,j_2,j_3,j_4\}=
$(j_1j_2j_3j_4)$ below.
Because the contour plot at the left of Figure \ref{fig:Gr48AB}
is topologically identical to 
$G_-(D)$, all of these Pl\"ucker coordinates $\Delta_{I_b}$ correspond
to dominant exponentials in the contour plot.
\setlength{\unitlength}{0.7mm}
\begin{center}
  \begin{picture}(60,40)
  
% Young Diagram
% horizontal lines
\put(5,35){\line(1,0){68}}
  \put(5,25){\line(1,0){68}}
  \put(5,15){\line(1,0){68}}
  \put(5,5){\line(1,0){51}}
  \put(5,-5){\line(1,0){51}}
% vertical lines
  \put(5,-5){\line(0,1){40}}
  \put(22,-5){\line(0,1){40}}
  \put(39,-5){\line(0,1){40}}
  \put(56,-5){\line(0,1){40}}
  \put(73,15){\line(0,1){20}}
%  \put(45,25){\line(0,1){10}}

% first row
 \put(7,29){$(5678)$}
 \put(24,29){$(2567)$}
 \put(41,29){$(2456)$}
   \put(58,29){$(2345)$}
 % \put(74,19){${t_2}$}
% second row
  \put(7,19){$(1678)$}
  \put(24,19){$(1567)$}
\put(41,19){$(1456)$}
  \put(58,19){$(1345)$}
%  \put(75,18){$[t_2]$}
% third row 
  \put(7,9){$(1268)$}
 \put(24,9){$(1256)$}
  \put(41,9){$(1256)$}
  %  \put(34,9){$1$}
    \put(7,-1){$(1248)$}
 \put(24,-1){$(1245)$}
  \put(41,-1){$(1245)$}
    \end{picture} 
    \hskip 3cm
 $
\raisebox{1cm}{\young[4,4,3,3][10][$1$,$1$,$1$,$1$,$1$,$1$,$1$,$1$,$-1$,$1$,$1$,$1$,$1$,$1$]}
  $
\end{center}

\vskip0.5cm
\noindent
The diagram at the right shows the values of the 
corresponding Pl\"ucker coordinates when we choose
all $p_j=1$ (regardless of the choice of the $m_j$ parameters).  
Since only the Pl\"ucker coordinate
$\Delta_{1,2,6,8}(A)=-1$ is negative, 
the singular line-solitons in the contour plot
are precisely those at the boundary of the 
corresponding region; these line-solitons have types $[4,6]$, $[5,8]$,
and $[2,7]$-types.
% are singular (they are the boundaries of the region with the index $I_b=\{1,2,6,8\}$).

\end{example}

\subsection{Non-uniqueness of the evolution of the contour plots for $t\gg 0$}

Consider $A \in S_D \subset Gr_{k,n}$.  
If the contour plot $\CC_{-\infty}(D)$
is topologically identical to $G_-(D)$, then 
the contour plot has almost no dependence 
on the parameters $m_j$ from the 
parameterization of $S_D$.  
This is because the Pl\"ucker coordinates
corresponding to the regions of 
$\CC_{-\infty}(D)$ (representing the dominant exponentials)
are either among the collection
of minors given in Theorem \ref{p:Plucker} 
(by Remark \ref{rem:G-Dtest}), or determined
from these by a ``two-term" Pl\"ucker relation.
Note that the minors given in Theorem \ref{p:Plucker}
are computed in terms of the parameters $p_i$ but have no 
dependence on the
$m_j$'s.

%As a corollary of Theorems \ref{p:Plucker} and \ref{t<<0}, the soliton graph for $t\ll0$ 
%does not depend on the parameters $m$ in the $x$-factors of the matrix $g\in G_{\v,\w}$.
Therefore it is possible to choose two different points $A$ and $A'$ in $S_D \subset Gr_{k,n}$
whose contour plots for a fixed $\kappa_1 < \dots \kappa_n$ and fixed $t \ll0$ are identical
(up to some exponentially small difference); 
we use the same parameters $p_i$
but different parameters $m_j$ for defining $A$ and $A'$.  However, as $t$ increases, those
contour plots may evolve to give different patterns.

%This implies that the soliton graphs with different values of $m$-parameters at $t\ll0$ give the same pattern.  However, those graphs may evolve to different graphs for $t\gg 0$.
Consider the Deodhar stratum $S_D \subset Gr_{2,4}$, corresponding to
\[
{\bf w}=s_2s_3s_1s_2\text{ and }  {\bf v}=s_211s_2.
\]
The Go-diagram and labeled Go-diagram are given by
%\medskip
\[
\young[2,2][10][\hskip0.4cm\circle*{5},,,\hskip0.4cm\circle{5}]\qquad\qquad
\young[2,2][10][$-1$,$p_3$,$p_2$,$1$].
\]
\medskip
The matrix $g$ is calculated as $g=s_2y_3(p_2)y_1(p_3)x_2(m)s_2^{-1}$,
and its
projection to $Gr_{2,4}$ is 
\[
A=\begin{pmatrix}
-p_3 & -m & 1 & 0 \\
0 & p_2 & 0   & 1
\end{pmatrix}.
\]
The $\tau$-function is then given by
\[
\tau_A=-(p_2p_3E_{1,2}+p_3E_{1,4}+mE_{2,4}+p_2E_{2,3}-E_{3,4}),
\]
where $E_{i,j}:=(\kappa_j-\kappa_i)\exp(\theta_i+\theta_j)$.
The contour plots of the solutions with $m=0$ and $m\ne0$ are the same
(except for some exponentially small difference) 
when $t\ll0$.  In both cases, the plot consists of 
two line-solitons forming an $X$-crossing, where the parts of those solitons adjacent to the region
with dominant exponential $E_{3,4}$ (i.e. for $x\gg 0$) are singular,
see the left of Figure \ref{fig:NUness}.  

On the other hand, for $t\gg0$, 
the contour plot with $m=0$ is topologically the same as it was for $t\ll0$,
while the contour plot with $m\ne0$ has a box with 
dominant exponential $E_{2,4}$,
surrounded by four bounded solitons
(some of which are singular).  See the middle and right of 
Figure \ref{fig:NUness}.  
So not only the contour plots but also the soliton graphs are 
different for $t\gg0$!
%%%%%%%%%%%%%%%%%%%%%%%%%%%%%%%%%%
\begin{figure}[h]
\begin{center}
\includegraphics[height=1.7in]{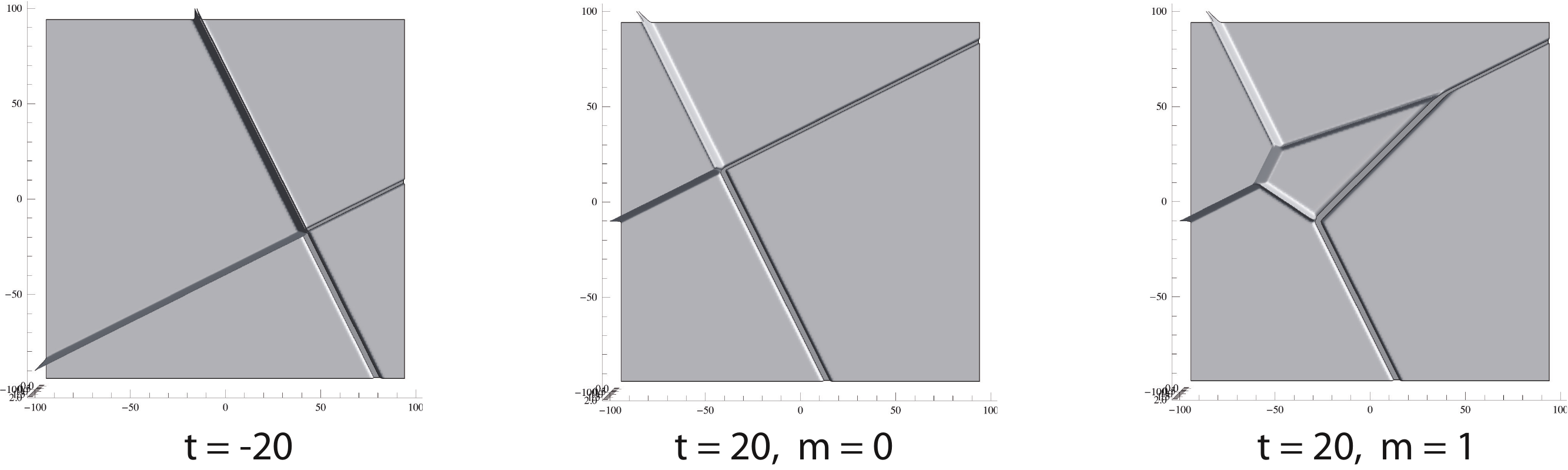}
\end{center}
\caption{The non-uniqueness of the evolution of the contour plots (and soliton graphs).  
The left panel shows the contour plot
at $t=-20$ for any value of $m$.  The middle panel shows the graph at $t=20$ with $m=0$,
and the right one shows the graph at $t=20$ with $m=1$.
These contour plots were made using the choice $p_i = 1$ for all $i$,
and 
$(\kappa_1,\ldots,\kappa_4)=(-2,-1,0,1.5)$.
In all of them,
the region at $x\gg 0$ has a positive sign ($\Delta_{3,4}=1$) and other
regions have negative signs. 
This means that the solitons adjacent to the region for $x\gg0$ are singular.}
 \label{fig:NUness}
\end{figure}
%%%%%%%%%%%%%%%%%%%%%%%%%%%%%%%%

Note that the non-uniqueness of the evolution of the 
contour plot (a tropical approximation) does not imply the non-uniqueness
of the evolution of the solution of the KP equation as $t$ changes.
If one makes two different choices for the $m_i$'s, 
the corresponding $\tau$-functions are different, but 
there is only an exponentially small difference in the corresponding
contour plots (hence the topology of the contour plots is identical).
This is particularly interesting to compare 
with the totally non-negative case,
where the soliton solution can be uniquely 
determined by the information in the contour plot at $t\ll 0$.
For more details, 
see the results on the inverse problem in \cite{KW2}.

\raggedright
%\addcontentsline{toc}{section}{References}

\end{document}